\theoremstyle{plain}
\newtheorem{theo}{Theorem}[section]
\newtheorem{lemme}[theo]{Lemma}
\newtheorem{prop}[theo]{Proposition}
\newtheorem{coro}[theo]{Corollary}
\theoremstyle{definition}
\newtheorem{defn}[theo]{Definition}
\newtheorem{nota}[theo]{Notation}
\newtheorem{exem}[theo]{Example}
\theoremstyle{remark}
\newtheorem{rema}[theo]{Remark}
\def\A{{\rm A}}
\def\B{{\rm B}}
\def\C{{\rm C}}
\def\D{{\rm D}}
\def\E{{\rm E}}
\def\F{{\rm F}}
\def\G{{\rm G}}
\def\H{{\rm H}}
\def\J{{\rm J}}
\def\L{{\rm L}}
\def\M{{\rm M}}
\def\O{{\rm O}}
\def\P{{\rm P}}
\def\R{{\rm R}}
\def\S{{\rm S}}
\def\T{{\rm T}}
\def\U{{\rm U}}
\def\V{{\rm V}}
\def\W{{\rm W}}
\def\X{{\rm X}}
\def\Y{{\rm Y}}
\def\Z{{\rm Z}}
\def\Id{{\rm Id}}
\def\Im{{\rm Im}}
\def\Re{{\rm Re}}
\def\st{{\rm st}}
\def\GL{{\rm GL}}
\def\Sp{{\rm Sp}}
\def\diag{{\rm diag}}
\def\Ad{{\rm Ad}}
\def\Mat{{\rm Mat}}
\def\End{{\rm End}}
\def\Hom{{\rm Hom}}
\def\Rad{{\rm Rad}}
\def\dim{{\rm dim}}
\def\Pin{{\rm Pin}}
\def\gr{{\rm gr}}
\def\Cliff{{\rm Cliff}}
\def\ad{{\rm ad}}
\def\exp{{\rm exp}}
\def\triv{{\rm triv}}
\def\conj{{\rm conj}}
\def\irr{{\rm irr}}
\def\R{\mathbb{R}}
\def\tilde{\widetilde}
\title{Classification and double commutant property for dual pairs in an orthosymplectic Lie supergroup}
\author[1]{Allan Merino}
\address[1]{ Department of Mathematics and Statistics \\ University of Ottawa \\ STEM Complex \\ 150 Louis-Pasteur PvT \\ Ottawa \\ ON \\ K1N6N5 \\ Canada}
\email[1]{amerino@uottawa.ca}
\author[2]{Hadi Salmasian}
\address[2]{ Department of Mathematics and Statistics \\ University of Ottawa \\ STEM Complex \\ 150 Louis-Pasteur PvT \\ Ottawa \\ ON \\ K1N6N5 \\ Canada}
\email[2]{hadi.salmasian@uottawa.ca}
\keywords{Dual pairs, Orthosymplectic Lie Supergroup, Spinor-Oscillator Representation, Double Commutant Theorem}
\subjclass[2010]{Primary: 22E45; Secondary: 22E46, 22E30.}
\begin{document}

\begin{abstract}
    

We obtain a full classification of reductive dual pairs in the (real or complex) Lie superalgebra $\mathfrak{spo}(\E)$ and the Lie supergroup $\textbf{SpO}(\E)$. Using this classification we
prove that for a reductive dual pair $(\mathscr{G}\,, \mathscr{G}') = ((\G\,, \mathfrak{g})\,, (\G'\,, \mathfrak{g}'))$ in $\textbf{SpO}(\E)$, the superalgebra $\textbf{WC}(\E)^{\mathscr{G}}$  consisting of $\mathscr{G}$-invariant elements in the Weyl-Clifford algebra $\textbf{WC}(\E)$,  equipped 
with the natural action of the orthosymplectic Lie supergroup $\textbf{SpO}(\E)$,
is generated by the Lie superalgebra $\mathfrak{g}'$.
 As an application, we prove that Howe duality holds for the dual pairs $(\normalfont{\textbf{SpO}}(2n|1)\,, \normalfont{\textbf{OSp}}(2k|2l)) \subseteq \normalfont{\textbf{SpO}}(\mathbb{C}^{2k|2l} \otimes \mathbb{C}^{2n|1})$.   
\end{abstract}

\maketitle

\tableofcontents

\section{Introduction}

A dual pair in a group $\S$ is a pair of subgroups $(\G\,, \G')$ with the property that $\G$ and $\G'$ are mutual centralizers in $\S$. Dual pairs
in the symplectic group $\Sp(\W)$ over a local  field
play an important role in the study of representations of classical groups. Roger E. Howe pioneered the classification of (reductive) dual pairs in the symplectic group (or its double cover) and developed the machinery of Theta Correspondence (see \cite{HOWET}) which is one of the central themes in unitary representation theory and automorphic forms. 

Let $\X$ be an $n$-dimensional complex vector space. We equip $\W = \X \oplus \X^{*}$ with the  symplectic form $\langle \cdot\,, \cdot\rangle$ given by
\begin{equation*}
\langle x_{1} + x^{*}_{1}\,, x_{2} + x^{*}_{2}\rangle = x^{*}_{1}(x_{2}) - x^{*}_{2}(x_{1})\,, \qquad (x_{1}\,, x_{2} \in \X\,, x^{*}_{1}\,, x^{*}_{2} \in \X^{*})\,.
\end{equation*}
For every $x \in \X$ (resp. $x^{*} \in \X^{*}$), we denote by $\M_{x}$ (resp. $\D_{x^{*}}$) the associated multiplication operator (resp. derivation operator) on the symmetric algebra $\S(\X)$. The \emph{Weyl algebra} on $\W$ is the subalgebra $\mathscr{W}(\W)$ of $\End(\S(\X))$ generated by the operators $\M_{x}\,, \D_{x^{*}}\,, x \in \X\,, x^{*} \in \X^{*}$. 
Set 
$\mathbb{Z}_{+}:=\{x\in\mathbb{Z}\,:\,x\geq 0\}$ and let  $\left\{\mathscr{W}(\W)_{k}\right\}_{k\in\mathbb Z_+}$
denote the natural filtration of 
 $\mathscr{W}(\W)$ induced by setting $\deg x=\deg x^*=1$. Then 
\begin{equation*}
\mathscr{W}(\W)_{k} \cdot \mathscr{W}(\W)_{l} \subseteq \mathscr{W}(\W)_{k+l}\,, \qquad \left[\mathscr{W}(\W)_{k}\,, \mathscr{W}(\W)_{l}\right] \subseteq \mathscr{W}(\W)_{k+l-2}\,, \qquad (k\,, l \in \mathbb{Z}_{+})\,,
\end{equation*}
where $\left[\cdot\,, \cdot\right]$ is the standard commutator on $\mathscr{W}(\W)$. 
Let $\iota: \W \to \mathscr{W}(\W)$ be the natural inclusion.
Then the subspace $\Omega$ of $\mathscr{W}(\W)_{2}$ defined by \[
\Omega: = \mathrm{Span}_\mathbb C\left\{\iota(w_{1})\iota(w_{2}) + \iota(w_{2})\iota(w_{1})\,, w_{1}\,, w_{2} \in \W\right\},
\]
is a Lie algebra isomorphic to $\mathfrak{sp}(\W) := \mathfrak{sp}(\W\,, \langle\cdot\,, \cdot\rangle)$.  As $\Omega \subseteq \mathscr{W}(\W) \subseteq \End(\S(\X)) \curvearrowright \S(\X)$, we get an action of $\mathfrak{sp}(\W)$ on $\S(\X)$: this is the Fock model of the Weil representation. Moreover, the action of $\mathfrak{sp}(\W)$ on $\mathscr{W}(\W)$ (coming from $\left[\Omega\,, \mathscr{W}(\W)\right] \subseteq \mathscr{W}(\W)$) can be exponentiated to an action of $\Sp(\W)$ on $\mathscr{W}(\W)$ by automorphisms.
In his representation theoretic approach to classical invariant theory, Howe obtained the following result.

\begin{theo}{(\cite[Theorem~7]{HOWE89})}

Let $(\G\,, \G')$ be an irreducible reductive dual pair in $\Sp(\W)$. Then the associative algebra $\mathscr{W}(\W)^{\G}$ is generated  by $\mathfrak{g}'$.

\label{DoubleCommutantSymplecticComplex}

\end{theo}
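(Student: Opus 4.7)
The plan is to pass to the associated graded algebra of $\mathscr{W}(\W)$ with respect to the filtration $\{\mathscr{W}(\W)_k\}$, reduce the problem to the First Fundamental Theorem (FFT) of classical invariant theory on the symmetric algebra $\S(\W)$, and then lift the conclusion back to $\mathscr{W}(\W)^{\G}$ by induction on filtration degree.

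First I would observe that the filtration is $\Sp(\W)$-stable (by the commutator bound $[\Omega,\mathscr{W}(\W)_k]\subseteq \mathscr{W}(\W)_k$), and that the symbol map identifies $\gr\mathscr{W}(\W)$ with $\S(\W)$ as a graded commutative algebra carrying the natural $\Sp(\W)$-action. In particular, the image of $\mathfrak{g}'\subseteq\mathscr{W}(\W)_2$ in $\gr_2\mathscr{W}(\W)=\S^2(\W)$ is a well-defined subspace, which I will denote $\sigma(\mathfrak{g}')$. Because $\G$ is reductive and acts locally finitely and semisimply on $\mathscr{W}(\W)$, the functor of $\G$-invariants is exact and commutes with passage to the associated graded, yielding a canonical isomorphism $\gr(\mathscr{W}(\W)^{\G})\cong\S(\W)^{\G}$.

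Next I would invoke the FFT for the irreducible reductive dual pair $(\G,\G')$ in Howe's classification. In each complex case (type II: $(\GL_n,\GL_m)$; type I: $(\O_n,\Sp_{2m})$), the algebra $\S(\W)^{\G}$ is generated as a commutative algebra by a space of quadratic polynomials that coincides precisely with $\sigma(\mathfrak{g}')$. With this in hand, the theorem follows by induction on the filtration degree: letting $\mathcal{A}\subseteq\mathscr{W}(\W)^{\G}$ denote the subalgebra generated by $\mathfrak{g}'$, any $X\in\mathscr{W}(\W)^{\G}\cap\mathscr{W}(\W)_k$ has symbol in $\S^k(\W)^{\G}$ which can be written as a polynomial in elements of $\sigma(\mathfrak{g}')$; lifting this polynomial to a product of the corresponding elements of $\mathfrak{g}'$ in $\mathcal{A}$ produces an element whose difference with $X$ lies in $\mathscr{W}(\W)^{\G}\cap\mathscr{W}(\W)_{k-1}$, to which the induction hypothesis applies.

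The main obstacle is the identification of $\sigma(\mathfrak{g}')$ with the FFT generators of $\S(\W)^{\G}$. The fact that $\S(\W)^{\G}$ is generated in degree two is classical, but matching these explicit Weyl-type generators with the image of the centralizer $\mathfrak{g}'$ requires a case-by-case verification against Howe's classification of irreducible reductive dual pairs in $\Sp(\W)$. Once this dictionary between the FFT and the centralizer is in place, the remaining induction on filtration degree is essentially formal, and the inclusion $\mathcal{A}\subseteq\mathscr{W}(\W)^{\G}$ coming from the duality $[\mathfrak{g}',\G]=0$ ensures that the constructed lift stays inside $\mathscr{W}(\W)^{\G}$.
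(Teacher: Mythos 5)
Your proposal is correct and follows essentially the same route as the paper: although the paper only cites Howe for this particular statement, its own proof of the superized analogue (the Double Commutant Theorem in Section 8) proceeds exactly as you describe, via the symbol map identifying $\gr$ of the Weyl(-Clifford) algebra with the (super)symmetric algebra, the First Fundamental Theorem giving generation of the $\mathscr{G}$-invariants in degree two (Sergeev's results in the super case), identification of the quadratic invariants with $\mathfrak{g}'$, and lifting through the filtration. No gaps.
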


The interest in Theorem~\ref{DoubleCommutantSymplecticComplex} lies in its connection with the Theta Correspondence for complex reductive groups (See Remark~\ref{rema:theta}).
Theorem \ref{DoubleCommutantSymplecticComplex} has an analogue in the real case as well. Let $\left\{x_{1}\,, \ldots\,, x_{n}\right\}$ be a basis of $\X$ and let $\{x^{*}_{1}\,, \ldots\,, x^{*}_{n}\}$ be the dual basis of $\X^*$. The map $\tau: \mathscr{W}(\W) \to \mathscr{W}(\W)$ given by
\begin{equation*}
\tau(x_{i}) = x^{*}_{i}\,, \quad \tau(x^{*}_{i}) = x_{i}\,, \quad \tau(\lambda x) = \bar{\lambda}\tau(x)\,, \quad \tau(xy) = \tau(y)\tau(x)\,, \quad (1 \leq i \leq n\,, x\,, y \in \mathscr{W}(\W)\,, \lambda \in \mathbb{C})
\end{equation*}
is a conjugate-linear involution on $\mathscr{W}(\W)$. The $\pm 1$-eigenspaces of $\tau$ (over $\mathbb R$) result in a direct sum decomposition  
\[\mathscr{W}(\W) = \mathscr{W}(\W)^{+} \oplus \mathscr{W}(\W)^{-}.\]
Let $\Omega_{\mathbb{R}} := \Omega \cap \mathscr{W}(\W)^{-}$ and $\W_{\mathbb{R}} := \W \cap \mathscr{W}(\W)^{-}.$ By \cite[Section~3]{HOWEPHYSICS}, we have $\ad(\Omega_{\mathbb{R}})(\W_{\mathbb{R}}) \subseteq \W_{\mathbb{R}}$ and the elements $\exp(\ad(u))$ for $u \in \Omega_{\mathbb{R}}$, generate the real symplectic group $\Sp(\W_{\mathbb{R}}) \subseteq \Sp(\W)$.

\begin{theo}{(\cite[Theorem~4.1]{HOWEPHYSICS})}

Let $(\G\,, \G')$ be an irreducible reductive dual pair in $\Sp(\W_{\mathbb{R}})$. Then the associative algebra $\mathscr{W}(\W)^{\G}$ is generated  by $\mathfrak{g}'$.

\label{DoubleCommutantSymplecticReal}

\end{theo}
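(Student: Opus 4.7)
The plan is to deduce Theorem~\ref{DoubleCommutantSymplecticReal} from its complex counterpart Theorem~\ref{DoubleCommutantSymplecticComplex} by complexification. Let $(\G\,, \G')$ be an irreducible reductive dual pair in $\Sp(\W_{\mathbb{R}})$, and denote by $\G_{\mathbb{C}}\,, \G'_{\mathbb{C}}$ the Zariski closures of $\G\,, \G'$ inside the complex algebraic group $\Sp(\W)$. Because $\G$ and $\G'$ are real reductive subgroups of $\Sp(\W)$, these closures are complex reductive algebraic subgroups whose Lie algebras are the complexifications $\mathfrak{g}_{\mathbb{C}} = \mathfrak{g} \otimes_{\mathbb{R}} \mathbb{C}$ and $\mathfrak{g}'_{\mathbb{C}} = \mathfrak{g}' \otimes_{\mathbb{R}} \mathbb{C}$. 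A case-by-case inspection along Howe's classification of real reductive dual pairs confirms that $(\G_{\mathbb{C}}\,, \G'_{\mathbb{C}})$ is again an irreducible reductive dual pair, now inside $\Sp(\W)$.

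The second step is to establish the equality of invariants $\mathscr{W}(\W)^{\G} = \mathscr{W}(\W)^{\G_{\mathbb{C}}}$. The filtration $\{\mathscr{W}(\W)_{k}\}_{k \in \mathbb{Z}_{+}}$ is preserved by the action of $\Sp(\W)$, and each $\mathscr{W}(\W)_{k}$ is a finite-dimensional rational representation of $\Sp(\W)$. On such a module $V$ and for $v \in V$, the stabilizer of $v$ under $\G_{\mathbb{C}}$ is Zariski closed; if it contains the Zariski-dense subgroup $\G$, it must equal $\G_{\mathbb{C}}$. Taking the union over all $k$ yields the sought equality of invariants.

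Theorem~\ref{DoubleCommutantSymplecticComplex} applied to $(\G_{\mathbb{C}}\,, \G'_{\mathbb{C}})$ then gives that $\mathscr{W}(\W)^{\G_{\mathbb{C}}}$ is generated, as an associative $\mathbb{C}$-algebra, by $\mathfrak{g}'_{\mathbb{C}}$. Since $\mathfrak{g}'_{\mathbb{C}} = \mathbb{C} \cdot \mathfrak{g}'$ inside $\mathscr{W}(\W)$, the associative $\mathbb{C}$-subalgebras generated by $\mathfrak{g}'$ and by $\mathfrak{g}'_{\mathbb{C}}$ coincide, and Theorem~\ref{DoubleCommutantSymplecticReal} follows. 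The main obstacle I anticipate is the first step: confirming that the Zariski closure of a real reductive dual pair still forms an \emph{irreducible} reductive dual pair in $\Sp(\W)$. This requires going through Howe's classification pair by pair and paying attention to disconnected components, which is where the real and complex classifications can subtly diverge.
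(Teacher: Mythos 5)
The paper does not actually prove this statement (it is quoted from \cite[Theorem~4.1]{HOWEPHYSICS}), so I am judging your argument on its own terms and against the complexification strategy the paper itself employs for the superized real analogue (Remark~\ref{RemarkComplexification} and the proof of Theorem~\ref{DoubleCommutantTheoremR}). Your steps 2 and 3 are fine: $\G$ is Zariski dense in $\G_{\mathbb{C}}$, each $\mathscr{W}(\W)_{k}$ is a finite-dimensional rational $\Sp(\W)$-module, so vector stabilizers are Zariski closed and $\mathscr{W}(\W)^{\G} = \mathscr{W}(\W)^{\G_{\mathbb{C}}}$; and since $\mathscr{W}(\W)$ is a $\mathbb{C}$-algebra, $\mathfrak{g}'$ and $\mathfrak{g}'_{\mathbb{C}}$ generate the same subalgebra. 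This is exactly the mechanism of Remark~\ref{RemarkComplexification}.

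The genuine gap is your step 1: it is \emph{not} true that the complexification of every irreducible reductive dual pair in $\Sp(\W_{\mathbb{R}})$ is again an irreducible dual pair in $\Sp(\W)$, and the case-by-case inspection you invoke does not confirm it. The failure occurs precisely for the real pairs built over $\mathbb{D}=\mathbb{C}$ with the \emph{trivial} involution, i.e. $(\GL(m,\mathbb{C}),\GL(n,\mathbb{C}))$ and $(\O(m,\mathbb{C}),\Sp(2n,\mathbb{C}))$ regarded as real pairs. For a complex group $\H$ viewed as a real group, the Zariski closure inside $\Sp(\W)$ is isomorphic to $\H\times\H$ (the real form sits via $h\mapsto(h,\bar h)$), and correspondingly $\W=\W_{\mathbb{R}}\otimes_{\mathbb{R}}\mathbb{C}$ splits into two nondegenerate, orthogonal, $\G_{\mathbb{C}}\oplus\G'_{\mathbb{C}}$-invariant summands. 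Thus $(\G_{\mathbb{C}},\G'_{\mathbb{C}})$ is a \emph{direct sum} of two irreducible pairs, and Theorem~\ref{DoubleCommutantSymplecticComplex} does not apply to it as stated. (By contrast, the pairs over $\mathbb{R}$, over $\mathbb{H}$, and the unitary pairs $(\U(p,q),\U(r,s))$ are unproblematic; note that the last family complexifies to the \emph{type II} irreducible pair $(\GL(p+q,\mathbb{C}),\GL(r+s,\mathbb{C}))$, so "irreducible" must be understood to include type II.) The repair is standard and is exactly what the paper does for the $\mathbb{D}=\mathbb{C}$ cases in the proof of Theorem~\ref{DoubleCommutantTheoremR}: for an orthogonal invariant splitting $\W=\W_{1}\oplus\W_{2}$ one has $\mathscr{W}(\W)\cong\mathscr{W}(\W_{1})\otimes\mathscr{W}(\W_{2})$, and by complete reducibility of the actions the invariants factor, $\mathscr{W}(\W)^{\G_{\mathbb{C}}}\cong\mathscr{W}(\W_{1})^{\G_{1,\mathbb{C}}}\otimes\mathscr{W}(\W_{2})^{\G_{2,\mathbb{C}}}$; applying Theorem~\ref{DoubleCommutantSymplecticComplex} to each summand and noting that the two copies of $\mathfrak{g}'$ together span $\mathfrak{g}'_{\mathbb{C}}$ recovers the desired conclusion. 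With this amendment your complexification argument goes through; without it, the proof as written is incomplete for the complex-viewed-as-real families.
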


The two theorems above also extend to the orthogonal group. Let $\E := \Y \oplus \Y^{*}$ be a complex vector space endowed with a bilinear form $\B$ given by
\begin{equation*}
\B(y_{1} + y^{*}_{1}\,, y_{2}+y^{*}_{2}) = y^{*}_{1}(y_{2}) + y^{*}_{2}(y_{1})\,, \qquad (y_{1}\,, y_{2} \in \Y\,, y^{*}_{1}\,, y^{*}_{2} \in \Y^{*})\,.
\end{equation*}
We denote by $\Cliff(\E\,, \B)$ the corresponding Clifford algebra. 

\begin{theo}{(\cite[Section~4]{ALLANGANGCLEMENT})}

Let $(\G\,, \G')$ be an irreducible reductive dual pair in $\O(\E\,, \B)$. Then the associative algebra $\Cliff(\E\,, \B)^{\G}$ is generated by $\mathfrak{g}'$.

\label{DoubleCommutantOrthogonalComplex}

\end{theo}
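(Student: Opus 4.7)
The plan is to follow the filtered/graded roadmap that underlies Theorems~\ref{DoubleCommutantSymplecticComplex} and \ref{DoubleCommutantSymplecticReal}. The Clifford algebra $\Cliff(\E,\B)$ carries a natural filtration $\{\Cliff(\E,\B)_{k}\}_{k\geq 0}$ obtained by declaring $\deg(v)=1$ for $v\in\E$; this filtration is preserved by the $\O(\E,\B)$-action and its associated graded is the exterior algebra $\Lambda(\E)$. The subspace of $\Cliff(\E,\B)_{2}$ spanned by the commutators $\iota(e_{1})\iota(e_{2})-\iota(e_{2})\iota(e_{1})$ with $e_{1},e_{2}\in\E$ is a Lie algebra isomorphic to $\mathfrak{o}(\E,\B)$, and for any reductive dual pair $(\G,\G')$ in $\O(\E,\B)$ the subalgebra $\mathfrak{g}'$ sits inside this copy of $\mathfrak{o}(\E,\B)$. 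The goal is then to establish (i) that the principal symbols of $\mathfrak{g}'$ generate the algebra $\Lambda(\E)^{\G}$, and (ii) deduce the Clifford statement from (i) by induction on the filtration degree.

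First I would separate the irreducible reductive dual pairs in $\O(\E,\B)$ into two families: Type~II pairs $(\GL(\U),\GL(\V))$ with $\E\simeq(\U\otimes\V)\oplus(\U^{*}\otimes\V^{*})$, and Type~I pairs $(\O(\U),\Sp(\V))$ with $\E\simeq\U\otimes\V$ (where $\U$ carries a symmetric and $\V$ a symplectic form). In each case one pins down $\mathfrak{g}'$ explicitly inside $\Cliff(\E,\B)_{2}$ and checks that its principal symbols in $\Lambda^{2}(\E)$ are the expected quadratic contractions arising from the bilinear form on $\U$ (together with the symplectic form on $\V$ in the Type~I case).

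The main input is then the statement that $\Lambda(\E)^{\G}$ is generated as an associative algebra by the image of $\mathfrak{g}'$ under the symbol map in $\Lambda^{2}(\E)$. For Type~II pairs this is the exterior-algebra first fundamental theorem for $\GL$, which one can deduce from Weyl's classical FFT via polarization applied to the $\GL(\U)\times\GL(\V)$-module $\Lambda(\U\otimes\V)\otimes\Lambda(\U^{*}\otimes\V^{*})$. For Type~I pairs one needs the skew-symmetric FFT for the orthogonal group on the exterior algebra of $\U\otimes\V$; this can be obtained from the classical symmetric FFT for $\Sp$ via the parity-swap produced by the symplectic form on $\V$, i.e.\ the standard transfer between symmetric $\Sp$-invariants and exterior $\O$-invariants.

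Once step~(i) is in hand, the filtered lift is routine: given $w\in\Cliff(\E,\B)^{\G}\cap\Cliff(\E,\B)_{k}$, its principal symbol in $\Lambda^{k}(\E)^{\G}$ can be written as a polynomial $P$ in the symbols of $\mathfrak{g}'$; lifting $P$ coefficient-wise to a polynomial $\widetilde{P}$ in $\mathfrak{g}'\subseteq\Cliff(\E,\B)$, the difference $w-\widetilde{P}$ lies in $\Cliff(\E,\B)^{\G}\cap\Cliff(\E,\B)_{k-2}$, and a downward induction on $k$ closes the argument. The main obstacle is step~(i): the exterior-algebra FFT is markedly more delicate than its symmetric counterpart because the natural generators are not algebraically independent (Pl\"ucker-type syzygies appear) and because one must rule out ``hidden'' orthogonal invariants in low-rank degenerations that would escape the quadratic description. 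I would handle this either by Sergeev--Veselov-type tensor identities or by transferring the problem to the symplectic FFT through the spinor identification $\Cliff(\E,\B)\simeq\End(\Lambda(\Y))$, converting $\O$-invariants in $\Cliff(\E,\B)$ into invariants in an endomorphism algebra where the symmetric-algebra methods of Theorem~\ref{DoubleCommutantSymplecticComplex} are directly applicable.
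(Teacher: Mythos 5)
Your overall strategy is the right one, and it is essentially the strategy this paper uses: the statement you are proving is only cited here (from \cite{ALLANGANGCLEMENT}), but the paper's proof of its superized generalization (Theorem~\ref{thm-thm1.8-hadi}) is exactly your plan — filter the (Weyl--)Clifford algebra, identify the associated graded with the (super)symmetric algebra via the symbol map, quote a first fundamental theorem (Sergeev \cite{SERGEEV}) saying that the graded invariants are generated in degree two, observe that the degree-two invariants are precisely the symbols of $\mathfrak g'$ (since $\Lambda^{2}(\E)^{\G}\cong\mathfrak{o}(\E,\B)^{\G}=\mathfrak g'$ for a dual pair), and lift through the filtration by downward induction. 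In that lifting step you should say explicitly that $\G$ acts semisimply on $\Cliff(\E,\B)$, so that every invariant symbol is the symbol of an \emph{invariant} element; this is what the paper's projection argument (cf.\ Lemma~\ref{ProjectionXG}) supplies, and without it the induction does not close. Your worry about Pl\"ucker-type syzygies is beside the point: syzygies concern the second fundamental theorem, whereas you only need generation.

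There is, however, a concrete error in your case analysis. The irreducible Type~I dual pairs in the complex orthogonal group are $(\O(\U),\O(\V))\subseteq\O(\U\otimes\V)$ with both forms symmetric, and $(\Sp(\U),\Sp(\V))\subseteq\O(\U\otimes\V)$ with both forms skew-symmetric; the pair $(\O(\U),\Sp(\V))$ acting on $\U\otimes\V$ with $\U$ symmetric and $\V$ symplectic preserves a \emph{skew-symmetric} form, so it is a dual pair in $\Sp(\U\otimes\V)$, not in $\O(\E,\B)$. As written, your Type~I analysis treats a pair that does not occur in $\O(\E,\B)$ and omits the $(\Sp,\Sp)$ family entirely. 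The invariant-theoretic input must therefore be restated: you need that $\Lambda(\U\otimes\V)^{\O(\U)}$ is generated by the quadratics corresponding to $\Lambda^{2}(\V)\cong\mathfrak{o}(\V)$, and that $\Lambda(\U\otimes\V)^{\Sp(\U)}$ is generated by the quadratics corresponding to $\S^{2}(\V)\cong\mathfrak{sp}(\V)$. Both are available — they are the skew (odd) first fundamental theorems, obtainable from \cite{HOWE89} or as special cases of Sergeev's super FFT \cite{SERGEEV} — so after this correction your argument goes through and coincides with the paper's method; but as stated the Type~I step would be proving the wrong FFT for a pair that is not in the orthogonal group.
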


An analogous statement holds in the real case. As explained in \cite{ALLANGANGCLEMENT}, Theorem \ref{DoubleCommutantOrthogonalComplex} implies directly the Howe duality for the dual pairs in the Pin group $\Pin(\E\,, \B)$ associated to the orthogonal group $\O(\E\,, \B)$.

\begin{rema}
Consider  an irreducible reductive dual pair 
$(\G\,, \G')$ 
in $\Sp(\W)$. By looking at the classification of irreducible dual pairs in $\Sp(\W)$ we can assume, without loss of generality, that $\G \subseteq \GL(\X)$. Moreover, the action of $\G$ on $\S(\X)$ is semisimple. Let $\G_{\irr}$ be the set of irreducible finite dimensional representations of $\G$ (up to equivalence). Then
\begin{equation*}
\S(\X) = \bigoplus\limits_{(\lambda\,, \V_{\lambda}) \in \G^{\omega}_{\irr}} \V(\lambda) 
\end{equation*}
where $\G^{\omega}_{\irr} := \left\{(\lambda\,, \V_{\lambda}) \in \G_{\irr}\,, \Hom_{\G}(\V_{\lambda}\,, \S(\X)) \neq 0\right\}$ and $\V(\lambda)$ is the $\G$-isotypic component corresponding to $(\lambda\,, \V_{\lambda}) \in \G_{\irr}$. 
As explained in~\cite[Theorem 8]{HOWE89}
the algebra $\mathscr{W}(\W)^{\G}$ naturally acts on $\V(\lambda)$ for every $\lambda$, and  the joint action of $\G$ and $\mathscr{W}(\W)^{\G}$ on every $\V(\lambda)$ is irreducible, i.e.,
\begin{equation*}
\V(\lambda) = \lambda \otimes \theta(\lambda)\,,
\end{equation*}
where $\theta(\lambda)$ is an irreducible $\mathscr{W}(\W)^{\G}$-module. Moreover, if $\lambda \nsim \lambda'$, the corresponding $\mathscr{W}(\W)^{\G}$-modules $\theta(\lambda)$ and $\theta(\lambda')$ are not equivalent. From Theorem \ref{DoubleCommutantSymplecticComplex} it follows that the restriction of $\theta(\lambda)$ to $\mathfrak{g}'$ is irreducible and in particular, the correspondence
\begin{equation}
\G^{\omega}_{\irr} \ni \lambda \to \theta(\lambda) \in \mathfrak{g}'^{\omega}_{\irr}
\label{HoweDuality}
\end{equation}
is one-to-one, where $\mathfrak{g}'^{\omega}_{\irr}$ stands for the set (up to equivalence) of irreducible $\mathfrak{g}'$-modules $(\V_{\lambda'}\,, \lambda')$ such that $\Hom_{\mathfrak{g}'}(\V_{\lambda'}\,, \S(\X)) \neq 0$. The bijection given in Equation \eqref{HoweDuality} is an instance of the well known Theta Correspondence (also called Howe Duality).

\label{rema:theta}
\end{rema}

\section{Main results}

\label{sec:mainres}
Theorems \ref{DoubleCommutantSymplecticComplex}, \ref{DoubleCommutantSymplecticReal} and \ref{DoubleCommutantOrthogonalComplex} are about irreducible reductive dual pairs in $\Sp(\W)$ and $\O(\E\,, \B)$. The complete classification of such pairs is due to Howe 
\cite{HOWEALG} (for further references, see  \cite{HOWEPHYSICS}, 
\cite{HOWE89}, 
\cite{MVW}, \cite{SCHMIDT} and \cite{RUBENTHALER}).
In this paper, we classify reductive dual pairs in an orthosymplectic Lie superalgebra and  extend  Theorems \ref{DoubleCommutantSymplecticComplex}, \ref{DoubleCommutantSymplecticReal} and \ref{DoubleCommutantOrthogonalComplex} to the superized setting.  
We remark that indeed in  \cite{HOWE89}, Howe suggested a way to approach both symplectic and orthogonal cases simultaneously by using $\mathbb{Z}_{2}$-graded structures: Lie superalgebras and/or Lie supergroups. 
However, 
in the dual pairs considered in \cite{HOWE89}, one factor is always purely even.  
Nishiyama also introduced the notion of dual pairs in the Lie superalgebra $\mathfrak{spo}(\E\,, \B)$ in \cite{NISHIYAMA}, but he did not address the classification problem.

From now on we assume that  $\mathbb{Z}_{2}: = \left\{\bar{0}, \bar{1}\right\}$ denotes the group with two elements and $\mathbb{K} := \mathbb{R}$ or $\mathbb{C}$.
Let
$\E = \E_{\bar{0}} \oplus \E_{\bar{1}}$  be a $\mathbb{Z}_{2}$-graded vector space over $\mathbb{K}$.
We assume that $\E$ is equipped with a  non-degenerate, even, $(-1)$-supersymmetric bilinear form 
$\B:\E\times \E\to\mathbb K$.  Here 
being
even means
$\B = \B_{\bar 0} \oplus^{\perp} \B_{\bar1}$ where $\B_{i} = \B_{|_{\E_{{i}} \times \E_{{i}}}}$ for $i = \bar 0\,, \bar 1$ (see also 
Definition~\ref{DefSuperhermitian}), and being
\emph{$\epsilon$-supersymmetric} for $\epsilon\in\{\pm 1\}$ means
\[
\B(u,v)=\epsilon(-1)^{|u|\cdot|v|}\B(v,u),\qquad(u,v\in \E)
\]
where $|u|,|v|\in\mathbb Z_2$ denote the parities of $u$ and $v$. 
Let $\mathfrak{spo}(\E\,, \B)$ denote the  subalgebra of $\mathfrak{gl}(\E)$ defined by
\begin{equation*}
\mathfrak{spo}(\E\,, \B) := \left\{\X \in \mathfrak{gl}(\E)\,, \B(\X(u)\,, v) + (-1)^{\left|\X\right|\cdot\left|u\right|}\B(u\,, \X(v)) = 0\,, u\,, v \in \E\right\}\,.
\end{equation*}

Following the ideas of Howe in the symplectic case,
in Section \ref{SectionPreliminaries} we introduce the notion of reductive and irreducible dual pairs in $\mathfrak{spo}(\E\,, \B)$ and prove  that every reductive dual pair in $\mathfrak{spo}(\E\,, \B)$ is a direct sum of irreducible dual pairs (see Proposition \ref{Proposition1}). We also show  that irreducible reductive dual pairs $(\mathfrak{g}\,, \mathfrak{g}')$ in $\mathfrak{spo}(\E\,, \B)$ can be divided into two different families: Type I (i.e., the joint action of $\mathfrak{g}$ and $\mathfrak{g}'$ on $\E$ is irreducible) and Type II (i.e., the space $\E$ can be decomposed as $\E = \E_{1} \oplus \E_{2}$, where both $\E_{1}$ and $\E_{2}$ are $\mathbb{Z}_{2}$-graded, $\B$-isotropic and irreducible under the joint action of  $\mathfrak{g}$ and $\mathfrak{g}'$).

By definition, a division superalgebra $\mathbb{D}$ over a field $\mathbb{K} $ is  an associative $\mathbb{K}$-superalgebra such that every homogeneous element of $\mathbb{D}$ is invertible. The even part of the center of $\mathbb{D}$ is always a field $\mathbb{L}$ containing $\mathbb{K}$. For all the division superalgebras that appear in this paper we have $\mathbb{K},\mathbb{L}\in\{\mathbb{R},\mathbb{C}\}$. Up to isomorphism, there exist 2 division superalgebras for which $\mathbb{L}=\mathbb{C}$ and 8 division superalgebras for which $\mathbb{L}=\mathbb{R}$  (for more details see Section \ref{ClassificationDivisionSuperalgebra}).

For a left $\mathbb{D}$-module $\U$, we denote by $\mathfrak{gl}_{\mathbb{D}}(\U)$ the $\mathbb{K}$-Lie superalgebra  of $\mathbb{D}$-linear maps  $\X:\U\to\U$, i.e. maps satisfying $\X(\D \cdot u) = (-1)^{\left|\D\right| \cdot \left|\X\right|} \D \cdot \X(u)$ for $\D \in \mathbb{D}\,, u \in \U$. Similarly, for a right $\mathbb{D}$-module $\W$, we denote by $\mathfrak{gl}_{\mathbb{D}}(\W)$ the $\mathbb K$-Lie superalgebra of (right) $\mathbb{D}$-linear maps  $\X:\W\to\W$, i.e. maps satisfying $\X(w \cdot \D) = \X(w) \cdot \D$ for $\D \in \mathbb{D}\,, w \in \W$.
\begin{defn}
By a superinvolution of a $\mathbb K$-algebra 
 $\mathscr A$  we mean an even  $\mathbb{K}$-linear map $\iota:\mathscr A\to\mathscr A$ 
satisfying
\begin{equation}
\iota^{2}(\X) = \X\,, \qquad \iota(\X\Y) = (-1)^{\left|\X\right|\cdot\left|\Y\right|} \iota(\Y)\iota(\X)\,, \qquad (\X\,, \Y \in \mathscr A)\,.
\label{IotaProperties}
\end{equation}
\end{defn}
\begin{defn}
Let $\mathbb{D}$ be a division superalgebra over $\mathbb K$ that is equipped with an involution $\iota$.
Let $\W$ be a right $\mathbb{D}$-module and let $\gamma: \W \times \W \to \mathbb{D}$ be a $\mathbb{K}$-bilinear map.
\begin{enumerate}
\item We call $\gamma$  even (resp., odd) if $\gamma(\W_{\alpha}\,, \W_{\beta}) \subseteq \mathbb{D}_{\alpha+\beta}$ (resp. $\gamma(\W_{\alpha}\,, \W_{\beta}) \subseteq \mathbb{D}_{\alpha+\beta+\bar{1}})$ for $\alpha,\beta\in\{\bar 0,\bar 1\}$. We denote the parity of 
$\gamma$ by $|\gamma|$.
\item Assume that $\gamma$ is homogeneous (i.e., either even or odd). Then $\gamma$ is called $(\iota\,, \epsilon)$-superhermitian for $\epsilon=\pm 1$ if it satisfies the following constraints:
\begin{itemize}
\item $\gamma(w\,,v)=\epsilon(-1)^{\left|v\right|\cdot \left|w\right|}\iota(\gamma(v\,, w))$for homogeneous $v,w\in \W$,
\item $\gamma(v\D_{1}\,, w\D_{2})=(-1)^{\left|\D_{1}\right|\cdot \left|v\right| + \left|\D_{1}\right|\cdot \left|\gamma\right|}\iota(\D_{1})\gamma(v\,, w)\D_{2}$ for homogeneous $v,w\in \W$ and $\D_1,\D_2\in\mathbb{D}$.
\end{itemize}
\end{enumerate}
For a left $\mathbb{D}$-module $\U$, we define an $(\iota, \epsilon)$-superhermiatian form $\gamma:\U\times \U\to\mathbb D$ by similar conditions, except that the second  condition is modified as follows:
\begin{equation*}
\gamma(\D_{1}v\,, \D_{2}w)=(-1)^{\left|\D_{2}\right|\cdot \left|w\right| + \left|\D_{1}\right| \cdot \left|\gamma\right|}\D_{1}\gamma(v\,, w)\iota(\D_{2})\qquad \text{for homogeneous }v\,, w \in \U, \D_{1}, \D_{2} \in \mathbb{D}.
\end{equation*}

\label{DefSuperhermitian}

\end{defn}

Henceforth for a division superalgebra $\mathbb D:=\mathbb D_{\bar 0}\oplus\mathbb D_{\bar 1}$ we define
$
\delta:\mathbb D\to\mathbb D$
by $\delta(\D):=(-1)^{|\D|}\D$.
The complete classification of irreducible dual pairs in $\mathfrak{spo}(\E\,,\B)$  is obtained in Section \ref{SectionClassification}, where we establish the following theorems.

\begin{theo}{(Classification of Irreducible  Dual Pairs - Type I)}
Let $(\mathfrak{g}\,, \mathfrak{g}')$ be an irreducible dual pair in $\mathfrak{spo}(\E\,, \B)$ of Type I. Then there exist
\begin{itemize}
\item $\mathbb{D}$: a division superalgebra over $\mathbb{K}$\,,
\item $\iota$: a superinvolution on $\mathbb{D}$\,,
\item $\U$: left $\mathbb{D}$-module endowed with a homogeneous $(\iota\,, \epsilon)$-superhermitian form $\gamma$ (with $\epsilon \in \left\{\pm 1\right\}$)\,,
\item $\W$: right $\mathbb{D}$-module endowed with a $(\iota \circ \delta\,, -\epsilon)$-superhermitian form $\gamma'$ with $\left|\gamma\right| = \left|\gamma'\right|$
\end{itemize}
such that the following hold:
\begin{itemize}

\item 
$
\E \cong \W \otimes_{\mathbb{D}} \U
$ and 
\[
\B(w_{1} \otimes u_{1}\,, w_{2} \otimes u_{2}) = (-1)^{\left|u_{1}\right|\cdot\left|w_{2}\right| +\left|w_{1}\right|\cdot\left|w_{2}\right|}\Re\big(\gamma'(w_{2}\,, w_{1})\gamma(u_1\,,u_2)\big)
\,,
\qquad\text{($u_{1}\,, u_{2} \in \U\,, w_{1}\,, w_{2} \in \W$).}
\]
\item $(\mathfrak{g}\,, \mathfrak{g}') = (\mathfrak{g}(\U\,, \gamma)\,, \mathfrak{g}(\W\,, \gamma'))$, where 
\begin{equation*}
\mathfrak{g}(\U\,, \gamma): = \left\{\X \in \mathfrak{gl}_{\mathbb{D}}(\U)\,, \gamma(\X(u)\,, v) + (-1)^{\left|\X\right|\cdot\left|u\right|}\gamma(u\,, \X(v)) = 0\,, u\,, v \in \U\right\}\,,
\end{equation*}
and $\mathfrak{g}(\W,\gamma')$ is defined analogously.
\end{itemize}
The explicit classification of the Type I dual pairs in $\mathfrak{spo}(\E,\B)$ is given in 
Table \ref{Table:I} and Table \ref{Table:IC}.
\label{TheoremeIntroduction1}
\end{theo}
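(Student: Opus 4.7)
The plan is to mimic Howe's classical construction in the $\mathbb{Z}_2$-graded setting, with super-linear algebra replacing ordinary linear algebra. First I would set $\mathbb{D} := \End_{\mathfrak{g},\mathfrak{g}'}(\E)$, the super-commutant of the joint action on $\E$. The Type I hypothesis says this joint action is irreducible, so a super version of Schur's lemma forces every homogeneous element of $\mathbb{D}$ to be invertible; hence $\mathbb{D}$ is a division superalgebra over $\mathbb{K}$, whose even center is the field $\mathbb{L}\in\{\mathbb{R},\mathbb{C}\}$ appearing in the classification list recalled earlier.

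Next I would use a super version of the Jacobson density / double centralizer theorem to promote this to a tensor decomposition $\E \cong \W\otimes_{\mathbb{D}}\U$ as a $(\mathfrak{g},\mathfrak{g}')$-bimodule, where $\U$ is a left $\mathbb{D}$-module on which $\mathfrak{g}$ acts $\mathbb{D}$-linearly, and $\W$ is a right $\mathbb{D}$-module on which $\mathfrak{g}'$ acts $\mathbb{D}$-linearly. Joint irreducibility is exactly what makes this decomposition rigid without having to first split $\E$ into isotypic components.

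The third step transfers the form. Requiring $\B$ to be $(\mathfrak{g},\mathfrak{g}')$-invariant forces it to factor through sesquilinear pairings on each tensorand, but to formulate this intrinsically one must introduce a superinvolution $\iota:\mathbb{D}\to\mathbb{D}$ that encodes the compatibility $\B(w\D\otimes u,\cdot)=\B(w\otimes \D u,\cdot)$ in the super-signed sense. With $\iota$ in place, the factorization takes the form
\[
\B(w_1\otimes u_1,w_2\otimes u_2) \;=\; (-1)^{|u_1|\cdot|w_2|+|w_1|\cdot|w_2|}\,\Re\bigl(\gamma'(w_2,w_1)\gamma(u_1,u_2)\bigr)
\]
for uniquely determined $\mathbb{K}$-bilinear forms $\gamma$ on $\U$ and $\gamma'$ on $\W$. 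The evenness and $(-1)$-supersymmetry of $\B$ then translate into $\gamma$ being $(\iota,\epsilon)$-superhermitian of some parity $|\gamma|$, while $\gamma'$ becomes $(\iota\circ\delta,-\epsilon)$-superhermitian of the same parity; the twist by $\delta$ on the $\W$-side arises because moving an element of $\mathbb{D}$ across the tensor product picks up an extra Koszul sign relative to the ``$\U$-convention.'' Finally, the super double centralizer identifies $\End_{\mathfrak{g}'}(\E)\cong\mathfrak{gl}_{\mathbb{D}}(\U)$ and $\End_{\mathfrak{g}}(\E)\cong\mathfrak{gl}_{\mathbb{D}}(\W)$, and the constraint of lying in $\mathfrak{spo}(\E,\B)$ becomes, after chasing the tensor identification, exactly preservation of $\gamma$ (respectively $\gamma'$); this yields $\mathfrak{g}=\mathfrak{g}(\U,\gamma)$ and $\mathfrak{g}'=\mathfrak{g}(\W,\gamma')$. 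Tables~\ref{Table:I} and~\ref{Table:IC} are then produced by enumerating the finite list of admissible triples $(\mathbb{D},\iota,\epsilon)$ and the two possible parities of $\gamma$.

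The main obstacle is the sign bookkeeping: producing the correct pair $(\iota,\epsilon)$ and $(\iota\circ\delta,-\epsilon)$ with \emph{matching} parities $|\gamma|=|\gamma'|$ requires propagating Koszul signs consistently through every identification. A secondary subtlety is that super-Schur's lemma is more delicate than in the purely even case, since an irreducible super-module may or may not remain irreducible after forgetting the grading; one must check that Type I irreducibility is the correct hypothesis to make the commutant genuinely a division superalgebra rather than a slightly larger graded algebra, and this is precisely the point at which the Type I versus Type II dichotomy becomes essential.
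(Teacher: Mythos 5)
Your overall architecture (division superalgebra, tensor factorization, transfer of $\B$ to superhermitian forms, double commutant) is the right template, but the very first step contains a genuine error that derails the rest. You define $\mathbb{D}$ as the supercommutant of the \emph{joint} action of $\mathfrak g\oplus\mathfrak g'$ on $\E$. Super-Schur does make this a division superalgebra, but it is the \emph{wrong} one: it is (essentially) only the super-centre of the division superalgebra you need, and $\E$ does not factor as $\W\otimes_{\mathbb D}\U$ over it. This already fails in the purely even quaternionic case: for $(\mathfrak{sp}(p,q),\mathfrak{so}^*(2n))$ acting on $\E=\mathbb H^{p+q}\otimes_{\mathbb H}\mathbb H^n$, the algebra generated by $\mathfrak g$ is all of $\End_{\mathbb H}(\mathbb H^{p+q})$, so the joint commutant is $\mathbb R$, while the correct $\mathbb D$ is $\mathbb H$; and no decomposition $\E\cong\W\otimes_{\mathbb R}\U$ compatible with the two actions exists ($\dim_{\mathbb R}\E=4(p+q)n$, whereas any real tensor product of a faithful $\mathfrak g$-module with a faithful $\mathfrak g'$-module has dimension at least $16(p+q)n$). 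The same failure occurs for the queer pairs, where the joint supercommutant is the purely even algebra $\mathbb C$ rather than $\mathrm{Cl}_1(\mathbb C)$, so your $\mathbb D$ would never have an odd part and the rows of Table~\ref{Table:I} with $\mathbb D=\mathrm{Cl}_1(\mathbb C)$ would be unreachable. The correct construction (Lemma~\ref{TechnicalLemmaTypeI}) is asymmetric: choose an irreducible $\mathfrak g$-submodule $\U\subseteq\E$, set $\mathbb D:=\End_{\mathfrak g}(\U)$ and $\W:=\Hom_{\mathfrak g}(\U,\E)$, and use the evaluation map $\W\otimes_{\mathbb D}\U\to\E$; Type~I irreducibility then gives the isomorphism, and the statement that $(\End_{\mathbb D}(\U),\End_{\mathbb D}(\W))$ are mutual commutants in $\End_{\mathbb K}(\E)$ is itself a nontrivial computation (Proposition~\ref{PropositionEndUEndW}), not an off-the-shelf double centralizer quote, since $\W$ is not known a priori to be $\mathfrak g'$-irreducible.

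The second gap is in the transfer of the form: asserting that invariance forces $\B$ to ``factor through'' uniquely determined $\mathbb D$-valued forms $\gamma,\gamma'$ is exactly the point that needs proof, and it cannot be done by inspection of the tensor decomposition. The paper's route is to let $\B$ induce a superinvolution $\eta$ of $\End_{\mathbb K}(\E)$, restrict it to $\End_{\mathbb D}(\U)$ and (via Proposition~\ref{PropositionEndUEndW}) to $\End_{\mathbb D}(\W)$, and invoke the correspondence between superinvolutions and superhermitian forms; this is also where Proposition~\ref{PropositionDivisionIota} enters, forcing $\mathbb D_{\bar 1}=0$ or $\mathbb D\cong\mathrm{Cl}_1(\mathbb C)$, and where the twist by $\delta$ on the $\W$-side and the matching of parities $|\gamma|=|\gamma'|$ are established (Proposition~\ref{PropositionNovember16}, a separate argument, not mere Koszul bookkeeping). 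Even then one only knows that $\Re\bigl(\gamma'\otimes\gamma\bigr)$ and $\B$ induce the \emph{same} superinvolution; their equality up to a central scalar $c$ with $\iota(c)=\pm c$ requires a graded Skolem--Noether comparison (Proposition~\ref{prp:Skolem}, Lemma~\ref{lemma:tauB}), after which $\gamma$ is rescaled by $c^{-1}$ to arrange $\epsilon'=-\epsilon$ and the displayed formula for $\B$. Your final identification $(\mathfrak g,\mathfrak g')=(\mathfrak g(\U,\gamma),\mathfrak g(\W,\gamma'))$ is fine in outline once these pieces are in place, since invariance gives the inclusions and maximality of mutual commutants gives equality.
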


\begin{theo}{(Classification of Irreducible Dual Pairs - Type II)}

For an irreducible dual pair $(\mathfrak{g}\,, \mathfrak{g}')$ in $\mathfrak{spo}(\E\,, \B)$ of Type II, there exist 
\begin{itemize}
\item $\mathbb{D}$: a division superalgebra over $\mathbb{K}$\,,
\item $\U$: left $\mathbb{D}$-module\,,
\item $\W$: right $\mathbb{D}$-module
\end{itemize}
such that
$
\E \cong (\W \otimes_{\mathbb{D}} \U) \oplus (\W \otimes_{\mathbb{D}} \U)^{*}
$ where the right hand side is equipped with its canonical $(-1)$-supersymmetric form
and $(\mathfrak{g}\,, \mathfrak{g}') = (\mathfrak{gl}_{\mathbb{D}}(\U)\,, \mathfrak{gl}_{\mathbb{D}}(\W))$.
The explicit classification of Type II dual pairs in $\mathfrak{spo}(\E,\B)$ is given in Table \ref{Table:II} and Table \ref{Table:IIC}.

\label{TheoremeIntroduction2}
\end{theo}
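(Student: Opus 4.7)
The plan is to reduce the Type II case to a super analogue of the classical double-centralizer analysis performed on one of the two Lagrangian summands. First I will exploit the form $\B$ to identify $\E_2$ with the dual of $\E_1$. Since $\E_1$ is $\mathbb{Z}_2$-graded, $\B$-isotropic, and $\B$ is non-degenerate, $\E_2$ must equal $\E_1^{\perp}$, and the restriction of $\B$ provides a perfect pairing $\E_1\times\E_2\to\mathbb K$. This yields an isomorphism $\E\cong \E_1\oplus\E_1^{*}$ under which the subalgebra of $\mathfrak{spo}(\E,\B)$ preserving the decomposition is precisely the image of $\mathfrak{gl}(\E_1)$ embedded via $\X\mapsto(\X,-\X^{\#})$, with $\X^{\#}$ the supertranspose. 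By the Type II hypothesis, both $\mathfrak g$ and $\mathfrak g'$ preserve $\E_1$ and $\E_2$, so the pair sits inside $\mathfrak{gl}(\E_1)$.

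Next I will decompose $\E_1$ under $\mathfrak g$. Reductivity forces $\mathfrak g$ to act semisimply, producing an isotypic decomposition. Since $\mathfrak g'$ commutes with $\mathfrak g$, it preserves each $\mathfrak g$-isotypic component; joint irreducibility on $\E_1$ then collapses the decomposition to a single type, yielding $\E_1\cong\W\otimes\U$ as a $\mathfrak g$-module with $\U$ an irreducible $\mathfrak g$-module. The super version of Schur's lemma, applied to $\U$, gives that $\mathbb D:=\End_{\mathfrak g}(\U)$ is a finite-dimensional division superalgebra over $\mathbb K$; upgrading $\U$ to a left $\mathbb D$-module via $\mathbb D^{\mathrm{op}}$ and setting $\W:=\Hom_{\mathfrak g}(\U,\E_1)$ identifies $\E_1\cong\W\otimes_{\mathbb D}\U$, with $\W$ carrying a natural right $\mathbb D$-module structure.

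I then identify the two members of the pair. The centralizer of $\mathfrak g$ inside $\End(\E_1)$ acts as $\mathfrak{gl}_{\mathbb D}(\W)$ on the $\W$-factor. Because $(\mathfrak g,\mathfrak g')$ are mutual centralizers in $\mathfrak{spo}(\E,\B)$ and the intersection of $\mathfrak{spo}(\E,\B)$ with the stabilizer of the decomposition $\E_1\oplus \E_2$ is precisely $\mathfrak{gl}(\E_1)$, the centralizer of $\mathfrak g$ in $\mathfrak{spo}(\E,\B)$ coincides with its centralizer inside $\mathfrak{gl}(\E_1)$, giving $\mathfrak g'=\mathfrak{gl}_{\mathbb D}(\W)$. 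The symmetric argument gives $\mathfrak g=\mathfrak{gl}_{\mathbb D}(\U)$, and under the identification $\E\cong(\W\otimes_{\mathbb D}\U)\oplus(\W\otimes_{\mathbb D}\U)^{*}$ the form $\B$ transports to the canonical $(-1)$-supersymmetric pairing. The enumeration in Tables \ref{Table:II} and \ref{Table:IIC} then follows by running over the (finitely many) possibilities for $\mathbb D$ from Section \ref{ClassificationDivisionSuperalgebra}.

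The principal technical obstacle lies in the passage through the super Schur's lemma and the subsequent double-centralizer statement. Unlike the purely even case, an irreducible module over a Lie superalgebra may possess both even and odd endomorphisms, so that $\mathbb D$ is genuinely $\mathbb Z_2$-graded; parities must be tracked carefully when converting between left and right $\mathbb D$-module structures, forming the balanced tensor product $\W\otimes_{\mathbb D}\U$, and identifying $\mathfrak{gl}_{\mathbb D}$ of each factor as the required centralizer. Ensuring the sign and parity conventions align so that the embedding $\mathfrak{gl}(\E_1)\hookrightarrow\mathfrak{spo}(\E,\B)$ transports into an equality $\mathfrak g'=\mathfrak{gl}_{\mathbb D}(\W)$, rather than a proper subalgebra, is the most delicate book-keeping step.
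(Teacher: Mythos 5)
Your structural first half coincides with the paper's: the paper likewise applies its Lemma~\ref{TechnicalLemmaTypeI} to the jointly irreducible isotropic summand $\E_1$ (whose existence, and the fact that $\B$ pairs it perfectly with $\E_2$, come from Lemma~\ref{Lemma02} and Proposition~\ref{Proposition12}), obtaining $\E_1\cong\W\otimes_{\mathbb{D}}\U$ with $\mathfrak{g}\subseteq\mathfrak{gl}_{\mathbb{D}}(\U)$, $\mathfrak{g}'\subseteq\mathfrak{gl}_{\mathbb{D}}(\W)$ and $\E\cong(\W\otimes_{\mathbb{D}}\U)\oplus(\W\otimes_{\mathbb{D}}\U)^{*}$. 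One small slip: $\E_2$ is not $\E_1^{\perp}$; since $\E_1$ is isotropic of half the dimension, $\E_1^{\perp}=\E_1$. What you actually need, and what is true, is that non-degeneracy of $\B$ forces the pairing $\E_1\times\E_2\to\mathbb{K}$ to be perfect, so the identification $\E_2\cong\E_1^{*}$ stands.

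Where you diverge is the endgame, and that is also where your argument has a gap. Proving $\mathfrak{g}'=\mathfrak{gl}_{\mathbb{D}}(\W)$ by computing the supercommutant of $\mathfrak{g}$ in $\End(\E_1)$ is fine (it is the isotypic Schur computation $\End_{\mathfrak{g}}(\E_1)\cong\End_{\mathbb{D}}(\W)$), but the claim that ``the symmetric argument gives $\mathfrak{g}=\mathfrak{gl}_{\mathbb{D}}(\U)$'' is not symmetric at this stage: $\U$ was chosen $\mathfrak{g}$-irreducible with $\End_{\mathfrak{g}}(\U)=\mathbb{D}$, whereas you do not know that $\W$ is $\mathfrak{g}'$-irreducible nor that $\End_{\mathfrak{g}'}(\W)=\mathbb{D}$ (the paper makes exactly this caveat in its Type~I discussion). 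To salvage your route you must first fix $\mathfrak{g}'=\mathfrak{gl}_{\mathbb{D}}(\W)$ and then invoke the genuine double-centralizer statement $\mathcal{C}_{\End(\E_1)}\big(\End_{\mathbb{D}}(\W)\big)=\End_{\mathbb{D}}(\U)$, i.e.\ an analogue of Proposition~\ref{PropositionEndUEndW}, which you flag as the main obstacle but do not prove. The paper's Proposition~\ref{PropositionTypeII} shows this machinery is unnecessary for Type~II: $\mathfrak{gl}_{\mathbb{D}}(\U)$ lies in $\mathfrak{spo}(\E,\B)$ and supercommutes with $\mathfrak{g}'$, hence $\mathfrak{gl}_{\mathbb{D}}(\U)\subseteq\mathcal{C}_{\mathfrak{spo}(\E,\B)}(\mathfrak{g}')=\mathfrak{g}$, which together with $\mathfrak{g}\subseteq\mathfrak{gl}_{\mathbb{D}}(\U)$ gives equality, and likewise $\mathfrak{gl}_{\mathbb{D}}(\W)\subseteq\mathcal{C}_{\mathfrak{spo}(\E,\B)}(\mathfrak{g})=\mathfrak{g}'$. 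Adopting this two-line mutual-centralizer trick closes your proof; as written, the ``symmetric'' step is the missing piece. Finally, ``running over all $\mathbb{D}$'' to read off the tables needs the identification $\W\otimes_{\mathbb{D}}\U\cong\U\otimes_{\mathbb{D}^{\mathrm{sop}}}\W$, as in the remark following the tables, so that super-opposite division superalgebras are not listed twice.
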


\begin{table}[ht]
\begin{tabular}{|c|c|c|c|c|c|c|}
  \hline
  $\mathbb{D}$ & $\mathbb{\iota}$ & $\left|\gamma\right|\,,\,|\gamma'|$ & $\U$ & $\W$ & $\mathfrak{g}$ & $\mathfrak{g}'$  \\
  \hline
  $\mathbb{R}$ & $\triv$ & $\bar{0}$ & $\mathbb{R}^{2a|b+c}$ & $\mathbb{R}^{p+q|2r}$ & $\mathfrak{spo}(2a|b, c)$ & $\mathfrak{osp}(p, q| 2r)$ \\
  \hline
   $\mathbb{R}$ & $\triv$ & $\bar{1}$ & $\mathbb{R}^{a|a}$ & $\mathbb{R}^{p|p}$ & ${\mathfrak{p}}(a, \mathbb{R})$ & $\mathfrak{p}(p, \mathbb{R})$\\
   \hline
  $\mathbb{C}$ & $\triv$ & $\bar{0}$ & $\mathbb{C}^{2a|b}$ & $\mathbb{C}^{p|2q}$ & $\mathfrak{spo}(2a|b)$ & $\mathfrak{osp}(p| 2q)$ \\
  \hline
  $\mathbb{C}$ & $\triv$ & $\bar{1}$ & $\mathbb{C}^{a|a}$ & $\mathbb{C}^{p|p}$ & ${\mathfrak{p}}(a, \mathbb{C})$ & $\mathfrak{p}(p, \mathbb{C})$ \\
  \hline
  $\mathbb{C}$ & $\conj$ & $\bar{0}$ & $\mathbb{C}^{a+ b|c+ d}$ & $\mathbb{C}^{p+ q|r+ s}$ & $\mathfrak{u}(a, b| c, d)$ & $\mathfrak{u}(p, q|r, s)$ \\
  \hline
   $\mathbb{C}$ & $\conj$ & $\bar{1}$ & $\mathbb{C}^{a|a}$ & $\mathbb{C}^{p|p}$ & $\bar{\mathfrak{p}}(a)$ & $\bar{\mathfrak{p}}(p)$ \\
  \hline
  $\mathbb{H}$ & $\conj$ & $\bar{0}$ & $\mathbb{H}^{a|b+c}$ & $\mathbb{H}^{p+ q| r}$ & $\mathfrak{osp}^{*}(a| b, c)$ & $\mathfrak{spo}^{*}(p, q| r)$ \\
   \hline
   $\mathbb{H}$ & $\conj$ & $\bar{1}$ & $\mathbb{H}^{a|a}$ & $\mathbb{H}^{p|p}$ & ${\mathfrak{p}}^{*}(a)$ & $\mathfrak{p}^{*}(p)$ \\
   \hline
   $\mathrm{Cl}_1(\mathbb C)$ & $\iota_1$ & $\bar{0}$ & $\mathrm{Cl}_1(\mathbb C)^{a+ b}$ & $\mathrm{Cl}_1(\mathbb C)^{p+ q}$ & ${\mathfrak{q}}(a, b)$ & $\mathfrak{q}(p, q)$ \\
   \hline
   \end{tabular}
  
   \vspace{3mm}
   
   \caption{ \label{Table:I} Real  dual pairs of Type I}
\end{table}

\begin{table}[ht]
 
\begin{tabular}{|c|c|c|c|c|c|c|}
  \hline
  $\mathbb{D}$ & $\iota$ &$|\gamma|\,,\,|\gamma'|$ &$\U$ & $\W$ & $\mathfrak{g}$ & $\mathfrak{g}'$ \\
  \hline
   $\mathbb{C}$ & $\triv$ & $\bar 0$& $\mathbb{C}^{2a|b}$ & $\mathbb{C}^{p|2q}$ & $\mathfrak{spo}(2a|b, \mathbb{C})$ & $\mathfrak{osp}(p| 2q, \mathbb{C})$ \\
    \hline
    $\mathbb{C}$ & $\triv$ & $\bar 1$ & $\mathbb{C}^{a|a}$ & $\mathbb{C}^{p|p}$ & ${\mathfrak{p}}(a, \mathbb{C})$ & $\mathfrak{p}(p, \mathbb{C})$ \\
    \hline
\end{tabular}

\vspace{3mm}

\caption{\label{Table:IC} Complex dual pairs of Type I}
\medskip

\end{table}    

\begin{table}[ht]

\begin{tabular}{|c|c|c|c|c|}
\hline
  $\mathbb{D}$ & $\U$ & $\W$ & $\mathfrak{g}$ & $\mathfrak{g}'$  \\
\hline
  $\mathbb{R}$ & $\mathbb{R}^{a|b}$ & $\mathbb{R}^{p|q}$ & $\mathfrak{gl}(a|b, \mathbb{R})$ & $\mathfrak{gl}(p|q, \mathbb{R})$ \\
  \hline
   $\mathbb{C}$ & $\mathbb{C}^{a|b}$ & $\mathbb{C}^{p|q}$ & $\mathfrak{gl}(a|b, \mathbb{C})$ & $\mathfrak{gl}(p|q, \mathbb{C})$ \\
   \hline
    $\mathbb{H}$ & $\mathbb{H}^{a|b}$ & $\mathbb{H}^{p|q}$ & $\mathfrak{gl}(a|b, \mathbb{H})$ & $\mathfrak{gl}(p|q, \mathbb{H})$ \\
    \hline
    $\mathrm{Cl}_1(\mathbb R)$ & $\mathrm{Cl}_1(\mathbb R)^{a}$ & $\mathrm{Cl}_1(\mathbb R)^{p}$ &${\mathfrak{q}}(a, \mathbb{R})$ & $\mathfrak{q}(p, \mathbb{R})$ \\
    \hline
    $\mathrm{Cl}_1(\mathbb C)$ & $\mathrm{Cl}_1(\mathbb C)^{a}$ & $\mathrm{Cl}_1(\mathbb C)^{p}$ & ${\mathfrak{q}}(a, \mathbb{C})$ & $\mathfrak{q}(p, \mathbb{C})$  \\
    \hline
    $\mathrm{Cl}_3(\mathbb R)$ & $\mathrm{Cl}_3(\mathbb R)^{a}$ & $\mathrm{Cl}_3(\mathbb R)^{p}$ & ${\mathfrak{q}}(a, \mathbb{H})$ & $\mathfrak{q}(p, \mathbb{H})$  \\
    \hline
    $\mathrm{Cl}_6(\mathbb R)$ & $\mathrm{Cl}_6(\mathbb R)^{a}_{}$ & $\mathrm{Cl}_6(\mathbb R)^{p}$ & $\bar{\mathfrak{q}}(a)$ & $\bar{\mathfrak{q}}(p)$  \\
    \hline
\end{tabular}

\vspace{3mm}

\caption{\label{Table:II} Real dual pairs of Type II}

\end{table}

\begin{table}[ht]
\begin{tabular}{|c|c|c|c|c|}
\hline
  $\mathbb{D}$ & $\U$ & $\W$ & $\mathfrak{g}$ & $\mathfrak{g}'$ \\
  \hline
    $\mathbb{C}$ & $\mathbb{C}^{a|b}$ & $\mathbb{C}^{p|q}$ & $\mathfrak{gl}(a|b, \mathbb{C})$ & $\mathfrak{gl}(p|q, \mathbb{C})$ \\
    \hline
    $\mathrm{Cl}_1(\mathbb C)$ & $\mathrm{Cl}_1(\mathbb C)^{a}$ & $\mathrm{Cl}_1(\mathbb C)^{p}$ & ${\mathfrak{q}}(a, \mathbb{C})$ & $\mathfrak{q}(p, \mathbb{C})$ \\
   \hline
\end{tabular}

\vspace{3mm}
\caption{\label{Table:IIC}Complex dual pairs of Type II}

\end{table}

Let us explain the notation and conventions used in Tables \ref{Table:I}--\ref{Table:IIC}. 
The 
division superalgebras  
$\mathrm{Cl}_k(\mathbb F)$ for $\mathbb F\in\{\mathbb R,\mathbb C\}$ are defined in Proposition \ref{ClassificationDivisionSuperalgebra}. We use
$\mathrm{Cl}_k(\mathbb F)^a$ to denote a free module of rank $a$ over $\mathrm{Cl}_k(\mathbb F)$. Explicit realizations of the Lie superalgebras that occur as 
$\mathfrak g$ and $\mathfrak{g}'$ 
are given in Appendix~\ref{AppendixExplicitRealization} (see also Table~\ref{Table:LieSup}).  Note that the content of Appendix \ref{AppendixExplicitRealization} directly pertains to 
$\mathfrak{gl}_\mathbb D(\W)$ and its subalgebra $\mathfrak{g}(\W,\gamma')$ for a \emph{right} $\mathbb D$-module $\W$. 
However, realizations of  $\mathfrak{gl}_\mathbb D(\U)$ and $\mathfrak{g}(\U,\gamma)$ for a left $\mathbb D$-module $\U$
can be obtained from 
the fact that $\U$ is canonically a right $\mathbb{D}^\mathrm{sop}$-module, where 
$\mathbb{D}^\mathrm{sop}$ is 
the super-opposite of $\mathbb D$, i.e., the division superalgebra with product \begin{equation}
\label{eq:superopp}
\D\bullet \D':=(-1)^{|\D|\cdot|\D'|}\D'\D.\end{equation} 
For Type II dual pairs  we have $\mathfrak g=\mathfrak{gl}_\mathbb D(\U)$ and $\mathfrak g'=\mathfrak{gl}_\mathbb D(\W)$.
In Tables \ref{Table:I} and \ref{Table:IC}, 
the $\mathbb K$-bilinear forms $\gamma:\U\times \U\to\mathbb D$ are always  $(\iota,-1)$-superhermitian, whereas the forms  
$\gamma':\W\times \W\to\mathbb D$ are always $(\iota\circ \delta,1)$-superhermitian (where $\delta$ is the identity map on $\mathbb D$ if $\mathbb D_{\bar 1}=0$).  
The superinvolution denoted by $\triv$  (resp. $\conj$) is the identity map (resp. complex or quaternionic conjugation) on $\mathbb{D} = \mathbb{D}_{\bar{0}}$, 
while the superinvolution  $\iota_1$ of  $\mathrm{Cl}_1(\mathbb C)$ is defined in Proposition \ref{PropositionDivisionIota}.

\begin{rema}
An important point is that  the real division superalgebras $\mathrm{Cl}_k(\mathbb R)$ for $k\in\{2,5,7\}$ do not appear 
in Table
\ref{Table:II}. The reason is that the latter division superalgebras do not result in new examples of dual pairs. 
This is because we have $\mathrm{Cl}_k(\mathbb R)\cong \mathrm{Cl}_{8-k}(\mathbb R)^\mathrm{sop}$ for $1\leq k\leq 7$,
and in general if we equip $\U$ and $\W$ with their canonical right and left $\mathbb{D}^\mathrm{sop}$-module structures, then the $\mathbb R$-linear  isomorphism  $\W\otimes_{\mathbb D}\U\cong \U\otimes_{\mathbb D^\mathrm{sop}}\W$ identifies the dual pairs $(\mathfrak{gl}_\mathbb D(\W),\mathfrak{gl}_\mathbb D(\U))$ and
$(\mathfrak{gl}_{\mathbb D^\mathrm{sop}}(\U),\mathfrak{gl}_{\mathbb D^\mathrm{sop}}(\W))$ in $\mathfrak{spo}(\E\,,\B)$. Thus, the dual pairs that occur for $k\in\{2,5,7\}$ are already in Table \ref{Table:II}. 

For a similar reason, in Table \ref{Table:I} we do not include a row corresponding to $\mathrm{Cl}_1(\mathbb C)$ such that either $\iota=\iota_2$ or $|\gamma|=\bar 1$. Suppose that  a Type I real dual pair corresponds to a factorization $\W\otimes_\mathbb D \U$ such that $\gamma$ is $(\iota_2,\epsilon)$-superhermitian and consequently $\gamma'$ is $(\iota_1,-\epsilon)$-superhermitian. We can equip $\U$ with a right $\mathbb D$-module structure by setting $u\cdot d:=(-1)^{|d|\cdot |u|}\iota_2(d)u$ for $u\in \U$ and $d\in \mathbb D$. Then on this right $\mathbb D$-module $\gamma$ will be a $(\iota_2,\epsilon)$-superhermitian form. Similarly, using $\iota_1=\iota_2\circ\delta$ we can equip $\W$ with a left $\mathbb D$-module structure and then $\gamma'$ will be $(\iota_1,-\epsilon)$-superhermitian. It follows that the map 
\[
\W\otimes_\mathbb D \U\to \U\otimes_{\mathbb D} \W
\ ,\ 
w\otimes u\mapsto (-1)^{|\gamma|+|u|\cdot |w|}
u\otimes w
\]
 is an isomorphism of real $(-1)$-supersymmetric spaces
that identifies a dual pair $(\mathfrak g(\W,\gamma'),\mathfrak g(\U,\gamma))$ with 
the dual pair
$(\mathfrak g(\U,\gamma),\mathfrak g(\W,\gamma'))$. The latter dual pair corresponds to $\iota_1$. Finally, if $\gamma$ and $\gamma'$ are odd,  then we can replace them by even superhermitian forms $\varepsilon\gamma$ and $\varepsilon\gamma'$ (see Remark~\ref{SuperHermitianForms}).

\end{rema}

Although the statements of Theorems
 \ref{TheoremeIntroduction2} 
and
\ref{TheoremeIntroduction1}
are similar to the well-known classification results in the non-superized setting, and their proofs use many ideas from the seminal works \cite{HOWEALG}, \cite{HOWE89}, and \cite{MVW}, there are a number of substantial differences between the superized and non-superized settings that serve as our motivation for writing this paper. First, we notice that only one division superalgebra $\mathbb D$ satisfying $\mathbb D_{\bar 1}\neq 0$ results in dual pairs of Type I. Second, the correspondence between superhermitian forms on a $\mathbb D$-module $\U$ and superinvolutions on $\End_\mathbb D(\U)$ works slightly differently in the two cases
$\mathbb D_{\bar 1}=0$
and $\mathbb D_{\bar 1}\neq 0$: in the former case the parity of the superhermitian form is determined by the superinvolution, whereas in the latter case one can associate two forms of opposite parity to the same superinvolution (see Propositions \ref{PropositionC4} and \ref{PropositionC5}).  
Third, the proofs of classification in the non-super context (see for example \cite{HOWEALG} or \cite{MVW}) usually use facts about reductive algebraic groups that must be replaced by different arguments that are applicable to Lie superalgebras.

The strategy to prove Theorems  
\ref{TheoremeIntroduction1} and
\ref{TheoremeIntroduction2} 
is as follows. Let $\U$ be an irreducible $\mathfrak{g}$-module such that we have $\Hom_{\mathfrak{g}}(\U\,, \E) \neq \{0\}$. Let $\mathbb{D}: = \End_{\mathfrak{g}}(\U)$ and $\W: = \Hom_{\mathfrak{g}}(\U\,, \E)$. One can see that the spaces $\U$ and $\W$ are $\mathbb{D}$-modules, $\mathfrak{g}'$ acts on $\W$ as $\mathbb{D}$-linear transformations and we have a canonical map $\W \otimes_{\mathbb{D}} \U\to \E$ (see Section \ref{SectionTechnicalLemma}). 
Using this, in
 Lemmas~\ref{Lemma01} and~\ref{Lemma02}
we  prove that either  $\E=\W\otimes_\mathbb D\U $ (Type I) or $\E=\W\otimes_\mathbb D\U \oplus (\W\otimes_\mathbb D\U )^*$ (Type II). 
For Type II dual pairs, $\mathfrak{gl}_{\mathbb{D}}(\U)$ and $\mathfrak{gl}_{\mathbb{D}}(\W)$ are subalgebras of $\mathfrak{spo}(\E\,, \B)$. As explained in Proposition \ref{PropositionTypeII}, using that $\mathfrak{g}$ (resp. $\mathfrak{g}'$) supercommutes with $\mathfrak{gl}_{\mathbb{D}}(\W)$ (resp. $\mathfrak{gl}_{\mathbb{D}}(\U)$), we get $(\mathfrak{g}\,, \mathfrak{g}') = (\mathfrak{gl}_{\mathbb{D}}(\U)\,, \mathfrak{gl}_{\mathbb{D}}(\W))$.
Assume now that $(\mathfrak{g}\,, \mathfrak{g}')$ is a Type I dual pair. The form $\B$ induces a superinvolution $\eta$ on $\End(\E)$ uniquely defined by
\begin{equation*}
\B(\T(u)\,, v) = (-1)^{\left|\T\right|\cdot\left|u\right|} \B(u\,, \eta(\T)(v)) = 0\,, \qquad (u\,, v \in \E)\,.
\end{equation*}
This superinvolution satisfies $\eta(\X) = -\X$ for every $\X \in \mathfrak{spo}(\E\,, \B)$. In particular, it  preserves the subalgebra of $\End(\E)$ that is generated by the image of $\mathfrak{g}$. By the superized form of the Jacobson Density Theorem~(see \cite{RACINE}), it follows that  $\eta$ defines a superinvolution on $\End_{\mathbb{D}}(\U)$. In Proposition \ref{PropositionDivisionIota}, we prove that if $\mathbb{D}$ is such that $\End_{\mathbb{D}}(\Y)$ is endowed with a superinvolution, where
\begin{equation*}
\Y = \begin{cases} \mathbb{D}^{a|b} & \text{ if } \mathbb{D}_{\bar{1}} = \{0\} \\ \mathbb{D}^{a} & \text{ if } \mathbb{D}_{\bar{1}} \neq \{0\}, \end{cases}
\end{equation*}
then  either $\mathbb{D}_{\bar{1}} = \{0\}$, or  $\mathbb{K} = \mathbb{R}$ and $\mathbb{D} = \mathbb{C} \oplus \mathbb{C} \cdot \varepsilon$, with $\varepsilon^{2} = 1$ and $i \varepsilon = \varepsilon i$. 
The superinvolution $\eta$ on $\End_{\mathbb{D}}(\U)$ preserves $\mathscr{Z}(\End_{\mathbb{D}}(\U)) \cong \mathscr {Z}(\mathbb{D})$. As explained in Section \ref{SectionClassification}, there exists a superinvolution $\iota$ on $\mathbb{D}$ such that $\iota_{|_{\mathscr{Z}(\mathbb{D})}} = \eta$. We construct a $(\iota\,, \epsilon)$-superhermitian form $\gamma$ on $\U$ and a $(\iota \circ \delta\,, -\epsilon)$-superhermitian form $\gamma'$ on $\W$ such that  
$\B=\gamma' \otimes \gamma$ when $\mathbb K=\mathbb  C$ and  
$\B = \Re(\gamma' \otimes \gamma)$
when $\mathbb K=\mathbb R$. We then show that  $\mathfrak{g} = \mathfrak{g}(\U\,, \gamma)$ and $\mathfrak{g}' = \mathfrak{g}(\W\,, \gamma')$.


\medskip

As noted 
by Howe in \cite{HOWE89}, the action of $\mathfrak{sp}(\W)$ on $\S(\X)$ that we described in the beginning of this introduction can be superized 
to a representation of the complex orthosymplectic algebra $\mathfrak{spo}(\E\,, \B)$ on the supersymmetric algebra $\S(\E)$, where $\E_{\bar{0}}$ and $\E_{\bar{1}}$ are both even dimensional complex vector spaces.
 Indeed, let $\E = \E_{\bar{0}} \oplus \E_{\bar{1}}$, with $\dim_{\mathbb{C}}(\E_{\bar{0}})$ and  $\dim_{\mathbb{C}}(\E_{\bar{1}})$ even, endowed with an even, non-degenerate, $(-1)$-supersymmetric form $\B$. 
In Section \ref{SpinorOscillatorRepresentation}
we consider the subalgebra $\textbf{WC}(\E\,, \B)$ of $\End(\S(\E))$ generated by multiplication and derivation operators on $\S(\E)$. Again, if we denote by $\textbf{WC}(\E\,, \B)_{k}$ the elements of $\textbf{WC}(\E\,, \B)$ of degree not greater than $k$, with $k \in \mathbb{Z}_{+}$, we get the following relations
\begin{equation*}
\textbf{WC}(\E\,, \B)_{k} \cdot \textbf{WC}(\E\,, \B)_{l} \subseteq \textbf{WC}(\E\,, \B)_{k+l}\,, \quad \left[\textbf{WC}(\E\,, \B)_{k}\,, \textbf{WC}(\E\,, \B)_{l}\right] \subseteq \textbf{WC}(\E\,, \B)_{k+l-2}\,, \quad (k\,, l \in \mathbb{Z}_{2})\,,
\end{equation*}
where $\left[\cdot\,, \cdot\right]$ is the supercommutator on $\textbf{WC}(\E\,, \B)$. Similar to what happens with $\mathscr W(\W)$, the elements of degree $2$ of $\textbf{WC}(\E\,, \B)$ span a Lie superalgebra isomorphic to $\mathfrak{spo}(\E\,, \B)$ (see Lemma \ref{SectionLieSupergroups}). In particular, we get an action of $\mathfrak{spo}(\E\,, \B)$ on $\S(\E)$ coming from the natural inclusion $\textbf{WC}(\E\,, \B) \subseteq \End(\S(\E))$. Moreover, the action of $\mathfrak{spo}(\E\,, \B)_{\bar{0}} \cong \mathfrak{sp}(\E_{\bar{0}}\,, \B_{\bar{0}}) \oplus \mathfrak{so}(\E_{\bar{1}}\,, \B_{\bar 1})$ on $\textbf{WC}(\E\,, \B)$ exponentiates to a group action of $\Sp(\E_{\bar{0}}\,, \B_{0}) \times \O(\E_{\bar{1}}\,, \B_{1})$ on $\textbf{WC}(\E\,, \B)$ by group automorphisms (see Remark \ref{RemarkActionSpO}). 
In conclusion, we get an action 
on $\textbf{WC}(\E\,,\B)$
of the pair \[
\textbf{SpO}(\E\,, \B) := (\Sp(\E_{\bar{0}}\,, \B_{0}) \times \O(\E_{\bar{1}}\,, \B_{1})\,, \mathfrak{spo}(\E\,, \B)),
\] which is 
the Harish-Chandra pair of the orthosymplectic Lie supergroup (see Section \ref{SectionLieSupergroups}). 
We
can also define irreducible reductive  dual pairs in $\textbf{SpO}(\E\,,\B)$. The classification of such dual pairs follows from
Theorems \ref{TheoremeIntroduction1} and \ref{TheoremeIntroduction2}
(see Proposition~\ref{DualPairsInOrthosymplecticSupergroup}). Using this classification, in Section \ref{SectionDoubleCommutant}
we obtain the following simultaneous extension of Theorems \ref{DoubleCommutantSymplecticComplex} and \ref{DoubleCommutantOrthogonalComplex}:
\begin{theo}{(Double Commutant Theorem)}

Let $\big(\mathscr{G}\,, \mathscr{G}'\big) = \big((\G\,, \mathfrak{g})\,, (\G'\,, \mathfrak{g}')\big)$ be an irreducible reductive dual pair in $\normalfont{\textbf{SpO}}(\E\,, \B)$. Then the superalgebra $\normalfont{\textbf{WC}}(\E\,, \B)^{\mathscr{G}}$ is generated by $\mathfrak{g}'$.
\label{thm-thm1.8-hadi}
\end{theo}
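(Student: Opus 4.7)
The plan is to reduce Theorem~\ref{thm-thm1.8-hadi} to a super First Fundamental Theorem (FFT) for the supersymmetric algebra $\S(\E)$, using the canonical filtration on $\textbf{WC}(\E\,, \B)$. The $\mathscr{G}$-action preserves the filtration $\{\textbf{WC}(\E\,, \B)_{k}\}_{k \in \mathbb{Z}_{+}}$, since this filtration is characterised by $\E \subseteq \textbf{WC}(\E\,, \B)_{1}$ and $\mathscr{G}$ acts on $\E$. The associated graded superalgebra is canonically $\mathscr{G}$-equivariantly isomorphic to $\S(\E)$, and the embedding $\mathfrak{spo}(\E\,, \B) \hookrightarrow \textbf{WC}(\E\,, \B)_{2}$ descends on symbols to an embedding $\mathfrak{spo}(\E\,, \B) \hookrightarrow \S^{2}(\E)$; in particular $\mathfrak{g}'$ sits canonically inside $\S^{2}(\E)$.

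A standard symbol-lifting argument then shows that it suffices to prove the super FFT: $\S(\E)^{\mathscr{G}}$ is generated as a superalgebra by the image of $\mathfrak{g}'$ in $\S^{2}(\E)$. Given $a \in \textbf{WC}(\E\,, \B)^{\mathscr{G}}$ of filtration degree $k$, the symbol $\sigma_{k}(a) \in \S^{k}(\E)^{\mathscr{G}}$ is, by the super FFT, expressible as a polynomial $p(Z_{1}\,, \ldots\,, Z_{m})$ with $Z_{i} \in \mathfrak{g}'$. Interpreting the same polynomial inside $\textbf{WC}(\E\,, \B)$ produces $b := p(Z_{1}\,, \ldots\,, Z_{m}) \in \textbf{WC}(\E\,, \B)^{\mathscr{G}}$ (using that $\mathfrak{g}'$ supercommutes with $\mathscr{G}$) with the same filtration degree and the same symbol as $a$. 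Thus $a - b \in \textbf{WC}(\E\,, \B)^{\mathscr{G}} \cap \textbf{WC}(\E\,, \B)_{k-1}$, and induction on $k$ concludes the argument.

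The technical core is therefore the super FFT, which we establish case by case using the classification in Theorems~\ref{TheoremeIntroduction1} and~\ref{TheoremeIntroduction2}. For a Type I pair with $\E \cong \W \otimes_{\mathbb{D}} \U$, the quadratic $\mathscr{G}$-invariants in $\S^{2}(\E)$ are precisely the matrix coefficients of $\gamma'$, which coincide with $\mathfrak{g}' = \mathfrak{g}(\W\,, \gamma')$; for a Type II pair, the quadratic invariants are the natural pairings between $\W \otimes_{\mathbb{D}} \U$ and its dual, which span $\mathfrak{g}' = \mathfrak{gl}_{\mathbb{D}}(\W)$. The main obstacle is showing that these quadratic invariants generate all of $\S(\E)^{\mathscr{G}}$ for every entry of Tables~\ref{Table:I}--\ref{Table:IIC}. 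For entries with $\mathbb{D}_{\bar{1}} = 0$ and $|\gamma| = \bar{0}$, this reduces to the classical FFT for the action of $\G$ on $\E_{\bar{0}}$ and $\E_{\bar{1}}$ through its diagonal action, followed by a direct verification of $\mathfrak{g}_{\bar{1}}$-invariance via the supercommutation of $\mathfrak{g}$ and $\mathfrak{g}'$. For the genuinely super entries (odd division superalgebras, or $|\gamma| = \bar{1}$), one invokes super FFTs from the literature where available, and otherwise adapts the density-type arguments of Section~\ref{SectionClassification} --- most notably Proposition~\ref{PropositionDivisionIota} --- to produce the required generation statement directly.
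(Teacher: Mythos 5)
Your filtration/symbol-lifting reduction is exactly the skeleton of the paper's proof: the symbol map identifies $\gr\big(\textbf{WC}(\E\,,\B)\big)$ with $\S(\E)$ as $\mathscr{G}$-modules, and once one knows that $\S(\E)^{\mathscr{G}}$ is generated by its quadratic part (which coincides with $\mathfrak{g}'=\mathfrak{spo}(\E\,,\B)^{\mathscr{G}}$), the induction on filtration degree finishes the argument. So the gap is concentrated entirely in your treatment of the ``super FFT'' step, and there it is a genuine gap. In the paper this step is not proved ad hoc: it is supplied by Sergeev's superized first fundamental theorems \cite{SERGEEV}, applied to the short list of complex pairs in Proposition~\ref{DualPairsInOrthosymplecticSupergroup} (for Theorem~\ref{thm-thm1.8-hadi} one has $\mathbb K=\mathbb C$, so only the complex tables are relevant; the real entries you discuss belong to Theorem~\ref{DoubleCommutantTheoremR}, which the paper handles by complexification and again reduces to \cite{SERGEEV}).

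Your proposed substitutes for this input do not work as stated. First, for the pairs with $\mathbb D_{\bar 1}=0$ and $|\gamma|=\bar 0$ you suggest applying the classical FFT to the action of the even group $\G$ and then performing ``a direct verification of $\mathfrak g_{\bar 1}$-invariance via the supercommutation of $\mathfrak g$ and $\mathfrak g'$.'' But the issue is not whether the quadratic elements of $\mathfrak g'$ are $\mathscr G$-invariant (they are); it is whether \emph{every} $\mathscr G$-invariant is a polynomial in them. Knowing that $\S(\E)^{\G}$ is generated by quadratics does not imply that the subalgebra of $\mathfrak g_{\bar 1}$-invariants inside it is generated by the quadratic $\mathscr G$-invariants; passing from $\G$-invariants to $(\G,\mathfrak g)$-invariants is precisely the nontrivial content of Sergeev's theorem, and the paper's own remark on the Pfaffian (the strict inclusion of $\textbf{WC}(\E\,,\B)^{\textbf{SpO}(\V)}$ in $\textbf{WC}(\E\,,\B)^{\mathfrak{spo}(\V)}$) shows how delicate the interplay between group-, algebra-, and super-invariants is here. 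Second, for the remaining cases your fallback of ``adapting the density-type arguments of Section~\ref{SectionClassification}, most notably Proposition~\ref{PropositionDivisionIota}'' has no traction: that proposition classifies superinvolutions on division superalgebras and says nothing about generation of invariants in $\S(\E)$; density-type statements about $\End_{\mathbb D}(\U)$ cannot by themselves produce an FFT. To make your proof complete you should replace these two steps by an explicit appeal to \cite{SERGEEV} (or an actual proof of the quadratic-generation statement) for each pair occurring in Proposition~\ref{DualPairsInOrthosymplecticSupergroup}.
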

We also prove an analogue of Theorem~\ref{DoubleCommutantSymplecticReal}. In this case $\E$ will be a \emph{real} $(-1)$-supersymmetric  space but we associate to it the Weyl-Clifford algebra 
$\normalfont{\textbf{WC}}(\E^{\mathbb{C}}\,, \B^{\mathbb{C}})$. We obtain the following result:
\begin{theo}
Let $(\mathscr{G}\,, \mathscr{G}') := \left((\G\,, \mathfrak{g})\,, (\G'\,, \mathfrak{g}')\right)$ be an irreducible reductive dual pair in $\normalfont{\textbf{SpO}}(\E\,, \B)$. Then the superalgebra $\normalfont{\textbf{WC}}(\E^{\mathbb{C}}\,, \B^{\mathbb{C}})^{\mathscr{G}}$ is generated by $\mathfrak{g}'$.

\label{DoubleCommutantTheoremR}

\end{theo}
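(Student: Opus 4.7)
The plan is to deduce Theorem~\ref{DoubleCommutantTheoremR} from its complex counterpart, Theorem~\ref{thm-thm1.8-hadi}, by complexifying everything in sight. Starting with an irreducible reductive dual pair $(\mathscr{G}, \mathscr{G}')$ in $\textbf{SpO}(\E, \B)$ with $\E$ real, the first step is to verify that $\mathfrak{g}^{\mathbb{C}} := \mathfrak{g} \otimes_{\mathbb R}\mathbb C$ and $(\mathfrak{g}')^{\mathbb{C}}$ form an irreducible reductive dual pair in $\mathfrak{spo}(\E^{\mathbb{C}}, \B^{\mathbb{C}})$. By the classification in Theorems~\ref{TheoremeIntroduction1} and~\ref{TheoremeIntroduction2}, this is a case-by-case check: each real pair in Table~\ref{Table:I} (resp.\ Table~\ref{Table:II}) complexifies to an entry in Table~\ref{Table:IC} (resp.\ Table~\ref{Table:IIC}) once one tracks how the underlying division superalgebras behave under extension of scalars. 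Letting $\G^{\mathbb{C}}$ and $(\G')^{\mathbb{C}}$ denote the complexifications of the even parts, we thereby obtain an irreducible reductive dual pair $(\mathscr{G}^{\mathbb{C}}, (\mathscr{G}')^{\mathbb{C}})$ in $\textbf{SpO}(\E^{\mathbb{C}}, \B^{\mathbb{C}})$.

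Next, I would establish the identity $\textbf{WC}(\E^{\mathbb{C}}, \B^{\mathbb{C}})^{\mathscr{G}} = \textbf{WC}(\E^{\mathbb{C}}, \B^{\mathbb{C}})^{\mathscr{G}^{\mathbb{C}}}$. Invariance under $\mathfrak{g}$ is equivalent to invariance under $\mathfrak{g}^{\mathbb{C}} = \mathfrak{g}\oplus i\mathfrak{g}$ because the supercommutator on $\textbf{WC}(\E^{\mathbb{C}}, \B^{\mathbb{C}})$ is $\mathbb{C}$-bilinear, so for each $X \in \mathfrak{g}$ the kernel of $\ad(X)$ is automatically a $\mathbb{C}$-subspace. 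For the group-level statement, the action of $\G^{\mathbb{C}}$ on $\textbf{WC}(\E^{\mathbb{C}}, \B^{\mathbb{C}})$ preserves the natural filtration and is algebraic on each filtered piece, so the stabilizer of any $\omega \in \textbf{WC}(\E^{\mathbb{C}}, \B^{\mathbb{C}})$ is Zariski-closed in $\G^{\mathbb{C}}$. Since $\G$ is a real form of $\G^{\mathbb{C}}$, hence Zariski-dense in it, invariance under $\G$ forces invariance under all of $\G^{\mathbb{C}}$.

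Applying Theorem~\ref{thm-thm1.8-hadi} to the complex dual pair $(\mathscr{G}^{\mathbb{C}}, (\mathscr{G}')^{\mathbb{C}})$ then yields that $\textbf{WC}(\E^{\mathbb{C}}, \B^{\mathbb{C}})^{\mathscr{G}^{\mathbb{C}}}$ is generated as a $\mathbb{C}$-superalgebra by $(\mathfrak{g}')^{\mathbb{C}} = \mathfrak{g}' \oplus i\mathfrak{g}'$; but $\mathfrak{g}'$ and $(\mathfrak{g}')^{\mathbb{C}}$ generate the same $\mathbb{C}$-subalgebra of $\textbf{WC}(\E^{\mathbb{C}}, \B^{\mathbb{C}})$, so the theorem follows. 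The main obstacle I anticipate is the first step: the real classification contains division superalgebras ($\mathbb{H}$, $\mathrm{Cl}_3(\mathbb R)$, $\mathrm{Cl}_6(\mathbb R)$) that are absent from the complex classification, and one must carefully identify how the $(\iota,\epsilon)$-superhermitian forms and their underlying modules behave under $-\otimes_{\mathbb{R}}\mathbb{C}$, for instance verifying that real pairs such as $(\mathfrak{osp}^*(a|b,c), \mathfrak{spo}^*(p,q|r))$ complexify to the expected $\mathfrak{spo}(\ast|\ast, \mathbb C)$/$\mathfrak{osp}(\ast|\ast, \mathbb C)$ pair. A secondary subtlety is the potential disconnectedness of $\G$ (e.g.\ when it has an orthogonal factor), but this is absorbed by the Zariski-density argument, since all real forms of the classical complex groups appearing here are Zariski-dense in their complexifications regardless of connectedness.
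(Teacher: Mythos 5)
Your overall strategy (complexify and reduce to the complex theorem, with the identity $\normalfont{\textbf{WC}}(\E^{\mathbb{C}},\B^{\mathbb{C}})^{\mathscr{G}}=\normalfont{\textbf{WC}}(\E^{\mathbb{C}},\B^{\mathbb{C}})^{\mathscr{G}_{\mathbb{C}}}$ justified by Zariski density, as in Remark~\ref{RemarkComplexification}) is the same as the paper's, but your first step contains a genuine gap. It is not true that each real pair in Table~\ref{Table:I} (resp.\ Table~\ref{Table:II}) complexifies to an \emph{irreducible} pair in Table~\ref{Table:IC} (resp.\ Table~\ref{Table:IIC}). Two things go wrong. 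First, the type can change: e.g.\ the Type~I pairs $(\mathfrak{u}(a,b|c,d),\mathfrak{u}(p,q|r,s))$ and $(\mathfrak{q}(a,b),\mathfrak{q}(p,q))$ complexify to the Type~II pairs $(\mathfrak{gl}(a+b|c+d,\mathbb{C}),\mathfrak{gl}(p+q|r+s,\mathbb{C}))$ and $(\mathfrak{q}(a+b,\mathbb{C}),\mathfrak{q}(p+q,\mathbb{C}))$ — this is only a bookkeeping error, since the complex theorem still applies. Second, and fatally for your argument as written, the real pairs that are complex pairs viewed over $\mathbb{R}$ (the rows with $\mathbb{D}=\mathbb{C}$, $\iota=\triv$, and with $\mathbb{D}=\mathrm{Cl}_1(\mathbb{C})$ in Table~\ref{Table:II}, e.g.\ $(\mathfrak{gl}(a|b,\mathbb{C}),\mathfrak{gl}(p|q,\mathbb{C}))$, $(\mathfrak{q}(a,\mathbb{C}),\mathfrak{q}(p,\mathbb{C}))$, $(\mathfrak{spo}(2a|b,\mathbb{C}),\mathfrak{osp}(p|2q,\mathbb{C}))$) do \emph{not} complexify to irreducible pairs: one has $\E^{\mathbb{C}}\cong \E^{1,0}\oplus\E^{0,1}$ and $\mathfrak{g}_{\mathbb{C}}\cong\mathfrak{g}\oplus\mathfrak{g}$, $\mathfrak{g}'_{\mathbb{C}}\cong\mathfrak{g}'\oplus\mathfrak{g}'$, so $(\mathscr{G}_{\mathbb{C}},\mathscr{G}'_{\mathbb{C}})$ is a \emph{direct sum} of two irreducible complex dual pairs. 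Theorem~\ref{thm-thm1.8-hadi} is stated only for irreducible pairs, so you cannot invoke it as a black box for these cases; this is exactly where your plan breaks.

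The paper avoids this by never asserting irreducibility of the complexified pair: it computes $\S(\E^{\mathbb{C}})^{(\G_{\mathbb{C}},\mathfrak{g}_{\mathbb{C}})}$ case by case at the symbol level, using Sergeev's results that invariants are generated in degree two; in the doubled cases the invariants factor as
\begin{equation*}
\S(\E^{\mathbb{C}})^{(\G_{\mathbb{C}},\mathfrak{g}_{\mathbb{C}})}
\;=\;
\S(\E^{1,0})^{(\G,\mathfrak{g})}\otimes\S(\E^{0,1})^{(\G,\mathfrak{g})}
\;=\;\langle\mathfrak{g}'\oplus\mathfrak{g}'\rangle=\langle\mathfrak{g}'_{\mathbb{C}}\rangle=\langle\mathfrak{g}'\rangle\,,
\end{equation*}
and then transfers this back through the symbol map $\sigma$. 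Your approach can be repaired along the same lines: either prove an extension of Theorem~\ref{thm-thm1.8-hadi} to finite direct sums of irreducible pairs (using that the Weyl--Clifford algebra of an orthogonal direct sum is the graded tensor product and that invariants of a tensor product under a product supergroup acting completely reducibly factor accordingly), or replace the black-box application by the explicit quadratic-generation computation in the doubled cases. Without one of these additions, the proof is incomplete for roughly half of the rows of Tables~\ref{Table:I} and~\ref{Table:II}.
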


Theorems~\ref{thm-thm1.8-hadi} and 
\ref{DoubleCommutantTheoremR}
exhibit a "compatibility" between  dual pairs in $\textbf{SpO}(\E\,, \B)$ and the Spinor-Oscillator representation of $\mathfrak{spo}(\E\,, \B)$ (or $\textbf{SpO}(\E\,, \B)$). Even though this circle of ideas pertains to Howe duality in the semisimple case, we do not consider  Howe duality in full generality in this paper. However,  in this context we refer the reader to other works (see for example  \cite[Theorem~8]{HOWE89}, \cite[Theorem~3.2]{CHENGWANG1}, \cite[Theorem~5.28]{CHENGWANG}, \cite{COULEMBIER}, \cite{HOWELU}).

\medskip

This paper is organized as follows. In Section \ref{SectionPreliminaries}, we introduce the notion of reductive and irreducible dual pairs and prove that every reductive dual pair is a direct sum
of irreducible pairs. In Section \ref{SectionTechnicalLemma}, we prove a technical lemma that will play a key part in our classification of dual pairs. In Section \ref{SectionDivisionSuperalgebra}, we study the division superalgebras with superinvolutions. In Section \ref{SectionClassification}, we classify irreducible reductive dual pairs in the real and complex orthosymplectic Lie superalgebra. In Section \ref{SectionLieSupergroups}, we introduce the concept of dual pairs in an orthosymplectic Lie supergroup. A classification of such pairs is directly obtained from 
Section \ref{SectionClassification}. In Section \ref{SpinorOscillatorRepresentation}, we construct the Spinor-Oscillator representation of the orthosymplectic Lie superalgebra $\mathfrak{spo}$ (over $\mathbb{K} = \mathbb{R}$ or $\mathbb{C}$). In Section \ref{SectionDoubleCommutant} we prove Theorems~\ref{thm-thm1.8-hadi} 
and \ref{DoubleCommutantTheoremR}.
Finally, as an application of Theorem~\ref{thm-thm1.8-hadi}, in Section~\ref{Section9} 
we prove that Howe duality holds for the dual pair
$(\normalfont{\textbf{SpO}}(2n|1)\,, \normalfont{\textbf{OSp}}(2k|2l)) \subseteq \normalfont{\textbf{SpO}}(\mathbb{C}^{2k|2l} \otimes \mathbb{C}^{2n|1})$.
\medskip

\noindent \textbf{Acknowledgments: }
The authors thank Tomasz Przebinda,  Michel Racine, 
Alistair Savage
and   Vera Serganova for fruitful discussions. This work was done while the first author was partially supported by an NFRF grant as a postdoctoral fellow at the University of Ottawa. The first author thanks the
Institute for Mathematical Sciences at the National University of Singapore for invitation to present this work in the conference on
Representations and Characters: Revisiting the Works of Harish-Chandra and Andr\'{e} Weil. The productive  ambience of the conference played a catalytic role in the completion of this project. 
The second author is partially supported by an NSERC Discovery Grant (RGPIN-2018-04044).

\section{Generalities and dual pairs in orthosymplectic Lie superalgebras}

\label{SectionPreliminaries}

As usual, a superalgebra simply means a $\mathbb Z_2$-graded (possibly nonassociative) algebra. A Lie superalgebra over $\mathbb{K}$ is a superalgebra $\mathfrak{g} = \mathfrak{g}_{\bar{0}} \oplus \mathfrak{g}_{\bar{1}}$ over $\mathbb{K}$  whose product $\left[\cdot\,, \cdot\right]$ satisfies the following relations  for every homogeneous $\X, \Y, \Z \in \mathfrak{g}$:
\begin{enumerate}
\item $\left[\X\,, \Y\right] = -(-1)^{\left|\X\right|\cdot\left|\Y\right|}\left[\Y\,, \X\right]$\,,
\item $(-1)^{\left|\X\right|\cdot\left|\Z\right|} \left[\X\,, \left[\Y\,, \Z\right]\right] + (-1)^{\left|\Y\right|\cdot\left|\X\right|} \left[\Y\,, \left[\Z\,, \X\right]\right] + (-1)^{\left|\Z\right|\cdot\left|\Y\right|} \left[\Z\,, \left[\X\,, \Y\right]\right] = 0$\,.
\end{enumerate}

Let $\V = \V_{\bar{0}} \oplus \V_{\bar{1}}$ be a finite dimensional $\mathbb Z_2$-graded vector space over $\mathbb{K}$. We denote by $\mathfrak{gl}(\V)$ the general linear Lie superalgebra associated to $\V$. The $\mathbb{Z}_{2}$-grading of $\mathfrak{gl}(\V)$ is given by 
\begin{equation*}
\mathfrak{gl}(\V)_{\alpha} = \left\{\X \in \mathfrak{gl}(\V)\,, \X(\V_{\beta}) \subseteq \V_{\alpha + \beta}\,, \beta \in \mathbb{Z}_{2}\right\}\,, \qquad (\alpha \in \mathbb{Z}_{2})\,,
\end{equation*}
and its superbracket $\left[\cdot\,, \cdot\right]$ is given  by
$\left[\X\,, \Y\right] = \X\Y - (-1)^{\left|\X\right|\cdot\left|\Y\right|}\Y\X$
for homogeneous $\X, \Y \in \mathfrak{gl}(\V)$.

Let $\mathfrak{g}$ be a 
Lie superalgebra. 
For any two $\mathfrak{g}$-modules 
$(\lambda,\W)$ and $(\lambda',\W')$ we define
the $\mathbb{Z}_{2}$-graded subspace 
$\Hom_{\mathfrak g}(\W\,, \W')$
of $\Hom(\W\,, \W')$  by
\begin{equation}
\Hom_{\mathfrak{g}}(\W\,, \W')_{\alpha} = \left\{\T \in \Hom(\W\,, \W')_{\alpha}\,, \T \circ \lambda(\X) = (-1)^{\alpha\left|\X\right|} \lambda'(\X) \circ \T\right\}\,,\qquad(\alpha\in\mathbb Z_2)\,.
\label{Equation1}
\end{equation}
For two irreducible $\mathfrak{g}$-modules $\W$ and $\W'$, we write $\W \cong \W'$ if there exists a homogeneous invertible $\T \in \Hom_{\mathfrak{g}}(\W\,, \W')$.

\begin{rema}

Intertwining operators are usually defined as linear maps $\T:\W\to\W'$ satisfying the relation $\T \circ \lambda(\X) = \lambda'(\X) \circ \T$. The relation between intertwining operators and elements of $\Hom_{\mathfrak g}(\W,\W')$ is as follows. There exists a linear map $\T\to \T^\diamond$ on $\Hom(\W,\W')$ that is uniquely determined by setting 
\begin{equation*}
\T^{\diamond}(w) := (-1)^{\left|\T\right|\cdot\left|w\right|}\T(w)\,, \qquad (w \in \W)\,,
\end{equation*}
for homogeneous $\T$. Then  $\T$ is an intertwining operator if and only if $\T^\diamond\in\Hom_{\mathfrak g}(\W,\W')$. Indeed if $\T$ is an interwining operator then 
\begin{equation*}
\T^{\diamond} \circ \lambda(\X)(w) = (-1)^{\left|\T\right|\cdot\left|\lambda(\X)(w)\right|} \T(\lambda(\X)(w)) = (-1)^{\left|\T\right|\cdot\left|\X\right|}(-1)^{\left|\T\right|\cdot\left|w\right|} \lambda'(\X)(\T(w)) = (-1)^{\left|\T\right|\cdot\left|\X\right|} \lambda'(\X) \circ \T^{\diamond}(w)\,,
\end{equation*}
i.e. $\T^{\diamond} \in \Hom_{\mathfrak{g}}(\W\,, \W')$. The converse is proved similarly. For more details, see \cite[Section~1.1.1]{CHENGWANG}.

\end{rema}

\begin{prop}

Let $\mathfrak{g}$ and $\mathfrak{g}'$ be two Lie superalgebras and let $\V$ be a $\mathfrak{g} \oplus \mathfrak{g}'$-module. Assume that $\V$ is semisimple both as a $\mathfrak{g}$-module and as a $\mathfrak{g}'$-module. Then $\V$ is also semisimple as a $\mathfrak{g} \oplus \mathfrak{g}'$-module.

\label{PropositionJointReducibility}

\end{prop}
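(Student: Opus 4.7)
The plan is to use the $\mathfrak g$-isotypic decomposition of $\V$ to reduce the joint semisimplicity to the $\mathfrak g'$-semisimplicity of the multiplicity spaces, which then follows from the hypothesis that $\V$ is $\mathfrak g'$-semisimple.

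By $\mathfrak g$-semisimplicity of $\V$, I would first decompose $\V=\bigoplus_{\lambda}\V(\lambda)$ into $\mathfrak g$-isotypic components, where $\lambda$ runs over the isomorphism classes of simple $\mathfrak g$-modules occurring in $\V$. Each even projection $p_\lambda:\V\to\V(\lambda)$ is $\mathfrak g$-equivariant and, since $\mathfrak g'$ supercommutes with $\mathfrak g$ inside $\mathfrak g\oplus\mathfrak g'$, it is also $\mathfrak g'$-equivariant. Hence each $\V(\lambda)$ is a $\mathfrak g\oplus\mathfrak g'$-submodule, every $\mathfrak g\oplus\mathfrak g'$-submodule $W\subseteq\V$ splits as $W=\bigoplus_\lambda p_\lambda(W)$, and it suffices to prove that each $\V(\lambda)$ is semisimple as a $\mathfrak g\oplus\mathfrak g'$-module.

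For fixed $\lambda$ with representative $U_\lambda$, set $\mathbb D_\lambda:=\End_{\mathfrak g}(U_\lambda)$ (a division superalgebra by the super version of Schur's lemma) and $M_\lambda:=\Hom_{\mathfrak g}(U_\lambda,\V)$. The evaluation map provides a $\mathfrak g\oplus\mathfrak g'$-equivariant isomorphism $U_\lambda\otimes_{\mathbb D_\lambda}M_\lambda\xrightarrow{\sim}\V(\lambda)$, in which $\mathfrak g$ acts on the first factor and $\mathfrak g'$ on the second. Under this isomorphism the $\mathfrak g\oplus\mathfrak g'$-submodules of $\V(\lambda)$ correspond to the $\mathfrak g'$-stable $\mathbb D_\lambda$-submodules of $M_\lambda$, so it is enough to prove that $M_\lambda$ is semisimple as a $\mathfrak g'$-module compatible with its $\mathbb D_\lambda$-structure.

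To establish this last fact, given a $\mathfrak g'$-stable $\mathbb D_\lambda$-submodule $N\subseteq M_\lambda$, the image $U_\lambda\otimes_{\mathbb D_\lambda}N$ is a $\mathfrak g\oplus\mathfrak g'$-submodule of $\V$; by the $\mathfrak g'$-semisimplicity hypothesis on $\V$ it admits a $\mathfrak g'$-stable complement $C\subseteq\V$. Applying $p_\lambda$ projects $C$ onto a $\mathfrak g'$-stable complement of $U_\lambda\otimes_{\mathbb D_\lambda}N$ inside $\V(\lambda)$, which through the inverse of the evaluation map yields the desired complement of $N$ in $M_\lambda$. The point that will require the most care is the super Schur lemma and the resulting treatment of the type-Q case where $\mathbb D_\lambda\supsetneq\mathbb K$ contains odd endomorphisms: in that situation the isomorphism $\V(\lambda)\cong U_\lambda\otimes_{\mathbb D_\lambda}M_\lambda$ and the submodule correspondence must be set up using the $\diamond$-convention from the preceding remark so that parities and signs are tracked correctly. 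Once this bookkeeping is in place, the reduction above goes through uniformly in both the type-M and type-Q cases.
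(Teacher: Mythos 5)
Your reduction to the $\mathfrak g$-isotypic components $\V(\lambda)\cong U_\lambda\otimes_{\mathbb D_\lambda}M_\lambda$ and the correspondence between $\mathfrak g\oplus\mathfrak g'$-submodules of $\V(\lambda)$ and $\mathfrak g'$-stable $\mathbb D_\lambda$-submodules of $M_\lambda$ are sound, but the final step has a genuine gap. A first, minor, problem: $p_\lambda(C)$ need not be a complement of $A:=U_\lambda\otimes_{\mathbb D_\lambda}N$ inside $\V(\lambda)$. If $c=a+w$ with $0\neq a\in A$ and $w$ in a different isotypic component, such a vector can perfectly well lie in a $\mathfrak g'$-stable complement $C$ of $A$ in $\V$ (e.g.\ when $\mathfrak g'$ acts trivially one can take $C$ containing $a+w$), and then $p_\lambda(c)=a$ gives $A\cap p_\lambda(C)\neq\{0\}$. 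This is repairable: $C\cap\V(\lambda)$ is a $\mathfrak g'$-stable complement of $A$ in $\V(\lambda)$, or one can simply note that $\V(\lambda)$, being a $\mathfrak g'$-submodule of a semisimple $\mathfrak g'$-module, is itself $\mathfrak g'$-semisimple. The serious problem is the next move: a $\mathfrak g'$-stable complement of $A$ inside $\V(\lambda)$ is in general \emph{not} $\mathfrak g$-stable, hence not of the form $U_\lambda\otimes_{\mathbb D_\lambda}N'$, and so ``the inverse of the evaluation map'' cannot be applied to it. What you need is a complement of $N$ in $M_\lambda$ that is simultaneously $\mathbb D_\lambda$-stable and $\mathfrak g'$-stable, i.e.\ a $\mathfrak g\oplus\mathfrak g'$-stable complement of $A$ — which is exactly the statement you are trying to prove for $\V(\lambda)$. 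As written, the last step is circular.

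What the hypotheses give you cheaply is only that $M_\lambda$ is semisimple over $\mathfrak g'$ alone (it is a $\mathfrak g'$-direct summand of $\V(\lambda)$); upgrading this to semisimplicity over the superalgebra generated by the commuting actions of $\mathbb D_\lambda$ and $\mathfrak g'$ is the real content, and it requires an input of the same nature as the paper's proof. The paper replaces $\mathfrak g,\mathfrak g'$ by the images $\mathscr A,\mathscr B$ of their enveloping algebras in $\End(\V)$, shows these are semisimple superalgebras (Lemma \ref{LemmaAppendixB1}), and then uses that a tensor product of finite-dimensional semisimple superalgebras over $\mathbb R$ or $\mathbb C$ is semisimple (Proposition \ref{PropositionSemisimpleSuperalgebras}) together with Theorem \ref{SuperWedderburnTheorem}. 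You could finish your multiplicity-space argument in the same way, applying the tensor-product semisimplicity to $\mathbb D_\lambda$ and the image of $\mathscr U(\mathfrak g')$ in $\End(M_\lambda)$ (note this also uses $\mathbb K=\mathbb R$ or $\mathbb C$; it can fail over general fields), but some such ingredient is unavoidable, and the projection-and-pullback step you propose does not supply it.
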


\begin{proof}

Let $\iota: \mathscr{U}(\mathfrak{g}) \to \End(\V)$ and $\iota': \mathscr{U}(\mathfrak{g}') \to \End(\V)$ be the superalgebra homomorphisms obtained from the actions of $\mathfrak{g}$ and $\mathfrak{g}'$ on $\V$. Furthermore, set $\mathscr{A}:=\iota(\mathscr{U}(\mathfrak{g}))$ and $\mathscr{B}:=\iota'(\mathscr{U}(\mathfrak{g}'))$.  From Lemma \ref{LemmaAppendixB1} it follows that both $\mathscr{A}$ and $\mathscr{B}$ are semisimple superalgebras. Now consider the map
\begin{equation*}
 j: \mathscr{A} \otimes  \mathscr{B} \ni a\otimes b \mapsto \T_{a, b} \in \End(\V)\,, \qquad \T_{a,b}(v):=abv\,, \qquad (v \in \V)\,.
\end{equation*}
Since $\mathscr{A}$ and $\mathscr{B}$ supercommute, the map $j$ is a homomorphism of associative superalgebras. Furthermore, its image is the same as the image of $\mathscr{U}(\mathfrak{g} \oplus \mathfrak{g}') \cong \mathscr{U}(\mathfrak{g})\otimes \mathscr{U}(\mathfrak{g}')$. Thus by Theorem \ref{SuperWedderburnTheorem} it suffices to verify that the image of $j$ is a semisimple superalgebra. From Proposition \ref{PropositionSemisimpleSuperalgebras} it follows that $\mathscr{A} \otimes \mathscr{B}$ is semisimple, so $\mathscr{A} \otimes \mathscr{B} / \ker(j) \cong \Im(j)$ is semisimple.
\end{proof}


In the rest of this section, $\B$ will be an even, non-degenerate, $(-1)$-supersymmetric form on $\V = \V_{\bar{0}} \oplus \V_{\bar{1}}$. We use the notation  $\mathfrak{spo}(\V\,, \B)$ for the corresponding orthosymplectic Lie superalgebra, i.e.
\begin{equation*}
\mathfrak{spo}(\V\,, \B)_{\alpha} = \left\{\A \in \mathfrak{gl}(\V)_{\alpha}\,, \B(\A(x)\,, y) + (-1)^{\alpha \xi}\B(x, \A(y)) = 0\,, x \in \V_{\xi}\,, y \in \V\right\}\,, \qquad (\alpha \in \mathbb{Z}_{2})\,.
\end{equation*}

We denote by $(\pi\,, \V)$ the natural action of $\mathfrak{spo}(\V\,, \B)$ on $\V$. \\

Let $\mathfrak{g}$ be a ($\mathbb Z_2$-graded) subalgebra of $\mathfrak{spo}(\V,\,\B)$. Let $(\lambda\,, \W)$, where $\W = \W_{\bar{0}} \oplus \W_{\bar{1}}$, be an irreducible $\mathfrak{g}$-module such that $\Hom_{\mathfrak{g}}(\W\,, \V) \neq \{0\}$. We denote by $\W_{\lambda}$ the $\mathfrak{g}$-isotypic component of $(\lambda\,, \W)$ in $\V$, i.e.
\begin{equation*}
\W_{\lambda} = \left\{\T(\W)\,, \T \in \Hom_{\mathfrak{g}}(\W\,, \V)\right\}\,.
\end{equation*}

\begin{nota}

For every sub-superalgebra $\mathfrak{l}$ of $\mathfrak{spo}(\V\,, \B)$, we denote by $\mathcal{C}_{\mathfrak{spo}(\V\,, \B)}(\mathfrak{l})$ the supercommutant of $\mathfrak{l}$ in $\mathfrak{spo}(\V\,, \B)$, i.e. 
\begin{equation*}
\mathcal{C}_{\mathfrak{spo}(\V\,, \B)}(\mathfrak{l}) = \left\{\X \in \mathfrak{spo}(\V\,, \B)\,, \left[\X\,, \Y\right] = 0\,, \Y \in \mathfrak{l}\right\}\,.
\end{equation*}

\end{nota}

\begin{defn}

A dual pair in $\mathfrak{spo}(\V\,, \B)$ is a pair of subalgebras $(\mathfrak{g}\,, \mathfrak{g}')$ 
of the Lie superalgebra $\mathfrak{spo}(\V\,, \B)$
such that $\mathcal{C}_{\mathfrak{spo}(\V\,, \B)}(\mathfrak{g}) = \mathfrak{g}'$ and $\mathcal{C}_{\mathfrak{spo}(\V\,, \B)}(\mathfrak{g}') = \mathfrak{g}$.
We say that the dual pair $(\mathfrak{g}\,, \mathfrak{g}')$ is reductive if $\mathfrak{g}$ and $\mathfrak{g}'$ both act on $\V$ semisimply.

\end{defn}

\begin{rema}

Assume that there exists a $\B$-orthogonal decomposition of $\V$ of the form $\V = \V^{1} \oplus^{\perp} \V^{2}$, where both $\V^{1} = \V^{1}_{\bar{0}} \oplus \V^{1}_{\bar{1}}$ and $\V^{2} = \V^{2}_{\bar{0}} \oplus \V^{2}_{\bar{1}}$ are $\mathfrak{g}$ and $\mathfrak{g}'$-invariant. Then, $(\mathfrak{g}_{1}\,, \mathfrak{g}'_{1})$ (resp. $(\mathfrak{g}_{2}, \mathfrak{g}'_{2}))$ is a dual pair in $\mathfrak{spo}(\V^{1}, \B^{1})$ (resp. $\mathfrak{spo}(\V^{2}, \B^{2})$), where $\mathfrak{g}_{i} = \mathfrak{g}_{|_{\V^{i}}}$, $\mathfrak{g}'_{i} = \mathfrak{g}'_{|_{\V^{i}}}$ and $\B^{i} = \B_{|_{\V^{i} \times \V^{i}}}$.
In this case, we say that $(\mathfrak{g}\,, \mathfrak{g}')$ is the direct sum of $(\mathfrak{g}_{1}\,, \mathfrak{g}'_{1})$ and $(\mathfrak{g}_{2}\,, \mathfrak{g}'_{2})$. In particular, we have $\mathfrak{g} = \mathfrak{g}_{1} \oplus \mathfrak{g}_{2}$ and 
$\mathfrak g'=\mathfrak g'_1\oplus\mathfrak g'_2$. Indeed, because both $\V_{1}$ and $\V_{2}$ are $\mathfrak{g}$-invariant, it follows that $\mathfrak{g} \subseteq \mathfrak{g}_{1} \oplus \mathfrak{g}_{2} \subseteq \mathfrak{spo}(\V\,, \B)$. Moreover, one can easily see that $\mathfrak{g}_{1} \oplus \mathfrak{g}_{2}$ commutes with $\mathfrak{g}'$, and then $\mathfrak{g}_{1} \oplus \mathfrak{g}_{2} \subseteq \mathcal{C}_{\mathfrak{spo}(\V\,, \B)} (\mathfrak{g}') = \mathfrak{g}$, so $\mathfrak{g} = \mathfrak{g}_{1} \oplus \mathfrak{g}_{2}$.
The argument for $\mathfrak g'$ is similar. 
\label{Remark1}

\end{rema}

In view of Remark \ref{Remark1}, the following definition is natural. 
\begin{defn}

The dual pair $(\mathfrak g,\mathfrak g')$ is called irreducible if there is no proper $\B$-orthogonal decomposition of $\V$ of the form $\V = \V^{1} \oplus^{\perp} \V^{2}$, where both $\V^{1} $ and $\V^{2}$ are $\mathfrak{g}\oplus\mathfrak{g}'$-invariant.
\end{defn}

From now on, we assume that $(\mathfrak{g}, \mathfrak{g}')$ is a reductive dual pair in $\mathfrak{spo}(\V\,, \B)$.
Let $(\lambda_{1}\,, \W_{1})\,, \ldots\,, (\lambda_{n}\,, \W_{n})$ be a complete list of representatives for the irreducible summands of the $\mathfrak{g}$-module $\V$ and let $\W_{\lambda_{i}}\,, 1 \leq i \leq n$, be the $\mathfrak{g}$-isotypic component corresponding to $\lambda_{i}$, so that  $\V = \bigoplus_{i=1}^{n} \W_{\lambda_{i}}$.
\begin{lemme}

For every $1 \leq i \leq n$, $\W_{\lambda_{i}}$ is a $\mathfrak{g}'$-module.

\label{LemmaJuly11}

\end{lemme}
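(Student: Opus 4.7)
The plan is to exploit the supercommutation relation between $\mathfrak g$ and $\mathfrak g'$ to realize each element of $\mathfrak g'$ as a $\mathfrak g$-module endomorphism of $\V$, and then observe that every such endomorphism must preserve each isotypic component.

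First, since $(\mathfrak g,\mathfrak g')$ is a dual pair we have $\mathfrak g'=\mathcal C_{\mathfrak{spo}(\V,\B)}(\mathfrak g)$, so for every homogeneous $\Y\in\mathfrak g'$ and every homogeneous $\X\in\mathfrak g$ the supercommutator $[\Y,\X]=0$ in $\mathfrak{spo}(\V,\B)$. Writing this out via the action $\pi$ on $\V$ gives, for all $v\in\V$,
\[
\Y\cdot(\X\cdot v)-(-1)^{|\Y|\cdot|\X|}\X\cdot(\Y\cdot v)=0.
\]
Comparing with \eqref{Equation1}, this shows that $\Y\in\Hom_{\mathfrak g}(\V,\V)_{|\Y|}$. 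By linearity, the entire $\mathfrak g'$ lies in $\Hom_{\mathfrak g}(\V,\V)$.

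Next, I recall that $\W_{\lambda_i}=\{\T(\W_i)\,,\ \T\in\Hom_{\mathfrak g}(\W_i,\V)\}$ (where the right-hand side is understood as the $\mathbb K$-linear span of all such images). Given any $\Y\in\mathfrak g'$ viewed as an element of $\Hom_{\mathfrak g}(\V,\V)$, and any $\T\in\Hom_{\mathfrak g}(\W_i,\V)$, the composition $\Y\circ\T$ again lies in $\Hom_{\mathfrak g}(\W_i,\V)$ (the space $\Hom_{\mathfrak g}(-,-)$ is closed under composition with elements of $\Hom_{\mathfrak g}(\V,\V)$, with parity adding). Hence
\[
\Y\bigl(\T(\W_i)\bigr)=(\Y\circ\T)(\W_i)\subseteq \W_{\lambda_i},
\]
which shows that $\Y(\W_{\lambda_i})\subseteq \W_{\lambda_i}$. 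Since this holds for every homogeneous $\Y\in\mathfrak g'$, the isotypic component $\W_{\lambda_i}$ is $\mathfrak g'$-invariant, as claimed.

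There is no real obstacle here beyond bookkeeping of parities; the only point to keep in mind is that the definition \eqref{Equation1} of $\Hom_{\mathfrak g}$ already incorporates the sign $(-1)^{\alpha|\X|}$ needed to turn a supercommuting operator into a graded intertwiner, which is precisely what makes the composition argument go through in the super setting.
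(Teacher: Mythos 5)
Your proof is correct and is essentially the paper's argument: the paper shows directly that $\pi(\X')\circ\T\in\Hom_{\mathfrak g}(\W_{i},\V)$ by the same sign computation, which is just your two steps (each $\X'\in\mathfrak g'$ lies in $\Hom_{\mathfrak g}(\V,\V)$, and composition adds parities) combined into one calculation.
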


\begin{proof}
It suffices  to verify that $\pi(\X') \circ \T \in \Hom_{\mathfrak{g}}(\W_{i}\,, \V)$ for every $\X' \in \mathfrak{g}'$ and $\T \in \Hom_{\mathfrak{g}}(\W_{i}\,, \V)$. For homogeneous $\X \in \mathfrak{g}, \X' \in \mathfrak{g}'$ and $\T \in \Hom_{\mathfrak{g}}(\W_{i}\,, \V)$, we have
\begin{eqnarray*}
(\pi(\X') \circ \T) \circ \lambda_{i}(\X) & = & \pi(\X') \circ \T \circ \lambda_{i}(\X) = (-1)^{\left|\T\right|\cdot\left|\X\right|}\pi(\X') \circ \pi(\X) \circ \T \\
& = & (-1)^{\left|\X\right|\cdot\left|\X'\right|}(-1)^{\left|\T\right|\cdot\left|\X\right|}\pi(\X) \circ (\pi(\X') \circ \T) = (-1)^{\left|\X\right|\cdot(\left|\T\right|+\left|\X'\right|)} \pi(\X) \circ (\pi(\X') \circ \T)\,,
\end{eqnarray*}
and the lemma follows from the relation $\left|\pi(\X') \circ \T\right| = \left|\T\right|+\left|\X'\right|$.
\end{proof}

We now refine the decomposition of  $\V$ into $\mathfrak g$-isotypic components  as \[
\V = \bigoplus\limits_{i=1}^{n} \W_{\lambda_{i}} = \bigoplus\limits_{i=1}^{n} \left(\bigoplus\limits_{j=1}^{d_{i}} \T_{j}(\W_{i})\right),
\] where $d_{i} = \dim_{\mathbb{K}} \Hom_{\mathfrak{g}}(\W_{i}\,, \V)$ and $\T_{1}, \ldots, \T_{d_{i}}$ is a homogeneous $\mathbb K$-basis of $\Hom_{\mathfrak{g}}(\W_{i}\,, \V)$.

\begin{nota}

For every $1 \leq i \leq n$ and $1 \leq j \leq d_{i}$, we denote 
the subspace 
$\T_{j}(\W_{i})$ of $\W_{\lambda_{i}}$
by $\V^{j}_{i}$.
\end{nota}

\begin{lemme}

The restriction of $\B$ to $\V^{j}_{i}$ is either zero or non-degenerate.

\label{Lemma0}

\end{lemme}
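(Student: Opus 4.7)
The plan is to study the radical of $\B|_{\V^j_i \times \V^j_i}$, namely
\[
\Rad := \left\{v \in \V^j_i \,:\, \B(v, w) = 0 \text{ for all } w \in \V^j_i\right\},
\]
and to show that $\Rad$ is a $\mathbb{Z}_2$-graded $\mathfrak{g}$-submodule of $\V^j_i$. Since $\V^j_i = \T_j(\W_i)$ is the image of the nonzero $\mathfrak{g}$-intertwiner $\T_j$, and $\W_i$ is irreducible (so that $\T_j$ is injective), the module $\V^j_i$ is isomorphic to $\W_i$ and is therefore $\mathfrak g$-irreducible. Consequently  $\Rad$ must be either $\{0\}$ or all of $\V^j_i$, which is exactly the conclusion.

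First I would check that $\Rad$ is $\mathbb Z_2$-graded. This uses the assumption that $\B$ is even: if $v = v_{\bar 0} + v_{\bar 1} \in \Rad$ then for a homogeneous $w \in (\V^j_i)_\alpha$ the equality $\B(v,w) = \B(v_\alpha,w)$ holds (since $\B$ pairs only homogeneous components of the same parity), so each $v_\alpha$ lies in $\Rad$. Moreover, because $\B$ is $(-1)$-supersymmetric, the left and right radicals coincide, so there is no ambiguity in the definition of $\Rad$.

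Next I would verify $\mathfrak g$-invariance. Let $\X \in \mathfrak g$ and $v \in \Rad$ be homogeneous. Because $\V^j_i$ is $\mathfrak g$-stable, we have $\X(v) \in \V^j_i$ and $\X(w) \in \V^j_i$ for every $w \in \V^j_i$. Since $\mathfrak g \subseteq \mathfrak{spo}(\V,\B)$, for any homogeneous $w \in \V^j_i$
\[
\B(\X(v), w) = -(-1)^{|\X|\cdot|v|} \B(v, \X(w)) = 0,
\]
the last equality holding because $\X(w) \in \V^j_i$ and $v \in \Rad$. Hence $\X(v) \in \Rad$.

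Combining these two observations, $\Rad$ is a graded $\mathfrak g$-submodule of the irreducible module $\V^j_i \cong \W_i$, so $\Rad \in \{0, \V^j_i\}$. In the first case $\B|_{\V^j_i}$ is non-degenerate, and in the second it is identically zero. I do not anticipate any real obstacle here; the only subtle point is keeping track of the $\mathbb{Z}_2$-grading to ensure the standard Schur-type argument (an invariant subspace of an irreducible module is $0$ or everything) applies in the super setting, which is handled by the evenness of $\B$.
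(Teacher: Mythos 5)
Your proof is correct and follows essentially the same route as the paper: both arguments consider the radical of $\B|_{\V^j_i\times\V^j_i}$, observe that it is a $\mathbb{Z}_2$-graded, $\mathfrak{g}$-invariant subspace of the irreducible module $\V^j_i=\T_j(\W_i)$, and conclude it is $\{0\}$ or all of $\V^j_i$. You merely spell out the gradedness and invariance checks that the paper leaves implicit (the only tiny imprecision being that $\V^j_i$ may be isomorphic to $\Pi(\W_i)$ rather than $\W_i$ when $\T_j$ is odd, which does not affect irreducibility).
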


\begin{proof}

The subspace of $\V^{j}_{i}$ defined by
$
\left\{v \in \V^{j}_{i}\,, \B(v\,, w) = 0, w \in \V^{j}_{i}\right\}
$
is $\mathbb{Z}_{2}$-graded and $\mathfrak{g}$-invariant. Consequently,  this subspace is equal to either $\{0\}$ (and in this case  $\B: \V^{j}_{i} \times \V^{j}_{i} \to \mathbb{K}$ is non-degenerate) or $\V^{j}_{i}$ (and in this case $\B: \V^{j}_{i} \times \V^{j}_{i} \to \mathbb{K}$ is zero).
\end{proof}

\begin{rema}

\begin{enumerate}
\item Assume that the restriction of $\B$ to $\V^{j}_{i}$ is non-degenerate. It follows that the $\mathfrak{g}$-module $\V^{j}_{i}$ is self-dual (the $\mathfrak{g}$-equivariant map $\S: \V^{j}_{i} \to {\V^{j}_{i}}^{*}$ is given by $\S(v)(w) = \B(v\,, w)\,, v, w \in \V^{j}_{i}$).
\item Assume that the restriction of $\B$ to $\V^{j}_{i}$ is zero. Then there exist $1 \leq k \leq n$ and $1 \leq l \leq d_{k}$ such that the form $\B$ on $\V^{j}_{i} \times \V^{l}_{k}$ is non-zero. Similar to the proof of Lemma \ref{Lemma0}, it follows that $\V^{j}_{i} \cong {\V^{l}_{k}}^{*}$.
\end{enumerate}

\end{rema}

\begin{lemme}

Let $i \in [|1, n|]$ be such that the restriction of $\B$ to $\W_{\lambda_{i}} \times \W_{\lambda_{i}}$ is non-zero. Then $\B$ is non-degenerate on $\W_{\lambda_{i}}$ and the space $\W_{\lambda_{i}}$ is $\B$-orthogonal to every $\W_{\lambda_{j}}, j \neq i$.

\label{Lemma01}

\end{lemme}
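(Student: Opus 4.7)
The plan is to exploit the non-degeneracy of $\B$ on the whole space via the induced $\mathfrak g$-equivariant map $\Phi\colon \V\to\V^*$, $\Phi(v):=\B(v,\cdot)$. First I would verify that $\Phi$ is $\mathfrak g$-equivariant: its evenness (forced by the evenness of $\B$) combined with the defining relation of $\mathfrak{spo}(\V,\B)$ yields $\Phi(X\cdot v)=X\cdot\Phi(v)$ for homogeneous $X\in\mathfrak g$, where $\V^*$ is endowed with the super-contragredient action. Since $\B$ is non-degenerate, $\Phi$ is a $\mathfrak g$-module isomorphism.

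Dualizing the decomposition $\V=\bigoplus_{k=1}^{n}\W_{\lambda_k}$ produces $\V^*=\bigoplus_{k=1}^{n}\W_{\lambda_k}^*$, and each $\W_{\lambda_k}^*$ is $\mathfrak g$-isotypic of type $\W_k^*$. For every pair $(i,j)$ let $\Phi_{ij}\colon\W_{\lambda_i}\to\W_{\lambda_j}^*$ denote the composite of $\Phi|_{\W_{\lambda_i}}$ with the projection $\V^*\twoheadrightarrow \W_{\lambda_j}^*$. Applying the super-version of Schur's lemma to the $\mathbb Z_2$-graded $\Hom$-space of Equation~\eqref{Equation1} shows that $\Phi_{ij}=0$ unless $\W_i\cong\W_j^*$. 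Because the classes $\W_1,\ldots,\W_n$ are pairwise distinct, for each $i$ there is at most one index $i^*\in\{1,\ldots,n\}$ with $\W_{i^*}\cong\W_i^*$. Existence of $i^*$ follows from the injectivity of $\Phi|_{\W_{\lambda_i}}$, and a dimension count (using the involutive symmetry $(i^*)^*=i$) upgrades $\Phi|_{\W_{\lambda_i}}$ to an isomorphism $\W_{\lambda_i}\xrightarrow{\sim}\W_{\lambda_{i^*}}^*$. In particular $\B(\W_{\lambda_i},\W_{\lambda_j})=0$ for every $j\neq i^*$, and the pairing between $\W_{\lambda_i}$ and $\W_{\lambda_{i^*}}$ induced by $\B$ is perfect.

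A short case analysis then completes the argument. If $i\neq i^*$, the orthogonality established above forces $\B(\W_{\lambda_i},\W_{\lambda_i})=0$, contradicting the hypothesis; hence $i=i^*$. This gives simultaneously both conclusions of the lemma: $\B$ is non-degenerate on $\W_{\lambda_i}$ (via the perfect self-pairing $\W_{\lambda_i}\xrightarrow{\sim}\W_{\lambda_i}^*$), and $\W_{\lambda_i}$ is $\B$-orthogonal to every $\W_{\lambda_j}$ with $j\neq i$. The main technicality is the correct implementation of super-Schur together with the sign conventions for the contragredient action, but this is routine within the framework already set up in the paper.
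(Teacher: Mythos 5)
Your argument is correct, and it reaches both conclusions by a somewhat different mechanism than the paper. The paper works componentwise: it already has Lemma~\ref{Lemma0} (the restriction of $\B$ to each irreducible copy $\V^{j}_{i}$ is zero or non-degenerate) and the remark following it (a non-zero $\B$-pairing between two irreducible copies forces them to be mutually dual). From the hypothesis it extracts a copy with $\V^{k}_{i}\cong{\V^{k}_{i}}^{*}$, deduces orthogonality to every $\W_{\lambda_{j}}$, $j\neq i$, by the same duality argument plus pairwise non-isomorphy of the $\lambda_{j}$, and then gets non-degeneracy on $\W_{\lambda_{i}}$ from $\Rad(\B_{|_{\W_{\lambda_{i}}\times\W_{\lambda_{i}}}})\subseteq\Rad(\B)=\{0\}$. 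You instead globalize: the even equivariant isomorphism $\Phi:\V\to\V^{*}$, super-Schur applied to the blocks $\Phi_{ij}$, and uniqueness of the dual partner give the map $i\mapsto i^{*}$, orthogonality to all $\W_{\lambda_{j}}$ with $j\neq i^{*}$, and (via $(i^{*})^{*}=i$ and a dimension count) a perfect pairing between $\W_{\lambda_{i}}$ and $\W_{\lambda_{i^{*}}}$; the hypothesis then forces $i=i^{*}$. Your route avoids Lemma~\ref{Lemma0} and the radical argument entirely, and in passing it establishes the partner structure that the paper proves separately in Lemma~\ref{Lemma02}; the paper's route is shorter here because Lemma~\ref{Lemma0} is already on hand. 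The only points to keep straight in a full writeup are exactly the ones you flag: the sign conventions making $\Phi$ even and lying in the graded $\Hom$-space of Equation~\eqref{Equation1}, and that ``isomorphic'' is taken in the paper's sense of a homogeneous (possibly odd) invertible intertwiner, so parity-shifted copies inside an isotypic component cause no difficulty in the Schur step.
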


\begin{proof}

Fix such $i$. Because the form $\B$ on $\W_{\lambda_{i}} \times \W_{\lambda_{i}}$ is non-zero, there exists $1 \leq k < t \leq d_{i}$ such that $\B(\V^{k}_{i}\,, \V^{t}_{i}) \neq \{0\}$. It follows from Lemma \ref{Lemma0} that $\V^{k}_{i} \cong {\V^{t}_{i}}^{*}$. By using that $\V^{k}_{i} \cong {\V^{t}_{i}}$, it follows that $\V^{k}_{i} \cong {\V^{k}_{i}}^{*}$ as a $\mathfrak{g}$-module. 

If $\B(\V^{a}_{i}\,, \V^{b}_{j}) \neq 0$ for some $b, j$ such that $i \neq j$, then Lemma \ref{Lemma0} yields that $\V^{a}_{i} \cong {\V^{b}_{j}}^{*}$. By using that $\V^{a}_{i} \cong {\V^{a}_{i}}^{*}$, it follows that $\V^{a}_{i} \cong \V^{b}_{j}$, which is impossible. In particular, $\W_{\lambda_{i}} \perp \W_{\lambda_{j}}$ and it follows that $\Rad(\B_{|_{\W_{\lambda_{i}} \times \W_{\lambda_{i}}}}) \subseteq \Rad(\B) = \{0\}$.
\end{proof}

\begin{lemme}

Let $i \in [|1, n|]$ such that  $\B_{|_{\W_{\lambda_{i}} \times \W_{\lambda_{i}}}} = 0$. Then there exists a unique $j \neq i$ such that $B: \W_{\lambda_{i}} \oplus \W_{\lambda_{j}} \times \W_{\lambda_{i}} \oplus \W_{\lambda_{j}} \to \mathbb{K}$ is non-degenerate with $\B_{|_{\W_{\lambda_{j}} \times \W_{\lambda_{j}}}} = 0$. In particular,  for  $1\leq a \leq  d_{i}$ and $1\leq b \leq d_{j}$ we have, $\V^{a}_{i} \cong {\V^{b}_{j}}^{*}$. Moreover, $\W_{\lambda_{i}} \oplus \W_{\lambda_{j}} \perp \W_{\lambda_{k}}$ for all$ k \neq i, j$.

\label{Lemma02}

\end{lemme}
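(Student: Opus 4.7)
The plan is to locate a unique index $j\neq i$ whose isotypic component $\W_{\lambda_{j}}$ forms a ``hyperbolic partner'' with $\W_{\lambda_{i}}$ under $\B$, and then deduce every assertion of the lemma from this pairing. Because $\B$ is non-degenerate on $\V$ and vanishes on $\W_{\lambda_{i}}\times \W_{\lambda_{i}}$, there must exist indices $k\neq i$, $a$, and $b$ such that $\B(\V^{a}_{i},\V^{b}_{k})\neq 0$; otherwise $\W_{\lambda_{i}}$ would be contained in the radical of $\B$. Set $j:=k$. Since $\B$ is even, the assignment $T(v):=\B(v,\cdot)$ defines a degree-zero element of $\Hom_{\mathfrak{g}}(\V^{a}_{i},(\V^{b}_{k})^{*})$, where $(\V^{b}_{k})^{*}$ carries its canonical contragredient $\mathfrak{g}$-action. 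Since $T$ is nonzero and both source and target are irreducible $\mathfrak{g}$-modules, $T$ must be an isomorphism by the superized Schur lemma; in particular $\W_{i}\cong \W_{j}^{*}$.

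Next I would establish the uniqueness of $j$ together with the $\B$-orthogonality of $\W_{\lambda_{i}}\oplus \W_{\lambda_{j}}$ to every other isotypic component. If $\B(\V^{a'}_{i},\V^{c}_{\ell})\neq 0$ for some $\ell\neq i$, the same Schur-type argument produces $\W_{i}\cong \W_{\ell}^{*}$; combined with $\W_{i}\cong \W_{j}^{*}$ this forces $\W_{\ell}\cong \W_{j}$, hence $\ell=j$ by distinctness of the irreducible summands. In particular $\W_{\lambda_{i}}\perp \W_{\lambda_{\ell}}$ for every $\ell\notin\{i,j\}$. To see that $\B|_{\W_{\lambda_{j}}\times \W_{\lambda_{j}}}=0$, I would apply Lemma~\ref{Lemma01} with $j$ in place of $i$ contrapositively: otherwise $\W_{\lambda_{j}}$ would be $\B$-orthogonal to every other isotypic component, contradicting $\B(\V^{a}_{i},\V^{b}_{j})\neq 0$. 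Non-degeneracy of $\B$ on $\W_{\lambda_{i}}\oplus \W_{\lambda_{j}}$ now follows: any $v$ in the radical of the restriction is also orthogonal to each $\W_{\lambda_{\ell}}$ with $\ell\notin\{i,j\}$ by what was just shown, and therefore lies in the radical of $\B$ on $\V$, which is zero. Finally, because each $\V^{a}_{i}$ is $\mathfrak{g}$-isomorphic to $\W_{i}$ and each $\V^{b}_{j}$ to $\W_{j}$, the isomorphism $\W_{i}\cong \W_{j}^{*}$ promotes to $\V^{a}_{i}\cong (\V^{b}_{j})^{*}$ for all $a,b$.

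The main technical obstacle is not a conceptual one but rather the super-linear algebra bookkeeping: one must verify, using the defining relation $\B(\X u,v)+(-1)^{|\X|\cdot|u|}\B(u,\X v)=0$ for $\X\in\mathfrak{g}\subseteq\mathfrak{spo}(\V,\B)$, that $v\mapsto \B(v,\cdot)$ genuinely lies in the graded $\Hom_{\mathfrak{g}}$-space of Equation~\eqref{Equation1}, and one must invoke the superized Schur lemma to conclude that a nonzero such map is a homogeneous isomorphism. Once these parity checks are performed the remainder of the proof is a direct super-analogue of the classical argument for isotypic decompositions with respect to a non-degenerate invariant form.
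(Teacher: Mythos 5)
Your overall strategy is the same as the paper's: existence of $j$ from non-degeneracy of $\B$, the identification $\V^{a}_{i} \cong {\V^{b}_{j}}^{*}$ via the map $v \mapsto \B(v,\cdot)$ between irreducibles, uniqueness of $j$ by comparing $\W_{i}\cong\W_{j}^{*}$ with $\W_{i}\cong\W_{\ell}^{*}$, and the vanishing of $\B$ on $\W_{\lambda_{j}}\times\W_{\lambda_{j}}$ by the contrapositive of Lemma~\ref{Lemma01}. All of that is fine, including the parity bookkeeping you flag (evenness of $\B$ makes $v\mapsto\B(v,\cdot)$ an even element of $\Hom_{\mathfrak g}$, and the super Schur lemma applies).

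There is, however, one step that does not follow from what you actually proved. In your last step you assert that an element of the radical of $\B|_{\W_{\lambda_{i}}\oplus\W_{\lambda_{j}}}$ is orthogonal to every $\W_{\lambda_{\ell}}$ with $\ell\notin\{i,j\}$ ``by what was just shown.'' What you showed is only $\W_{\lambda_{i}}\perp\W_{\lambda_{\ell}}$; a vector in $\W_{\lambda_{i}}\oplus\W_{\lambda_{j}}$ has a component in $\W_{\lambda_{j}}$, so you also need $\W_{\lambda_{j}}\perp\W_{\lambda_{\ell}}$ for $\ell\notin\{i,j\}$ --- which is moreover half of the lemma's final assertion $\W_{\lambda_{i}}\oplus\W_{\lambda_{j}}\perp\W_{\lambda_{k}}$, and your proposal announces it but never proves it. The gap is easy to close, and the paper does exactly this: once you know $\B|_{\W_{\lambda_{j}}\times\W_{\lambda_{j}}}=0$, rerun your uniqueness argument with $j$ in place of $i$; if $\B(\W_{\lambda_{j}},\W_{\lambda_{\ell}})\neq 0$ for some $\ell\neq j$ then $\W_{j}\cong\W_{\ell}^{*}$, while $\W_{j}\cong\W_{i}^{*}$ already, forcing $\ell=i$. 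With that in hand, your radical argument for non-degeneracy on $\W_{\lambda_{i}}\oplus\W_{\lambda_{j}}$ goes through as written. Please add this symmetric application explicitly rather than appealing to ``what was just shown.''
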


\begin{proof}

The existence of such $j$ is straightforward (otherwise, $\W_{\lambda_{i}} \subseteq \Rad(\B) = \{0\}$). Let us first prove its uniqueness. If $k$ is such that $\B: \W_{\lambda_{i}} \times \W_{\lambda_{k}} \to \mathbb{K}$ is non-zero, then we get that $\V^{1}_{i} \cong {\V^{1}_{k}}^{*}$, but we also know that $\V^{1}_{i} \cong {\V^{1}_{j}}^{*}$, hence $\V^{1}_{j} \cong \V^{1}_{k}$. This implies that $k = j$. Moreover, it follows from Lemma \ref{Lemma01} that $\B: \W_{\lambda_{j}} \times \W_{\lambda_{j}} \to \mathbb{K}$ is zero, otherwise $\W_{\lambda_{j}}$ would be orthogonal to $\W_{\lambda_{i}}$. 

One can easily see that $\W_{\lambda_{j}}$ is orthogonal to every $\W_{\lambda_{k}}, k \neq i$ (by applying the above argument for uniqueness of $j$ to $\W_{\lambda_{j}}$), and it follows that the restriction of $\B$ to $\W_{\lambda_{i}} \oplus \W_{\lambda_{j}}$ is non-degenerate.
\end{proof}

\begin{nota}

Without loss of generality, we assume that $\B_{|_{\W_{\lambda_{i}}}}\,, 1 \leq i \leq m$, is non-degenerate and $\B_{|_{\W_{\lambda_{i}}}}\,, m+1 \leq i \leq n$, is zero. For every $1 \leq k \leq m + \frac{n-m}{2}$, we denote by $\widetilde{\W}^{k}$ the subspace of $\V$ given by
\begin{equation*}
\widetilde{\W}^{k} = \begin{cases} \W_{\lambda_{k}} & \text{ if } 1 \leq k \leq m \\ \W_{\lambda_{m + 2(k-m)-1}} \oplus \W_{\lambda_{m + 2(k-m)}} & \text{ otherwise } \end{cases}\,.
\end{equation*}

\end{nota}

From Lemmas \ref{Lemma01} and \ref{Lemma02} it follows that the restriction of $\B$ to $\widetilde{\W}^{k}, 1 \leq k \leq m$, is non-degenerate,  and the subspace $\widetilde{\W}^{k}, m+1 \leq k \leq m + \frac{n-m}{2},$ is a direct sum of two ($\B$-isotropic) $\mathfrak{g}$-isotypic components such that $\B_{|_{\widetilde{\W}^{k} \times \widetilde{\W}^{k}}}$ is non-degenerate. Futhermore, $\widetilde{\W}^{i} \perp \widetilde{\W}^{j}$ for $i \neq j$.

The following proposition is a superized analogue  of one of the steps in the classification of dual pairs in the purely even case (see \cite{HOWEALG} or \cite{MVW}). However, we are not able to find a superized extension of the  proofs in \cite{HOWEALG} or \cite{MVW}. Thus, the proof that we give below is essentially new.

\begin{prop}

The joint action of $\mathfrak{g}$ and $\mathfrak{g}'$ on $\W_{\lambda_{i}}, 1 \leq i \leq n$, is irreducible. In particular, every $\mathfrak{g}$-isotypic component $\W_{\lambda_{i}}$ is a full $\mathfrak{g}'$-isotypic component.

\label{Proposition12}

\end{prop}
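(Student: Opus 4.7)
The plan is to argue by contradiction. Assume the joint action of $\mathfrak{g}\oplus\mathfrak{g}'$ on $\W_{\lambda_i}$ is not irreducible. By Proposition \ref{PropositionJointReducibility} the $\mathfrak{g}\oplus\mathfrak{g}'$-module $\V$ is semisimple, so $\W_{\lambda_i}$ admits a non-trivial joint-invariant decomposition $\W_{\lambda_i}=\U\oplus\U'$ with both $\U$ and $\U'$ non-zero. The strategy is to produce an operator in $\mathfrak{g}\cap\mathfrak{g}'$ whose action on $\W_{\lambda_i}$ is incompatible with membership in $\mathfrak{g}$.

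Let $P\in\End(\V)$ be the even operator acting as the projection onto $\U$ along $\U'$ on $\W_{\lambda_i}$ and as zero on $\bigoplus_{j\neq i}\W_{\lambda_j}$. A direct check shows that $P$ supercommutes with $\mathfrak{g}$ (by Lemma \ref{LemmaJuly11} it preserves every $\mathfrak{g}$-isotypic component) and with $\mathfrak{g}'$ (by joint-invariance of $\U,\U'$). Let $\eta$ be the superinvolution on $\End(\V)$ induced by $\B$. The identity $\eta([A,B])=-[\eta(A),\eta(B)]$ combined with $\eta(X)=-X$ for $X\in\mathfrak{spo}(\V,\B)$ implies that $\eta$ preserves the supercommutants of $\mathfrak{g}$ and $\mathfrak{g}'$. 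Consequently the operator $Q:=P-\eta(P)$ satisfies $\eta(Q)=-Q$, so $Q\in\mathfrak{spo}(\V,\B)$, and $Q$ supercommutes with both $\mathfrak{g}$ and $\mathfrak{g}'$. By the dual pair condition this forces $Q\in\mathfrak{g}\cap\mathfrak{g}'$.

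The hard part is to extract a contradiction from the existence of $Q$. The key tension is that $Q\in\mathfrak{g}$ acts on every $\mathfrak{g}$-isotypic copy of $\W_i$ inside $\W_{\lambda_i}$ via the common operator $\lambda_i(Q)\in\End(\W_i)$; combined with $\mathfrak{g}$-equivariance coming from $Q\in\mathfrak{g}'$, this forces $Q|_{\W_{\lambda_i}}$ to be a $\mathbb{D}_i$-scalar operator on $\W_{\lambda_i}$ under the identification $\W_{\lambda_i}\cong \W_i\otimes_{\mathbb{D}_i}\M_i$, where $\mathbb{D}_i=\End_\mathfrak{g}(\W_i)$ and $\M_i=\Hom_\mathfrak{g}(\W_i,\W_{\lambda_i})$. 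If $Q=0$, then $\eta(P)=P$ is $\B$-self-adjoint, which forces $\U\perp \ker(P)=\U'\oplus\bigoplus_{j\neq i}\W_{\lambda_j}$; this yields a $\B$-orthogonal decomposition of $\V$ into $\mathfrak{g}\oplus\mathfrak{g}'$-invariant subspaces, and Remark \ref{Remark1} gives $\mathfrak{g}=\mathfrak{g}|_{\U}\oplus\mathfrak{g}|_{\U'\oplus\mathrm{rest}}$. Choosing an element $X\in\mathfrak{g}$ with $X|_{\U}\neq 0$ and $X|_{\U'\oplus\mathrm{rest}}=0$ then forces $\lambda_i(X)$ to be simultaneously non-zero (from copies of $\W_i$ inside $\U$) and zero (from copies inside $\U'$), a contradiction. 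If $Q\neq 0$, an explicit computation of $\eta(P)|_{\W_{\lambda_i}}$ via Lemmas \ref{Lemma01} and \ref{Lemma02} (it vanishes when $\W_{\lambda_i}$ pairs with some $\W_{\lambda_{i'}}$, and is a $\B$-dual-type projection when $\B|_{\W_{\lambda_i}}$ is non-degenerate) shows that $Q|_{\W_{\lambda_i}}$ distinguishes the $\M_i$-components of $\U$ and $\U'$, hence is not a $\mathbb{D}_i$-scalar, contradicting the constraint above. Finally, the joint irreducibility of $\W_{\lambda_i}$ immediately yields the $\mathfrak{g}'$-isotypic decomposition of $\W_{\lambda_i}$ into copies of a single irreducible $\mathfrak{g}'$-module $N_i$, which is the second assertion.
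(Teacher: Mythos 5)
Your construction of the element $Q=P-\eta(P)$ is sound and is a genuinely different organization from the paper's proof of Proposition~\ref{Proposition12}: $P$ is even and supercommutes with $\mathfrak g$ and $\mathfrak g'$, the identity $\eta([\A,\B])=-[\eta(\A),\eta(\B)]$ together with $\eta(\X)=-\X$ on $\mathfrak{spo}(\V,\B)$ shows $\eta$ preserves both supercommutants, and $\eta(Q)=-Q$ puts $Q$ in $\mathfrak{spo}(\V,\B)$, hence in $\mathfrak g\cap\mathfrak g'$; consequently $Q|_{\W_{\lambda_i}}$ acts as a central even element $z$ of $\mathbb{D}_i=\End_{\mathfrak g}(\W_i)$. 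Your $Q\neq 0$ branch is only asserted, but it can be closed by a short computation you should include: if $\B|_{\W_{\lambda_i}}$ is nondegenerate then $\eta(P)|_{\W_{\lambda_i}}=P|_{\W_{\lambda_i}}-z$ is idempotent, and evaluating idempotency on $\U$ and on $\U'$ gives $z^{2}=z$ and $z^{2}=-z$, hence $z=0$; if $\W_{\lambda_i}$ is isotropic (Lemma~\ref{Lemma02}) then $Q|_{\W_{\lambda_i}}=P|_{\W_{\lambda_i}}$ is a nontrivial idempotent, which can be neither $0$ nor an invertible scalar. So that half is fixable.

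The genuine gap is in the $Q=0$ branch. There your contradiction requires an element $\X\in\mathfrak g$ with $\X|_{\U}\neq 0$, and such an element exists only when $\lambda_i$ is a nontrivial $\mathfrak g$-module; if $\mathfrak g$ acts by zero on $\W_{\lambda_i}$ (the trivial isotypic component), the block-diagonal element $\diag(\X|_{\U},0)$ is zero for every $\X\in\mathfrak g$ and your argument produces nothing. This is not a marginal case: it is exactly the situation (a jointly reducible isotypic component on which $\mathfrak g$ acts trivially) that consumes most of the paper's proof (Cases I.a and I.b there). The contradiction in that case cannot come from comparing $\lambda_i(\X)$ on copies inside $\U$ and $\U'$; instead one must use that the supercommutant of $\mathfrak g$ then contains a whole subalgebra such as $\diag\big(\mathfrak{spo}(\W_{\lambda_i}\,,\B|_{\W_{\lambda_i}})\,,0\big)$ (or the $\mathfrak{spo}$ of a suitable span of isotropic pieces), which by the dual pair condition lies in $\mathfrak g'$, yet contains elements that do not preserve the $\mathfrak g'$-invariant subspace $\U$ — this is the paper's mechanism. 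Until you add an argument of this type for the trivial-type (or, more generally, $\mathfrak g|_{\W_{\lambda_i}}=0$) case, the proof is incomplete precisely where the statement is hardest.
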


\begin{proof}

Let $\U \subseteq \V$ be an irreducible $\mathfrak{g} \oplus \mathfrak{g}'$-submodule.
From Lemma \ref{LemmaJuly11} it follows that 
 $\U$ is a direct sum of several copies of a single irreducible $\mathfrak{g}$-module (i.e., $\U\subseteq \W_{\lambda_i}$ for some $1\leq i\leq n$), and also several copies of a single irreducible $\mathfrak{g}'$-module. By invariance of $\B(\cdot\,,\cdot)$, we know that  $\B\big|_{\U \times \U}$ is either non-degenerate or zero.

\textbf{Case I:}  $\B\big|_{\U\times \U}$ is non-degenerate. Then we can decompose $\V$ as 
\begin{equation*}
\V = \U \oplus \U^{\perp}\,.
\end{equation*}
Note that $\B_{|\U^\perp\times \U^\perp}$ is also nondegenerate. Furthermore, both $\U$ and $\U^{\perp}$ are $\mathfrak{g} \oplus \mathfrak{g}'$-invariant. Consequently, elements of both $\mathfrak{g}$ and $\mathfrak{g}'$ can be represented by matrices of the form $\diag(\X_{\U}\,, \X_{\U^{\perp}})$. We show that for any pair of irreducible $\mathfrak{g}$-modules $\E$ and $\E'$ such that $\E \subseteq \U$ and $\E' \subseteq \U^{\perp}$, we have $\E \not\cong \E'$. This in particular implies that $\U = \W_{\lambda_i}$ for some $1\leq i\leq n$.

Suppose, on the contrary, that $\E,\E'$ exist such that $\E\cong\E'$.  For any  $\X \in \mathfrak{g}$, the matrix $\Y: = \diag(\X\big|_{\U}\,, 0)$ is in $\mathfrak{spo}(\V\,, \B)$, and it also supercommutes  with $\mathfrak{g}'$, hence we have $\Y \in \mathfrak{g}$. Clearly $\Y \cdot \E' = 0$, hence by the assumption that  $\E \cong \E'$ we should have $\Y \cdot \E = 0$. But the actions of $\X$ and $\Y$ on $\E$ are the same, hence $\X \cdot \E=0$. As $\U$ is a direct sum of several copies of $\E$ or $\Pi(\E)$, it follows that $\X \cdot \U = 0$. 
In conclusion, we have proved that for every $\X \in \mathfrak{g}$ we have $\X\cdot \U = 0$. Thus, in particular $\E$ is the trivial $\mathfrak{g}$-module. 

Now let $\W'$ denote the $\mathfrak{g}$-isotypic component for the trivial $\mathfrak{g}$-module inside $\U^{\perp}$. Then $\W'$ is also $\mathfrak{g}'$-invariant (see Lemma \ref{LemmaJuly11}). Take an irreducible $\mathfrak{g} \oplus \mathfrak{g}'$ submodule of $\U' \subseteq \W'$. There are two cases to consider:

\textbf{Case I.a: }$\B\big|_{\U'\times \U'}$ is non-degenerate. In this case we can write $\U^\perp$ as $\U^{\perp}=\U'\oplus {\U'}^{\perp}$, therefore we have a $\mathfrak{g}\oplus \mathfrak{g}'$-invariant decomposition of $\V$ into nondegenerate, $(-1)$-supersymmetric spaces, i.e.,
\begin{equation*}
\V = \U \oplus \U'\oplus {\U'}^{\perp}\,.
\end{equation*}
Since the action of $\mathfrak{g}$ on $\U$ and $\U'$ is trivial, it follows that the elements of $\mathfrak{g}$ are of the form
\begin{equation*}
\diag(0_{\U}\,, 0_{\U'}\,,*_{{\U'}^{\perp}})\,.
\end{equation*}
Clearly matrices of the form $(\mathfrak{spo}(\U\oplus \U')\,,0_{{\U'}^{\perp}})$ supercommute with the elements of $\mathfrak{g}$, hence they belong to $\mathfrak{g}'$. But the latter elements form a subalgebra of $\mathfrak{spo}(\V\,, \B)$ that does \emph{not} preserve the decomposition $\V = \U \oplus \U^{\perp}$. This is a contradiction.

\textbf{Case I.b:} $\B\big|_{\U'\times \U'}=0$. In this case, we can find an irreducible $\mathfrak{g}\oplus \mathfrak{g}'$-submodule $\U''\subseteq \U^{\perp}$ such that 
$\B(\U'\,, \U'')\neq 0$. From irreducibility of $\U'$ and $\U''$ it follows that indeed the bilinear map $\B\big|_{\U'\times \U''}$ is non-degenerate. We claim that $\B(\U''\,, \U'') = 0$. Indeed from non-degeneracy of $\B\big|_{\U'\times \U''}$ it follows that the irreducible $\mathfrak{g}$-submodules of $\U'$ and $\U''$ are dual to each other, hence $\U''$ is a direct sum of trivial $\mathfrak{g}$-modules. If $\B\big|_{\U''\times \U''}$ is nonzero, then it is nondegenerate because of irreducibility of $\U''$. Thus, we  are back to Case I.a above, with $\U'$ replaced by $\U''$, and we arrive at a contradiction.  

By the above argument, from now on we can assume that  $\B(\U''\,, \U'')=0$. Thus obtain a $\mathfrak{g}\oplus \mathfrak{g}'$-invariant decomposition into 
nondegenerate $(-1)$-supersymmetric spaces
\begin{equation}
\label{V=UUU1}
\V = \U\oplus (\U'\oplus \U'')\oplus \widetilde{\U}\,,
\end{equation}
where $\widetilde{\U}$ is the orthogonal complement of $\U'\oplus \U''$ in $\U^{\perp}$. Elements of $\mathfrak{g}$ and $\mathfrak{g}'$ are of the form 
\begin{equation*}
\diag(\A\,, \B\,, \C)\,,
\end{equation*}
compatible with the above direct sum decomposition. Furthermore, for elements of $\mathfrak{g}$ we have $\A = \B = 0$. But then it follows that the supercommutant of $\mathfrak{g}$ should contain matrices of the form 
\begin{equation*}
\diag(\mathfrak{spo}(\U \oplus (\U'\oplus \U''))\,, 0_{\tilde \U})\,,
\end{equation*}
and the subalgebra formed by the above matrices (which is included in $\mathfrak{g}'$) does not leave the decomposition \eqref{V=UUU1} invariant.  
This is a contradiction. 

\textbf{Case II:} 
$\B\big|_{\U\times \U}=0$. Then there is an irreducible $\mathfrak{g}\oplus \mathfrak{g}'$-module $\U'$ such that $\B|_{\U \times \U'}$ is nonzero, hence non-degenerate (because $\U$ and $\U'$ are irreducible). In particular, as $\mathfrak{g}$-modules $\U$ and $\U'$ are dual to each other. If $\B(\U'\,, \U')\neq 0$, then we are back to Case I above, with $\U$ replaced by $\U'$. If $\B(\U'\,, \U')=0$, then the restriction of $\B$ to $\U\oplus\U'$ is a nondegenerate, $(-1)$-supersymmetric bilinear form and therefore we obtain  a $\mathfrak{g} \oplus \mathfrak{g}'$-invariant decomposition of $\V$, i.e.,
\begin{equation*}
\V = \U \oplus \U'\oplus (\U \oplus \U')^{\perp}\,.
\end{equation*}
Note that the restriction of $\B$ to $(\U\oplus \U')^\perp$ is also nondegenerate. 
Now the matrix
\begin{equation*}
\Y:=\diag(\Id_{\U}\,, -\Id_{\U'}\,, 0_{(\U\oplus \U')^{\perp}})
\end{equation*}
is in $\mathfrak{spo}(\V\,, \B)$ and supercommutes with
elements of $\mathfrak{g}'$, hence  $\Y\in\mathfrak{g}$. 
Since   $\Y\big|_{(\U \oplus \U')^{\perp}}=0$ and $\Y\big|_{\U}=-1$, it follows that $\Hom_{\mathfrak{g}}(\U\,, (\U\oplus \U')^\perp)=\{0\}$. Consequently, $\U$ is a full $\mathfrak{g}$-isotypic component. 
\end{proof}
The following assertion is an immediate consequence of Proposition \ref{Proposition12}.
\begin{prop}

For every $1 \leq i \leq m + \frac{n-m}{2}$, $(\mathfrak{g}_{|_{\widetilde{\W}^{i}}}\,, \mathfrak{g}'_{|_{\widetilde{\W}^{i}}})$ is an irreducible reductive dual pair in $\mathfrak{spo}(\widetilde{\W}^{i}\,, \B_{|_{\widetilde{\W}^{i}}})$. In particular, every reductive dual pair is the direct sum of irreducible reductive dual pairs.

\label{Proposition1}

\end{prop}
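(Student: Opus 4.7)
The plan is to read off the conclusion from the structural information about $\widetilde{\W}^i$ already established in Lemmas~\ref{Lemma01} and~\ref{Lemma02}, combined with the joint irreducibility statement in Proposition~\ref{Proposition12}. First I would record the direct sum decomposition $\V=\bigoplus_{i=1}^{m+(n-m)/2} \widetilde{\W}^i$ as a $\B$-orthogonal sum of $\mathfrak{g}\oplus\mathfrak{g}'$-invariant, $\B$-nondegenerate subspaces, and apply Remark~\ref{Remark1} to obtain the compatible direct sum decompositions $\mathfrak{g}=\bigoplus_i \mathfrak{g}_i$ and $\mathfrak{g}'=\bigoplus_i \mathfrak{g}'_i$, where $\mathfrak{g}_i := \mathfrak{g}|_{\widetilde{\W}^i}$ and $\mathfrak{g}'_i := \mathfrak{g}'|_{\widetilde{\W}^i}$. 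Semisimplicity of each $\mathfrak{g}_i$ and $\mathfrak{g}'_i$ on $\widetilde{\W}^i$ is immediate because $\widetilde{\W}^i$ is a sum of isotypic components for the semisimple actions of $\mathfrak{g}$ and $\mathfrak{g}'$ on $\V$.

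Next I would verify the mutual commutant property via a padding-by-zero argument. Clearly $\mathfrak{g}'_i\subseteq \mathcal{C}_{\mathfrak{spo}(\widetilde{\W}^i,\B|_{\widetilde{\W}^i})}(\mathfrak{g}_i)$. Conversely, given any $\Y\in \mathcal{C}_{\mathfrak{spo}(\widetilde{\W}^i,\B|_{\widetilde{\W}^i})}(\mathfrak{g}_i)$, extend it to $\widetilde{\Y}\in\mathfrak{spo}(\V,\B)$ by zero on $\bigoplus_{j\neq i}\widetilde{\W}^j$; this makes sense because the summands are $\B$-orthogonal and nondegenerate. Since every element of $\mathfrak{g}=\bigoplus_j \mathfrak{g}_j$ is block-diagonal in this decomposition, $\widetilde{\Y}$ supercommutes with all of $\mathfrak{g}$, hence $\widetilde{\Y}\in\mathfrak{g}'=\bigoplus_j \mathfrak{g}'_j$; reading off the $i$-th component yields $\Y\in\mathfrak{g}'_i$. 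The symmetric argument with the roles of $\mathfrak{g}$ and $\mathfrak{g}'$ swapped completes the dual pair check.

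For irreducibility, suppose $\widetilde{\W}^i=\U_1\oplus^\perp \U_2$ is a $\B$-orthogonal decomposition into $\mathfrak{g}\oplus\mathfrak{g}'$-invariant pieces. By Proposition~\ref{Proposition12} each $\mathfrak{g}$-isotypic component $\W_{\lambda_j}\subseteq \V$ is irreducible under $\mathfrak{g}\oplus\mathfrak{g}'$, so every $\mathfrak{g}\oplus\mathfrak{g}'$-invariant subspace of $\widetilde{\W}^i$ is a sum of some of the (one or two) $\W_{\lambda_j}$'s comprising $\widetilde{\W}^i$. For $1\leq i\leq m$, $\widetilde{\W}^i=\W_{\lambda_i}$ is itself already joint-irreducible, so no proper decomposition exists. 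For $m+1\leq i\leq m+\tfrac{n-m}{2}$ the only nontrivial proper such subspaces are the two isotypic constituents $\W_{\lambda_j}$ and $\W_{\lambda_k}$ that make up $\widetilde{\W}^i$, but Lemma~\ref{Lemma02} shows that each of them is $\B$-isotropic; hence neither is $\B$-nondegenerate and neither can appear as one of the orthogonal summands $\U_1$ or $\U_2$. This forces $\{\U_1,\U_2\}=\{\{0\},\widetilde{\W}^i\}$, establishing irreducibility. The final ``in particular'' clause is simply the observation that by construction $(\mathfrak{g},\mathfrak{g}')$ is the direct sum (in the sense of Remark~\ref{Remark1}) of the irreducible reductive dual pairs $(\mathfrak{g}_i,\mathfrak{g}'_i)$.

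No step looks genuinely hard; the only place where one must be slightly careful is the extension-by-zero in the commutant argument, since one must know that each $\widetilde{\W}^i$ carries a nondegenerate restriction of $\B$ and is $\B$-orthogonal to its complement — but this is exactly what Lemmas~\ref{Lemma01} and~\ref{Lemma02} deliver, so the proof is essentially a bookkeeping exercise assembling the earlier results.
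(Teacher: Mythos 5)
Your proposal is correct and is essentially the paper's own argument: the paper simply declares Proposition \ref{Proposition1} an immediate consequence of Proposition \ref{Proposition12}, and you spell out exactly the intended bookkeeping (Remark \ref{Remark1} for the direct-sum/dual-pair structure via extension by zero, Lemmas \ref{Lemma01}--\ref{Lemma02} for nondegeneracy and orthogonality of the $\widetilde{\W}^{i}$, and joint irreducibility of isotypic components for irreducibility). The only point you gloss over is that a $\mathfrak{g}\oplus\mathfrak{g}'$-invariant subspace of $\W_{\lambda_{j}}\oplus\W_{\lambda_{k}}$ could a priori be a diagonal graph rather than a sum of the two constituents, but such graphs are excluded because the constituents are non-isomorphic even as $\mathfrak{g}$-modules, so your conclusion stands.
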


\begin{defn}

Let $(\mathfrak{g}\,, \mathfrak{g}')$ be an irreducible reductive dual pair in $\mathfrak{spo}(\V\,, \B)$. If the joint action of $\mathfrak{g}$ and $\mathfrak{g}'$ on $\V$ is irreducible, we say that $(\mathfrak{g}\,, \mathfrak{g}')$ is of Type I. If $\V = \V^{1} \oplus \V^{2}$ is a direct sum of two isotropic subspaces, where $\V^{1}$ and $\V^{2}$ are invariant under both $\mathfrak{g}$ and $\mathfrak{g}'$, and $\V^{1} \cong {\V^{2}}^{*}$ (as $\mathfrak{g}\oplus \mathfrak{g}'$-modules), we say that $(\mathfrak{g}\,, \mathfrak{g}')$ is of Type II.

\end{defn}

\section{The tensor product decomposition of $\V$}

\label{SectionTechnicalLemma}

In this section, we prove a general lemma that is  well-known in the purely even setting.

\begin{lemme}

Let $\mathfrak{g}$ and $\mathfrak{g}'$ be two Lie superalgebras over $\mathbb K$ 
and
let $\V = \V_{\bar{0}} \oplus \V_{\bar{1}}$ be a finite dimensional  irreducible  
$\mathfrak{g}\oplus \mathfrak {g}'$-module. Then there exists a division superalgebra $\mathbb{D} = \mathbb{D}_{\bar{0}} \oplus \mathbb{D}_{\bar{1}}$ over $\mathbb{K}$, a left-$\mathbb{D}$-module $\U$, and a right-$\mathbb{D}$-module $\W$ such that 
\begin{equation*}
\V \cong \W \otimes_{\mathbb{D}} \U\,,
\end{equation*}
where $\mathfrak{g}$ (resp. $\mathfrak{g}'$) acts on $\U$ (resp. $\W$) by $\mathbb{D}$-linear transformations, and  the $\mathfrak{g}$-module $\U$ is irreducible.

\label{TechnicalLemmaTypeI}

\end{lemme}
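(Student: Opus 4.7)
The plan is to construct $(\mathbb{D}, \U, \W)$ by choosing an irreducible $\mathfrak{g}$-submodule of $\V$ and invoking a super Schur's lemma, then to produce the desired isomorphism as the natural evaluation map $T \otimes u \mapsto T(u)$.

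First, I would show that $\V$ is $\mathfrak{g}$-isotypic. Pick any nonzero minimal $\mathfrak{g}$-submodule $\U \subseteq \V$, which exists because $\V$ is finite dimensional. Let $\widetilde{\V} := \sum_{T} T(\U)$, where $T$ ranges over $\Hom_{\mathfrak{g}}(\U, \V)$. By construction $\widetilde{\V}$ is $\mathfrak{g}$-invariant. Moreover, because the actions of $\mathfrak{g}$ and $\mathfrak{g}'$ on $\V$ supercommute, for every homogeneous $Y \in \mathfrak{g}'$ and $T \in \Hom_{\mathfrak{g}}(\U, \V)$ one verifies that $\pi(Y)\circ T$ again lies in $\Hom_{\mathfrak{g}}(\U, \V)$, so $\widetilde{\V}$ is also $\mathfrak{g}'$-invariant. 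Since $\V$ is irreducible as a $\mathfrak{g}\oplus\mathfrak{g}'$-module and $\widetilde{\V}$ contains $\U$, we conclude that $\widetilde{\V} = \V$. In particular $\V$ is a sum, and therefore a direct sum, of irreducible $\mathfrak{g}$-submodules all isomorphic to $\U$.

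Next, set $\mathbb{D} := \End_{\mathfrak{g}}(\U)$ and $\W := \Hom_{\mathfrak{g}}(\U, \V)$. The graded version of Schur's lemma (cf.\ Lemma~\ref{LemmaAppendixB1}) shows that every nonzero homogeneous element of $\mathbb{D}$ is invertible, so $\mathbb{D}$ is a division superalgebra over $\mathbb{K}$, and $\U$ is canonically a left $\mathbb{D}$-module by evaluation. Equip $\W$ with the right $\mathbb{D}$-module structure $(T \cdot D)(u) := T(Du)$ and with the $\mathfrak{g}'$-action $(Y \cdot T)(u) := \pi(Y)(T(u))$; these act on opposite sides of $T$ and hence commute, so $\mathfrak{g}'$ acts on $\W$ by $\mathbb{D}$-linear endomorphisms. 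Define
\begin{equation*}
\phi : \W \otimes_{\mathbb{D}} \U \longrightarrow \V\,, \qquad \phi(T \otimes u) := T(u)\,.
\end{equation*}
The identity $\phi((T\cdot D)\otimes u) = T(Du) = \phi(T \otimes Du)$ shows that $\phi$ is $\mathbb{D}$-balanced, hence well defined. Equipping $\W \otimes_{\mathbb{D}} \U$ with the $\mathfrak{g}$-action $\X\cdot(T\otimes u) := (-1)^{|\X|\cdot|T|}T\otimes (\X\cdot u)$ and the $\mathfrak{g}'$-action $Y\cdot(T\otimes u) := (Y\cdot T)\otimes u$, a short computation using the sign convention in~\eqref{Equation1} shows that $\phi$ is a homomorphism of $\mathfrak{g}\oplus\mathfrak{g}'$-modules.

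Finally, $\phi$ is surjective, since its image is $\mathfrak{g}\oplus\mathfrak{g}'$-invariant and contains the inclusion of $\U$, and $\V$ is irreducible. For injectivity, choose a homogeneous right $\mathbb{D}$-basis $T_1,\ldots,T_k$ of $\W$, which exists because every module over a division superalgebra is free. Then $\W\otimes_{\mathbb{D}}\U \cong \bigoplus_{i=1}^k \U$ canonically as $\mathfrak{g}$-modules, so $\dim_{\mathbb{K}}(\W\otimes_{\mathbb{D}}\U) = k\cdot\dim_{\mathbb{K}}\U$, which matches $\dim_{\mathbb{K}}\V$ by the isotypic decomposition established in the first step. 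Hence the surjective map $\phi$ is a $\mathfrak{g}\oplus\mathfrak{g}'$-equivariant isomorphism, and the $\mathfrak{g}$-module $\U$ is irreducible by construction. The main delicate point is tracking the correct Koszul signs in the induced module structures and in the compatibility between $\mathbb{D}$ and the $\mathfrak{g}\oplus\mathfrak{g}'$-actions; once those are fixed as in~\eqref{Equation1}, the argument closely parallels the classical (non-super) case.
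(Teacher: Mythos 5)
Your proposal is correct and follows essentially the same route as the paper's proof: the same choices $\mathbb{D}=\End_{\mathfrak{g}}(\U)$ and $\W=\Hom_{\mathfrak{g}}(\U,\V)$, the same evaluation map with the module structures of Equation~\eqref{ActionLieG}, surjectivity from irreducibility of $\V$, and injectivity via identifying $\W$ as a free $\mathbb{D}$-module whose rank matches the number of summands ($\U$ or $\Pi\U$) in the $\mathfrak{g}$-isotypic decomposition of $\V$, which is exactly the paper's final step. One minor point: the super Schur's lemma you invoke is standard and suffices, but it is not the content of Lemma~\ref{LemmaAppendixB1}, which concerns semisimplicity of subalgebras of $\End(\V)$.
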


\begin{proof}

Let $\U = \U_{\bar{0}} \oplus \U_{\bar{1}} \subseteq \V$ be an irreducible $\mathfrak{g}$-module and set $\mathbb{D} := \End_{\mathfrak{g}}(\U)$. It is easily seen that $\mathbb{D} = \mathbb{D}_{\bar{0}} \oplus \mathbb{D}_{\bar{1}}$ is a division superalgebra over $\mathbb{K}$. 
The $\mathbb{K}$-vector space $\U$ can be seen as a left-$\mathbb{D}$-module, where the action of $\mathbb{D}$ on $\U$ is given by
\begin{equation*}
\D \cdot u = \D(u)\,, \qquad (u \in \U\,, \D \in \mathbb{D})\,.
\end{equation*}
Similarly the $\mathbb{K}$-vector space $\Hom_{\mathfrak{g}}(\U\,, \V)$ has a natural structure of right-$\mathbb{D}$-module, where
\begin{equation*}
\T \cdot \D = \T \circ \D\,, \qquad \left(\T \in \Hom_{\mathfrak{g}}(\U\,, \V)\,, \D \in \mathbb{D}\right)\,.
\end{equation*}
We define a map $\S_{\mathbb{K}}: \Hom_{\mathfrak{g}}(\U\,, \V) \otimes_{\mathbb{K}} \U \to \V$ by
\begin{equation*}
\S_{\mathbb{K}}(\T \otimes u) = \T(u)\,, \qquad (u \in \U, \T \in \Hom_{\mathfrak{g}}(\U\,, \V)\,.
\end{equation*}
The actions of $\mathfrak{g}$ and $\mathfrak{g}'$ on $\Hom_{\mathfrak{g}}(\U\,, \V) \otimes_{\mathbb{K}} \U$ are given by 
\begin{equation}
\X \cdot \T \otimes u = (-1)^{\left|\T\right|\cdot\left|\X\right|} \T \otimes \pi(\X)u\,, \quad \X' \cdot \T \otimes u = \pi(\X') \circ \T \otimes u\,, 
\label{ActionLieG}
\end{equation}
for $u \in \U\,, \T \in \Hom_{\mathfrak{g}}(\U\,, \V)\,, \X \in \mathfrak{g}\,, \X' \in \mathfrak{g}'$.
The action of $X'$ is well-defined since the actions of $\mathfrak{g}$ and $\mathfrak{g}'$ supercommute and thus  $\pi(\X') \circ \T \in \Hom_{\mathfrak{g}}(\U\,, \V)$ for every $\X' \in \mathfrak{g}'$ and $\T \in \Hom_{\mathfrak{g}}(\U\,, \V)$.
The map $\S_\mathbb{K}$ is a 
$\mathfrak{g}\oplus\mathfrak{g}'$-module homomorphism, hence by 
irreducibility of $\V$ this map is also surjective. 
For every $\D \in \mathbb{D}, u \in \U$ and $\T \in \Hom_{\mathfrak{g}}(\U\,, \V)$, we have
\begin{equation*}
\S_{\mathbb{K}}(\T \otimes \D\cdot u) = \T(\D(u)) = \T \circ \D(u) = \S_{\mathbb{K}}(\T \cdot \D \otimes u)\,,
\end{equation*}
and in particular, we obtain a map
\begin{equation*}
\S_{\mathbb{D}}: \Hom_{\mathfrak{g}}(\U\,, \V) \otimes_{\mathbb{D}} \U \to \V\,.
\end{equation*}
The map $\S_{\mathbb{D}}$ is also $\mathfrak{g}\oplus\mathfrak{g}'$-equivariant. It follows that $\V$ is a direct sum of copies of $\U$ and $\Pi \U$, where $\Pi$ is the parity change functor. If $\V=\U^{\oplus a}\oplus\Pi\U^{\oplus b}$ then $\Hom_{\mathfrak g}(\U,\V)$ is isomorphic to $\mathbb D^{\oplus a}\oplus \Pi \mathbb D^{\oplus b}$ as a $\mathbb D$-module. It follows immediately that $\S_\mathbb D$ is injective.
\end{proof}

\begin{rema}

If $\End_{\mathfrak{g}}(\U)_{\bar{1}} \neq \{0\}$, then $\dim_{\mathbb{K}} \Hom_{\mathfrak{g}}(\U\,, \V)_{\bar{0}} = \dim_{\mathbb{K}} \Hom_{\mathfrak{g}}(\U\,, \V)_{\bar{1}}$.

\end{rema}

\section{Division superalgebras over $\mathbb{R}$ and $\mathbb{C}$}

\label{SectionDivisionSuperalgebra}

In \cite{WALL}, Wall gave an explicit description of division superalgebras over $\mathbb{R}$ and $\mathbb C$. We summarise Wall's result in the following proposition.

\begin{prop}

Every division superalgebra $\mathbb{D}$ over $\mathbb{R}$ or $\mathbb{C}$
is isomorphic to one of the following:
\begin{enumerate}
\item[\rm $\mathrm{Cl}_0(\mathbb R)$:] $ \mathbb{R}$ (where $\mathbb K=\mathbb R$),

\item[\rm $\mathrm{Cl}_1(\mathbb R)$:] $\mathbb{R} \oplus \mathbb{R}\cdot \varepsilon$, with $\varepsilon$ odd and $\varepsilon^{2} = -1$(where $\mathbb K=\mathbb R$),

\item[\rm $\mathrm{Cl}_2(\mathbb R)$:] $\mathbb{C} \oplus \mathbb{C}\cdot \varepsilon$, with $\varepsilon$ odd, $\varepsilon^{2} = -1$ and $i\cdot\varepsilon = -\varepsilon\cdot i$ (where $\mathbb K=\mathbb R$),

\item[\rm $\mathrm{Cl}_3(\mathbb R)$:]  $\mathbb{H} \oplus \mathbb{H}\cdot \varepsilon$, with $\varepsilon$ odd, $\varepsilon^{2} = 1$ and $\varepsilon$ commutes with elements of $\mathbb{H}$
(where $\mathbb K=\mathbb R$),

\item[\rm $\mathrm{Cl}_4(\mathbb R)$:] $ \mathbb{H}$
(where $\mathbb K=\mathbb R$),

\item[\rm $\mathrm{Cl}_5(\mathbb R)$:] $ \mathbb{H} \oplus \mathbb{H}\cdot \varepsilon$, with $\varepsilon$ odd, $\varepsilon^{2} = -1$ and $\varepsilon$ commutes with elements of $\mathbb{H}$
(where $\mathbb K=\mathbb R$).

\item[\rm $\mathrm{Cl}_6(\mathbb R)$:] $ \mathbb{C} \oplus \mathbb{C}\cdot \varepsilon$, with $\varepsilon$ odd, $\varepsilon^{2} = 1$ and $i\cdot\varepsilon = -\varepsilon\cdot i$
(where $\mathbb K=\mathbb R$),

\item[\rm $\mathrm{Cl}_7(\mathbb R)$:] $ \mathbb{R} \oplus \mathbb{R}\cdot \varepsilon$, with $\varepsilon$ odd and $\varepsilon^{2} = 1$
(where $\mathbb K=\mathbb R$),
\item[\rm $\mathrm{Cl}_0(\mathbb C)$:] $\mathbb{C}$
(where $\mathbb K=\mathbb R$ or $\mathbb C$),
\item[\rm $\mathrm{Cl}_1(\mathbb C)$:] $\mathbb{C} \oplus \mathbb{C}\cdot \varepsilon$, with $\varepsilon$ odd, $\varepsilon^{2} = 1$ and $i\cdot\varepsilon = \varepsilon\cdot i$
(where $\mathbb K=\mathbb R$ or $\mathbb C$),

\end{enumerate}

\label{ClassificationDivisionSuperalgebra}

\end{prop}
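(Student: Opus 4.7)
My plan is to reduce the problem to the classical Frobenius theorem plus a normalization argument for a chosen nonzero odd element. First, observe that $\mathbb{D}_{\bar 0}$ is an associative division algebra over $\mathbb{K}$ whose center contains $\mathbb{K}$; by Frobenius's theorem this forces $\mathbb{D}_{\bar 0}\in\{\mathbb{R},\mathbb{C},\mathbb{H}\}$ when $\mathbb{K}=\mathbb{R}$ and $\mathbb{D}_{\bar 0}=\mathbb{C}$ when $\mathbb{K}=\mathbb{C}$. In the trivial case $\mathbb{D}_{\bar 1}=\{0\}$ these three possibilities give precisely $\mathrm{Cl}_0(\mathbb{R})$, $\mathrm{Cl}_4(\mathbb{R})$, and $\mathrm{Cl}_0(\mathbb{C})$.

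Assume henceforth $\mathbb{D}_{\bar 1}\neq\{0\}$ and pick a nonzero $\varepsilon\in\mathbb{D}_{\bar 1}$, which is invertible by hypothesis. Left multiplication by $\varepsilon$ is a $\mathbb{K}$-linear bijection $\mathbb{D}_{\bar 0}\to\mathbb{D}_{\bar 1}$, so $\mathbb{D}=\mathbb{D}_{\bar 0}\oplus\mathbb{D}_{\bar 0}\varepsilon$. The entire algebra structure is then encoded in the $\mathbb{K}$-algebra automorphism
\[
\sigma_\varepsilon:\mathbb{D}_{\bar 0}\to\mathbb{D}_{\bar 0},\qquad \sigma_\varepsilon(x):=\varepsilon x\varepsilon^{-1},
\]
together with the nonzero $\sigma_\varepsilon$-fixed element $\varepsilon^2\in\mathbb{D}_{\bar 0}^\times$. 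The remaining work is an enumeration on the pairs $(\mathbb{D}_{\bar 0},\sigma_\varepsilon)$, followed by a rescaling $\varepsilon\mapsto c\varepsilon$ with $c\in\mathbb{D}_{\bar 0}^\times$, which satisfies $(c\varepsilon)^2=c\,\sigma_\varepsilon(c)\,\varepsilon^2$.

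When $\mathbb{D}_{\bar 0}=\mathbb{R}$, necessarily $\sigma_\varepsilon=\mathrm{id}$ and $\varepsilon^2\in\mathbb{R}^\times$ rescales to $\pm 1$, yielding $\mathrm{Cl}_1(\mathbb{R})$ and $\mathrm{Cl}_7(\mathbb{R})$. When $\mathbb{D}_{\bar 0}=\mathbb{H}$, the Skolem--Noether theorem gives $\sigma_\varepsilon=\mathrm{Ad}(q)$ for some $q\in\mathbb{H}^\times$; replacing $\varepsilon$ by $q^{-1}\varepsilon$ reduces to $\sigma_\varepsilon=\mathrm{id}$, so that $\varepsilon^2$ is central in $\mathbb{H}$ hence real and rescales to $\pm 1$, yielding $\mathrm{Cl}_3(\mathbb{R})$ and $\mathrm{Cl}_5(\mathbb{R})$. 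When $\mathbb{D}_{\bar 0}=\mathbb{C}$, the automorphism $\sigma_\varepsilon$ belongs to $\mathrm{Aut}_\mathbb{K}(\mathbb{C})$: in the complex base case $\mathbb{K}=\mathbb{C}$ only $\mathrm{id}$ is admissible, and $\varepsilon^2\in\mathbb{C}^\times$ normalizes to $1$ since every nonzero complex number is a square, producing $\mathrm{Cl}_1(\mathbb{C})$; the same argument handles the sub-case $\mathbb{K}=\mathbb{R}$ with $\sigma_\varepsilon=\mathrm{id}$ and recovers $\mathrm{Cl}_1(\mathbb{C})$.

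The genuinely subtle case, which I expect to be the main obstacle, is $\mathbb{K}=\mathbb{R}$ with $\mathbb{D}_{\bar 0}=\mathbb{C}$ and $\sigma_\varepsilon$ equal to complex conjugation. Here $\varepsilon^2$ lies in $\mathbb{R}^\times$ (being $\sigma_\varepsilon$-fixed), but the rescaling formula becomes $(c\varepsilon)^2=c\,\overline{c}\,\varepsilon^2=|c|^2\varepsilon^2$, so the \emph{sign} of $\varepsilon^2$ is a genuine invariant. This produces two non-isomorphic superalgebras: $\mathrm{Cl}_2(\mathbb{R})$ for $\varepsilon^2=-1$ and $\mathrm{Cl}_6(\mathbb{R})$ for $\varepsilon^2=+1$. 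To conclude, I would verify that the ten superalgebras so obtained are pairwise non-isomorphic, which follows by distinguishing them through easily computable invariants: the isomorphism class of $\mathbb{D}_{\bar 0}$, whether $\sigma_\varepsilon$ is trivial, inner, or an outer involution, and finally the sign of $\varepsilon^2$ in the conjugation case.
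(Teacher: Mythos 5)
Your argument is correct in substance, but it is a genuinely different route from the paper: the paper gives no proof of this proposition at all, simply summarising Wall's classification of graded Brauer groups (the reference [WALL]), which packages these ten algebras as the classes $\mathrm{Cl}_k(\mathbb{R})$, $\mathrm{Cl}_k(\mathbb{C})$ and explains the Clifford-algebra labels via Morita equivalence. Your proof is an elementary, self-contained alternative: Frobenius pins down $\mathbb{D}_{\bar 0}$ (note you are implicitly using finite-dimensionality here, which is consistent with the paper's standing conventions), then the pair $(\sigma_\varepsilon,\varepsilon^2)$ with the rescaling rule $(c\varepsilon)^2=c\,\sigma_\varepsilon(c)\,\varepsilon^2$, together with Skolem--Noether in the quaternionic case, gives the enumeration. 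What Wall's route buys is the group structure (the graded Brauer group $\mathbb{Z}/8$, explaining the $\mathrm{Cl}_k(\mathbb{R})\otimes\mathrm{Cl}_l(\mathbb{R})$ behaviour and the relation $\mathrm{Cl}_k(\mathbb{R})^{\mathrm{sop}}\cong\mathrm{Cl}_{8-k}(\mathbb{R})$ used later in the paper); what your route buys is transparency and independence from the Brauer-theoretic machinery.

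One small loose end: your closing list of distinguishing invariants assigns the sign of $\varepsilon^2$ only to the conjugation case, but it is equally the invariant separating $\mathrm{Cl}_1(\mathbb{R})$ from $\mathrm{Cl}_7(\mathbb{R})$ and $\mathrm{Cl}_3(\mathbb{R})$ from $\mathrm{Cl}_5(\mathbb{R})$, since in those cases $\mathbb{D}_{\bar 0}$ and the (trivial, resp.\ inner) type of $\sigma_\varepsilon$ coincide. The fix is the same computation you already did: any isomorphism restricts to an automorphism of $\mathbb{D}_{\bar 0}$ and sends a normalised odd element $\varepsilon$ (one commuting with $\mathbb{D}_{\bar 0}$) to $c\varepsilon'$ where $c$ must again commute with $\mathbb{D}_{\bar 0}$, hence $c\in\mathbb{R}^\times$ and $(c\varepsilon')^2=c^2\varepsilon'^2$ has the same sign as $\varepsilon'^2$; so $\varepsilon^2=1$ cannot be matched with $\varepsilon'^2=-1$. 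With that sentence added, the non-isomorphism check is complete.
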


\begin{rema}
The choice of notation $\mathrm{Cl}_k(\mathbb F)$ is due to the fact that the division superalgebra corresponding to  $\mathrm{Cl}_k(\mathbb F)$ is Morita equivalent (as an $\mathbb F$-algebra) to the Clifford algebra with $k$ generators $g_1,\ldots,g_k$ satisfying $g_ag_b=-g_bg_a$ for $1\leq a\neq b\leq k$ and $g_a^2=-1$ for $1\leq a\leq k$.  
\end{rema}



Henceforth, 
for any superalgebra $\mathscr A$  we use $\mathscr Z(\mathscr A)$ to denote the centre
of $\mathscr A$ in the \emph{ungraded} sense (see Definition~\ref{dfn-frstApx}).
\begin{prop}

Let $\mathbb{D}$ be a division superalgebra over $\mathbb{K} = \mathbb{R}$ or $\mathbb{C}$. Suppose that there exists a   superinvolution $\iota$ on the $\mathbb K$-algebra $\End_{\mathbb{D}}(\Y)$, with
\begin{equation*}
\Y = \begin{cases} \mathbb{D}^{a|b} & \text{ if } \mathbb{D}_{\bar{1}} = \{0\} \\ \mathbb{D}^{a} & \text{ if } \mathbb{D}_{\bar{1}} \neq \{0\} \end{cases}
\end{equation*}
Then either  $\mathbb D= \mathbb{D}_{\bar{0}}$,
or $\mathbb{D}\cong \mathrm{Cl}_1(\mathbb C)$ and  
$\mathbb{K} = \mathbb{R}$. Moreover, in the latter case the restriction of the superinvolution  $\iota$ to $\mathscr{Z}(\End_{\mathbb{D}}(\U)) \cong \mathbb{D}$ is equal to either $\iota_1$ or $\iota_2$, where 
\begin{equation}
\iota_{1}(\alpha + \beta\varepsilon) = \bar{\alpha} + i\bar{\beta}\varepsilon\,, \qquad \iota_{2}(\alpha + \beta\varepsilon) = \bar{\alpha} - i\bar{\beta}\varepsilon\,, \qquad (\alpha, \beta \in \mathbb{C})\,.
\label{eq:superinv}
\end{equation}
\label{PropositionDivisionIota}

\end{prop}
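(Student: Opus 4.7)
The plan is to reduce the problem to the classification of $\mathbb{K}$-linear superinvolutions on $\mathbb{D}$ itself, and then rule out every division superalgebra of Proposition~\ref{ClassificationDivisionSuperalgebra} with $\mathbb{D}_{\bar 1}\neq 0$ except $\mathrm{Cl}_{1}(\mathbb{C})$ (with $\mathbb{K}=\mathbb{R}$) by direct computation. First, since $\iota$ is even and $\mathbb{K}$-linear, it preserves the ungraded centre $\mathscr{Z}(\End_{\mathbb{D}}(\Y))$, which via scalar multiplications is canonically identified with $\mathscr{Z}(\mathbb{D})$, so the restriction of $\iota$ yields an even $\mathbb{K}$-linear superinvolution on $\mathscr{Z}(\mathbb{D})$. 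Using further that $\End_{\mathbb{D}}(\Y)$ is a simple matrix superalgebra over (a twisted form of) $\mathbb{D}$, and that superinvolutions on such matrix superalgebras are governed by non-degenerate superhermitian forms on the underlying $\mathbb{D}$-module (see Propositions~\ref{PropositionC4} and~\ref{PropositionC5}), one concludes that $\mathbb{D}$ itself must admit a $\mathbb{K}$-linear superinvolution~$\iota_{0}$.

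Now suppose $\mathbb{D}_{\bar 1}\neq 0$, and let $\varepsilon$ be the distinguished odd generator given by Proposition~\ref{ClassificationDivisionSuperalgebra}, so that $\varepsilon^{2}\in\{\pm 1\}$. Since $\iota_{0}$ is even, we have $\iota_{0}(\varepsilon)=h\varepsilon$ for a unique $h\in\mathbb{D}_{\bar 0}$. Applying the super-anti-homomorphism axiom to $\varepsilon\cdot\varepsilon$ and using the relation $\varepsilon q=\sigma(q)\varepsilon$ describing the (anti-)commutation of $\varepsilon$ with $\mathbb{D}_{\bar 0}$, one computes $\iota_{0}(\varepsilon)^{2}=h\sigma(h)\varepsilon^{2}$ and hence derives $h\sigma(h)=-1$. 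When $\varepsilon$ is central in $\mathbb{D}$ (the cases $\mathrm{Cl}_{k}(\mathbb{R})$ for $k\in\{1,3,5,7\}$ and $\mathrm{Cl}_{1}(\mathbb{C})$), a comparison of $\iota_{0}(q\varepsilon)$ with $\iota_{0}(\varepsilon q)$ forces $h\in\mathscr{Z}(\mathbb{D}_{\bar 0})$, reducing the equation to $h^{2}=-1$ in a commutative field; when $\varepsilon$ anti-commutes with $i$ (the cases $\mathrm{Cl}_{2}(\mathbb{R})$ and $\mathrm{Cl}_{6}(\mathbb{R})$), the equation instead becomes $|h|^{2}=-1$ with $h\in\mathbb{C}$. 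No real scalars satisfy any of these equations, whereas in $\mathrm{Cl}_{1}(\mathbb{C})$ the equation $h^{2}=-1$ admits the complex solutions $h=\pm i$. Hence $\mathrm{Cl}_{1}(\mathbb{C})$ is the only surviving possibility.

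In this surviving case, the involutive condition $\iota_{0}^{2}=\mathrm{id}$ applied to $\varepsilon$ gives $h\cdot\iota_{0}(h)=1$. If $\mathbb{K}=\mathbb{C}$ then $\iota_{0}$ is $\mathbb{C}$-linear, so $\iota_{0}(h)=h$, forcing $h^{2}=1$ and contradicting $h^{2}=-1$. Hence $\mathbb{K}=\mathbb{R}$ and $\iota_{0}|_{\mathbb{C}}$ must be complex conjugation, which yields $h\bar{h}=|h|^{2}=1$ in agreement with $h=\pm i$. The two choices $h=i$ and $h=-i$ produce precisely the superinvolutions $\iota_{1}$ and $\iota_{2}$ of~\eqref{eq:superinv}. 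The main obstacle in this plan is the very first step, namely the descent of the superinvolution from $\End_{\mathbb{D}}(\Y)$ down to $\mathbb{D}$: in the classical non-super setting this is a routine consequence of the structure theory of central simple algebras with involution, but in the $\mathbb{Z}_{2}$-graded setting one has to track superhermitian forms of both parities together with the super-opposite structure on $\mathbb{D}$, so a careful invocation of the structure theory of simple superalgebras with superinvolution is required.
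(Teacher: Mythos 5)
Your closing computation is correct and is a genuinely more hands-on route to the dichotomy than the paper's own Lemma~\ref{LemmaTechnicalDecember}: where the paper observes that a superinvolution is an isomorphism $\mathbb D\cong\mathbb D^{\mathrm{sop}}$ and invokes $\mathrm{Cl}_k(\mathbb R)^{\mathrm{sop}}\cong\mathrm{Cl}_{8-k}(\mathbb R)\not\cong\mathrm{Cl}_k(\mathbb R)$, you write $\iota_0(\varepsilon)=h\varepsilon$ and extract $h\sigma(h)=-1$ and $h\,\iota_0(h)=1$, which correctly eliminates $\mathrm{Cl}_k(\mathbb R)$ for $k\in\{1,2,3,5,6,7\}$, excludes $\mathbb K=\mathbb C$, and recovers the two superinvolutions $\iota_1,\iota_2$ of \eqref{eq:superinv}.

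The genuine gap is the step you yourself flag as the main obstacle: descending from a superinvolution on $\End_{\mathbb D}(\Y)$ to a superinvolution $\iota_0$ on $\mathbb D$. Propositions~\ref{PropositionC4} and~\ref{PropositionC5} cannot be used for this: they take as hypotheses a superinvolution \emph{already given} on $\mathbb D$ together with a compatible superinvolution on $\End_{\mathbb D}(\U)$, and produce superhermitian forms; moreover they assume $\mathbb D=\mathbb D_{\bar 0}$, respectively $\mathbb D\cong\mathrm{Cl}_1(\mathbb C)$ --- the very dichotomy under proof --- so invoking them here is circular. Restricting $\iota$ to the ungraded centre, as in your first sentence, only yields a superinvolution of $\mathscr Z(\mathbb D)$; this happens to suffice when $\varepsilon$ is central (it rules out $\mathrm{Cl}_k(\mathbb R)$ for $k\in\{1,3,5,7\}$ and handles $\mathrm{Cl}_1(\mathbb C)$, since there $\mathscr Z(\mathbb D)\supseteq\mathbb R\oplus\mathbb R\varepsilon$), but for $\mathrm{Cl}_2(\mathbb R)$ and $\mathrm{Cl}_6(\mathbb R)$ the ungraded centre is just $\mathbb R$, and your anticommuting-case computation needs $\iota_0$ defined on all of $\mathbb D$. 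Note also that the weaker, formally available statement $\mathbb D\cong\mathbb D^{\mathrm{sop}}$ (from $\End_{\mathbb D}(\Y)\cong\End_{\mathbb D}(\Y)^{\mathrm{sop}}$ and uniqueness of the coordinate division superalgebra) would not close the gap either, because $\mathrm{Cl}_1(\mathbb C)\cong\mathrm{Cl}_1(\mathbb C)^{\mathrm{sop}}$ as $\mathbb C$-algebras while the case $\mathbb K=\mathbb C$ must still be excluded, and your exclusion uses $\iota_0^2=\mathrm{id}$. What is needed is exactly the structural input the paper uses: \cite[Corollary~23]{ELDUQUEVILLA}, which asserts that every simple superalgebra with superinvolution is of the form $\End_{\mathbb D'}(\V')$ with $\mathbb D'$ itself carrying a superinvolution and the involution induced by a superhermitian form, combined with the isomorphism theorem of \cite{RACINE} to force $\mathbb D\cong\mathbb D'$. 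With that input in place of Propositions~\ref{PropositionC4}--\ref{PropositionC5}, your computation does complete the proof.
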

\begin{rema}
The explicit description of superinvolutions on 
$\End_{\mathbb D}(\Y)$  is given in
Appendix~\ref{AppendixSuperInvolution}. 
\end{rema}
In order to prove 
Proposition~\ref{PropositionDivisionIota} we begin with the following lemma.  

\begin{lemme}

Let $\mathbb{D}$ be a division superalgebra over $\mathbb{K}$. Then  $\mathbb D$ 
considered as a $\mathbb K$-algebra 
has a  superinvolution if and only if  either $\mathbb{D} = \mathbb{D}_{\bar{0}}$, or  $\mathbb{D} \cong \mathrm{Cl}_1(\mathbb C)$ and $\mathbb{K} = \mathbb{R}$. 
\label{LemmaTechnicalDecember}

\end{lemme}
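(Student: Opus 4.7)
The plan is to establish each direction separately, reducing the ``only if'' direction to a short case analysis based on the classification in Proposition~\ref{ClassificationDivisionSuperalgebra}.

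For the ``if'' direction I exhibit a superinvolution in each allowed case. When $\mathbb D=\mathbb D_{\bar 0}$, any ordinary $\mathbb K$-algebra involution of $\mathbb D_{\bar 0}$ qualifies (for instance, identity on $\mathbb R$ and on $\mathbb C$ regarded as a $\mathbb C$-algebra, complex conjugation on $\mathbb C$ as an $\mathbb R$-algebra, and quaternionic conjugation on $\mathbb H$), since the sign $(-1)^{|\X|\cdot|\Y|}$ in \eqref{IotaProperties} is automatically $+1$ on purely even elements. When $\mathbb D\cong \mathrm{Cl}_1(\mathbb C)$ over $\mathbb K=\mathbb R$, a direct computation verifies that either of the maps $\iota_1,\iota_2$ in \eqref{eq:superinv} satisfies \eqref{IotaProperties}.

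For the ``only if'' direction, suppose $\mathbb D_{\bar 1}\neq 0$ and that a superinvolution $\iota$ exists. Because $\mathbb D$ is a division superalgebra, I can pick $\varepsilon\in \mathbb D_{\bar 1}$ as in Proposition~\ref{ClassificationDivisionSuperalgebra}, so that $\varepsilon$ is invertible, $\mathbb D_{\bar 1}=\mathbb D_{\bar 0}\varepsilon$, and $\varepsilon^2\in\{\pm 1\}$. Set $\iota_0:=\iota|_{\mathbb D_{\bar 0}}$, which is an ordinary $\mathbb K$-algebra involution of $\mathbb D_{\bar 0}$; write $\iota(\varepsilon)=c\varepsilon$ for a unique $c\in \mathbb D_{\bar 0}$; and let $\sigma(x):=\varepsilon x\varepsilon^{-1}$ be the inner $\mathbb K$-automorphism of $\mathbb D_{\bar 0}$, which satisfies $\sigma^2=\mathrm{id}$ because $\varepsilon^2\in\mathbb K$. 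Three consequences of \eqref{IotaProperties} provide the key constraints: the identity $\iota^2(\varepsilon)=\varepsilon$ yields $c\sigma(\iota_0(c))=1$ together with $\iota_0\sigma=\sigma\iota_0$; the identity $\iota(\varepsilon^2)=-\iota(\varepsilon)^2$ yields $c\sigma(c)=-1$; and applying the odd-odd rule of \eqref{IotaProperties} to $(h_1\varepsilon)(h_2\varepsilon)=\varepsilon^2 h_1\sigma(h_2)$, after using the previous relations, forces $cxc^{-1}=x$ for every $x\in \mathbb D_{\bar 0}$, i.e., $c\in \mathscr Z(\mathbb D_{\bar 0})$. Combining the first two conditions also yields $\iota_0(c)=-c$.

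Finally I analyze the possibilities for $\mathscr Z(\mathbb D_{\bar 0})$. When $\mathscr Z(\mathbb D_{\bar 0})=\mathbb R$ (the cases $\mathbb D_{\bar 0}\in\{\mathbb R,\mathbb H\}$), the involution $\iota_0$ fixes $\mathbb R$ pointwise, so $c=\iota_0(c)=-c$ forces $c=0$, contradicting invertibility of $\iota(\varepsilon)$. When $\mathscr Z(\mathbb D_{\bar 0})=\mathbb C$ and $\mathbb K=\mathbb C$, the only $\mathbb C$-linear involution of $\mathbb C$ is the identity, so the same argument applies. In the remaining case $\mathscr Z(\mathbb D_{\bar 0})=\mathbb C$ and $\mathbb K=\mathbb R$, the condition $\iota_0(c)=-c$ forces $\iota_0|_\mathbb C$ to be complex conjugation (hence $c\in i\mathbb R^*$), and $c\sigma(c)=-1$ rules out $\sigma|_\mathbb C=\overline{\,\cdot\,}$ (which would give $|c|^2=-1$); thus $\sigma|_\mathbb C=\mathrm{id}$, so $\varepsilon$ is central in $\mathbb D$. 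Among the entries of Proposition~\ref{ClassificationDivisionSuperalgebra} with $\mathbb D_{\bar 0}=\mathbb C$, this characterizes $\mathrm{Cl}_1(\mathbb C)$. The main obstacle I anticipate is the odd-odd supercommutation calculation that produces the centrality $c\in\mathscr Z(\mathbb D_{\bar 0})$; once this is in hand, the remaining case work is essentially routine.
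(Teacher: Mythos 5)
Your argument is correct in substance, but it follows a genuinely different route from the paper's. The paper's proof treats a superinvolution as an isomorphism $\mathbb{D}\cong\mathbb{D}^{\mathrm{sop}}$ (with $\mathbb{D}^{\mathrm{sop}}$ as in \eqref{eq:superopp}) and then quotes the relation $\mathrm{Cl}_k(\mathbb{R})^{\mathrm{sop}}\cong\mathrm{Cl}_{8-k}(\mathbb{R})\not\cong\mathrm{Cl}_k(\mathbb{R})$ for $k\neq 0,4$, which eliminates all real division superalgebras with $\mathbb{D}_{\bar 1}\neq\{0\}$ except $\mathrm{Cl}_1(\mathbb{C})$ in one stroke; it then notes that the $\mathbb{C}$-linear isomorphism $\mathrm{Cl}_1(\mathbb{C})\cong\mathrm{Cl}_1(\mathbb{C})^{\mathrm{sop}}$ sends $\varepsilon$ to $\pm i\varepsilon$ and hence does not square to the identity, so only the $\mathbb{R}$-linear involutions $\iota_1,\iota_2$ of \eqref{eq:superinv} survive. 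Your computation with an odd unit $\varepsilon$ is more elementary and has the advantage of producing explicitly the constraints ($c\sigma(c)=-1$, $c$ central, $\iota_0(c)=-c$) that pin down $\iota$ on $\mathrm{Cl}_1(\mathbb{C})$; the price is more bookkeeping, and one attribution in your sketch needs repair. The identity $\iota^2(\varepsilon)=\varepsilon$ only yields $c\,\sigma(\iota_0(c))=1$; it does not by itself give $\iota_0\sigma=\sigma\iota_0$, and without that commutation the odd--odd computation combined with your other relations only reproduces $c\sigma(c)=-1$ rather than the centrality of $c$. The clean fix is to split according to Proposition~\ref{ClassificationDivisionSuperalgebra} before extracting centrality: if $\mathbb{D}_{\bar 0}\in\{\mathbb{R},\mathbb{C}\}$ then centrality of $c$ is vacuous and $\sigma,\iota_0$ lie in $\{\mathrm{id},\mathrm{conj}\}$, so everything you need (in particular $\iota_0(c)=-c$) follows from your first two constraints; if $\mathbb{D}_{\bar 0}=\mathbb{H}$, the classification representatives have $\varepsilon$ commuting with $\mathbb{H}$, so $\sigma=\mathrm{id}$ and the odd--odd rule \eqref{IotaProperties} gives $y=-cyc$ for all $y\in\mathbb{H}$; taking $y=1$ gives $c^2=-1$ and then $yc=cy$ for all $y$, i.e.\ $c\in\mathbb{R}$, contradicting $c^2=-1$. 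With this adjustment your case analysis (ruling out $\mathbb{D}_{\bar 0}\in\{\mathbb{R},\mathbb{H}\}$, the $\mathbb{C}$-linear situation over $\mathbb{K}=\mathbb{C}$, and $\sigma=\mathrm{conj}$ via $|c|^2=-1$, leaving exactly $\mathrm{Cl}_1(\mathbb{C})$ over $\mathbb{K}=\mathbb{R}$), together with the direct verification that $\iota_1,\iota_2$ satisfy \eqref{IotaProperties} and the trivial constructions in the purely even cases, gives a complete and valid proof.
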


\begin{proof}
A superinvolution of $\mathbb D$ is an isomorphism between $\mathbb D$ and $\mathbb D^\mathrm{sop}$, where the latter is defined in~\eqref{eq:superopp}. However, 
from the classification of division superalgebras in Proposition~\ref{ClassificationDivisionSuperalgebra} it follows that 
if $1\leq k\leq 7$ and  $k\neq 4$ then \[
\mathrm{Cl}_k(\mathbb R)^\mathrm{sop}
\cong \mathrm{Cl}_{8-k}(\mathbb R)\not\cong \mathrm{Cl}_k(\mathbb R).
\] 
Note that $\mathrm{Cl}_4(\mathbb R)\cong \mathbb H$. Thus, the only division superalgebra $\mathbb D$ with $\mathbb D_{\bar 1}\neq \{0\}$
that can have a superinvolution is $\mathrm{Cl}_1(\mathbb C)$. Note that $\mathrm{Cl}_1(\mathbb C)\cong \mathrm{Cl}_1(\mathbb C)^\mathrm{sop}$ as $\mathbb C$-algebras but the latter isomorphism (say $\theta$) does not yield a superinvolution of $\mathrm{Cl}_1(\mathbb C)$, because it  satisfies $\theta(\varepsilon)=\pm i\varepsilon$, hence we have $\theta^2(\varepsilon)=(\pm i)^2\varepsilon=-\varepsilon\neq \varepsilon$. 
But $\mathrm{Cl}_1(\mathbb C)$ does admit $\mathbb R$-linear involutions: if $\iota$ is an $\mathbb R$-linear involution of 
$\mathrm{Cl}_1(\mathbb C)$ that is nontrivial on $\mathbb C$ then we must have $\iota(\alpha)=\bar \alpha$ for $\alpha\in\mathbb C$, hence  
$
\iota(\alpha \varepsilon) = \iota(\varepsilon)\iota(\alpha) = \iota(\varepsilon)\bar{\alpha}
$
for every $\alpha \in \mathbb{C}$. Furthermore from $(\iota(\varepsilon))^2=-\iota(\varepsilon^2)=-1$ we obtain $\iota(\varepsilon) \in \left\{\pm i\varepsilon\right\}$. It follows immediately that there are exactly two involutions $\iota_{1}$ and $\iota_{2}$ on $\mathbb{D}$ given by
\begin{equation*}
\iota_{1}(\alpha + \beta\varepsilon) = \bar{\alpha} + i\bar{\beta}\varepsilon\,, \qquad \iota_{2}(\alpha + \beta\varepsilon) = \bar{\alpha} - i\bar{\beta}\varepsilon\,, \qquad (\alpha, \beta \in \mathbb{C})\,.
\qedhere\end{equation*}
\end{proof}

\begin{rema}

Let $\mathbb{D} := \mathrm{Cl}_1(\mathbb C)$ and let $\iota_{1}$ and $\iota_{2}$ be the two super-involutions on $\mathbb{D}$ given in Proposition \ref{PropositionDivisionIota}. One can see that $\iota_{1} = \iota_{2} \circ \delta$, where $\delta: \mathbb{D} \to \mathbb{D}$ is the map given by
\begin{equation*}
\delta(\X) = (-1)^{\left|\X\right|}\X\,, \qquad (\X \in \mathbb{D})\,.
\end{equation*}

\label{RemarkDeltaInvolution}

\end{rema}

\begin{proof}[Proof of the Proposition \ref{PropositionDivisionIota}]

If $\mathbb{D} = \mathbb{D}_{\bar{0}}$ then there is nothing to prove. Assume that $\mathbb{D}_{\bar{1}} \neq \{0\}$. The assciative $\mathbb K$-superalgebra  $\End_\mathbb D(\Y)$ is simple. 
Now according to~\cite[Corollary~23]{ELDUQUEVILLA}, 
every simple $\mathbb K$-superalgebra with a superinvolution is isomorphic to $\End_{{\mathbb D'}}(\V')$, where ${\mathbb D'}$ is a 
division superalgebra over $\mathbb K$ that has a superinvolution (say $\iota'$), $\V'$ is a free $\mathbb D'$-module that is equipped with a 
$(\iota',\epsilon)$-superhermitian form (for some $\epsilon\in\{\pm 1\}$), and the involution of $\End_{\mathbb D'}(\V')$ is induced from this superhermitian form as in Propositions~\ref{PropositionC4}
 and~\ref{PropositionC5}. 
Thus 
$\End_{\mathbb D}(\Y)\cong \End_{\mathbb D'}(\V')$ for such $\mathbb D'$ and $\V'$,
 and  in particular  we must have $\mathbb D\cong \mathbb D'$ (the latter assertion follows  from the Isomorphism Theorem proved in \cite[p. 594]{RACINE}).
 By Lemma \ref{LemmaTechnicalDecember},  we also obtain that $\mathbb{D} \cong \mathrm{Cl}_1(\mathbb C)$
and $\mathbb K=\mathbb R$. 
\end{proof}

\section{Classification of irreducible reductive dual pairs in $\mathfrak{spo}$}
\label{SectionClassification}

In this section we prove 
Theorems 
\ref{TheoremeIntroduction1} and 
\ref{TheoremeIntroduction2}. 
We begin by defining families of Lie superalgebras that occur in the classification of dual pairs. 

\begin{nota}

Let $\mathbb{D}$ be a division superalgebra over $\mathbb{K}$ endowed with a superinvolution $\iota$ and let $\U$ be the right-$\mathbb{D}$-module 
\begin{equation*} 
\U := \begin{cases} \mathbb{D}^{n|m} & \text{ if } \mathbb{D}_{\bar{1}} = \{0\} \\ \mathbb{D}^{k} & \text{ if } \mathbb{D}_{\bar{1}} \neq \{0\} \end{cases} \,.
\end{equation*}
Recall that we denote by $\mathfrak{gl}_{\mathbb{D}}(\U)$ the $\mathbb K$-Lie superalgebra of $\mathbb{D}$-linear maps $\X:\U\to \U$, i.e. satisfying $\X(u \cdot \D) = \X(u) \cdot \D$ for every $\D \in \mathbb{D}$ and $u \in \U$. 
If $\mathbb{D}_{\bar{1}} = \{0\}$, we fix a $\mathbb{D}_{\bar{0}}$-basis $\{e_{1}, \ldots, e_{m}\}$ of $\U_{\bar{0}}$ and a $\mathbb{D}_{\bar{0}}$-basis $\{f_{1}, \ldots, f_{m}\}$ of $\U_{\bar{1}}$ and let $\mathscr{B} = \left\{e_{1}, \ldots, e_{n}, f_{1}, \ldots, f_{m}\right\}$.
If $\mathbb{D}_{\bar{1}} \neq \{0\}$, then as a $\mathbb{D}_{\bar{0}}$-module $\U = \mathbb{D}^{k} = \mathbb{D}^{k}_{\bar{0}} \oplus \mathbb{D}^{k}_{\bar{0}} \cdot \varepsilon$, where $(v_{1}, \ldots, v_{k})\cdot\varepsilon = (v_{1}\varepsilon, \ldots, v_{k}\varepsilon)$, with $v_{1}, \ldots, v_{k} \in \mathbb{D}_{\bar{0}}$. We fix a $\mathbb{D}_{\bar{0}}$-basis $\left\{h_{1}, \ldots, h_{k}\right\}$ of $\mathbb{D}^{k}_{\bar{0}}$. In particular, $\left\{h_{1}\cdot\varepsilon, \ldots, h_{k}\cdot\varepsilon\right\}$ is a basis of $\mathbb{D}_{\bar{1}} = \mathbb{D}^{k}_{\bar{0}} \cdot \varepsilon$ and $\mathscr{B}_{1} = \left\{h_{1}, \ldots, h_{k}\right\}$ is a $\mathbb{D}$-basis of $\U$.

\label{NotationsBasisU}

\end{nota}

\begin{defn}
Let $\U$ be as in Notation~\ref{NotationsBasisU}.
Let $\gamma$ be a non-degenerate, homogeneous, $(\iota, \varepsilon)$-superhermitian form on $\U$, with $\varepsilon \in \{\pm 1\}$. We denote by $\mathfrak{g}(\U\,, \gamma)$ the subset of $\mathfrak{gl}_{\mathbb{D}}(\U)$ given by 
\begin{equation*}
\mathfrak{g}(\U\,, \gamma) := \left\{\X \in \mathfrak{gl}_{\mathbb{D}}(\U)\,, \gamma(\X(u)\,, v) + (-1)^{\left|\X\right|\cdot\left|u\right|} \gamma(u\,, \X(v)) = 0\,, u\,, v \in \U\right\}\,.
\end{equation*}

\label{DefinitionOfGUGamma}

\end{defn}

 In Appendix \ref{AppendixExplicitRealization}, we give explicit realizations of $\mathfrak{g}(\U\,, \gamma)$ for every possible $\mathbb{D}\,, \iota$ and $\gamma$.\\

Henceforth we assume that $\E=\E_{\bar 0}\oplus \E_{\bar 1}$ is a finite dimensional $\mathbb Z_2$-graded $\mathbb K$-vector space that is equipped with an even, non-degenerate, $(-1)$-supersymmetric bilinear  form \[
\B:\E\times \E\to\mathbb K.
\]
Furthermore, we assume that $(\mathfrak g,\mathfrak g')$ is an irreducible dual pair in $\mathfrak{spo}(\E,\B)$. 
To proceed with the classification, we consider the Type I and Type II dual pairs separately. \\

Let us first consider the case of  dual pairs of Type II, which are easier to analyze. Thus, let $(\mathfrak{g}\,, \mathfrak{g}')$ be an irreducible reductive dual pair in $\mathfrak{spo}(\E\,, \B)$ of type II. Then it follows from Lemma \ref{TechnicalLemmaTypeI} that there exists a division superalgebra $\mathbb{D}$ over $\mathbb{K}$, a $\mathfrak{g}$-irreducible left-$\mathbb{D}$-module $\U$, a $\mathfrak{g}'$-invariant right-$\mathbb{D}$-module $\W$ such that $\E = (\W \otimes_{\mathbb{D}} \U) \oplus (\W \otimes_{\mathbb{D}} \U)^{*}$ with $\mathfrak{g} \subseteq \mathfrak{gl}_{\mathbb{D}}(\U)$ and $\mathfrak{g}' \subseteq \mathfrak{gl}_{\mathbb{D}}(\W)$.

\begin{prop}

We have $(\mathfrak{g}\,, \mathfrak{g}') = (\mathfrak{gl}_{\mathbb{D}}(\U)\,, \mathfrak{gl}_{\mathbb{D}}(\W))$.

\label{PropositionTypeII}

\end{prop}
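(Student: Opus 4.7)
The plan is to identify $\mathfrak{gl}_{\mathbb{D}}(\U)$ and $\mathfrak{gl}_{\mathbb{D}}(\W)$ as two mutually supercommuting subalgebras of $\mathfrak{spo}(\E\,, \B)$ and then apply the dual pair hypothesis, following the standard pattern in the purely even setting.

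First I would construct the natural embedding $\mathfrak{gl}_{\mathbb{D}}(\U) \hookrightarrow \mathfrak{spo}(\E\,, \B)$. For a homogeneous $\X \in \mathfrak{gl}_{\mathbb{D}}(\U)$, its action on the tensor factor $\U$ lifts to an action on $\W \otimes_{\mathbb{D}} \U$ via
\[
\X \cdot (w \otimes u) := (-1)^{|\X|\cdot|w|} w \otimes \X(u),
\]
which is well-defined precisely because $\X$ is $\mathbb{D}$-linear. The negative (super)transpose of this operator acts on $(\W \otimes_{\mathbb{D}} \U)^{*}$, and together these two actions give an operator on $\E$ preserving the canonical $(-1)$-supersymmetric pairing between $\W \otimes_{\mathbb{D}} \U$ and its dual, and hence lying in $\mathfrak{spo}(\E\,, \B)$. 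The embedding $\mathfrak{gl}_{\mathbb{D}}(\W) \hookrightarrow \mathfrak{spo}(\E\,, \B)$ is defined analogously, with $\Y \in \mathfrak{gl}_{\mathbb{D}}(\W)$ acting on $\W \otimes_{\mathbb{D}} \U$ via $\Y \cdot (w \otimes u) := \Y(w) \otimes u$. Since $\X$ and $\Y$ act on different tensor factors, a direct Koszul-sign computation shows that the supercommutator $[\X\,, \Y]$ vanishes on $\W \otimes_{\mathbb{D}} \U$, and the corresponding statement on the dual follows by transposition. Hence $\mathfrak{gl}_{\mathbb{D}}(\U)$ and $\mathfrak{gl}_{\mathbb{D}}(\W)$ supercommute inside $\mathfrak{spo}(\E\,, \B)$.

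Now I invoke the dual pair hypothesis. Since $\mathfrak{g} \subseteq \mathfrak{gl}_{\mathbb{D}}(\U)$, the subalgebra $\mathfrak{gl}_{\mathbb{D}}(\W)$ supercommutes with $\mathfrak{g}$, so
\[
\mathfrak{gl}_{\mathbb{D}}(\W) \subseteq \mathcal{C}_{\mathfrak{spo}(\E\,, \B)}(\mathfrak{g}) = \mathfrak{g}'.
\]
Combined with the given inclusion $\mathfrak{g}' \subseteq \mathfrak{gl}_{\mathbb{D}}(\W)$, this yields $\mathfrak{g}' = \mathfrak{gl}_{\mathbb{D}}(\W)$. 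Symmetrically, $\mathfrak{gl}_{\mathbb{D}}(\U) \subseteq \mathcal{C}_{\mathfrak{spo}(\E\,, \B)}(\mathfrak{g}') = \mathfrak{g}$, giving $\mathfrak{g} = \mathfrak{gl}_{\mathbb{D}}(\U)$. The only subtle point is tracking the parities and Koszul signs in order to confirm that the operators defined above genuinely land in $\mathfrak{spo}(\E\,, \B)$ and truly supercommute, but these checks are routine given the hyperbolic structure of $\E$.
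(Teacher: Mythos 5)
Your proof is correct and follows essentially the same route as the paper: realize $\mathfrak{gl}_{\mathbb{D}}(\U)$ and $\mathfrak{gl}_{\mathbb{D}}(\W)$ as mutually supercommuting subalgebras of $\mathfrak{spo}(\E\,,\B)$ (acting on $\W\otimes_{\mathbb{D}}\U$ and, by negative supertranspose, on its dual), then combine the resulting inclusions $\mathfrak{gl}_{\mathbb{D}}(\W)\subseteq\mathcal{C}_{\mathfrak{spo}(\E\,,\B)}(\mathfrak{g})=\mathfrak{g}'$ and $\mathfrak{gl}_{\mathbb{D}}(\U)\subseteq\mathcal{C}_{\mathfrak{spo}(\E\,,\B)}(\mathfrak{g}')=\mathfrak{g}$ with the given containments $\mathfrak{g}\subseteq\mathfrak{gl}_{\mathbb{D}}(\U)$, $\mathfrak{g}'\subseteq\mathfrak{gl}_{\mathbb{D}}(\W)$. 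The only difference is that you spell out the embedding and the sign checks that the paper leaves as "one can easily see."
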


\begin{proof}

One can easily see that $\mathfrak{gl}_{\mathbb{D}}(\U)$ is a subalgebra of $\mathfrak{spo}(\E\,, \B)$ and commutes with $\mathfrak{g'}$. In particular, $\mathfrak{gl}_{\mathbb{D}}(\U) \subseteq \mathfrak{g}$. By using that $\mathfrak{g} \subseteq \mathfrak{gl}_{\mathbb{D}}(\U)$, it follows that $\mathfrak{g} = \mathfrak{gl}_{\mathbb{D}}(\U)$. Similarly, we get that $\mathfrak{g}' \subseteq \mathfrak{gl}_{\mathbb{D}}(\W)$ and the proposition follows.
\end{proof}

Next let us consider dual pairs of Type I. We will give the argument in the case   $\mathbb{K} = \mathbb{R}$. 
The case $\mathbb{K}=\mathbb C$ can be treated analogously and is indeed slightly easier because   there are no $\mathbb C$-linear superinvolutions on $\mathbb C\oplus\mathbb C\varepsilon$,
hence we can assume that $\mathbb D=\mathbb D_{\bar 0}$.

In the rest of this section  $(\mathfrak{g}\,, \mathfrak{g}')$ will be an irreducible reductive Type I dual pair in $\mathfrak{spo}(\E\,, \B)$. By Lemma \ref{TechnicalLemmaTypeI}, there exists a factorization $\E \cong \W \otimes_{\mathbb{D}} \U$ with $\mathfrak{g} \subseteq \mathfrak{gl}_{\mathbb{D}}(\U)$ and $\mathfrak{g}' \subseteq \mathfrak{gl}_{\mathbb{D}}(\W)$, where $\mathbb{D} = \End_{\mathfrak{g}}(\U)$. 
By using the form $\B$, we construct a superinvolution $\eta$ on $\End_{\mathbb{R}}(\E)$ (as in Appendix \ref{AppendixSuperInvolution}) by 
\begin{equation*}
\B(\T(u)\,, v) = (-1)^{\left|\T\right| \cdot \left|u\right|} \B(u\,, \eta(\T)(v))\,, \qquad (u\,, v \in \E)\,.
\end{equation*}
In particular, 
\begin{equation}
    \label{eq:eta(x)}
    \eta(\X) = -\X,\qquad    (\X \in \mathfrak{spo}(\E\,, \B)).
\end{equation}
We denote by $\mathscr{A}(\mathfrak{g})$ the subalgebra of $\End_{\mathbb{R}}(\E)$ generated by $\mathfrak{g}$. By using the superized version of the Jacobson Density Theorem (see \cite{RACINE}), we get that $\mathscr{A}(\mathfrak{g}) = \End_{\mathbb{D}}(\U)$. Similarly, we denote by $\mathscr{A}(\mathfrak{g}')$ the subalgebra of $\End_{\mathbb{R}}(\E)$ generated by $\mathfrak{g}'$. Of course $\mathscr{A}(\mathfrak{g}') \subseteq \End_{\mathbb{D}}(\W)$), but a priori we do not know that  equality holds (because we have not proved that $\W$ is  irreducible).

In the next proposition, a pair of ($\mathbb Z_2$-graded) subalgebras $(\mathscr A,\mathscr B)$ of $\End_{\mathbb R}(\E)$ is called a \emph{dual pair} if \[
\mathscr A=\{\T\in\End_\mathbb R(\E)\,:\,[\mathscr B,\T]=0\}\quad \text{and}\quad
\mathscr B=\{\T\in\End_\mathbb R(\E)\,:\,[\mathscr A,\T]=0\},
\]
where $[\cdot,\cdot]$ denotes the standard superbracket of $\End_\mathbb R(\E)$. 
\begin{prop}

The pair of subalgebras $(\End_{\mathbb{D}}(\U), \End_{\mathbb{D}}(\W))$ is a dual pair in   $\End_{\mathbb{R}}(\E)$. 
\label{PropositionEndUEndW}

\end{prop}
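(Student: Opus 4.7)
The plan is to verify the double supercommutant property by decomposing the $\End_{\mathbb{D}}(\U)$-action on $\E \cong \W \otimes_{\mathbb{D}} \U$ through a homogeneous $\mathbb{D}$-basis of $\W$ and applying the super-analogue of Schur's Lemma on each block. First I would fix two supercommuting representations on $\E$: the map $\rho:\End_{\mathbb{D}}(\U) \to \End_{\mathbb{R}}(\E)$ defined by
\begin{equation*}
\rho(A)(w \otimes u) := (-1)^{\left|A\right|\cdot\left|w\right|}\, w \otimes A(u),
\end{equation*}
and the map $\rho':\End_{\mathbb{D}}(\W) \to \End_{\mathbb{R}}(\E)$ defined by $\rho'(B)(w \otimes u) := B(w) \otimes u$. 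A direct computation using the sign $(-1)^{\left|A\right|\cdot\left|d\right|}$ coming from the $\mathbb{D}$-linearity of $A$ on the left $\mathbb{D}$-module $\U$ shows that both maps descend to $\otimes_{\mathbb{D}}$ and satisfy $\rho(A)\rho'(B) = (-1)^{\left|A\right|\cdot\left|B\right|}\rho'(B)\rho(A)$. Hence each of $\End_{\mathbb{D}}(\U)$ and $\End_{\mathbb{D}}(\W)$ is contained in the supercommutant of the other in $\End_{\mathbb{R}}(\E)$.

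For the reverse inclusions, I would fix a homogeneous $\mathbb{D}$-basis $\{w_1,\ldots,w_n\}$ of $\W$, so that $\E = \bigoplus_i w_i \otimes \U$ decomposes as an $\End_{\mathbb{D}}(\U)$-module into summands each isomorphic to $\U$ (up to a parity shift by $\left|w_i\right|$). Given a homogeneous $T \in \End_{\mathbb{R}}(\E)$ supercommuting with $\rho(\End_{\mathbb{D}}(\U))$, I would write $T(w_i \otimes u) = \sum_j w_j \otimes T_{ji}(u)$; unpacking the supercommutation identity then shows that each $T_{ji}:\U\to\U$ is graded $\End_{\mathbb{D}}(\U)$-equivariant of parity $\left|T\right| + \left|w_i\right| + \left|w_j\right|$. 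Since $\mathscr{A}(\mathfrak{g}) = \End_{\mathbb{D}}(\U)$ by the super-Jacobson density theorem already invoked in the text, $\U$ is a super-simple $\End_{\mathbb{D}}(\U)$-module with $\End_{\End_{\mathbb{D}}(\U)}(\U) \cong \mathbb{D}$, so each $T_{ji}$ equals scalar multiplication by some $d_{ji} \in \mathbb{D}$. Assembling the $d_{ji}$ into a matrix and checking right-$\mathbb{D}$-linearity yields $B \in \End_{\mathbb{D}}(\W)$ with $T = \rho'(B)$. The symmetric argument, with the roles of $\U$ and $\W$ exchanged, gives the other inclusion.

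The main obstacle I anticipate is the bookkeeping of parity signs, particularly in the case $\mathbb{D} \cong \mathrm{Cl}_1(\mathbb{C})$ where $\mathbb{D}_{\bar 1} \neq 0$. Here a homogeneous $\mathbb{D}$-basis of $\W$ does not interact trivially with right-multiplication by odd elements of $\mathbb{D}$, so some care is needed to verify that the matrix $(d_{ji})$ assembles into a genuinely right-$\mathbb{D}$-linear map (not just right-$\mathbb{D}_{\bar 0}$-linear) and that the identifications $\End_{\End_{\mathbb{D}}(\U)}(\U) \cong \mathbb{D}$ and its analogue on $\W$ are compatible with the $\mathbb{D}$- versus $\mathbb{D}^{\mathrm{sop}}$-module conventions used in Definition \ref{DefSuperhermitian}. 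Once these sign and opposite-algebra conventions are fixed, the remainder reduces to the super-analogue of the classical double commutant theorem for a simple algebra acting on its unique irreducible module.
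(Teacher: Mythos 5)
Your proposal is correct, and its skeleton is the same as the paper's: decompose $\E=\bigoplus_i w_i\otimes \U$ along a homogeneous $\mathbb{D}$-basis of $\W$, write a supercommuting operator in block form, identify the blocks with elements of $\mathbb{D}$, reassemble, and invoke symmetry for the remaining inclusion. The genuine difference is how the blocks are identified: you appeal to the graded Schur-type statement that the supercommutant of $\End_{\mathbb{D}}(\U)$ on $\U$ is $\mathbb{D}$ (which is indeed legitimate and non-circular here, since any such operator supercommutes in particular with the image of $\mathfrak{g}$, hence lies in $\End_{\mathfrak{g}}(\U)=\mathbb{D}$, using $\mathscr{A}(\mathfrak{g})=\End_{\mathbb{D}}(\U)$ from the density theorem), whereas the paper works with the matrix units $\T^{p}_{q}(a)=\E_{p,q}a$ of $\End_{\mathbb{D}}(\W)$ and verifies $\mathbb{D}$-linearity of the resulting map $\S:\U\to\U$ by hand. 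Your route is shorter and more structural; the paper's is self-contained and, by computing the commutant of $\End_{\mathbb{D}}(\W)$ rather than of $\End_{\mathbb{D}}(\U)$, it makes explicit the sign $\alpha(i,\T)=(-1)^{|i|\cdot|\T|}$ that is exactly the wrinkle hidden in your final sentence: because $\rho(A)(w\otimes u)=(-1)^{|A|\cdot|w|}w\otimes A(u)$ carries a Koszul sign while $\rho'(B)$ does not, the "symmetric" inclusion is not literally symmetric — the blocks $\S_{ji}:\W\to\W$ of an operator supercommuting with $\rho'(\End_{\mathbb{D}}(\W))$, taken along a $\mathbb{D}$-basis of $\U$, satisfy $\S_{ji}B=(-1)^{|\T|\cdot|B|}B\S_{ji}$, a commutation rule twisted by $|\T|$ rather than by the parity $|\T|+|e_i|+|e_j|$ of $\S_{ji}$, so before applying Schur you must either compose with the parity operator $w\mapsto(-1)^{|w|}w$ or transport the problem to $\U\otimes_{\mathbb{D}^{\mathrm{sop}}}\W$; the outcome is that $\S_{ji}(w)=(-1)^{|\T|\cdot|w|}w\,d_{ji}$, which assembles precisely into $\rho(A)$ for a graded left-$\mathbb{D}$-linear $A$. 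With that point made explicit, your argument is complete.
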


\begin{proof}

Let $\{x_{1}, \ldots, x_{d}\}$ be an homogeneous $\mathbb{R}$-basis of $\mathbb{D}$ and $\left\{e_{1}, \ldots, e_{u}\right\}$ (resp. $\left\{f_{1}, \ldots, f_{w}\right\}$) be a $\mathbb{D}$-basis of $\U$ (resp. $\W$) as in Notation \ref{NotationsBasisU}. In particular, 
\begin{equation*}
\left\{f_{i} \otimes x_{k}e_{j}\,, 1 \leq i \leq u\,, 1 \leq k \leq d\,, 1 \leq j \leq w\right\}
\end{equation*} 
is an homogeneous $\mathbb{R}$-basis of $\V$. Note that $f_{i} \otimes x_{k}e_{j} = f_{i}x_{k} \otimes e_{j}$.

For every $1 \leq p\,, q \leq w$, we denote by $\E_{p\,, q}$ the endomorphism of $\End_{\mathbb{D}}(\W)$ such that 
\begin{equation*}
\E_{p, q}(f_{p}) = f_{q}\,, \qquad \E_{p, q}(f_{i}) = 0\,, \qquad (1 \leq i \leq w\,, i \neq p)\,,
\end{equation*}
and let $\T^{p}_{q}: \mathbb{D} \to \End_{\mathbb{D}}(\W)$ be the map given by $\T^{p}_{q}(a) = \E_{p, q}a, a \in \mathbb{D}$ ($\T^{p}_{q}(a)$ can be seen as an element of $\End_{\mathbb{R}}(\V)$ by identifying $\T^{p}_{q}(a)$ with $\T^{p}_{q}(a) \otimes \Id_{\U}$).

Let $\T$ be an element in the supercommutant of $\End_{\mathbb{D}}(\W)$ of parity $\xi$. In particular, $\left[\T\,, \T^{p}_{q}(a)\right] = 0$ for every $1 \leq p\,, q \leq w$ and $a \in \mathbb{D}$.

For every $x \in \mathbb{D}$, $1 \leq i \leq w$ and $1 \leq j \leq u$,
\begin{equation*}
\T(f_{i} \otimes xe_{j}) = \sum\limits_{b = 1}^{w} \sum\limits_{c = 1}^{u} \sum\limits_{k=1}^{d} \alpha_{b\,, c\,, k}(i\,, j\,, x) f_{b} \otimes x_{k}e_{c}\,,
\end{equation*}
with $\alpha_{b\,, c\,, k}(i\,, j\,, x) \in \mathbb{R}$.  In particular, $\T(f_{i} \otimes xe_{j}) = \sum\limits_{b = 1}^{w} \sum\limits_{c = 1}^{u} f_{b} \otimes \beta_{b\,, c}(i\,, j\,, x)e_{c}$
where we define $\beta_{b\,, c}(i\,, j\,, x) = \sum\limits_{k = 1}^{d} \alpha_{b\,, c\,, k}(i\,, j\,, x) x_{k} \in \mathbb{D}$.

On one hand, for every $a \in \mathbb{D}$ and $1 \leq p\,, q \leq w$,
\begin{eqnarray*}
(\T^{p}_{q}(a) \circ \T)(f_{i} \otimes xe_{j}) & = & \T^{p}_{q}(a)\left(\sum\limits_{b = 1}^{w} \sum\limits_{c = 1}^{u} \beta_{b\,, c}(i\,, j\,, x) f_{b} \otimes e_{c}\right) = \sum\limits_{b = 1}^{w} \sum\limits_{c = 1}^{u} \E_{p\,, q}a(f_{b}) \otimes \beta_{b, c}(i, j, x)e_{c} \\
& = & \sum\limits_{b = 1}^{w} \sum\limits_{c = 1}^{u} \delta_{b\,, p} f_{q}a \otimes \beta_{b\,, c}(i\,, j\,, x)e_{c} = \sum\limits_{c = 1}^{u} f_{q} \otimes a\beta_{p\,, c}(i\,, j\,, x)e_{c}\,.
\end{eqnarray*}
On the other hand,
\begin{equation*}
(\T \circ \T^{p}_{q}(a))(f_{i} \otimes xe_{j}) = \delta_{i\,, p} \T(f_{q}a \otimes xe_{j}) = \delta_{i\,, p} \T(f_{q} \otimes axe_{j})  = \delta_{i, p} \sum\limits_{b = 1}^{w} \sum\limits_{c = 1}^{u} f_{b} \otimes \beta_{b\,, c}(q\,, j\,, ax)e_{c}\,.
\end{equation*}
In particular, it implies that if $i \neq p$, $\beta_{p\,, c}(i\,, j\,, x) = 0$ and then 
\begin{equation*}
\T(f_{i} \otimes xe_{j}) = f_{i} \otimes \left(\sum\limits_{c = 1}^{u} \beta_{i\,, c}(i\,, j\,, x)e_{c}\right)\,.
\end{equation*}
Assume that $\mathbb{D} \neq \mathbb{D}_{\bar{0}}$. For every $1 \leq i' \leq w$, we get:
\begin{eqnarray*}
\T(f_{i'} \otimes xe_{j}) & = & \T \circ (\E_{i\,, i'} \otimes \Id_{\U})(f_{i} \otimes xe_{j}) = \E_{i\,, i'} \otimes \Id_{\U} \circ \T (f_{i} \otimes xe_{j}) \\
& = & \E_{i\,, i'} \otimes \Id_{\U}\left(f_{i} \otimes\left(\sum\limits_{c = 1}^{u} \beta_{i\,, c}(i\,, j\,, x)e_{c}\right)\right) = f_{i'} \otimes\left(\sum\limits_{c = 1}^{u} \beta_{i\,, c}(i\,, j\,, x)e_{c}\right)\,,
\end{eqnarray*}  
i.e. $\beta_{i\,, c}(i\,, j\,, x) = \beta_{i'\,, c}(i'\,, j\,, x)$ and then $\T(f_{i} \otimes xe_{j}) = f_{i} \otimes \left(\sum\limits_{c = 1}^{u} \beta_{c}(j\,, x)e_{c}\right)$, where $\beta_{c}(j\,, x) = \beta_{1\,, c}(1\,, j\,, x)$. In particular, there exists a map $\S: \U \to \U$ such that
\begin{equation*}
\T(f_{i} \otimes xe_{j}) = f_{i} \otimes \S(xe_{j})\,, \qquad (1 \leq i \leq w\,, 1 \leq j \leq u\,, x \in \mathbb{D})\,.
\end{equation*}
Note that the map $\S$ is homogeneous and $\left|\S\right| = \left|\T\right| = \xi$. 

Assume now that $\mathbb{D} = \mathbb{D}_{\bar{0}}$. Then for every $1 \leq i' \leq w$, we get:
\begin{eqnarray*}
\T(f_{i'} \otimes xe_{j}) & = & \T \circ (\E_{i\,, i'} \otimes \Id_{\U})(f_{i} \otimes xe_{j}) = (-1)^{\xi \cdot \left|\E_{i\,, i'}\right|} \E_{i\,, i'} \otimes \Id_{\U} \circ \T (f_{i} \otimes xe_{j}) \\
                  & = & (-1)^{\xi(\left|i\right| + \left|i'\right|)}\E_{i\,, i'} \otimes \Id_{\U}\left(f_{i} \otimes\left(\sum\limits_{c = 1}^{u} \beta_{i\,, c}(i\,, j\,, x)e_{c}\right)\right) =  (-1)^{\xi(\left|i\right| + \left|i'\right|)}f_{i'} \otimes\left(\sum\limits_{c = 1}^{u} \beta_{i\,, c}(i\,, j\,, x)e_{c}\right)\,
\end{eqnarray*}   
i.e.
\begin{equation*}
\T(f_{i} \otimes xe_{j}) = \alpha(i\,, \T) f_{i} \otimes \left(\sum\limits_{c = 1}^{u} \beta_{c}(j\,, x)e_{c}\right)\,.
\end{equation*}
where $\alpha(i\,, \T) = \begin{cases} (-1)^{\left|i\right|\cdot \left|\T\right|} & \text{ if } \dim(\U_{\bar{0}}) \neq 0 \\ 1 & \text{ otherwise} \end{cases}$ and $\beta_{c}(j\,, x) = \beta_{1\,, c}(1\,, j\,, x)$. In particular,
\begin{equation*}
\T(f_{i} \otimes xe_{j}) = \alpha(i\,, \T) f_{i} \otimes \S(xe_{j})\,, \qquad (1 \leq i \leq w\,, 1 \leq j \leq u\,, x \in \mathbb{D})\,.
\end{equation*}

To prove that $[\mathcal{C}_{\End_{\mathbb{R}}(\V)}(\End_{\mathbb{D}}(\W))]_{\xi} = \End_{\mathbb{D}}(\U)_{\xi}$, we still need to prove that $\S$ is $\mathbb{D}$-linear, i.e. we have $\S(\D xe_{j}) = (-1)^{\left|\S\right| \cdot \left|\D\right|}\D\S(xe_{j})$ for every $\D, x \in \mathbb{D}$. Then for every $\D \in \mathbb{D}$, $1 \leq i \leq w$ and $1 \leq j \leq u$, we get
\begin{equation*}
\T \circ (\E_{i\,, i}\D \otimes \Id_{\U})(f_{i} \otimes xe_{j}) = \T(f_{i}\D \otimes xe_{j}) = \T(f_{i} \otimes \D xe_{j}) = \Omega(i\,, \T\,, \mathbb{D}) f_{i} \otimes \left(\sum\limits_{c = 1}^{u} \beta_{c}(j\,, \D x)e_{c}\right)
\end{equation*}
and 
\begin{eqnarray*}
(-1)^{\left|\T\right| \cdot \left|\D\right|}\left(\E_{i\,, i}\D \otimes \Id_{\U}\right) \circ \T(f_{i} \otimes xe_{j}) & = & (-1)^{\left|\T\right| \cdot \left|\D\right|} \Omega(i\,, \T\,, \mathbb{D}) \D\E_{i\,, i} \otimes \Id_{\U}\left(f_{i} \otimes \left(\sum\limits_{c = 1}^{u} \beta_{c}(j\,, x)e_{c}\right)\right) \\
    & = & (-1)^{\left|\S\right| \cdot \left|\D\right|} \Omega(i\,, \T, \mathbb{D}) f_{i} \otimes \left(\sum\limits_{c = 1}^{u} d\beta_{c}(j\,, x)e_{c}\right)\,,
\end{eqnarray*}
where $\Omega(i\,, \T\,, \mathbb{D}) = \begin{cases} 1 & \text{ if } \mathbb{D}_{\bar{1}} \neq \{0\} \\ \alpha(i\,, \T) & \text{ if } \mathbb{D}_{\bar{1}} = \{0\} \end{cases}$. In particular $\D\beta_{c}(j\,, x) = (-1)^{\left|\S\right| \cdot \left|\D\right|}\beta_{c}(j\,, \D x)$ and then we get  $[\mathcal{C}_{\End_{\mathbb{R}}(\V)}(\End_{\mathbb{D}}(\W))]_{\xi} = \End_{\mathbb{D}}(\U)_{\xi}$, $\xi \in \mathbb{Z}_{2}$. By symmetry, we get $\mathcal{C}_{\End_{\mathbb{R}}(\V)}(\End_{\mathbb{D}}(\U)) = \End_{\mathbb{D}}(\W)$ and the result follows.
\end{proof}

One can easily see that the superinvolution $\eta$ preserves $\mathscr{A}(\mathfrak{g}) = \End_{\mathbb{D}}(\U)$ and $\mathscr{A}(\mathfrak{g}')$. Moreover, $\eta$ preserves $\mathscr{C}_{\End_{\mathbb{R}}(\V)}(\End_{\mathbb{D}}(\U))$, hence by Proposition \ref{PropositionEndUEndW} it also preserves $\End_{\mathbb{D}}(\W)$.

\begin{rema}
Recall that $\mathscr Z(\mathbb D)$ denotes the centre of $\mathbb D$ in the \emph{ungraded} sense. 
Since $\mathscr{Z}(\mathbb D)\cong  \mathscr Z(\End_{\mathbb D}(\U))$, 
the restriction of $\eta$ to $\mathscr{Z}(\End_{\mathbb{D}}(\U))$ defines a superinvolution on $\mathscr{Z}(\mathbb{D})$. Using Propositions \ref{ClassificationDivisionSuperalgebra} and \ref{PropositionDivisionIota}, there exists a superinvolution $\iota$ on $\mathbb{D}$ such that $\eta_{\big|_{\mathscr{Z}(\mathbb{D})}} = \iota_{\big|_{\mathscr{Z}(\mathbb{D})}}$. Note that 
$\mathscr Z(\mathbb D)=\mathbb D$ when $\mathbb{D}_{\bar 1}\neq 0$.
\end{rema}

By Propositions \ref{PropositionC4} and \ref{PropositionC5},
the superinvolution of $\End_{\mathbb D}(\U)$
that is obtained by restriction of $\eta$ corresponds to a $(\iota,\epsilon_1)$-superhermitian form 
\[
\gamma:\U\times \U\to\mathbb D,
\] 
for some $\epsilon_1\in\{\pm 1\}$.
From \eqref{eq:eta(x)} for $\X\in\mathfrak g$ it follows that $\gamma$ is $\mathfrak g$-invariant, since 
\[
\gamma(\X u,v)=-\gamma(\eta(\X)u,v)=-(-1)^{|\X|\cdot|u|}
\gamma(u,\X v).
\]
Similarly, the restriction of $\eta$ to $\End_{\mathbb D}(\W)$ corresponds to a $(\iota\circ\delta,\epsilon_2)$-superhermitian form \[
\gamma':\W\times \W\to\mathbb D,
\]
for some $\epsilon_2\in\{\pm 1\}$.
Here $\delta$ is as in  Remark \ref{RemarkDeltaInvolution}. 
Note that when $\mathbb D=\mathbb D_{\bar 0}$ we have $\iota\circ\delta=\iota$. The latter form  is also $\mathfrak g'$-invariant.

\begin{prop}

The forms $\gamma$ and $\gamma'$ have the same parity.

\label{PropositionNovember16}

\end{prop}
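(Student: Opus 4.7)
The plan is to reduce the desired equality $|\gamma|=|\gamma'|$ to the single input that $\B$ is even, by tracking parities through the factorization $\E \cong \W\otimes_{\mathbb{D}}\U$.

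First, I would note that a non-degenerate homogeneous superhermitian form always induces a module isomorphism to the appropriate linear dual that has the same parity as the form itself. Concretely, $\gamma$ yields a $\mathbb D$-semilinear (with respect to $\iota$) isomorphism
\[
\Phi_\gamma\colon \U\longrightarrow \U^\vee,\qquad \Phi_\gamma(u)(v)=\gamma(v,u),
\]
of parity $|\gamma|$, and similarly $\gamma'$ yields $\Phi_{\gamma'}\colon \W\to \W^\vee$ of parity $|\gamma'|$. Forming the tensor product over $\mathbb D$ produces a $\mathbb K$-linear isomorphism
\[
\Phi_{\gamma'}\otimes \Phi_\gamma\colon\ \E=\W\otimes_{\mathbb D}\U\longrightarrow \W^\vee\otimes_{\mathbb D^{\mathrm{sop}}}\U^\vee\cong \E^*,
\]
of parity $|\gamma|+|\gamma'|$.

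Next, the form $\B$ on its own induces a $\mathbb K$-linear isomorphism $\Phi_\B\colon \E\to\E^*$, and because $\B$ is even, $\Phi_\B$ has parity $\bar 0$. The crux of the argument is to show that $\Phi_\B$ coincides with $\Phi_{\gamma'}\otimes \Phi_\gamma$ up to a non-zero scalar in $\mathbb K$. I would prove this by showing that both maps induce the same super-involution on $\End_{\mathbb K}(\E)$: by construction $\Phi_\B$ induces $\eta$, while the tensor product $\Phi_{\gamma'}\otimes\Phi_\gamma$ induces, on the subalgebras $\End_{\mathbb D}(\U)$ and $\End_{\mathbb D}(\W)$, precisely the super-involutions determined (via Propositions~\ref{PropositionC4} and~\ref{PropositionC5}) by $\gamma$ and $\gamma'$. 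But by the very definition of $\gamma$ and $\gamma'$, these are nothing but the restrictions $\eta|_{\End_{\mathbb D}(\U)}$ and $\eta|_{\End_{\mathbb D}(\W)}$. Since by Proposition~\ref{PropositionEndUEndW} the subalgebras $\End_{\mathbb D}(\U)$ and $\End_{\mathbb D}(\W)$ together generate $\End_{\mathbb K}(\E)$, the two super-involutions on $\End_{\mathbb K}(\E)$ agree, and by the classification of non-degenerate homogeneous bilinear forms inducing a given super-involution the corresponding dualizing isomorphisms $\E\to\E^*$ must agree up to a non-zero $\mathbb K$-scalar.

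Once $\Phi_\B$ and $\Phi_{\gamma'}\otimes\Phi_\gamma$ agree up to a scalar in $\mathbb K$, they have equal parities, forcing $|\gamma|+|\gamma'|=\bar 0$, i.e.\ $|\gamma|=|\gamma'|$. The main obstacle I expect is the sign-bookkeeping in verifying that $\Phi_{\gamma'}\otimes \Phi_\gamma$ really induces the tensor product of the super-involutions $\eta_\gamma$ and $\eta_{\gamma'}$: one must juggle the super Koszul signs coming from the cross-actions of $\End_{\mathbb D}(\U)$ on the $\W$-factor and vice versa, and must treat the case $\mathbb D_{\bar 1}\neq 0$ (where the parity of $\gamma$ is a genuine choice, so one essentially verifies compatibility of the chosen conventions) separately from the case $\mathbb D_{\bar 1}=0$ (where $|\gamma|$ is determined by the super-involution).
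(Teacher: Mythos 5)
For $\mathbb{D}=\mathbb{D}_{\bar 0}$ your argument is essentially the paper's proof, up to two points that need repair. Proposition~\ref{PropositionEndUEndW} does not say that $\End_{\mathbb{D}}(\U)$ and $\End_{\mathbb{D}}(\W)$ generate $\End_{\mathbb{K}}(\E)$ (this is false, e.g.\ for $\mathbb{D}=\mathbb{C}$, $\mathbb{K}=\mathbb{R}$, where they only generate $\End_{\mathbb{C}}(\E)$); it says they are mutual supercommutants. The correct deduction is therefore not ``the two dualizing maps agree up to a scalar in $\mathbb{K}$'' but only that $\Phi_{\B}^{-1}\circ(\Phi_{\gamma'}\otimes\Phi_{\gamma})$ supercommutes with both subalgebras and hence lies in $\End_{\mathbb{D}}(\U)\cap\End_{\mathbb{D}}(\W)=\mathscr{Z}(\mathbb{D})$. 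When $\mathbb{D}$ is purely even this central element is automatically even, the parity count $|\gamma|+|\gamma'|=\bar 0$ goes through, and this is exactly the paper's Case~1 argument.

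The genuine gap is the case $\mathbb{D}_{\bar 1}\neq\{0\}$, i.e.\ $\mathbb{D}\cong\mathrm{Cl}_1(\mathbb{C})$, which you propose to dispose of as ``verifying compatibility of conventions.'' There $\mathscr{Z}(\mathbb{D})=\mathbb{D}$ has a nonzero odd part, so the commutant argument only shows that the two maps $\E\to\E^{*}$ differ by a homogeneous central element that may be \emph{odd}; equivalently, by Proposition~\ref{PropositionC5} and Remark~\ref{SuperHermitianForms}(2), one and the same superinvolution of $\End_{\mathbb{D}}(\U)$ is induced by superhermitian forms of \emph{both} parities, so ``same induced superinvolution'' does not determine $|\gamma|+|\gamma'|$, and your key classification claim fails precisely here. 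What is actually needed is the identity $\eta_{|_{\mathscr{Z}(\End_{\mathbb{D}}(\U))}}=\eta_{|_{\mathscr{Z}(\End_{\mathbb{D}}(\W))}}\circ\delta$, i.e.\ that $\eta$ restricts to $\iota_{1}$ on one copy of $\mathbb{D}$ and to $\iota_{2}=\iota_{1}\circ\delta$ on the other; combined with the parity conventions of Proposition~\ref{PropositionC5} (even forms correspond to $\iota$, odd forms to $\iota\circ\delta$), this is what forces $|\gamma|=|\gamma'|$. This is a substantive fact, not a convention: writing $\eta(\Id_{\W}\otimes\A_{\varepsilon})=\Id_{\W}\otimes\A_{ia\varepsilon}$ and $\eta(\F_{\varepsilon}\otimes\Id_{\U})=\F_{ib\varepsilon}\otimes\Id_{\U}$ with $a,b\in\{\pm1\}$, one pairs against $\B$ and uses that $\B$, being even, vanishes unless the total parity of its arguments is even, to conclude $a=-b$. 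Your proposal never uses the evenness of $\B$ beyond the parity of $\Phi_{\B}$, and without this extra computation the conclusion does not follow in the $\mathrm{Cl}_1(\mathbb{C})$ case.
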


\begin{proof}

First assume that $\mathbb{D} = \mathbb{D}_{\bar{0}}$. 
Using Propositions \ref{PropositionC4} and \ref{PropositionC5} and part (1) of Remark \ref{remark:B1}, 
we obtain $\mathbb D$-linear maps  $\Psi: \U \to \U^{*}$ and $\Phi: \W \to \W^{*}$ corresponding to the restrictions of $\eta$ to $\End_{\mathbb D}(\U)$ and $\End_{\mathbb D}(\W)$. 
According to Remark \ref{RemarkNovember14AH},
we have 
\[\eta(\Id_{\W} \otimes \T) = 
(-1)^{|\T|\cdot|\Psi|}
(\Id_{\W} \otimes \Psi)^{-1} \circ (\Id_{\W} \otimes \T)^{*} \circ \Id_{\W} \otimes \Psi\,,
\qquad(\T\in\End_{\mathbb D}(\U))\,,
\]
and 
\[\eta(\S \otimes \Id_{\U}) = 
(-1)^{|\S|\cdot |\Phi|}
(\Phi \otimes \Id_{\U})^{-1} \circ (\S \otimes \Id_{\U})^{*} \circ \Phi \otimes \Id_{\U}\,,
\qquad(\S\in\End_{\mathbb D}(\W))\,.
\]
It follows that 
\begin{equation*}\eta(\S \otimes \T) = 
(-1)^{(|\Phi|+\Psi|)(|\S|+|\T|)}
(\Phi \otimes \Psi)^{-1} \circ (\S \otimes \T)^{*} \circ \Phi \otimes \Psi\,, \qquad \left(\T \in \End_{\mathbb{D}}(\U), \S \in \End_{\mathbb{D}}(\W)\right)\,.
\end{equation*}
Similarly, by part (1) of Remark \ref{remark:B1}  the involution $\eta: \End_{\mathbb{R}}(\E) \to \End_{\mathbb{R}}(\E)$ corresponds to an even $\mathbb R$-linear  map $h: \E \to \E^{*}$ such that
\begin{equation*}
\eta(\F) = h^{-1} \circ \F^{*} \circ h\,, \qquad (\F \in \End_{\mathbb{R}}(\E))\,.
\end{equation*}
From the last two relations it follows that for every $\T \in \End_{\mathbb{D}}(\U)$ and $\S \in \End_{\mathbb{D}}(\W)$, we have 
\begin{equation*}
(\Phi \otimes \Psi) \circ h^{-1} \circ (\S \otimes \T)^{*} = 
(-1)^{(|\S|+|\T|)(|\Phi|+|\Psi|)}
(\S \otimes \T)^{*} \circ (\Phi \otimes \Psi) \circ h^{-1}\,.
\end{equation*}
It follows that $\Phi \otimes \Psi \circ h^{-1}$ supercommutes with $\End_{\mathbb{D}}(\U)$ and $\End_{\mathbb{D}}(\W)$. Thus by Proposition
\ref{PropositionEndUEndW} we have 
 $\Phi \otimes \Psi \circ h^{-1} \in \End_{\mathbb{D}}(\U) \cap \End_{\mathbb{D}}(\W) = \mathscr{Z}(\mathbb{D})$. In particular, $\Phi \otimes \Psi \circ h^{-1}$ is even. Because $h$ is even, it follows that $\Phi \otimes \Psi$ is even and using that $\bar{0} = \left|\Phi \otimes \Psi\right| = \left|\Phi\right| + \left|\Psi\right|$, we get that $\left| \Phi\right| = \left|\Psi\right|$ and the proposition follows.

Assume now that $\mathbb{D}_{\bar{1}} \neq \{0\}$. In particular, it follows from Proposition \ref{PropositionDivisionIota} that $\mathbb D \cong \mathbb{C} \oplus \mathbb{C} \cdot \varepsilon$, with $\varepsilon^{2} = 1$ and $i \varepsilon = \varepsilon i$. The superinvolutions induced on $\End_{\mathbb{D}}(\U)$ and on $\End_{\mathbb{D}}(\W)$ by $\eta$  preserve the spaces $\mathscr{Z}(\End_{\mathbb{D}}(\U))$ and $\mathscr{Z}(\End_{\mathbb{D}}(\W))$, where
\begin{equation*}
\mathscr{Z}(\End_{\mathbb{D}}(\U)) = \left\{\X \in \End_{\mathbb{D}}(\U), \X\Y = \Y\X\,, \Y \in \End_{\mathbb{D}}(\U)\right\}\,.
\end{equation*}
Note that $\mathscr{Z}(\End_{\mathbb{D}}(\U)) \cong \mathbb{D} \cong \mathscr{Z}(\End_{\mathbb{D}}(\W))$. According to 
Proposition \ref{PropositionC5}, it is enough to prove that $\eta_{|_{\mathscr{Z}(\End_{\mathbb{D}}(\U))}} = \eta_{|_{\mathscr{Z}(\End_{\mathbb{D}}(\W))}} \circ \delta$, where $\delta: \mathbb{D} \to \mathbb{D}$ is defined in Remark \ref{RemarkDeltaInvolution}.

For every $\D \in \mathbb{D}$, we denote by $\A_{\D}$ (resp. $\F_{\D}$) the endomorphism of $\End_{\mathbb{D}}(\U)$ (resp. $\End_{\mathbb{D}}(\W)$) defined by 
\begin{equation*}
\A_{\D}(u) = (-1)^{\left|\D\right| \cdot \left|u\right|} \D \cdot u\,, \qquad \F_{\D}(w) = w \cdot \D\,, \qquad (u \in \U, w \in \W)\,.
\end{equation*}
In particular, the map
\begin{equation*}
\mathbb{D} \ni \D \to \A_{\D} \in {\mathscr{Z}}(\End_{\mathbb{D}}(\U))
\end{equation*}
is an isomorphism. We denote by $\widetilde{\A}_{\D} = \Id_{\W} \otimes \A_{\D}$ and $\widetilde{\F}_{\D} = \F_{\D} \otimes \Id_{\U}$ the corresponding elements of $\End_{\mathbb{R}}(\W \otimes_{\mathbb{D}} \U)$. As explained in Equation \eqref{ActionLieG}, we have:
\begin{equation*}
\widetilde{\A}_{\D}(w \otimes u) = (-1)^{\left|\D\right| \cdot \left|w\right|} w \otimes \A_{\D}(u)\,, \qquad \widetilde{\F}_{\D}(w \otimes u) = \F_{\D}(w) \otimes u\,, \qquad (u \in \U, w \in \W)\,.
\end{equation*}
As explained in Proposition \ref{PropositionDivisionIota}, there exists $a, b \in \{\pm 1\}$ such that $\eta(\widetilde{\A}_{\varepsilon}) = \widetilde{\A}_{ia\varepsilon}$ and $\eta(\widetilde{\F}_{\varepsilon}) = \widetilde{\F}_{ib\varepsilon}$. 

For every $w_{1}, w_{2} \in \W$ and $u_{1}, u_{2} \in \U$, we get:
\begin{equation*}
\B(\widetilde{\A}_{\varepsilon}(w_{1} \otimes u_{1})\,, w_{2} \otimes u_{2}) = (-1)^{\left|w_{1} \otimes u_{1}\right|} \B(w_{1} \otimes u_{1}\,, \eta(\widetilde{\A}_{\varepsilon})(w_{2} \otimes u_{2}))\,,
\end{equation*}
i.e. 
\begin{equation*}
(-1)^{\left|w_{1}\right| + \left|u_{1}\right|} \B(w_{1} \otimes \varepsilon u_{1}\,, w_{2} \otimes u_{2}) = (-1)^{\left|u_{1}\right| + \left|w_{1}\right| + \left|u_{2}\right| + \left|w_{2}\right|} \B(w_{1} \otimes u_{1}\,, w_{2} \otimes ia\varepsilon u_{2})\,.
\end{equation*}
Similarly, $\B(\widetilde{\F}_{\varepsilon}(w_{1} \otimes u_{1})\,, w_{2} \otimes u_{2}) = (-1)^{\left|w_{1} \otimes u_{1}\right|} \B(w_{1} \otimes u_{1}\,, \eta(\widetilde{\F}_{\varepsilon})(w_{2} \otimes u_{2}))$, i.e.
\begin{equation*}
\B(w_{1}\cdot\varepsilon \otimes u_{1}\,, w_{2} \otimes u_{2}) = (-1)^{\left|u_{1}\right| + \left|w_{1}\right|} \B(w_{1}\otimes u_{1}\,, w_{2}ib\varepsilon \otimes u_{2})\,,
\end{equation*}
and using that $\B(w_{1} \otimes \varepsilon u_{1}\,, w_{2} \otimes u_{2}) = \B(w_{1}\varepsilon \otimes u_{1}\,, w_{2} \otimes u_{2})$ and \[
\B(w_{1} \otimes u_{1}\,, w_{2} \otimes ia\varepsilon u_{2}) = \B(w_{1} \otimes u_{1}\,, w_{2} ia\varepsilon \otimes u_{2}) = a\B(w_{1} \otimes u_{1}\,, w_{2} i\varepsilon \otimes u_{2}),
\]it follows that
\begin{equation}
a(-1)^{\left|u_{2}\right| + \left|w_{2}\right|} 
\B(w_{1} \otimes \varepsilon u_{1}\,, w_{2} \otimes u_{2})
= b(-1)^{\left|u_{1}\right| + \left|w_{1}\right|}
\B(w_{1} \otimes \varepsilon u_{1}\,, w_{2} \otimes u_{2})\,.
\label{EquationAandB}
\end{equation}
The form $\B$ is even, in particular, $\B(w_{1} \otimes \varepsilon u_{1}\,, w_{2} \otimes u_{2})  \neq 0$ if and only if $\left|u_{1}\right| + \left|u_{2}\right| + \left|w_{1}\right| + \left|w_{2}\right| = \bar {1}$. From Equation \eqref{EquationAandB}, we get that $a = -b$ and the proposition follows.
\end{proof}

Let $\tau: \W \otimes_{\mathbb{R}} \U \times \W \otimes_{\mathbb{R}} \U \to \mathbb{R}$ be the bilinear form given by
\begin{equation*}
\tau(w_{1} \otimes u_{1}\,, w_{2} \otimes u_{2}) = (-1)^{\left|u_{1}\right|\cdot\left|w_{2}\right| + \left|w_{1}\right|\cdot\left|w_{2}\right|} \Re(\gamma'(w_{2}\,, w_{1})\gamma(u_{1}\,, u_{2}))\,.
\end{equation*}
\begin{rema}
For $\mathbb D:=\mathbb C\oplus\mathbb C\varepsilon$ we define 
${\Re}(a+b\varepsilon):={\Re}(a)$.
\end{rema}\begin{lemme}

For every $\D \in \mathbb{D}$, we have
\begin{equation*}
\tau(w_{1} \otimes \D u_{1}\,, w_{2} \otimes u_{2}) = \tau(w_{1}\D \otimes u_{1}\,, w_{2} \otimes u_{2})\qquad\text{and} \qquad \tau(w_{1} \otimes u_{1}\,, w_{2} \otimes \D u_{2}) = \tau(w_{1} \otimes u_{1}\,, w_{2}\D \otimes u_{2})\,.
\end{equation*}
In particular, the form $\tau$ factors through $\E = \W \otimes_{\mathbb{D}} \U$.

\label{LemmaNovember3}

\end{lemme}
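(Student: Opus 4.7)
The plan is to verify the two identities directly by $\mathbb R$-linear bookkeeping, using the scalar-movement formulas supplied by the superhermitian properties of $\gamma$ and $\gamma'$. Once both identities are established, the ``in particular'' assertion is immediate: they express precisely that $\tau$ annihilates the generators $w\D\otimes u - w\otimes\D u$ of the kernel of the canonical surjection $\W\otimes_\mathbb{R}\U\twoheadrightarrow \W\otimes_\mathbb{D}\U$, so $\tau$ descends to a well-defined $\mathbb R$-bilinear form on $\W\otimes_\mathbb{D}\U$.

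For the first identity, the approach is to expand both sides using the definition of $\tau$ and then apply the relation $\gamma(\D u_1, u_2) = (-1)^{|\D|\cdot|\gamma|}\D\gamma(u_1, u_2)$ (from the left-module $(\iota,\epsilon)$-superhermitian rule with $\D_1=\D$, $\D_2=1$) together with $\gamma'(w_2, w_1\D) = \gamma'(w_2, w_1)\D$ (from the right-module $(\iota\circ\delta,-\epsilon)$-superhermitian rule with $\D_1=1$, $\D_2=\D$). Both sides then reduce to a scalar multiple of $\Re\bigl(\gamma'(w_2, w_1)\,\D\,\gamma(u_1, u_2)\bigr)$, and a short comparison of exponents shows that the two scalars agree.

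For the second identity the same strategy applies, but one additional ingredient is required. After applying the analogous scalar-movement rules, the left-hand side involves $\Re\bigl(\gamma'(w_2, w_1)\,\gamma(u_1, u_2)\,\iota(\D)\bigr)$ while the right-hand side involves $\Re\bigl((\iota\circ\delta)(\D)\,\gamma'(w_2, w_1)\,\gamma(u_1, u_2)\bigr)$. To reconcile these one invokes the cyclic trace property $\Re(xy)=\Re(yx)$ on $\mathbb{D}$ (checked case-by-case across the list in Proposition~\ref{ClassificationDivisionSuperalgebra}, including $\mathbb D\cong\mathrm{Cl}_1(\mathbb C)$) together with the pointwise identity $\iota(\D) = (-1)^{|\D|}(\iota\circ\delta)(\D)$ coming from $\delta(\D)=(-1)^{|\D|}\D$. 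These two observations bring both expressions to a common form and the remaining sign comparison proceeds as in the first identity.

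The main obstacle lies in the sign bookkeeping intrinsic to the super setting, specifically the case $\mathbb{D}\cong\mathrm{Cl}_1(\mathbb C)$ where the odd part of $\mathbb D$ is nontrivial: the naive comparison leaves a residual factor $(-1)^{|\D|\cdot|\gamma|}$, which is harmless whenever $\mathbb D_{\bar 1}=0$ or $|\gamma|=\bar 0$ but would be nontrivial when $|\D|=|\gamma|=\bar 1$. This last case is precisely the one handled by the reduction explained in the Remark following Theorem~\ref{TheoremeIntroduction2}: an odd superhermitian form $\gamma$ may be replaced by the even form $\varepsilon\gamma$ (with a correlated change of the superinvolution and the sign $\epsilon$), and similarly for $\gamma'$, which does not alter the tensor $\tau$ up to a uniform real rescaling but enforces $|\gamma|=\bar 0$ throughout the verification.
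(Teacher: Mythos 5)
Your overall route is the same as the paper's: expand $\tau$, move $\D$ across using the superhermitian axioms, compare signs, and observe that the two identities say exactly that $\tau$ kills the relations defining $\W\otimes_{\mathbb D}\U$. However, your sign analysis of the second identity stops too early. After invoking $\Re(xy)=\Re(yx)$ and $\iota(\D)=(-1)^{|\D|}(\iota\circ\delta)(\D)$, the exponents do \emph{not} agree identically: careful bookkeeping of the prefactors (which involve $|w_2\D|=|w_2|+|\D|$ on one side but not the other) leaves the residual factor $(-1)^{|\D|\cdot(|u_1|+|u_2|+|w_1|+|w_2|+\bar 1)}$ (plus $(-1)^{|\D|\cdot|\gamma'|}$ if $\gamma'$ were odd), which is nontrivial even when both forms are even, e.g.\ $|\D|=\bar 1$ with all four vectors even. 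The paper removes it by a support argument: since $|\gamma|=|\gamma'|$ (Proposition \ref{PropositionNovember16}) and $\Re$ vanishes on $\mathbb D_{\bar 1}$, the term $\Re\big(\gamma'(w_2,w_1)\gamma(u_1,u_2)\iota(\D)\big)$ can be nonzero only when $|u_1|+|u_2|+|w_1|+|w_2|+|\D|=\bar 0$, and on that support the leftover sign is $+1$. This step is missing from your plan; "the remaining sign comparison proceeds as in the first identity" is misleading, because in the first identity the signs cancel identically whereas here they cancel only on the support of the $\Re$-term.

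The second problem is your disposal of the case $|\D|=|\gamma|=\bar 1$. Replacing $\gamma,\gamma'$ by $\varepsilon\gamma,\varepsilon\gamma'$ does \emph{not} change $\tau$ by a uniform real rescaling: for $\mathbb D\cong\mathrm{Cl}_1(\mathbb C)$ one has $\Re(\varepsilon x)=\Re(b)$ for $x=a+b\varepsilon$, which is not a real multiple of $\Re(x)=\Re(a)$, so the rescaled data define a genuinely different bilinear form, and identities proved for it do not transfer back to the $\tau$ of the statement (which must later be compared with $\B$ in Lemma \ref{lemma:tauB}). In fact no sign or rescaling trick can rescue that configuration: with $|\D|=|\gamma|=|\gamma'|=\bar 1$ the two sides of the first identity differ by the factor $(-1)^{|\D|\cdot|\gamma|}=-1$ on the support of $\Re$, so the identity would simply be false. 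What saves the lemma is that this configuration never occurs in the setting at hand: by Propositions \ref{PropositionC4} and \ref{PropositionC5}, with $\gamma$ being $(\iota,\epsilon_1)$-superhermitian and $\gamma'$ being $(\iota\circ\delta,\epsilon_2)$-superhermitian for the superinvolutions induced by $\eta$, the forms are even whenever $\mathbb D_{\bar 1}\neq\{0\}$; odd forms only arise when $\mathbb D=\mathbb D_{\bar 0}$, where $|\D|=\bar 0$ and the factor is harmless. Replace your rescaling step by this observation (or run the paper's parity-support argument throughout), and the rest of your verification, including the cyclicity of $\Re$ on $\mathbb R,\mathbb C,\mathbb H,\mathrm{Cl}_1(\mathbb C)$ and the deduction that $\tau$ factors through $\W\otimes_{\mathbb D}\U$, is correct.
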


\begin{proof}
We have
\begin{eqnarray*}
\tau(w_{1} \otimes \D u_{1}\,, w_{2} \otimes u_{2}) & = & (-1)^{\left|\D u_{1}\right|\cdot \left|w_{2}\right| + \left|w_{1}\right|\cdot \left|w_{2}\right|} \Re(\gamma'(w_{2}\,, w_{1})\gamma(\D u_{1}\,, u_{2})) \\
                                             & = & (-1)^{\left|\D u_{1}\right|\cdot \left|w_{2}\right| + \left|w_{1}\right|\cdot \left|w_{2}\right|} \Re(\gamma'(w_{2}\,, w_{1})\D\gamma(u_{1}\,, u_{2})) \\
                                             & = & (-1)^{\left|\D u_{1}\right|\cdot \left|w_{2}\right| + \left|w_{1}\right|\cdot \left|w_{2}\right|} (-1)^{\left|u_{1}\right|\cdot \left|w_{2}\right| + \left|w_{1}\D\right|\cdot \left|w_{2}\right|} \tau(w_{1}\D \otimes u_{1}\,, w_{2} \otimes u_{2}) 
\end{eqnarray*}
Similarly, we have
\begin{align*}
\tau(w_{1} \otimes &u_{1} \,, w_{2} \otimes \D u_{2})
=  (-1)^{\left|u_{1}\right|\cdot \left|w_{2}\right| + \left|w_{1}\right|\cdot \left|w_{2}\right|} \Re(\gamma'(w_{2}\,, w_{1})\gamma(u_{1}\,, \D u_{2})) \\
                          = & (-1)^{\left|u_{1}\right|\cdot \left|w_{2}\right| + \left|w_{1}\right|\cdot \left|w_{2}\right|} (-1)^{\left|\D\right|\cdot \left|u_{2}\right|}\Re(\gamma'(w_{2}\,, w_{1})\gamma(u_{1}\,, u_{2})\iota(\D)) \\
                          = & (-1)^{\left|u_{1}\right|\cdot \left|w_{2}\right| + \left|w_{1}\right|\cdot \left|w_{2}\right|} (-1)^{\left|\D\right|\cdot \left|u_{2}\right|} (-1)^{\left|\D\right|} \Re(\gamma'(w_{2}\,, w_{1})\gamma(u_{1}\,, u_{2})\iota_{2}(\D)) \\
                          = & (-1)^{\left|u_{1}\right|\cdot \left|w_{2}\right| + \left|w_{1}\right|\cdot \left|w_{2}\right|} (-1)^{\left|\D\right|\cdot \left|u_{2}\right|} (-1)^{\left|\D\right|} (-1)^{\left|\D\right|\cdot \left|w_{2}\right|} \Re(\gamma'(w_{2}\D\,, w_{1})\gamma(u_{1}\,, u_{2})) \\
                          = & (-1)^{\left|u_{1}\right|\cdot \left|w_{2}\right| + \left|w_{1}\right|\cdot \left|w_{2}\right|} (-1)^{\left|\D\right|\cdot \left|u_{2}\right|} (-1)^{\left|\D\right|} (-1)^{\left|\D\right|\cdot \left|w_{2}\right|} (-1)^{\left|u_{1}\right|\cdot \left|w_{2}\D\right| + \left|w_{1}\right|\cdot \left|w_{2}\D\right|} \Re(\gamma'(w_{2}\D\,, w_{1})\gamma(u_{1}\,, u_{2})) \\ 
                          = & (-1)^{\left|\D\right|\cdot \left(\left|u_{1}\right|+\left|u_{2}\right|+\left|w_{1}\right|+\left|w_{2}\right|+\bar{1}\right)} \tau(w_{1} \otimes u_{1}\,, w_{2}\D \otimes u_{2}).
\end{align*}
The forms $\gamma$ and $\gamma'$ have the same parity. In particular, $\Re(\gamma'(w_{2}\,, w_{1})\gamma(u_{1}\,, \D u_{2}))$ is non-zero only if 
\begin{equation*}
\begin{cases} \left|w_{2}\right| & = \left|w_{1}\right| \\ \left|u_{1}\right| & = \left|\D u_{2}\right| \end{cases} \qquad \text{ or } \qquad \begin{cases} \left|w_{2}\right| & = \left|w_{1}\right| + \bar{1} \\ \left|u_{1}\right| & = \left|\D u_{2}\right| + \bar{1} \end{cases} 
\end{equation*}
i.e.
\begin{equation}
\begin{cases} \left|w_{2}\right| & = \left|w_{1}\right| \\ \left|u_{1}\right| & = \left|u_{2}\right| + \left|\D\right| \end{cases} \qquad \text{ or } \qquad \begin{cases} \left|w_{2}\right| & = \left|w_{1}\right| + \bar{1} \\ \left|u_{1}\right| & = \left|u_{2}\right| + \left|\D\right| + \bar{1} \end{cases} 
\label{Equation111111}
\end{equation}                      
If $\left|\D\right| = \bar{0}$ then $(-1)^{\left|\D\right|\cdot \left(\left|u_{1}\right|+\left|u_{2}\right|+\left|w_{1}\right|+\left|w_{2}\right|+\bar{1}\right)} = 1$. If $\left|\D\right| = \bar{1}$, then whenever we have  \[\Re(\gamma'(w_{2}\,, w_{1})\gamma(u_{1}\,, \D u_{2})) \neq 0,
\]
it follows from Equation \eqref{Equation111111} that $\left|u_{1}\right|+\left|u_{2}\right|+\left|w_{1}\right|+\left|w_{2}\right|+\bar{1} = \bar{0}$, i.e. $(-1)^{\left|\D\right|\cdot \left(\left|u_{1}\right|+\left|u_{2}\right|+\left|w_{1}\right|+\left|w_{2}\right|+\bar{1}\right)} = 1$, and the Lemma follows.
\end{proof}

Recall that $\gamma$ is $(\iota,\epsilon_1)$-superhermitian and
$\gamma'$ is $(\iota\circ\delta,\epsilon_2)$-superhermitian. Of course when $\mathbb D_{\bar 1}=0$ we have $\iota\circ\delta=\iota$.
\begin{lemme}

The form $\tau$ is even and $\epsilon_{1}\epsilon_{2}$-supersymmetric.

\label{LemmaNovember31}

\end{lemme}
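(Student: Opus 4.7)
The plan is to verify evenness and $\epsilon_1\epsilon_2$-supersymmetry separately, in both cases by translating the statement about $\tau$ into a statement about the product $\gamma'(w_2,w_1)\gamma(u_1,u_2) \in \mathbb{D}$ and then exploiting the superhermiticity relations for $\gamma$ and $\gamma'$ together with two elementary properties of $\Re:\mathbb{D}\to\mathbb{R}$ that hold case-by-case for every division superalgebra appearing in Proposition~\ref{ClassificationDivisionSuperalgebra}: namely $\Re\circ\iota=\Re$ (easily checked for $\iota\in\{\mathrm{triv},\mathrm{conj},\iota_1\}$) and the cyclicity $\Re(xy)=\Re(yx)$ for $x,y\in\mathbb{D}$.

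For evenness: the parity of $\gamma'(w_2,w_1)\gamma(u_1,u_2)$ as an element of $\mathbb{D}$ equals $|w_1|+|w_2|+|u_1|+|u_2|+|\gamma|+|\gamma'|$. By Proposition~\ref{PropositionNovember16} we have $|\gamma|=|\gamma'|$, so this parity is congruent to $|w_1|+|w_2|+|u_1|+|u_2|$ modulo $2$. Since $\Re$ vanishes on $\mathbb{D}_{\bar 1}$, the form $\tau(w_1\otimes u_1,w_2\otimes u_2)$ vanishes unless $|w_1\otimes u_1|+|w_2\otimes u_2|=\bar 0$, which means $\tau$ is even.

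For $\epsilon_1\epsilon_2$-supersymmetry, I would compute $\tau(w_2\otimes u_2,w_1\otimes u_1)$ by applying the defining identities
\[
\gamma(u_2,u_1)=\epsilon_1(-1)^{|u_1|\cdot|u_2|}\iota(\gamma(u_1,u_2)),
\qquad
\gamma'(w_1,w_2)=\epsilon_2(-1)^{|w_1|\cdot|w_2|}(\iota\circ\delta)(\gamma'(w_2,w_1)),
\]
expanding $\delta(x)=(-1)^{|x|}x$ using $|\gamma'(w_2,w_1)|=|w_1|+|w_2|+|\gamma'|$, converting $\iota(\gamma'(w_2,w_1))\iota(\gamma(u_1,u_2))$ via $\iota(b)\iota(a)=(-1)^{|a|\cdot|b|}\iota(ab)$, then applying $\Re\circ\iota=\Re$ and the cyclicity of $\Re$ to restore $\Re(\gamma'(w_2,w_1)\gamma(u_1,u_2))$. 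The accumulated sign before $\epsilon_1\epsilon_2$ is
\[
(-1)^{|w_1|\cdot|w_2|+|u_1|\cdot|u_2|+|w_1|+|w_2|+|\gamma'|+|\gamma'(w_2,w_1)|\cdot|\gamma(u_1,u_2)|},
\]
and under the evenness constraint $|w_1|+|w_2|=|u_1|+|u_2|$, together with $|\gamma|=|\gamma'|$, this reduces modulo $2$ to $|w_1|\cdot|w_2|+|u_1|\cdot|u_2|$. Combining with the explicit prefactors in the definition of $\tau$, the total exponent becomes $(|w_1|+|u_1|)(|w_2|+|u_2|)$, yielding
\[
\tau(w_2\otimes u_2,w_1\otimes u_1)=\epsilon_1\epsilon_2(-1)^{(|w_1|+|u_1|)(|w_2|+|u_2|)}\tau(w_1\otimes u_1,w_2\otimes u_2),
\]
which is exactly $\epsilon_1\epsilon_2$-supersymmetry.

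The main obstacle is purely bookkeeping: tracking the interaction of $\delta$ inside the superhermiticity of $\gamma'$ with the superantiautomorphism property of $\iota$, and systematically using the evenness constraint on parities to collapse the six or seven contributing signs into the clean expression $(|w_1|+|u_1|)(|w_2|+|u_2|)$. The division superalgebra $\mathrm{Cl}_1(\mathbb{C})$ requires a separate small check that $\Re\circ\iota_1=\Re$ and $\Re(xy)=\Re(yx)$ hold there, using $\iota_1(a+b\varepsilon)=\bar a+i\bar b\varepsilon$ and $\varepsilon^2=1$; once these pointwise identities are in hand, the above sign calculation applies uniformly across all the division superalgebras of Proposition~\ref{ClassificationDivisionSuperalgebra}.
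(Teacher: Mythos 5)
Your proof is correct and follows essentially the same route as the paper's: both arguments deduce evenness from Proposition~\ref{PropositionNovember16}, then flip the arguments of $\gamma$ and $\gamma'$ via their superhermiticity (with the $\delta$-twist), use the superantiautomorphism property of $\iota$ to recombine the factors, and collapse the accumulated sign to $(-1)^{(|w_1|+|u_1|)\cdot(|w_2|+|u_2|)}$ using the parity constraint on the support of $\tau$. The only difference is that you make explicit the identities $\Re\circ\iota=\Re$ and $\Re(xy)=\Re(yx)$, which the paper's chain of equalities uses tacitly; your sign bookkeeping agrees with the paper's.
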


\begin{proof}

Using Proposition \ref{PropositionNovember16}, it follows that the form $\tau$ is even. Moreover, for all homogeneous elements $u_{1}, u_{2} \in \U$ and $w_{1}, w_{2} \in \W$, we have
\begin{align*}
 \tau(w_{1} &\otimes u_{1}\,, w_{2} \otimes u_{2}) = (-1)^{\left|u_{1}\right|\cdot \left|w_{2}\right| + \left|w_{1}\right|\cdot \left|w_{2}\right|} \Re(\gamma'(w_{2}\,, w_{1})\gamma(u_{1}\,, u_{2})) \\
             = & (-1)^{\left|u_{1}\right|\cdot \left|w_{2}\right| + \left|w_{1}\right|\cdot \left|w_{2}\right|} \Re(\iota(\gamma'(w_{2}\,, w_{1})\gamma(u_{1}\,, u_{2}))) \\
             = & 
            (-1)^{\left|u_{1}\right|\cdot \left|w_{2}\right| + \left|w_{1}\right|\cdot \left|w_{2}\right|} (-1)^{(\left|u_{1}\right|+\left|u_{2}\right|+|\gamma|)\cdot (\left|w_{1}\right|+\left|w_{2}\right|+|\gamma'|)} \Re(\iota(\gamma(u_{1}\,, u_{2}))\iota(\gamma'(w_{2}\,, w_{1}))) \\ 
             = & (-1)^{\left|u_{1}\right|\cdot \left|w_{2}\right| + \left|w_{1}\right|\cdot \left|w_{2}\right|} (-1)^{(\left|u_{1}\right|+\left|u_{2}\right|+|\gamma|)\cdot (\left|w_{1}\right|+\left|w_{2}\right|+|\gamma'|)} (-1)^{\left|w_{1}\right|+\left|w_{2}\right|+|\gamma'|} \Re(\iota(\gamma(u_{1}\,, u_{2}))\iota\circ\delta(\gamma'(w_{2}\,, w_{1}))) \\ 
             = & \epsilon_{1}\epsilon_{2}(-1)^{\left|u_{1}\right|\cdot \left|w_{2}\right| + \left|w_{1}\right|\cdot \left|w_{2}\right|} (-1)^{(\left|u_{1}\right|+\left|u_{2}\right|+|\gamma|)\cdot (\left|w_{1}\right|+\left|w_{2}\right|+|\gamma'|)} (-1)^{\left|w_{1}\right|+\left|w_{2}\right|+|\gamma'|+\left|u_{1}\right|\cdot \left|u_{2}\right|+\left|w_{1}\right|\cdot \left|w_{2}\right|} \Re(\gamma(u_{2}\,, u_{1})\gamma'(w_{1}\,, w_{2})) \\
             = & \epsilon_{1}\epsilon_{2} (-1)^{(\left|u_{1}\right|+\left|w_{2}\right|)\cdot (\left|w_{1}\right|+\left|u_{2}\right|) + \left|w_{1}\right|+\left|w_{2}\right|} 
            (-1)^{|\gamma|(|w_1|+|w_2|+|u_1|+|u_2|)}\tau(w_{2} \otimes u_{2}\,, w_{1} \otimes u_{1})
\end{align*}            
We want to prove that we can replace \[(-1)^{(\left|u_{1}\right|+\left|w_{2}\right|)\cdot (\left|w_{1}\right|+\left|u_{2}\right|) + \left|w_{1}\right|+\left|w_{2}\right|}(-1)^{|\gamma|(|w_1|+|w_2|+|u_1|+|u_2|)}
\] by $(-1)^{(\left|u_{1}\right|+\left|w_{1}\right|)\cdot (\left|u_{2}\right|+\left|w_{2}\right|)}$. It is enough to prove that these two signs are equal whenever $\tau(w_{1} \otimes u_{1}\,, w_{2} \otimes u_{2}) \neq 0$. One can see that $\tau(w_{1} \otimes u_{1}\,, w_{2} \otimes u_{2}) \neq 0$ only if 
\begin{equation*}
\begin{cases} \left|w_{2}\right| & = \left|w_{1}\right| \\ \left|u_{1}\right| & = \left|u_{2}\right| \end{cases} \qquad \text{ or } \qquad \begin{cases} \left|w_{2}\right| & = \left|w_{1}\right| + \bar{1} \\ \left|u_{1}\right| & = \left|u_{2}\right| + \bar{1} \,.\end{cases} 
\end{equation*}
In the first case, we get:
\begin{equation*}
(-1)^{(\left|u_{1}\right|+\left|w_{2}\right|)\cdot (\left|w_{1}\right|+\left|u_{2}\right|) + \left|w_{1}\right|+\left|w_{2}\right|} = (-1)^{(\left|u_{1}\right|+\left|w_{1}\right|)\cdot (\left|u_{2}\right|+\left|w_{2}\right|)}
\end{equation*}
and in the second case
\begin{eqnarray*}
& & (-1)^{(\left|u_{1}\right|+\left|w_{2}\right|)(\left|w_{1}\right|+\left|u_{2}\right|) + \left|w_{1}\right|+\left|w_{2}\right|} = (-1)^{(\left|u_{1}\right|+\left|w_{1}\right|+\bar{1})(\left|w_{2}\right|+\left|u_{2}\right|+\bar{1}) + \left|w_{1}\right|+\left|w_{1}\right| + \bar{1}}. \\
\end{eqnarray*}
Furthermore in both cases we have $(-1)^{|\gamma|(|w_1|+|w_2|+|u_1|+|u_2|)}=1$
and the lemma follows.
\end{proof}

\begin{lemme}
There exists $c\in\mathscr Z(\mathbb D)_{\bar 0}$   such that 
$\iota(c)=\pm c$ and 
$\tau=c\B$.
\label{lemma:tauB}
\end{lemme}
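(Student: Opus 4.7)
The plan is to show that $\tau$ and $\B$ differ by (left) multiplication by a central scalar $c \in \mathscr Z(\mathbb D)_{\bar 0}$. Since $\B$ is non-degenerate and even, I would first define an even $\mathbb K$-linear map $\S : \E \to \E$ by the identity $\tau(x,y) = \B(\S x, y)$. The goal is then to prove that $\S$ coincides with left multiplication by some $c \in \mathscr Z(\mathbb D)_{\bar 0}$.

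The key observation is that $\tau$, like $\B$, is invariant under both $\mathfrak g$ and $\mathfrak g'$: the $\mathfrak g$-invariance follows from the $\mathfrak g$-invariance of $\gamma$, since $\mathfrak g$ acts only on the $\U$-factor of $\E = \W \otimes_\mathbb D \U$, and likewise for $\mathfrak g'$ using $\gamma'$. Consequently, for every $\X \in \mathfrak g \cup \mathfrak g'$ and homogeneous $x \in \E$,
\[
\tau(\X x, y) = -(-1)^{|\X||x|}\tau(x, \X y) = (-1)^{|\X||x|}\tau(x, \eta(\X) y),
\]
using $\eta(\X) = -\X$ from \eqref{eq:eta(x)}. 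Since $\eta$ is an anti-super-homomorphism of $\End_\mathbb R(\E)$ preserving $\End_\mathbb D(\U)$, and $\mathscr A(\mathfrak g) = \End_\mathbb D(\U)$ by the superized Jacobson Density Theorem, a short induction on the length of products in $\mathfrak g$ extends this identity to every $\A \in \End_\mathbb D(\U)$, and analogously to every $\A \in \End_\mathbb D(\W)$.

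Combining this with the definition of $\S$, for homogeneous $\A \in \End_\mathbb D(\U)$ and $x, y \in \E$ we find
\[
\B(\S\A x, y) = \tau(\A x, y) = (-1)^{|\A||x|}\tau(x, \eta(\A) y) = (-1)^{|\A||x|}\B(\S x, \eta(\A) y) = (-1)^{|\A||\S|}\B(\A\S x, y),
\]
so non-degeneracy of $\B$ yields $[\S, \A] = 0$ in the super-sense; the same reasoning applies to $\End_\mathbb D(\W)$. By Proposition~\ref{PropositionEndUEndW}, $\S$ then belongs to $\End_\mathbb D(\U) \cap \End_\mathbb D(\W) = \mathscr Z(\mathbb D)$ (the latter equality being the same one used in the proof of Proposition~\ref{PropositionNovember16}). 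Since $\S$ is even, this produces a $c \in \mathscr Z(\mathbb D)_{\bar 0}$ with $\tau = c\B$.

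The identity $\iota(c) = \pm c$ then follows from comparing the two supersymmetry conditions. By Lemma~\ref{LemmaNovember31} and the $(-1)$-supersymmetry of $\B$,
\[
\B(cx, y) = \tau(x, y) = \epsilon_1\epsilon_2 (-1)^{|x||y|}\tau(y, x) = \epsilon_1\epsilon_2(-1)^{|x||y|}\B(cy, x) = -\epsilon_1\epsilon_2\, \B(x, cy),
\]
while the fact that $\eta$ restricts to $\iota$ on $\mathscr Z(\mathbb D)$ together with $c$ being even gives $\B(cx, y) = \B(x, \iota(c) y)$. Together these yield $\iota(c) = -\epsilon_1\epsilon_2\, c \in \{\pm c\}$. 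The main technical subtlety is verifying that the adjunction formula for $\tau$ extends from $\mathfrak g$ to all of $\End_\mathbb D(\U)$; once this is in place, the remainder is sign bookkeeping together with an appeal to Proposition~\ref{PropositionEndUEndW}.
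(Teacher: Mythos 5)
Your overall strategy is essentially the paper's: both arguments reduce the lemma to showing that the adjoint involution attached to $\tau$ agrees with $\eta$ on both $\End_{\mathbb D}(\U)$ and $\End_{\mathbb D}(\W)$, and then use Proposition~\ref{PropositionEndUEndW} together with $\End_{\mathbb D}(\U)\cap\End_{\mathbb D}(\W)=\mathscr Z(\mathbb D)$ to force the comparison operator into $\mathscr Z(\mathbb D)_{\bar 0}$. Your variant, which works directly with the operator $\S$ defined by $\tau(x,y)=\B(\S x,y)$, even lets you bypass the appeal to the graded Skolem--Noether theorem (Proposition~\ref{prp:Skolem}) that the paper uses, and your derivation of $\iota(c)=-\epsilon_1\epsilon_2\,c$ from Lemma~\ref{LemmaNovember31} is correct and consistent with Lemma~\ref{lem:Btildegamma}.

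However, there is a genuine gap in the step where you extend the adjunction identity from $\mathfrak g'$ to all of $\End_{\mathbb D}(\W)$ ``analogously'' by induction on products. That extension relies on $\mathscr A(\mathfrak g')=\End_{\mathbb D}(\W)$, and this equality is precisely what is \emph{not} available at this stage: the Jacobson density argument gives $\mathscr A(\mathfrak g)=\End_{\mathbb D}(\U)$ because $\U$ is an irreducible $\mathfrak g$-module, but $\W=\Hom_{\mathfrak g}(\U,\E)$ is not known to be irreducible under $\mathfrak g'$, so one only knows $\mathscr A(\mathfrak g')\subseteq\End_{\mathbb D}(\W)$ (the paper flags exactly this point before Proposition~\ref{PropositionEndUEndW}). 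If you only obtain supercommutation of $\S$ with $\End_{\mathbb D}(\U)$ and with $\mathscr A(\mathfrak g')$, Proposition~\ref{PropositionEndUEndW} yields merely $\S\in\End_{\mathbb D}(\W)_{\bar 0}$ commuting with $\mathfrak g'$, which does not force $\S\in\mathscr Z(\mathbb D)$. The fix is to establish the identity
\begin{equation*}
\tau\big((\X\otimes 1)x\,, y\big)=(-1)^{|\X|\cdot|x|}\,\tau\big(x\,, (\eta(\X)\otimes 1)y\big)\,,\qquad (\X\in\End_{\mathbb D}(\W))\,,
\end{equation*}
by a direct computation from the definition of $\tau$ in terms of $\gamma'$, using that the restriction of $\eta$ to $\End_{\mathbb D}(\W)$ is, by construction, the involution attached to $\gamma'$; this is exactly the calculation carried out in the paper's proof (and the analogous computation, or your density argument, handles the $\U$-side). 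With that replacement your argument goes through.
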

\begin{proof}
Let $\eta_\tau$ denote the involution of $\End_\mathbb R(\E)$ 
that corresponds to $\tau$. 
Let $\X\in \End_\mathbb D(\W)$. We denote 
$\eta(\X\otimes 1)$ by $\overline \eta(X)$. Note that $\overline \eta$ is the involution of $\End_\mathbb D(\W)$ that corresponds to $\gamma'$. We have

\begin{eqnarray*}
\tau((\X\otimes 1)(w_1\otimes u_1),w_2\otimes u_2)&=&
\tau(\X w_1\otimes u_1,w_2\otimes u_2)\\
&=&
(-1)^{|u_1|\cdot |w_2|+(|\X|+|w_1|)\cdot |w_2|}
\Re(\gamma'(w_2,\X w_1)\gamma(u_1,u_2))\\
&=&
(-1)^{|u_1|\cdot |w_2|+(|\X|+|w_1|)\cdot |w_2|}
(-1)^{|\X|\cdot |w_2|}
\Re(\gamma'(\overline \eta(\X)w_2,w_1)\gamma(u_1,u_2))\\
&=&
(-1)^{|u_1|\cdot |w_2|+|w_1|\cdot |w_2|}
\Re(\gamma'(\overline \eta(\X)w_2,w_1)\gamma(u_1,u_2)).
\end{eqnarray*}

We also have
\begin{align*}
(-1)^{|\X|\cdot(|w_1|+|u_1|)}
&\tau(w_1\otimes u_1,\overline \eta(\X)w_2\otimes u_2)\\
&=
(-1)^{|\X|\cdot(|w_1|+|u_1|)}
(-1)^{|u_1|\cdot (\X|+|w_2|)+|w_1|\cdot (|\X|+|w_2|)}
\Re(\gamma'(\overline \eta(\X)w_2,w_1)\gamma(u_1,u_2))\\
&=
(-1)^{|u_1|\cdot |w_2|+|w_1|\cdot |w_2|}
\Re(\gamma'(\overline \eta(\X)w_2,w_1)\gamma(u_1,u_2)).
\end{align*}
From the above calculation it follows that 
$\eta_\tau(\X\otimes 1)=\overline \eta(\X)\otimes 1$, in particular $\eta$ and $\eta_\tau$ agree on $\End_\mathbb D(\W)$. By a similar calculation we can verify that $\eta$ and $\eta'$ also agree on $\End_\mathbb D(\U)$. 
It follows that $\theta:=\eta^{-1}\circ \eta_\tau$ is a $\mathscr Z(\mathbb D)_{\bar 0}$-linear  automorphism of $\End_\mathbb R(\E)$ that fixes $\End_\mathbb D(\W)$ and $\End_\mathbb D(\U)$ pointwise. By
Proposition \ref{prp:Skolem}
 there exists $\P\in\End_\mathbb R(\E)$ such that 
\[
\theta(\X)=(-1)^{|\P|\cdot |X|}\P\X\P^{-1}.
\]
Since both $\tau$ and $\B$ are even, it follows that $|\P|=\bar 0$. From Proposition \ref{PropositionEndUEndW} it also follows that $\P\in \End_\mathbb D(\U)\cap \End_\mathbb D(\W)$, hence in conclusion we obtain $\P=c\Id_\E$ for some $c\in\mathscr Z(\mathbb D)_{\bar 0}$. Finally, note that $\tau$ can be $\pm 1$-supersymmetric 
only if $\iota(c)=\pm c$.
\end{proof}

Let $\tilde{\gamma}: \U \times \U \to \mathbb{D}$ be the form on $\U$ given by
\begin{equation*}
\tilde{\gamma}(u_{1}\,, u_{2}) = \gamma(c^{-1}u_{1}\,, u_{2})\,, \qquad (u_{1}\,, u_{2} \in \U)\,,
\end{equation*}
with $c\in\mathscr Z(\mathbb D)_{\bar 0}$ as in Lemma \ref{lemma:tauB}. 
The following lemma is now immediate. 
\begin{lemme}

The form $\tilde{\gamma}$ is non-degenerate, $\mathfrak{g}$-invariant and $(\iota, \tilde{\epsilon_1})$-superhermitian where
$\tilde{\epsilon_1}:=\iota(c)c^{-1}\epsilon_1$.
Furthermore,
\begin{equation*}
\B(w_{1} \otimes u_{1}\,, w_{2} \otimes u_{2}) = (-1)^{\left|u_{1}\right|\cdot\left|w_{2}\right| + \left|w_{1}\right|\cdot\left|w_{2}\right|} \Re(\gamma'(w_{2}\,, w_{1})\tilde{\gamma}(u_{1}\,, u_{2}))\,.
\end{equation*}
Moreover, $\tilde{\epsilon_{1}}\epsilon_{2} = -1$.
\label{lem:Btildegamma}
\end{lemme}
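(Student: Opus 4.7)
The plan is to deduce everything from the definition $\tilde{\gamma}(u_{1},u_{2})=\gamma(c^{-1}u_{1},u_{2})$ together with the relation $\tau=c\B$ established in Lemma~\ref{lemma:tauB}, using that $c\in\mathscr{Z}(\mathbb{D})_{\bar 0}$ is invertible, even, central, and satisfies $\iota(c)=\pm c$. Non-degeneracy of $\tilde{\gamma}$ is immediate because $u\mapsto c^{-1}u$ is an even $\mathbb{K}$-linear bijection of $\U$. The $\mathfrak{g}$-invariance transfers from that of $\gamma$: every $\X\in\mathfrak{g}$ is $\mathbb{D}$-linear and $c^{-1}$ is even, so $\X(c^{-1}u_{1})=c^{-1}\X(u_{1})$, and the invariance identity for $\gamma$ yields the analogous identity for $\tilde{\gamma}$ without modification.

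For the superhermitian identity, start from
\begin{equation*}
\tilde{\gamma}(u_{2},u_{1})=\gamma(c^{-1}u_{2},u_{1})=\epsilon_{1}(-1)^{|u_{1}|\cdot|u_{2}|}\,\iota\bigl(\gamma(u_{1},c^{-1}u_{2})\bigr),
\end{equation*}
apply the $\mathbb{D}$-linearity rule of Definition~\ref{DefSuperhermitian} to extract $\iota(c^{-1})$ from the second slot, then use $\iota^{2}=\mathrm{Id}$ and the centrality of both $c$ and $\iota(c)$ to move the resulting scalar outside. What appears is the factor $\iota(c)c^{-1}$, giving
\begin{equation*}
\tilde{\gamma}(u_{2},u_{1})=\epsilon_{1}\iota(c)c^{-1}(-1)^{|u_{1}|\cdot|u_{2}|}\,\iota(\tilde{\gamma}(u_{1},u_{2})),
\end{equation*}
which is precisely the $(\iota,\tilde{\epsilon}_{1})$-superhermitian identity. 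Since $\iota(c)=\pm c$, the scalar $\tilde{\epsilon}_{1}=\epsilon_{1}\iota(c)c^{-1}$ lies in $\{\pm 1\}$.

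The bilinear formula for $\B$ follows by substituting $\tilde{\gamma}(u_{1},u_{2})=c^{-1}\gamma(u_{1},u_{2})$ into the right-hand side of the claimed identity, using centrality of $c^{-1}$ to pull it across $\gamma'(w_{2},w_{1})$, and recognising what remains as $c^{-1}\tau(w_{1}\otimes u_{1},w_{2}\otimes u_{2})$, which equals $\B(w_{1}\otimes u_{1},w_{2}\otimes u_{2})$ by Lemma~\ref{lemma:tauB}. Commuting $c^{-1}$ past $\Re(\cdot)$ requires $c\in\mathbb{K}$; this holds because $\tau$ and $\B$ are both $\mathbb{K}$-valued with $\B\not\equiv 0$, which forces the scalar $c$ produced by Lemma~\ref{lemma:tauB} to lie in $\mathbb{K}$. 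Consequently $\iota(c)=c$ and $\tilde{\epsilon}_{1}=\epsilon_{1}$. Combining the $\epsilon_{1}\epsilon_{2}$-supersymmetry of $\tau$ from Lemma~\ref{LemmaNovember31} with the $(-1)$-supersymmetry of $\B$ via $\tau=c\B$ (and swapping arguments on both sides) yields $\epsilon_{1}\epsilon_{2}=-1$, hence $\tilde{\epsilon}_{1}\epsilon_{2}=-1$.

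The main technical nuisance throughout is the bookkeeping of signs produced by the superhermitian axioms and the correct placement of $\iota(c)$ versus $c$ when commuting central scalars past $\iota$; once the reduction to $c\in\mathbb{K}$ is secured, the remaining computations are routine expansions.
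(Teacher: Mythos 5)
Your treatment of non-degeneracy, $\mathfrak{g}$-invariance, and the computation showing that $\tilde{\gamma}$ is $(\iota,\tilde{\epsilon_1})$-superhermitian with $\tilde{\epsilon_1}=\iota(c)c^{-1}\epsilon_1$ is correct, and this is exactly the (unwritten) content behind the paper's ``the following lemma is now immediate.'' The problem is in the second half, where you force $c\in\mathbb{K}$. You read Lemma~\ref{lemma:tauB} as a literal scalar identity between $\mathbb{K}$-valued forms and conclude $c\in\mathbb{K}$, hence $\iota(c)=c$, $\tilde{\epsilon_1}=\epsilon_1$, and then derive $\epsilon_1\epsilon_2=-1$ by comparing supersymmetry types of $\tau$ and $\B$. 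That reading cannot be the intended one: the element $c$ produced in the proof of Lemma~\ref{lemma:tauB} is a central element of $\mathbb{D}$ acting through the module structure, i.e.\ $\P=\Id_{\W}\otimes c\in\End_{\mathbb{D}}(\U)\cap\End_{\mathbb{D}}(\W)$, and ``$\tau=c\B$'' means $\tau(w_1\otimes u_1,w_2\otimes u_2)=\B(w_1\otimes cu_1,w_2\otimes u_2)$, which as a real-linear twist of $\B$ is \emph{not} a $\mathbb{K}$-scalar multiple of $\B$ unless $c\in\mathbb{K}$. If your conclusion $c\in\mathbb{K}$ were correct, then $\iota(c)c^{-1}=1$ always, the definition $\tilde{\epsilon_1}:=\iota(c)c^{-1}\epsilon_1$ would be pointless, and the lemma would reduce to the claim that the original forms already satisfy $\epsilon_1\epsilon_2=-1$ — which is false in general and is precisely the situation the lemma is designed to repair. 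Concretely, when $\mathbb{D}=\mathbb{C}$ with $\iota=\conj$ and the forms $\gamma,\gamma'$ produced by Propositions~\ref{PropositionC4}--\ref{PropositionC5} happen to be both hermitian ($\epsilon_1=\epsilon_2=1$), Lemma~\ref{LemmaNovember31} makes $\tau$ a nonzero $+1$-supersymmetric form, so no nonzero real multiple of $\B$ can equal $\tau$; here $c$ must satisfy $\iota(c)=-c$ (e.g.\ $c=i$), and the final remark in the proof of Lemma~\ref{lemma:tauB} (``$\tau$ can be $\pm1$-supersymmetric only if $\iota(c)=\pm c$'') only makes sense because $c$ need not be fixed by $\iota$.

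With the correct reading, the displayed formula for $\B$ is immediate: since $c$ is even and central, $\tilde{\gamma}(u_1,u_2)=\gamma(c^{-1}u_1,u_2)$ and the parity signs are unchanged, so the right-hand side equals $\tau(w_1\otimes c^{-1}u_1,w_2\otimes u_2)=\B(w_1\otimes u_1,w_2\otimes u_2)$; no commutation of $c^{-1}$ past $\Re$ is needed. The last claim should then be obtained not from $\gamma$ but from $\tilde{\gamma}$: apply Lemma~\ref{LemmaNovember31} to the pair $(\tilde{\gamma},\gamma')$ (note $\tilde{\gamma}$ is $(\iota,\tilde{\epsilon_1})$-superhermitian and $\gamma'$ is $(\iota\circ\delta,\epsilon_2)$-superhermitian of the same parity); the associated form is exactly $\B$ by the displayed identity, hence $\B$ is $\tilde{\epsilon_1}\epsilon_2$-supersymmetric, and since $\B$ is nonzero and $(-1)$-supersymmetric we get $\tilde{\epsilon_1}\epsilon_2=-1$. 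So your argument for the last two assertions needs to be replaced along these lines; as written it proves them only under an interpretation of Lemma~\ref{lemma:tauB} that is inconsistent with the cases the lemma is meant to cover.
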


\begin{coro}
In the notation of Definition \ref{DefinitionOfGUGamma}, we get $(\mathfrak{g}\,, \mathfrak{g}') = (\mathfrak{g}(\U\,, \tilde{\gamma})\,, \mathfrak{g}(\W\,, \gamma'))$.

\label{cor:gUgamma}
\end{coro}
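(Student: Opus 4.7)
The plan is to deduce the corollary from Lemma~\ref{lem:Btildegamma} by two applications of the defining dual pair property, after showing that $\mathfrak{g}(\U,\tilde\gamma)$ and $\mathfrak{g}(\W,\gamma')$ both sit inside $\mathfrak{spo}(\E,\B)$ and supercommute with one another.

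First I would record the easy half of the equalities. Lemma~\ref{lem:Btildegamma} asserts that $\tilde\gamma$ is $\mathfrak{g}$-invariant, so $\mathfrak{g}\subseteq \mathfrak{g}(\U,\tilde\gamma)$; we have already observed that $\gamma'$ is $\mathfrak{g}'$-invariant, so $\mathfrak{g}'\subseteq \mathfrak{g}(\W,\gamma')$. Next I would verify the inclusions
\[
\mathfrak{g}(\U,\tilde\gamma)\ \subseteq\ \mathfrak{spo}(\E,\B),\qquad \mathfrak{g}(\W,\gamma')\ \subseteq\ \mathfrak{spo}(\E,\B).
\]
For the first, an $\X\in \mathfrak{g}(\U,\tilde\gamma)$ acts on $\W\otimes_\mathbb D\U$ as $\Id_\W\otimes \X$, and using the formula
$\B(w_1\otimes u_1,w_2\otimes u_2)=(-1)^{|u_1|\cdot|w_2|+|w_1|\cdot|w_2|}\Re(\gamma'(w_2,w_1)\tilde\gamma(u_1,u_2))$
from Lemma~\ref{lem:Btildegamma}, one checks by moving $\X$ from the first argument to the second that the defining identity of $\mathfrak{spo}(\E,\B)$ holds with the correct sign $(-1)^{|\X|\cdot|u_1|}$; the verification for $\mathfrak{g}(\W,\gamma')$ acting as $\T\otimes \Id_\U$ is entirely analogous (and routine, since the $(-1)^{|w_1|\cdot|w_2|}$ term is just what makes the two $\gamma'$-identities match up).

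With those inclusions in hand, the key observation is that any element of the form $\Id_\W\otimes \X$ supercommutes on $\W\otimes_\mathbb D \U$ with any element of the form $\T\otimes \Id_\U$. In particular $\mathfrak{g}(\U,\tilde\gamma)$ and $\mathfrak{g}(\W,\gamma')$ supercommute inside $\mathfrak{spo}(\E,\B)$. Combined with $\mathfrak{g}'\subseteq \mathfrak{g}(\W,\gamma')$, this gives
\[
\mathfrak{g}(\U,\tilde\gamma)\ \subseteq\ \mathcal{C}_{\mathfrak{spo}(\E,\B)}(\mathfrak{g}')\ =\ \mathfrak{g},
\]
so $\mathfrak{g}=\mathfrak{g}(\U,\tilde\gamma)$. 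Symmetrically, since $\mathfrak{g}\subseteq \mathfrak{g}(\U,\tilde\gamma)$,
\[
\mathfrak{g}(\W,\gamma')\ \subseteq\ \mathcal{C}_{\mathfrak{spo}(\E,\B)}(\mathfrak{g}(\U,\tilde\gamma))\ \subseteq\ \mathcal{C}_{\mathfrak{spo}(\E,\B)}(\mathfrak{g})\ =\ \mathfrak{g}',
\]
so $\mathfrak{g}'=\mathfrak{g}(\W,\gamma')$.

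There is essentially no obstacle here beyond a careful sign check; the real work was done in the preceding lemmas (the construction of $\gamma$ and $\gamma'$, the parity matching in Proposition~\ref{PropositionNovember16}, and the rescaling in Lemma~\ref{lemma:tauB} that produced $\tilde\gamma$ with $\tau=\B$). The only place one must be slightly careful is in confirming that $\Id_\W\otimes\X$ and $\T\otimes\Id_\U$ supercommute with the appropriate Koszul signs coming from the definition in \eqref{ActionLieG}; once this is recorded, the corollary follows formally from the dual pair hypothesis $\mathcal C_{\mathfrak{spo}(\E,\B)}(\mathfrak g)=\mathfrak g'$ and $\mathcal C_{\mathfrak{spo}(\E,\B)}(\mathfrak g')=\mathfrak g$.
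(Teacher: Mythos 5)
Your proof is correct and follows essentially the same route as the paper: the paper deduces the corollary from the inclusions $\mathfrak g\subseteq \mathfrak g(\U,\tilde\gamma)$, $\mathfrak g'\subseteq \mathfrak g(\W,\gamma')$ of Lemma~\ref{lem:Btildegamma} together with ``an argument analogous to the Type II case'' (Proposition~\ref{PropositionTypeII}), which is exactly the supercommutation-plus-double-commutant argument you spell out. Your write-up merely makes explicit the sign checks that $\mathfrak g(\U,\tilde\gamma)$ and $\mathfrak g(\W,\gamma')$ lie in $\mathfrak{spo}(\E,\B)$ and supercommute, which the paper leaves implicit.
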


\begin{proof}
By Lemma \ref{lem:Btildegamma} we have $\mathfrak g\subseteq \mathfrak g(\U,\tilde{\gamma})$ and $\mathfrak g'\subseteq \mathfrak g(\W,\gamma')$. An argument analogous to the Type II case proves that the latter pair of Lie superalgebras forms a dual pair. 
    \end{proof}

We can now complete the proofs of the classification theorems of dual pairs. 
\begin{proof}[Proof of Theorems \ref{TheoremeIntroduction2} and \ref{TheoremeIntroduction1}.]
Theorem \ref{TheoremeIntroduction2} follows from Proposition \ref{PropositionTypeII} and the computations in Appendix C.I. Theorem \ref{TheoremeIntroduction1} follows from Corollary \ref{cor:gUgamma} and the computations in Appendix C.II. More precisely, we choose $\tilde{\gamma}$ to be a
$(\iota,\tilde{\epsilon_1})$-superhermitian form. Then we should choose 
${\gamma'}$ to be of the same parity as $\tilde{\gamma}$ and 
 $(\iota\circ\delta,-\tilde{\epsilon_1})$-superhermitian (and of course $\iota\circ\delta=\iota$ when $\mathbb D_{\bar 1}=0$).
By switching the roles of $\U$ and $\W$ if necessary, we can assume that $\tilde{\epsilon_1}=-1$. Finally, from Proposition \ref{PropositionC5} it follows that when $\mathbb D_{\bar 1}\neq 0$ both $\tilde{\gamma}$ and $\gamma'$ are even. 
\end{proof}

\begin{rema}

One can see that for every irreducible reductive complex dual pair $(\mathfrak{g}\,, \mathfrak{g}')$ of type II in $\mathfrak{spo}(\E_{\mathbb{C}}, \B_{\mathbb{C}})$, there exists an irreducible reductive dual pair $(\mathfrak{h}\,, \mathfrak{h}')$ in $\mathfrak{spo}(\E\,, \B)$ such that $(\mathfrak{h}_{\mathbb{C}}\,, \mathfrak{h}'_{\mathbb{C}}) = (\mathfrak{g}\,, \mathfrak{g}')$.
Indeed, $\mathfrak{u}(p\,, q| r\,, s\,, \mathbb{C}) \otimes_{\mathbb{R}} \mathbb{C} \cong \mathfrak{gl}(p+q|r+s\,, \mathbb{C})$ and  $\mathfrak{q}(p\,, q) \otimes_{\mathbb{R}} \mathbb{C} \cong \mathfrak{q}(p+q\,, \mathbb{C})$.

\end{rema}

\section{Dual pairs in an orthosymplectic Lie supergroup}

\label{SectionLieSupergroups}

 As explained in \cite{FIORESI}, there is an equivalence of categories between the category of Lie supergroups and the category of Harish-Chandra pairs, whose definition we recall below.
\begin{defn}
Let $\mathbb{K} = \mathbb{R}$ or  $\mathbb{C}$.
A Harish-Chandra pair is a pair $\mathscr{G} := (\G\,, \mathfrak{g})$, where $\G$ is a $\mathbb{K}$-Lie group, $\mathfrak{g} = \mathfrak{g}_{\bar{0}} \oplus \mathfrak{g}_{\bar{1}}$ is a $\mathbb{K}$-Lie superalgebra, and the following hold:
\begin{enumerate}
\item $\mathfrak{g}_{\bar{0}}$ is the Lie algebra of $\G$\,,
\item $\G$ acts on $\mathfrak{g}$ smoothly by $\mathbb{K}$-linear transformations\,,
\item The differential of the action $\G \curvearrowright \mathfrak{g}$ is identical  to the adjoint action $\mathfrak{g}_{\bar{0}} \curvearrowright \mathfrak{g}$.
\end{enumerate}

\end{defn}
In this section a Lie supergroup will be synonymous to a Harish-Chandra pair. 
\begin{rema}
Let $\mathscr{G} = (\G\,, \mathfrak{g})$ be a Lie supergroup. To simplify, we will denote by $\Ad$ the action of $\G$ on $\mathfrak{g}$.
 A Lie sub-supergroup is a Harish-Chandra pair $\mathscr{H} := (\H\,, \mathfrak{h})$ where $\H$ is a Lie subgroup of $\G$\,, $\mathfrak{h}$ is a Lie sub-superalgebra of $\mathfrak{g}$ and the Lie algebra of  $\H$ is  $\mathfrak{h}_{\bar 0}$. The supercommutant of $\mathscr{H}$ in $\mathscr{G}$ is the Harish-Chandra pair $\mathscr{C} := (\C\,, \mathfrak{c})$ where 
\begin{equation*}
\C := \left\{c \in \mathcal{C}_{\G}(\H)\,, \Ad(c)_{|_{\mathfrak{h}_{\bar{1}}}} = \Id_{|_{\mathfrak{h}_{\bar{1}}}}\right\}\qquad\text{and}\qquad \mathfrak{c} = \mathcal{C}_{\mathfrak{g}}(\mathfrak{h})\,.
\end{equation*}
\end{rema}

\begin{exem}

Let $\V = \V_{\bar{0}} \oplus \V_{\bar{1}}$ be a finite dimensional
$\mathbb Z_2$-graded vector space over $\mathbb{C}$. 
\begin{itemize}
\item Let $\G = \GL(\V_{\bar{0}}) \times \GL(\V_{\bar{1}})$ and $\mathfrak{g} = \mathfrak{gl}(\V)$. Here we see $\G$ and $\mathfrak{g}$ as subspaces of $\End(\V)$. Let $\Ad: \G \to \GL(\mathfrak{g})$ be the action defined by
\begin{equation*}
\Ad(g)(\X) = g\X g^{-1}\,, \qquad (g \in \G\,, \X \in \mathfrak{g})\,.
\end{equation*}
Then $(\G\,, \mathfrak{g})$ is a Harish-Chandra pair. We denote by $\textbf{GL}(\V)$ the corresponding Lie supergroup.
\item Let $\B$ be an even, non-degenerate, $(-1)$-supersymmetric form on $\V$ and let $\B_{i} = \B_{|_{\V_{\bar{i}} \times \V_{\bar{i}}}}$. Let $\H = \Sp(\V_{\bar{0}}\,, \B_{0}) \times \O(\V_{\bar{1}}\,, \B_{1})$ and $\mathfrak{h} = \mathfrak{spo}(\V\,, \B)$. One can easily check that $\Ad(\H)(\mathfrak{h}) \subseteq \mathfrak{h}$, i.e. $(\H\,, \mathfrak{h})$ is a Lie sub-supergroup of $\textbf{GL}(\V)$. We denote by $\textbf{SpO}(\V)$ (or $\textbf{SpO}(\V\,, \B)$) the corresponding Lie supergroup: it is the orthosymplectic Lie supergroup. Similarly, if the form $\B$ on $\V$ is $1$-supersymmetric, we denote by $\textbf{OSp}(\V)$ (or $\textbf{OSp}(\V\,, \B)$) the corresponding Lie supergroup.
\item Let $\B_{1}$ be an odd, non-degenerate, $1$-supersymmetric form on $\V$. Let $\L$ (resp. $\mathfrak{l}$) be the subspace of $\G$ (resp. $\mathfrak{g}$) given by
\begin{equation*}
\L := \left\{g \in \G\,, \B_{1}(g(v_{1})\,, g(v_{2})) = \B_{1}(v_{1}\,, v_{2})\,, v_{1}\,, v_{2} \in \V\right\}\,, 
\end{equation*}
\begin{equation*}
\mathfrak{l} = \left\{\X \in \mathfrak{g}\,, \B_{1}(\X(v_{1})\,, v_{2}) + (-1)^{\left|\X\right| \cdot \left|v_{1}\right|}\B_{1}(v_{1}\,, \X(v_{2})) = 0\right\}\,.
\end{equation*}
We denote by $\textbf{P}(\V)$ (or $\textbf{P}(\V\,, \B_{1})$) the corresponding Lie supergroup: it is the periplectic Lie supergroup. Note that if the form $\B_{1}$ is $-1$-supersymmetric, the corresponding supergroup is isomorphic to ${\textbf{P}}(\V)$.
\end{itemize}

\label{ExampleAugust2}

\end{exem}

\begin{rema}
We keep the notations of Example \ref{ExampleAugust2}. 
\begin{enumerate}
\item Assume that $\dim_{\mathbb{C}}(\V) = (m|n)$. Fix a basis $\left\{v_{1}\,, \ldots\,, v_{m}\right\}$ (resp. $\left\{v_{\bar{1}}\,, \ldots\,, v_{\bar{n}}\right\}$) of $\V_{\bar{0}}$ (resp. $\V_{\bar{1}}$). Then the elements of $\G$ and $\mathfrak{g}$ can be seen as elements of $\Mat(m|n)$. In this case, $\G$ is identified with $\GL(m) \times \GL(n)$ and $\mathfrak{g}$ with $\mathfrak{gl}(m|n)$ and we use the notation $\textbf{GL}(m|n)$. 

In the case $n = m$, if we assume that the basis is such that $\B_{1}(v_{i}\,, v_{\bar{j}}) = \delta_{i\,, j}$ 
then the group $\L$ is identified with 
\begin{equation*}
\left\{\begin{pmatrix} g & 0 \\ 0 & (g^{-1})^{t} \end{pmatrix}\,, g \in \GL(n)\right\}
\end{equation*}
and $\mathfrak{l}$ with $\mathfrak{p}(n)$. We denote by $\textbf{P}(n)$ the corresponding Lie supergroup. 
\item For the orthosymplectic Lie supergroup, assume that $m = 2e$ and $n = 2f$. We choose a basis of $\V$ such that $\Mat(\B) = \begin{pmatrix} \J_{e} & 0 \\ 0 & \Id_{2f} \end{pmatrix}$. In this case, $\H = \Sp(2e) \times \O(2f)$ and $\mathfrak{h} = \mathfrak{spo}(2e|2f)$ and the corresponding supergroup is denoted by $\textbf{SpO}(2e|2f)$. 
\item Assume that $n = m$. One can easily see that the pair $(\Delta(\GL(n) \times \GL(n))\,, \mathfrak{q}(n))$ is a Lie supergroup that we will denote by $\textbf{Q}(n)$: it is the queer Lie supergroup. 
\end{enumerate}

\end{rema}

\begin{defn}

Let $\V$ be a vector space over $\mathbb{K}$, $\B$ be an even, non-degenerate, $(-1)$-supersymmetric form on $\V$ and $\textbf{SpO}(\V\,, \B)$ be the corresponding orthosymplectic Lie supergroup. A pair of sub-supergroups $(\mathscr{G}\,, \mathscr{G}') = \big( (\G\,, \mathfrak{g})\,, (\G'\,, \mathfrak{g}')\big)$ of $\textbf{SpO}(\V\,, \B)$ is a dual pair if $\mathcal{C}_{\textbf{SpO}(\V\,, \B)}(\mathscr{G}) = \mathscr{G}'$ and $\mathcal{C}_{\textbf{SpO}(\V\,, \B)}(\mathscr{G}') = \mathscr{G}$.

\end{defn}
From the classification of dual pairs given in Theorems \ref{TheoremeIntroduction2} and \ref{TheoremeIntroduction1} we obtain the following result.
\begin{prop}

Every irreducible reductive dual pair in a complex orthosymplectic Lie supergroup is isomorphic to one of the following pairs:
\begin{enumerate}
\item $\left(\normalfont{\textbf{GL}}(m|n)\,, \normalfont{\textbf{GL}}(k|l)\right) \subseteq \normalfont{\textbf{SpO}}(2(mk+nl)|2(ml+nk))$\,,
\item $\left(\normalfont{{\textbf{Q}}}(m)\,, \normalfont{\textbf{Q}}(n)\right) \subseteq \normalfont{\textbf{SpO}}(2ab|2ab)$\,,
\item $\left(\normalfont{{\textbf{P}}}(m)\,, \normalfont{\textbf{P}}(n)\right) \subseteq \normalfont{\textbf{SpO}}(2ab|2ab)$\,,
\item $\left(\normalfont{\textbf{SpO}}(2m|n)\,, \normalfont{\textbf{OSp}}(k|2l)\right) \subseteq \normalfont{\textbf{SpO}}(2mk+2nl|4ml+nk)$\,.
\end{enumerate}
where $m\,, n\,, k\,, l \in \mathbb{Z}$ .

\label{DualPairsInOrthosymplecticSupergroup}

\end{prop}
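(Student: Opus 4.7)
The plan is to reduce the classification at the supergroup level to the Lie superalgebra classification already established in Theorems~\ref{TheoremeIntroduction1} and~\ref{TheoremeIntroduction2}, and then to verify that in each case the Lie group component of the Harish-Chandra pair is uniquely determined.

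First, starting from an irreducible reductive dual pair $(\mathscr{G}, \mathscr{G}') = ((\G, \mathfrak{g}), (\G', \mathfrak{g}'))$ in $\textbf{SpO}(\V, \B)$, I would extract the Lie superalgebra pair $(\mathfrak{g}, \mathfrak{g}')$. The centralizer condition in the definition of supercommutant for Harish-Chandra pairs yields $\mathfrak{g}' = \mathcal{C}_{\mathfrak{spo}(\V, \B)}(\mathfrak{g})$ and symmetrically $\mathfrak{g} = \mathcal{C}_{\mathfrak{spo}(\V, \B)}(\mathfrak{g}')$, so $(\mathfrak{g}, \mathfrak{g}')$ is a dual pair of Lie superalgebras. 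Reductivity of the supergroup pair entails semisimplicity of the $\mathfrak{g}$- and $\mathfrak{g}'$-actions on $\V$, and irreducibility at the supergroup level transfers immediately to the Lie superalgebra level, so Proposition~\ref{Proposition1} applies.

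Next, I would consult Tables~\ref{Table:IC} and~\ref{Table:IIC}, which list all possibilities for $(\mathfrak{g}, \mathfrak{g}')$ in the complex case. There are exactly four: the Type II families corresponding to $\mathbb{D} = \mathbb{C}$ and $\mathbb{D} = \mathrm{Cl}_1(\mathbb{C})$, giving respectively $(\mathfrak{gl}(m|n, \mathbb{C}), \mathfrak{gl}(k|l, \mathbb{C}))$ and $(\mathfrak{q}(m, \mathbb{C}), \mathfrak{q}(n, \mathbb{C}))$; and the Type I families corresponding to $\mathbb{D} = \mathbb{C}$ with $|\gamma| = \bar{0}$ or $\bar{1}$, giving $(\mathfrak{spo}(2m|n, \mathbb{C}), \mathfrak{osp}(k|2l, \mathbb{C}))$ and $(\mathfrak{p}(m, \mathbb{C}), \mathfrak{p}(n, \mathbb{C}))$. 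The ambient dimension of $\V$ is then read off from the factorization $\V \cong \W \otimes_\mathbb{D} \U$ (Type I) or $\V \cong (\W \otimes_\mathbb{D} \U) \oplus (\W \otimes_\mathbb{D} \U)^*$ (Type II) using the explicit $\U$ and $\W$ in the table, producing precisely the four ambient supergroups listed in the statement.

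The final step is to identify the Lie group component $\G$. By definition $\G$ is a closed Lie subgroup of $\Sp(\V_{\bar 0}) \times \O(\V_{\bar 1})$ with Lie algebra $\mathfrak{g}_{\bar 0}$ whose action on $\mathfrak{g}$ integrates the adjoint action of $\mathfrak{g}_{\bar 0}$. The dual pair condition forces $\G$ to be the full centralizer of $\G'$ in $\Sp(\V_{\bar 0}) \times \O(\V_{\bar 1})$ compatible with the action on $\mathfrak{g}_{\bar{1}}$; a direct case-by-case inspection then identifies $\G$ and $\G'$ with the Lie group components of the supergroups $\textbf{GL}$, $\textbf{Q}$, $\textbf{P}$, $\textbf{SpO}$, or $\textbf{OSp}$ as appropriate. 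The main subtlety lies in correctly capturing the disconnected components, notably the non-identity component of the orthogonal factor in the $\textbf{SpO}$--$\textbf{OSp}$ case; this is handled by observing that the centralizer of the $\mathfrak{g}'$-action inside $\O(\V_{\bar 1})$ is itself the full orthogonal group on the appropriate multiplicity space, so both components of $\O$ are automatically included.
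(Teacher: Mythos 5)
Your proposal is correct and follows essentially the same route as the paper, which deduces Proposition~\ref{DualPairsInOrthosymplecticSupergroup} directly from Theorems~\ref{TheoremeIntroduction1} and~\ref{TheoremeIntroduction2} (Tables~\ref{Table:IC} and~\ref{Table:IIC}) together with the definition of the supercommutant of a Harish-Chandra pair. Your extra paragraph on identifying the group components (and your remark that supergroup-level irreducibility passes to the superalgebra level, which follows since elements of $\G$ and $\G'$ genuinely commute with the images of $\mathfrak{g}'$ and $\mathfrak{g}$ in $\End(\V)$ and hence preserve the relevant isotypic decompositions) only adds detail beyond what the paper records.
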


\section{Spinor-Oscillator representation}

\label{SpinorOscillatorRepresentation}

Let $\E = \E_{\bar{0}} \oplus \E_{\bar{1}}$ be a  complex $\mathbb Z_2$-graded vector space  (thus we are assuming  $\mathbb K=\mathbb{C}$) that is  endowed with an even, non-degenerate, $(-1)$-supersymmetric form $\B$. In particular, $\B = \B_{0} \oplus^{\perp} \B_{1}$, where $\B_{0} = \B_{|_{\E_{\bar{0}} \times \E_{\bar{0}}}}$ and $\B_{1} = \B_{|_{\E_{\bar{1}} \times \E_{\bar{1}}}}$. In this section, we will assume that $\E_{\bar 0}$ and $\E_{\bar 1}$ are both even dimensional.
\\

We denote by $\T(\E)$ the tensor algebra
\begin{equation*}
\T(\E) = \bigoplus\limits_{k = 0}^{\infty} \E^{\otimes k}\,.
\end{equation*}
We have a natural $\mathbb{Z} \times \mathbb{Z}_{2}$-graded structure on $\T(\E)$. 
Let us set 
\begin{equation*}
\textbf{WC}(\E\,, \B) = \T(\E) / \langle x \otimes y - (-1)^{\left|x\right|\cdot\left|y\right|} y \otimes x - \B(x\,, y)\,, x\,, y \in \E\rangle\,.
\end{equation*}
We denote by $\boldsymbol\iota: \E \to \textbf{WC}(\E\,, \B)$ the canonical injection. One can see that
\begin{equation*}
\textbf{WC}(\E\,, \B) = \mathscr{W}(\E_{\bar{0}}\,, \B_{0}) \otimes \Cliff(\E_{\bar{1}}\,, \B_{1})\,.
\end{equation*}
where $\mathscr{W}(\E_{\bar{0}}\,, \B_{0})$ (resp. $\Cliff(\E_{\bar{1}}\,, \B_{1})$) is the Weyl algebra (resp. Clifford algebra) corresponding to $(\E_{\bar{0}}\,, \B_{0})$ (resp. $(\E_{\bar{1}}\,, \B_{1})$).  The Weyl-Clifford algebra inherits a $\mathbb{Z}$-grading coming from the one on $\T(\E)$. We denote by $\textbf{WC}(\E\,, \B)_{k}\,, k \in \mathbb{Z}^{+}$, the elements of $\textbf{WC}(\E\,, \B)$ of degree not greater than $k$. One can easily see that for every $k\,, l \in \mathbb{Z}^{+}$, we have
\begin{equation*}
\textbf{WC}(\E\,, \B)_{k} \cdot \textbf{WC}(\E\,, \B)_{l} \subseteq \textbf{WC}(\E\,, \B)_{k+l}\qquad
\text{and}\qquad\left[\textbf{WC}(\E\,, \B)_{k}\,, \textbf{WC}(\E\,, \B)_{l}\right] \subseteq \textbf{WC}(\E\,, \B)_{k+l-2}\,,
\end{equation*}
where $\left[\cdot\,, \cdot\right]$ is the canonical supercommutator on $\textbf{WC}(\E\,, \B)$. Moreover, it follows that $\textbf{WC}(\E\,, \B)_{1}$ is a nilpotent Lie superalgebra and $\left[\textbf{WC}(\E\,, \B)_{2}\,, \textbf{WC}(\E\,, \B)_{2}\right] \subseteq \textbf{WC}(\E\,, \B)_{2}$.\\

Let $\Omega_{\bar{0}}$ and $\Omega_{\bar{1}}$ be the subspaces of $\textbf{WC}(\E)_{2}$ given by
\begin{equation*}
\Omega_{\bar{0}} = \left\{\boldsymbol\iota(e_{0}) \boldsymbol\iota(f_{0}) + \boldsymbol\iota(f_{0})\boldsymbol\iota(e_{0})\,, e_{0}\,, f_{0} \in \E_{\bar{0}}\right\} \oplus \left\{\boldsymbol\iota(e_{1}) \boldsymbol\iota(f_{1}) - \boldsymbol\iota(f_{1})\boldsymbol\iota(e_{1})\,, e_{1}\,, f_{1} \in \E_{\bar{1}}\right\}\,, 
\end{equation*}
\begin{equation*}
\Omega_{\bar{1}} = \left\{\boldsymbol\iota(e_{0})\boldsymbol\iota(f_{1})\,, e_{0} \in \E_{\bar{0}}\,, f_{1} \in \E_{\bar{1}}\right\}\,,
\end{equation*}
and let 
\begin{equation}
\Omega = \Omega_{\bar{0}} \oplus \Omega_{\bar{1}}\,.
\label{DefinitionOmega}
\end{equation}

\begin{lemme}

We have $\left[\Omega\,, \Omega\right] \subseteq \Omega$. Moreover, $\Omega \cong \mathfrak{spo}(\E\,, \B)$.

\label{Degree2Elements}

\end{lemme}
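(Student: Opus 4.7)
The plan is to construct an explicit Lie superalgebra isomorphism $\phi\colon \Omega\to\mathfrak{spo}(\E,\B)$ coming from the adjoint action of degree-two elements on $\boldsymbol\iota(\E)$, and then verify $[\Omega,\Omega]\subseteq\Omega$ by a direct supercommutator computation.

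First I would define $\phi$. For $\omega\in\Omega$ and $v\in\E$ the filtration gives $[\omega,\boldsymbol\iota(v)]\in \textbf{WC}(\E,\B)_1=\mathbb{C}\oplus\boldsymbol\iota(\E)$. Applying the super-Leibniz rule to a typical spanning element $\omega=\boldsymbol\iota(x)\boldsymbol\iota(y)+(-1)^{|x||y|}\boldsymbol\iota(y)\boldsymbol\iota(x)\in\Omega$ and using $[\boldsymbol\iota(a),\boldsymbol\iota(b)]=\B(a,b)$, together with the parity constraint imposed by the non-vanishing of the even form $\B$, yields
\[
[\omega,\boldsymbol\iota(z)]=2\B(y,z)\boldsymbol\iota(x)+2(-1)^{|x||y|}\B(x,z)\boldsymbol\iota(y)\in\boldsymbol\iota(\E).
\]
Hence $\phi(\omega)\in\End(\E)$ is uniquely characterized by $[\omega,\boldsymbol\iota(v)]=\boldsymbol\iota(\phi(\omega)(v))$. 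Super-Jacobi applied to the triple $(\omega,\boldsymbol\iota(u),\boldsymbol\iota(v))$, combined with centrality of the scalar $[\boldsymbol\iota(u),\boldsymbol\iota(v)]=\B(u,v)$, produces
\[
0=\B(\phi(\omega)(u),v)+(-1)^{|\omega|\cdot|u|}\B(u,\phi(\omega)(v)),
\]
so $\phi(\omega)\in\mathfrak{spo}(\E,\B)$.

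Next I would prove $\phi$ is a linear bijection. Injectivity: if $\phi(\omega)=0$ then $\omega$ supercommutes with $\boldsymbol\iota(\E)$, which generates $\textbf{WC}(\E,\B)$, so $\omega$ lies in the supercenter of $\textbf{WC}(\E,\B)$, which equals $\mathbb{C}$ because $\B$ is nondegenerate and both $\E_{\bar 0}$ and $\E_{\bar 1}$ are even-dimensional. The principal-symbol map identifies $\Omega$ with the super-symmetric square $S^2(\E)_{\mathrm{super}}\subseteq \gr^2 \textbf{WC}(\E,\B)$ via $\boldsymbol\iota(x)\boldsymbol\iota(y)+(-1)^{|x||y|}\boldsymbol\iota(y)\boldsymbol\iota(x)\mapsto 2\,x\odot y$, so $\Omega\cap \textbf{WC}(\E,\B)_1=\{0\}$ and in particular $\Omega\cap\mathbb{C}=\{0\}$. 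Surjectivity then follows from the matching dimensions
\[
\dim\Omega=\tfrac12\dim\E_{\bar 0}(\dim\E_{\bar 0}+1)+\tfrac12\dim\E_{\bar 1}(\dim\E_{\bar 1}-1)+\dim\E_{\bar 0}\cdot\dim\E_{\bar 1}=\dim\mathfrak{spo}(\E,\B).
\]

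Finally, to establish $[\Omega,\Omega]\subseteq\Omega$ I would expand $[\omega_1,\omega_2]$ directly for a pair of spanning super-anticommutators $\omega_1=\boldsymbol\iota(a)\boldsymbol\iota(b)+(-1)^{|a||b|}\boldsymbol\iota(b)\boldsymbol\iota(a)$ and $\omega_2=\boldsymbol\iota(c)\boldsymbol\iota(d)+(-1)^{|c||d|}\boldsymbol\iota(d)\boldsymbol\iota(c)$. Using super-Leibniz twice and the explicit formula for $[\omega_1,\boldsymbol\iota(\cdot)]$ above, the result splits into four scalar-weighted pieces; for each, the parity constraint from $\B(\cdot,\cdot)\neq 0$ combines with the super-anticommutator structure to yield an element of $\Omega$, and the final identity takes the form
\[
[\omega_1,\omega_2]=2\B(b,c)\{a,d\}+2\epsilon_1\B(a,c)\{b,d\}+2\epsilon_2\B(b,d)\{a,c\}+2\epsilon_3\B(a,d)\{b,c\},
\]
for appropriate signs $\epsilon_i\in\{\pm 1\}$, where $\{x,y\}:=\boldsymbol\iota(x)\boldsymbol\iota(y)+(-1)^{|x||y|}\boldsymbol\iota(y)\boldsymbol\iota(x)\in\Omega$. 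No scalar or linear residue survives. A last application of super-Jacobi then gives $\phi([\omega_1,\omega_2])=[\phi(\omega_1),\phi(\omega_2)]$, completing the proof that $\phi$ is a Lie superalgebra isomorphism.

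The main obstacle throughout is the careful bookkeeping of super signs in the two explicit computations above, since any missed factor would produce spurious scalar or $\boldsymbol\iota(\E)$-contributions that would invalidate closure. The systematic use of the parity constraint $\B(p,q)\neq 0\Rightarrow |p|+|q|=\bar 0$ is precisely what makes each term simplify into an element of $\Omega$ rather than falling into a lower filtration piece.
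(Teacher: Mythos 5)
Your proposal is correct, and it follows essentially the same route as the paper, which gives no independent argument but simply refers to the proof of Proposition~5.5 in the book of Cheng--Wang: there, as in your write-up, the quadratic elements act on $\boldsymbol\iota(\E)$ by the supercommutator, this adjoint action defines the map to $\mathfrak{spo}(\E\,,\B)$, and closure plus bijectivity are checked by direct computation and a symbol/dimension count. Your sign bookkeeping (e.g.\ the formula $[\{x,y\},\boldsymbol\iota(z)]=2\B(y,z)\boldsymbol\iota(x)+2(-1)^{|x|\cdot|y|}\B(x,z)\boldsymbol\iota(y)$ and the cancellation of scalar residues via the parity constraint imposed by the evenness of $\B$) checks out, so your text in effect supplies the details the paper delegates to the reference.
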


\begin{proof}

The proof of this lemma is similar to the proof \cite[Proposition~5.5]{CHENGWANG}.
\end{proof}

Let $\F\subseteq\E$ be a $\mathbb{Z}_{2}$-graded subspace which is maximal $\B$-isotropic. In particular, $\F_{\bar{0}} = \F \cap \E_{\bar{0}}$ (resp. $\F_{\bar{1}} = \F \cap \E_{\bar{1}}$) is maximal $\B_{0}$-isotropic (resp. maximal $\B_{1}$-isotropic).
Let $\S = \textbf{WC}(\E\,, \B) / \textbf{WC}(\E\,, \B) \cdot \F$. We have a natural action  of $\textbf{WC}(\E\,, \B)$ on $\S$ and thus by inclusion, $\Omega \curvearrowright \S$. Let $\beta: \mathfrak{spo}(\E\,, \B) \to \Omega$ denote the  isomorphism of Lemma \ref{Degree2Elements}. In particular, we get an  action $\pi$ of $\mathfrak{spo}(\E\,, \B)$ on $\S$ by setting
\begin{equation}
\pi(\X) s := \beta(\X)\cdot s\,, \qquad (\X \in \mathfrak{spo}(\E\,, \B)\,, s \in \S)\,.
\label{eq:actionofBeta}
\end{equation}
 One can see that, up to equivalence, the representation obtained does not depend on the choice of a super-Lagrangian $\F$.

\begin{rema}

The natural action of $\Omega_{\bar{0}}$ on $\textbf{WC}(\E\,, \B)$ \big(coming from $\left[\Omega_{\bar{0}}\,, \textbf{WC}(\E\,, \B)\right] \subseteq \textbf{WC}(\E\,, \B)$\big) exponantiates to an action of $\Sp(\E_{\bar{0}}\,, \B_{0}) \times \O(\E_{\bar{1}}\,, \B_{1})$ on $\textbf{WC}(\E\,, \B)$ by automorphisms. The proof is similar to the one given in \cite[Proposition~5.6]{CHENGWANG}.

\label{RemarkActionSpO}

\end{rema}

In particular, we get an action of the orthosymplectic Lie supergroup $\textbf{SpO}(\E\,, \B)$ on the Weyl-Clifford algebra $\textbf{WC}(\E\,, \B)$.

\begin{rema}
\begin{enumerate}
\item One can view the above construction from another perspective, that is by realizing the action view  multiplication and derivation operators (see \cite[Section~2]{HOWE89} or \cite[Chapter~5]{CHENGWANG}). Let $\V = \V_{\bar{0}} \oplus \V_{\bar{1}}$ be a complex vector space. We denote by $\S(\V)$ the corresponding supersymmetric algebra, i.e.
\begin{equation*}
\S(\V) = \left(\bigoplus\limits_{k = 0}^{\infty}\V^{\otimes k}\right) / \langle x \otimes y - (-1)^{\left|x\right| \cdot \left|y\right|} y \otimes x\,, x\,, y \in \V\rangle\,.
\end{equation*}
One can easily see that
\begin{equation*}
\S(\V) = \S(\V_{\bar{0}}) \otimes \Lambda(\V_{\bar{1}})\,.
\end{equation*}
For every $u \in \V_{\alpha}, \alpha \in \mathbb{Z}_{2}$, we denote by $\M_{u}$ the operator of $\End(\S(\V))$ given by 
\begin{equation*}
\M_{u}(v) = uv \qquad (v \in \S(\V))\,.
\end{equation*}
Note that $\left|\M_{u}\right| = \left|u\right|$.
Similarly, for every $u^{*}_{0} \in \V^{*}_{\bar{0}}$ and $u^{*}_{1} \in \V^{*}_{\bar{1}}$, we define the derivation operators $\D_{u^{*}_{0}}$ and $\D_{u^{*}_{1}}$ of $\End(\S(\V))$ by
\begin{equation*}
\D_{u^{*}_{0}}(a_{1} \ldots a_{n} b_{1} \ldots b_{m}) = \sum\limits_{k = 1}^{n} u^{*}_{0}(a_{k}) a_{1} \ldots \hat{a}_{k} \ldots a_{n} b_{1} \ldots b_{m}\,,
\end{equation*}
\begin{equation*}
\D_{u^{*}_{1}}(a_{1} \ldots a_{n} b_{1} \ldots b_{m}) = \sum\limits_{k = 1}^{m} (-1)^{k-1} u^{*}_{1}(b_{k}) a_{1} \ldots a_{n} b_{1} \ldots \hat{b}_{k} \ldots b_{m}\,.
\end{equation*}
with $a_{1}\,, \ldots\,, a_{n} \in \V_{\bar{0}}\,, b_{1}\,, \ldots b_{m} \in \V_{\bar{1}}$\,.
For every $u\,, v \in \V$ and $u^{*}\,, v^{*} \in \V^{*}$, we have
\begin{equation*}
\M_{u} \circ \M_{v} = (-1)^{|u|\cdot|v|} \M_{v} \circ \M_{u} \,, \qquad \D_{u^{*}} \circ \D_{v^{*}} = (-1)^{|u^{*}|\cdot|v^{*}|} \D_{v^{*}} \circ \D_{u^{*}}\,.
\end{equation*}
Moreover, $\D_{u^{*}} \circ \M_{v} - (-1)^{\left|u^{*}\right| \cdot \left|v\right|} \M_{v} \circ \D_{u^{*}} = u^{*}(v) \cdot 1$.
Let $\E = \V \oplus \V^{*}$ (in particular, $\E$ is $\mathbb{Z}_{2}$-graded and we have $\E_{\alpha} = \V_{\alpha} \oplus \V^{*}_{\alpha}\,, \alpha \in \mathbb{Z}_{2}$). Let $\boldsymbol e: \E \to \End(\S(\V))$ be the map given by
\begin{equation*}
\boldsymbol e(e) = \M_{e}\,, \quad \boldsymbol e(e^{*}) = \D_{e^{*}}\,, \quad (e \in \V\,, e^{*} \in \V^{*})\,.
\end{equation*}
We denote by $\textbf{WC}(\E)$ the subalgebra of $\End(\S(\V))$ generated by the elements $\boldsymbol e(\tilde{e})\,, \tilde{e} \in \V$: this is the Weyl-Clifford algebra. As before, for every $k \in \mathbb{Z}^{+}$, we denote by $\textbf{WC}(\E)_{k}$ the elements of $\textbf{WC}(\E)$ of degree not greater than $k$.

Let $\Omega$ be the subspace of $\textbf{WC}(\E)_{2}$ defined in Equation \eqref{DefinitionOmega}. Using the isomorphism between $\Omega$ and $\mathfrak{spo}(\E)$, we get an action of $\mathfrak{spo}(\E)$ on $\S(\V)$ obtained by restriction of the natural action $\textbf{WC}(\E) \subseteq \End(\S(\V)) \curvearrowright \S(\V)$.
\item Note that  in the construction of (1)
the form $\B$ is hidden
in the background. In this construction, the space $\E$ is of the form $\E = \V \oplus \V^{*}$. Indeed $\B$ is the even form given  by
\begin{equation*}
\B(e_{0}+e^{*}_{0}\,, f_{0}+f^{*}_{0}) = e^{*}_{0}(f_{0}) - f^{*}_{0}(e_{0})\,,
\end{equation*}
\begin{equation*}
\B(e_{1}+e^{*}_{1}\,, f_{1}+f^{*}_{1}) = e^{*}_{1}(f_{1}) + f^{*}_{1}(e_{1})\,,
\end{equation*}
\begin{equation*}
\B(e_{0}+e^{*}_{0}\,, e_{1}+e^{*}_{1}) = 0\,,
\end{equation*}
for every $e_{0}\,, f_{0} \in \V_{\bar{0}}\,, e^{*}_{0}\,, f^{*}_{0} \in \V^{*}_{\bar{0}}\,, e_{1}\,, f_{1} \in \V_{\bar{1}}$ and $e^{*}_{1}\,, f^{*}_{1} \in \V^{*}_{\bar{1}}$.
On can easily see that this form is $(-1)$-supersymmetric and that for every $e\,, f \in \E$, we have:
\begin{equation*}
\left[\boldsymbol e(e)\,, \boldsymbol e(f)\right] = \B(e\,, f) \cdot 1.
\end{equation*}
In particular, the two constructions of the Spinor-Oscillator representation are linked by identifying $\F=\V$.  \item The action of $\mathfrak{spo}(\E)_{\bar{0}} = \mathfrak{sp}(\E_{\bar{0}}) \oplus \mathfrak{so}(\E_{\bar{1}})$ on $\S(\V) = \S(\V_{\bar{0}}) \otimes \Lambda(\V_{\bar{1}})$ is quite special. The action of $\mathfrak{so}(\E_{\bar{1}})$ on $\Lambda(\V_{\bar{1}})$ can be exponantiated to the group $\Pin(\E_{\bar{1}})$: this is a standard construction of the spinorial representation of $\Pin(\E_{\bar{1}})$. The action of $\mathfrak{sp}(\E_{\bar{0}})$ on $\S(\V_{\bar{0}})$ does not exponantiate to a group action. However, the action of the Lie algebra $\mathfrak{gl}(\V_{\bar{0}}) \subseteq \mathfrak{sp}(\E_{\bar{0}})$ on $\S(\V_{\bar{0}})$ can be exponantiated to $\GL(\V_{\bar{0}})$. Moreover, the space $\S(\V_{\bar{0}})$ can be embedded in a larger space, with a natural realization as a space of smooth form-valued functions, and the corresponding action of $\mathfrak{sp}(\E_{\bar{0}})$ on the larger space exponantiates to a group action of $\Sp(\E_{\bar{0}})$. This is the algebraic $(\mathfrak{sp}(\E_{\bar{0}})\,, \GL(\V_{\bar{0}}))$-module of the Weil representation (also called Fock model of the Weil representation).

\end{enumerate}

\label{RemaSpinWeil}
\end{rema}

\medskip

We now switch to the case $\mathbb K=\mathbb R$ and construct the Spinor-Oscillator representation of the \emph{real} orthosymplectic Lie superalgebra. We begin with the same complex orthosymplectic vector space
 $(\E\,, \B)$ considered above, 
and we choose a  complex conjugate-linear involution $\tau$  on $\E$. In particular, we get the decomposition $\E = \E^{+} \oplus \E^{-}$ over $\mathbb R$ ($\pm 1$-eigenspaces of $\tau$). We can choose $\tau$ such that  $\B_{|_{\E^{-} \times \E^{-}}}$ is non-degenerate and real-valued.
The involution $\tau$ on $\E$ extends to $\textbf{WC}(\E\,, \B)$ by
\begin{equation*}
\tau(\X\Y) = (-1)^{\left|\X\right|\cdot\left|\Y\right|} \tau(\Y)\tau(\X)\,, \qquad \left(\X\,, \Y \in \textbf{WC}(\E^{\mathbb{C}}\,, \B^{\mathbb{C}})\right)\,.
\end{equation*}
In particular, $\tau$ is a superinvolution on $\textbf{WC}(\E\,, \B)$.
Let $\Omega$ be the subspace of $\textbf{WC}(\E\,, \B)_{2}$ defined in Equation \eqref{DefinitionOmega} and let $\F$ be a maximal $\B$-isotropic subspace of $\E$. We denote by $\S = \S(\F)$ the oscillator representation of $\Omega$ (and $\mathfrak{spo}(\E\,, \B)$). One can easily see that $\tau(\Omega) = \Omega$. Let $\Omega(\mathbb{R}) := \left\{\omega \in \Omega\,, \tau(\omega) = -\omega\right\}$. For every $\X\,, \Y \in \Omega(\mathbb{R})$, we have
\begin{eqnarray*}
\tau(\left[\X\,, \Y\right]) & = & \tau(\X\Y) - (-1)^{\left|\X\right|\cdot\left|\Y\right|}\tau(\Y\X) = (-1)^{\left|\X\right|\cdot\left|\Y\right|}\tau(\Y)\tau(\X) - \tau(\X)\tau(\Y) \\
& = & (-1)^{\left|\X\right|\cdot\left|\Y\right|}\Y\X - \X\Y = -\left[\X\,, \Y\right]\,,
\end{eqnarray*}
i.e. $\Omega(\R)$ is a Lie superalgebra. Moreover, for every $e\,, f \in \E$, we have:
\begin{equation*}
\B(\tau(e)\,, \tau(f)) = \tau(e)\tau(f) - (-1)^{\left|e\right|\cdot\left|f\right|} \tau(f)\tau(e) = \tau((-1)^{\left|e\right|\cdot\left|f\right|}fe - ef) = -\tau(\B(e\,, f))\,.
\end{equation*}

The relation $\left[\Omega(\mathbb{R})\,, \E^{-}\right] \subseteq \E^{-}$ implies the following lemma:
\begin{lemme}

The Lie superalgebra $\Omega(\R)$ is isomorphic to $\mathfrak{spo}(\E^{-}\,, \B_{|_{\E^{-}}})$.
\label{lem:omeR}
\end{lemme}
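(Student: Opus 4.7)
The plan is to transport the superinvolution $\tau$ through the isomorphism $\beta:\mathfrak{spo}(\E\,,\B)\to\Omega$ of Lemma~\ref{Degree2Elements}, compute its action on $\mathfrak{spo}(\E\,,\B)$ explicitly, and read off the desired real form from the fixed-point description.

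First I would compute the involution $\sigma:=\beta^{-1}\circ\tau\circ\beta$ on $\mathfrak{spo}(\E\,,\B)$. The key input is the defining relation $[\beta(\X),\boldsymbol\iota(e)]=\boldsymbol\iota(\X(e))$ for homogeneous $\X\in\mathfrak{spo}(\E\,,\B)$ and $e\in\E$. Applying $\tau$ to both sides and using the super anti-multiplicativity $\tau(\Y\Z)=(-1)^{|\Y|\cdot|\Z|}\tau(\Z)\tau(\Y)$, careful sign tracking converts the left-hand side into $-[\tau(\beta(\X)),\boldsymbol\iota(\tau(e))]$, while the right-hand side becomes $\boldsymbol\iota(\tau(\X(e)))=\boldsymbol\iota((\tau\X\tau)(\tau(e)))$. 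Letting $e$ range over $\E$ and invoking injectivity of $\beta$, this forces
\[
\sigma(\X)=-\tau\X\tau\,,\qquad (\X\in\mathfrak{spo}(\E\,,\B)).
\]
Consequently $\beta^{-1}(\Omega(\mathbb{R}))$ is precisely the set of $\X\in\mathfrak{spo}(\E\,,\B)$ satisfying $\tau\X\tau=\X$, i.e.\ the $\mathbb{C}$-linear orthosymplectic operators on $\E$ that commute with the conjugate-linear involution $\tau$.

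The second step is to identify this commutant with $\mathfrak{spo}(\E^{-}\,,\B_{|_{\E^{-}}})$. Since $\tau$ is $\mathbb{C}$-antilinear and $\tau^{2}=\Id_{\E}$, one has $\E^{+}=i\E^{-}$ and $\E=\E^{-}\oplus i\E^{-}$; equivalently, $\E$ is the complexification of the real $\mathbb{Z}_{2}$-graded vector space $\E^{-}$. A $\mathbb{C}$-linear endomorphism of $\E$ commutes with $\tau$ if and only if it preserves $\E^{-}$, in which case it is the $\mathbb{C}$-linear extension of its restriction to $\E^{-}$; the restriction map then realises these endomorphisms as $\End_{\mathbb{R}}(\E^{-})$ and intertwines the (super)bracket. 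Moreover, by $\mathbb{C}$-bilinearity of $\B$ the orthosymplectic identity for such an $\X$ on all of $\E$ is equivalent to the same identity restricted to $u,v\in\E^{-}$, which is the defining condition for membership in $\mathfrak{spo}(\E^{-}\,,\B_{|_{\E^{-}}})$.

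The main obstacle I anticipate is the sign bookkeeping in Step~1: it is exactly the minus sign in $\sigma(\X)=-\tau\X\tau$ that converts the $(-1)$-eigenspace condition $\tau(\omega)=-\omega$ on $\Omega$ into the commutation relation $\X\tau=\tau\X$, so producing a bona fide real form rather than its Cartan twist. Once this sign is settled, the identification of Step~2 is largely formal and the stated isomorphism $\Omega(\mathbb{R})\cong\mathfrak{spo}(\E^{-}\,,\B_{|_{\E^{-}}})$ follows.
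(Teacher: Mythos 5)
Your proof is correct and is essentially the paper's argument made explicit: the paper simply observes that $\left[\Omega(\mathbb{R})\,, \E^{-}\right] \subseteq \E^{-}$ and restricts the adjoint action to $\E^{-}$, which is exactly what your identity $\tau(\beta(\X))=\beta(-\tau\X\tau)$ (whose sign bookkeeping does check out) encodes, since $\tau\X\tau=\X$ is equivalent to the $\mathbb{C}$-linear map $\X$ preserving $\E^{-}$. The subsequent identification of the $\tau$-commuting part of $\mathfrak{spo}(\E\,,\B)$ with $\mathfrak{spo}(\E^{-}\,,\B_{|_{\E^{-}}})$ by restriction is the same step the paper leaves implicit, so no further changes are needed.
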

Since $\Omega(\mathbb R)\subseteq \Omega$, we obtain  an action of $\Omega(\mathbb{R})$ on $\S$ by restriction. 
Thus by Lemma \ref{lem:omeR}
we obtain an action of $\mathfrak{spo}(\E^{-}\,, \B_{|_{\E^{-}}})$ on $\S$: this is the Spinor-Oscillator representation of the \emph{real} orthosymplectic Lie superalgebra $\mathfrak{spo}(\E^{-}\,, \B_{|_{\E^{-}}})$.


\section{Double Commutant Theorem}

\label{SectionDoubleCommutant}

In this section we prove Theorems \ref{thm-thm1.8-hadi} and \ref{DoubleCommutantTheoremR}.
 We begin with the case $\mathbb{K} = \mathbb{C}$.
Let $\E$ be a complex $\mathbb Z_2$-graded vector space, with $\dim_{\mathbb{C}}(\E_{\bar{0}})$ and $\dim_{\mathbb{C}}(\E_{\bar{1}})$ even, and let $\B$ be a $(-1)$-supersymmetric form on $\E$ as in Section \ref{SpinorOscillatorRepresentation}. We also use  $(\pi, \S)$ to denote the Spinor-Oscillator representation of $\mathfrak{spo}(\E\,, \B)$, defined in Equation \eqref{eq:actionofBeta}.
As explained in Remark \ref{RemaSpinWeil},  one can take $\S$ to be $\S(\F)$, where $\F$ is a maximal $\B$-isotropic subspace of $\E$.


\begin{rema}

The Lie supergroup $\textbf{SpO}(\E\,, \B)$ acts naturally on $\E$ and the action can be extended to $\S(\E)$; in particular, we have a natural action of $\mathscr{G}$ on $\S(\E)$.

\end{rema}

\begin{proof}
[Proof of Theorem \ref{thm-thm1.8-hadi}]
For every $k$, we define the symbol map \[
\sigma: \textbf{WC}(\E\,, \B)_{k} \to \bigoplus\limits_{i = 0}^{k}\S^{i}(\E)
\]as in \cite[Equation~(5.3)]{CHENGWANG}. In particular, the map $\sigma$ defines an isomorphism of $\mathscr{G}$-modules between $\gr\left(\textbf{WC}(\E\,, \B)\right)$ and $\S(\E)$, where $\gr\left(\textbf{WC}(\E\,, \B)\right)$ is the graded algebra corresponding to the filtered algebra $\left\{\textbf{WC}(\E\,, \B)_{k}\right\}_{k \in \mathbb{Z}^{+}}$.
From the results of \cite{SERGEEV} and Proposition \ref{DualPairsInOrthosymplecticSupergroup}, it follows that $\S(\E)^{\mathscr{G}}$ is generated by degree two elements. In particular, by using the map $\sigma$, we get
\begin{equation*}
\textbf{WC}(\E\,, \B)^{\mathscr{G}} = \langle \mathfrak{spo}(\E\,, \B)^{\mathscr{G}}\rangle = \langle \mathfrak{g}'\rangle\,,
\end{equation*}
and the theorem follows.
\end{proof}

\begin{rema}

\begin{enumerate}
\item For the complex irreducible reductive dual pairs $(\mathscr{G}\,, \mathscr{G}') = (\textbf{GL}(m|n)\,, \textbf{GL}(k|l))$\,, $({\textbf{Q}}(n)\,, \textbf{Q}(m))$ or $({\textbf{P}}(n)\,, \textbf{P}(m))$ of $\textbf{SpO}(\E\,, \B)$, it follows from \cite{SERGEEV} that $\textbf{WC}(\E\,, \B)^{\mathscr{G}} = \textbf{WC}(\E\,, \B)^{\mathfrak{g}}$. However, for the dual pair $(\textbf{SpO}(\V)\,, \textbf{OSp}(\W)) \subseteq \textbf{SpO}(\E = \W \otimes_{\mathbb{C}} \V)$, when the dimension of $\V_{\bar{1}}$ is even we have a strict inclusion of $\textbf{WC}(\E\,, \B)^{\textbf{SpO}(\V)}$  in $\textbf{WC}(\E\,, \B)^{\mathfrak{spo}(\V)}$. Indeed the Pfaffian constructed in \cite[Lemma~5.4]{SERGEEV} is $\mathfrak{spo}(\V)$-invariant but not $\textbf{SpO}(\V)$-invariant.
\item  Theorem \ref{thm-thm1.8-hadi} has been obtained recently in \cite{DAVIDSONKUJAWAMUTH} by Davidson-Kujawa-Mouth for the dual pair $({\textbf{P}}(n)\,, \textbf{P}(m))$ (see \cite[Theorem~7.8.2]{DAVIDSONKUJAWAMUTH}). It can also be obtained for the pairs $(\textbf{GL}(m|n)\,, \textbf{GL}(k|l))$ and $({\textbf{Q}}(n)\,, \textbf{Q}(m))$ by using \cite[Corollary~3.1]{CHENGWANG1} and \cite[Section~5.2.4]{CHENGWANG}.
\item 
Recall the action $\pi$ of $\mathfrak{g}$ on $\S(\F)$. We obtain a natural action of $\mathfrak{g}$ on $\End(\S(\F))$ given by 
\begin{equation*}
\X \cdot \T = \pi(\X) \circ \T - (-1)^{\left|\X\right| \cdot \left|\T\right|} \T \circ \pi(\X)\,, \qquad (\X \in \mathfrak{g}\,, \T \in \End(\S(\F)))\,.
\end{equation*}
One can easily see that the space $\textbf{WC}(\E\,, \B)$ is invariant under the action of $\mathfrak{g}$ and that the elements of $\textbf{WC}(\E\,, \B)^{\mathfrak{g}}$ are exactly the elements of $\textbf{WC}(\E\,, \B)$ that supercommute with the action of $\mathfrak{g}$ on $\S(\F)$.
\end{enumerate}

\end{rema}

\begin{rema}

In this section (and in Section \ref{SpinorOscillatorRepresentation}), we assumed that $\dim_{\mathbb{C}}(\E_{\bar{1}})$ is even. The construction is a slightly different when $\E_{\bar{1}}$ is odd (see \cite{NISHIYAMA1}).
For the pair $(\mathfrak{spo}(2m|2n+1)\,, \mathfrak{osp}(2k+1|2l))$ as a dual  pair in \[\mathfrak{spo}(2(m(2k+1)+l(2n+1))\,|\,4(ml +nk)+2(n+k)+1),
\]the proof of Theorem \ref{thm-thm1.8-hadi} needs some minor modifications but is still valid. In Slupinski (see \cite[Lemma~4.2.1]{SLUPINSKI}) Theorem \ref{thm-thm1.8-hadi} is proved in the special case of  a dual pair consisting of two (odd dimensional) orthogonal groups.

\end{rema}

Next we consider the case $\mathbb{K} = \mathbb{R}$.
In this case  $\E$ is a real vector space and $\B$ is a $(-1)$-supersymmetric form on $\E$. We denote by $(\pi\,, \S)$ the Spinor-Oscillator representation of $\mathfrak{spo}(\E\,, \B)$. We denote by $(\E^{\mathbb{C}}\,, \B^{\mathbb{C}})$ the complexification of $(\E\,, \B)$. 





\begin{rema}
Let 
us denote 
the real and complex orthosymplectic Lie supergroups corresponding to $(\E\,, \B)$ and $(\E^{\mathbb{C}}\,, \B^{\mathbb{C}})$ by
\[
\textbf{SpO}(\E\,, \B) = (\Sp(\E_{\bar{0}}\,, \B_{0}) \times \O(\E_{\bar{1}}\,, \B_{1})\,, \mathfrak{spo}(\E\,, \B))
\]and \[
\textbf{SpO}(\E^{\mathbb{C}}\,, \B^{\mathbb{C}}) = (\Sp(\E^{\mathbb{C}}_{\bar{0}}\,, \B^{\mathbb{C}}_{0}) \times \O(\E^{\mathbb{C}}_{\bar{1}}\,, \B^{\mathbb{C}}_{1})\,, \mathfrak{spo}(\E^{\mathbb{C}}\,, \B^{\mathbb{C}}))\,,
\] respectively.
There is a natural embedding of Harish-Chandra pairs $\textbf{SpO}(\E\,,\B)\to
\textbf{SpO}(\E^{\mathbb{C}}\,, \B^{\mathbb{C}})
$.
Let $(\G\,, \mathfrak{g})$ be a sub-supergroup of $\textbf{SpO}(\E\,, \B)$, where $\G$ is a real  algebraic subgroup of 
$
\Sp(\E_{\bar{0}}\,, \B_{0}) \times \O(\E_{\bar{1}}\,, \B_{1})$. We define the sub-supergroup $(\G_\mathbb C\,,\mathfrak{g}_\mathbb C)$ of
$\textbf{SpO}(\E^{\mathbb{C}}\,, \B^{\mathbb{C}})$
as follows: $\mathfrak g_\mathbb C$ is the $\mathbb C$-linear span of the image of $\mathfrak g$ in $\mathfrak{spo}(\E\,,\B)$, and 
$\G_\mathbb C$ is the Zariski closure of the image of $\G$ in 
$
\Sp(\E^{\mathbb{C}}_{\bar{0}}\,, \B^{\mathbb{C}}_{0}) \times \O(\E^{\mathbb{C}}_{\bar{1}}\,, \B^{\mathbb{C}}_{1})$.
By Remark \ref{RemarkActionSpO}, it follows that $\G_{\mathbb{C}}$ acts naturally on $\textbf{WC}(\E^{\mathbb{C}}\,, \B^{\mathbb{C}})$. Similarly, the complexification $\mathfrak{g}_{\mathbb{C}}$ of $\mathfrak{g}$ is a Lie sub-superalgebra of $\mathfrak{spo}(\E^{\mathbb{C}}\,, \B^{\mathbb{C}})$ and thus $\mathfrak{g}_{\mathbb{C}}$ acts on $\textbf{WC}(\E^{\mathbb{C}}\,, \B^{\mathbb{C}})$. The pair $(\G_{\mathbb{C}}\,, \mathfrak{g}_{\mathbb{C}})$  can be viewed as the complexification of $(\G\,, \mathfrak{g})$.
With the preceding  notation, one can easily verify that
\begin{equation*}
\textbf{WC}(\E^{\mathbb{C}}\,, \B^{\mathbb{C}})^{\big(\G\,, \mathfrak{g}\big)} = \textbf{WC}(\E^{\mathbb{C}}\,, \B^{\mathbb{C}})^{\big(\G_{\mathbb{C}}\,, \mathfrak{g}_{\mathbb{C}}\big)}\,,
\end{equation*}
which is the key point in the proof of the double commutant theorem in the real case.

\label{RemarkComplexification}

\end{rema}

\begin{proof}[Proof of Theorem \ref{DoubleCommutantTheoremR}]
First we consider dual pairs of Type II.
Let \[
\big((\G\,, \mathfrak{g})\,, (\G'\,, \mathfrak{g}')\big) = \big((\GL(\V_{\bar{0}}) \times \GL(\V_{\bar{1}})\,,\mathfrak{gl}(\V))\,, (\GL(\W_{\bar{0}}) \times \GL(\W_{\bar{1}})\,,\mathfrak{gl}(\W))\big)
\]be such an irreducible reductive dual pair in $\textbf{SpO}(\E\,, \B)$, where $\V$ and $\W$ are $\mathbb Z_2$-graded vector spaces over $\mathbb{D}$ with $\mathbb{D} = \mathbb{R}, \mathbb{C}$ or $\mathbb{H}$, and $\E = (\W \otimes_{\mathbb{D}} \V)_{\mathbb{R}} \oplus (\W^{*} \otimes_{\mathbb{D}} \V^{*})_{\mathbb{R}}$. In the proofs that follow,
we use the results of Sergeev stating that in each case invariants are generated by quadratic elements \cite{SERGEEV}.

\begin{itemize}
\item \underline{$\mathbb{D} = \mathbb{R}$: } The complexification of $\W \otimes_{\mathbb{R}} \V $ is $\W^{\mathbb{C}} \otimes_{\mathbb{C}} \V^{\mathbb{C}}$.  We have
\begin{eqnarray*}
\S(\E^{\mathbb{C}})^{\big(\G_{\mathbb{C}}\,, \mathfrak{g}_{\mathbb{C}}\big)} & = & \S(\W^{\mathbb{C}} \otimes_{\mathbb{C}} \V^{\mathbb{C}} \oplus {\W^{\mathbb{C}}}^{*} \otimes_{\mathbb{C}} {\V^{\mathbb{C}}}^{*})^{\big(\GL(\V^{\mathbb{C}}_{\bar{0}}) \times \GL(\V^{\mathbb{C}}_{\bar{1}})\,, \mathfrak{gl}(\V_{\mathbb{C}})\big)} \\
& = & \langle \S^{2}(\W^{\mathbb{C}} \otimes_{\mathbb{C}} \V^{\mathbb{C}} \oplus {\W^{\mathbb{C}}}^{*} \otimes_{\mathbb{C}} {\V^{\mathbb{C}}}^{*})^{\big(\GL(\V^{\mathbb{C}}_{\bar{0}}) \times \GL(\V^{\mathbb{C}}_{\bar{1}})\,, \mathfrak{gl}(\V_{\mathbb{C}})\big)}  \rangle \\
& = & \langle \mathfrak{spo}(\W^{\mathbb{C}} \otimes_{\mathbb{C}} \V^{\mathbb{C}} \oplus {\W^{\mathbb{C}}}^{*} \otimes_{\mathbb{C}} {\V^{\mathbb{C}}}^{*})^{\big(\GL(\V^{\mathbb{C}}_{\bar{0}}) \times \GL(\V^{\mathbb{C}}_{\bar{1}})\,, \mathfrak{gl}(\V^{\mathbb{C}})\big)} \rangle = \langle \mathfrak{gl}(\W^{\mathbb{C}}) \rangle = \langle \mathfrak{g}'_{\mathbb{C}} \rangle \\
& = & \langle \mathfrak{g}' \rangle\,,
\end{eqnarray*}
and by using the map $\sigma: \gr\left(\textbf{WC}(\E^{\mathbb{C}}\,, \B^{\mathbb{C}})\right) \to \S(\E^{\mathbb{C}})$ defined in proof of Theorem \ref{thm-thm1.8-hadi}, we get that $\textbf{WC}(\E^{\mathbb{C}}\,, \B^{\mathbb{C}})^{\big(\G\,, \mathfrak{g}\big)} = \textbf{WC}(\E^{\mathbb{C}}\,, \B^{\mathbb{C}})^{(\G_{\mathbb{C}}\,, \mathfrak{g}_{\mathbb{C}})}$ is generated by $\mathfrak{g}'$.
\item \underline{$\mathbb{D} = \mathbb{C}$: } We have $\E = (\W \otimes_{\mathbb{C}} \V)_{\mathbb{R}} \oplus (\W^{*} \otimes_{\mathbb{C}} \V^{*})_{\mathbb{R}}$, where $\W$ and $\V$ are complex vector spaces, and then $\E_{\mathbb{C}} = \W \otimes_{\mathbb{C}} \V \oplus \W \otimes_{\mathbb{C}} \V \oplus \W^{*} \otimes_{\mathbb{C}} \V^{*} \oplus \W^{*} \otimes_{\mathbb{C}} \V^{*}$. In particular,
\begin{eqnarray*}
\S(\E^{\mathbb{C}})^{\big(\G_{\mathbb{C}}\,, \mathfrak{g}_{\mathbb{C}}\big)} & = & \S(\W \otimes_{\mathbb{C}} \V \oplus \W \otimes_{\mathbb{C}} \V \oplus \W^{*} \otimes_{\mathbb{C}} \V^{*} \oplus \W^{*} \otimes_{\mathbb{C}} \V^{*})^{\big(\G \times \G\,, \mathfrak{g} \oplus \mathfrak{g}\big)} \\
& = & \S(\W \otimes_{\mathbb{C}} \V \oplus \W^{*} \otimes_{\mathbb{C}} \V^{*})^{\big(\G\,, \mathfrak{g}\big)} \otimes \S(\W \otimes_{\mathbb{C}} \V \oplus \W^{*} \otimes_{\mathbb{C}} \V^{*})^{\big(\G\,, \mathfrak{g}\big)} \\
& = & \langle \S^{2}(\W \otimes_{\mathbb{C}} \V \oplus \W^{*} \otimes_{\mathbb{C}} \V^{*})^{\big(\G\,, \mathfrak{g}\big)} \rangle \otimes \langle \S^{2}(\W \otimes_{\mathbb{C}} \V \oplus \W^{*} \otimes_{\mathbb{C}} \V^{*})^{\big(\G\,, \mathfrak{g}\big)} \rangle \\
& = & \langle \mathfrak{g}' \rangle \otimes \langle \mathfrak{g}' \rangle = \langle \mathfrak{g}' \oplus \mathfrak{g}' \rangle = \langle \mathfrak{g}'_{\mathbb{C}} \rangle \\
& = & \langle \mathfrak{g}'\rangle
\end{eqnarray*}
and again it follows that $\textbf{WC}(\E^{\mathbb{C}}\,, \B^{\mathbb{C}})^{\big(\G_{\mathbb{C}}\,, \mathfrak{g}_{\mathbb{C}}\big)}$ is generated by $\mathfrak{g}'$.
\item \underline{$\mathbb{D} = \mathbb{H}$: } We have $\E = (\W \otimes_{\mathbb{H}} \V)_{\mathbb{R}} \oplus (\W \otimes_{\mathbb{H}} \V)^{*}_{\mathbb{R}}$ (where $\W$ and $\V$ are vector spaces over $\mathbb{H}$) and then $\E_{\mathbb{C}} = \W^{1} \otimes _{\mathbb{C}} \V^{1} \oplus (\W^{1} \otimes _{\mathbb{C}} \V^{1})^{*}$ (here $\W^{1}$ is the complex vector space corresponding to $\W$, i.e. if $\W = \mathbb{H}^{n|m}$, $\W^{\mathbb{C}} = \mathbb{C}^{2n|2m}$). In particular,
\begin{eqnarray*}
\S(\E^{\mathbb{C}})^{\big(\G_{\mathbb{C}}\,, \mathfrak{g}_{\mathbb{C}}\big)} & = & \S(\W^{1} \otimes_{\mathbb{C}} \U^{1} \oplus {\W^{1}}^{*} \otimes_{\mathbb{C}} {\U^{1}}^{*})^{\big(\GL(\V^{1}_{\bar{0}}) \times \GL(\V^{1}_{\bar{1}})\,, \mathfrak{gl}(\V^{1})\big)} \\ 
& = & \langle \S^{2}(\W^{1} \otimes_{\mathbb{C}} \U^{1} \oplus {\W^{1}}^{*} \otimes_{\mathbb{C}} {\U^{1}}^{*})^{\big(\GL(\V^{1}_{\bar{0}}) \times \GL(\V^{1}_{\bar{1}})\,, \mathfrak{gl}(\V^{1})\big)} \rangle \\
& = & \langle \mathfrak{spo}(\W^{1} \otimes_{\mathbb{C}} \U^{1} \oplus {\W^{1}}^{*} \otimes_{\mathbb{C}} {\U^{1}}^{*})^{\big(\GL(\V^{1}_{\bar{0}}) \times \GL(\V^{1}_{\bar{1}})\,, \mathfrak{gl}(\V^{1})\big)} \rangle \\
& = & \langle \mathfrak{gl}(\W^{1}) \rangle = \langle \mathfrak{g}'_{\mathbb{C}} \rangle \\
& = & \langle \mathfrak{g}' \rangle 
\end{eqnarray*}
and then $\textbf{WC}(\E^{\mathbb{C}}\,, \B^{\mathbb{C}})^{(\G_{\mathbb{C}}\,, \mathfrak{g}_{\mathbb{C}})}$ is generated by $\mathfrak{g}'$.
\end{itemize}
We now consider the irreducible dual pairs $\left((\G, \mathfrak{g})\,, (\G', \mathfrak{g}')\right) = \left((\GL(\V)^{\triangle}\,, \mathfrak{q}(\V))\,, (\GL(\W)^{\triangle}\,, {\mathfrak{q}}(\W))\right)$ in $\textbf{SpO}(\E\,, \B)$, where $\V$ and $\W$ are vector spaces over $\mathbb{K}$ with $\mathbb{D} = \mathbb{R}, \mathbb{C}$ or $\mathbb{H}$, $\E = (\W \otimes_{\mathbb{D}} \V)_{\mathbb{R}}$ and where $\GL(\V)^{\triangle}$ is the "diagonal subgroup" of $\GL(\V_{\bar{0}}) \times \GL(\V_{\bar{1}})$.
Again, we distinguish three different cases: 
\begin{itemize}
\item \underline{$\mathbb{D} = \mathbb{R}$: } The complexification of $\W \otimes_{\mathbb{R}} \V $ is $\W^{\mathbb{C}} \otimes_{\mathbb{C}} \V^{\mathbb{C}}$. We have
\begin{eqnarray*}
\S(\E^{\mathbb{C}})^{\big(\G_{\mathbb{C}}\,, \mathfrak{g}_{\mathbb{C}}\big)} & = & \S(\W^{\mathbb{C}} \otimes_{\mathbb{C}} \V^{\mathbb{C}} )^{\big(\GL(\V^{\mathbb{C}})^{\triangle}\,, \mathfrak{q}(\V_{\mathbb{C}})\big)} \\
& = & \langle \S^{2}(\W^{\mathbb{C}} \otimes_{\mathbb{C}} \V^{\mathbb{C}})^{\big(\GL(\V^{\mathbb{C}})^{\triangle}\,, \mathfrak{q}(\V_{\mathbb{C}})\big)}  \rangle \\
& = & \langle \mathfrak{spo}(\W^{\mathbb{C}} \otimes_{\mathbb{C}} \V^{\mathbb{C}})^{\big(\GL(\V^{\mathbb{C}})^{\triangle}\,, \mathfrak{q}(\V^{\mathbb{C}})\big)} \rangle = \langle {\mathfrak{q}}(\W^{\mathbb{C}}) \rangle = \langle \mathfrak{g}'_{\mathbb{C}} \rangle \\
& = & \langle \mathfrak{g}' \rangle\,,
\end{eqnarray*}
and then $\textbf{WC}(\E^{\mathbb{C}}\,, \B^{\mathbb{C}})^{\big(\G\,, \mathfrak{g}\big)} = \textbf{WC}(\E^{\mathbb{C}}\,, \B^{\mathbb{C}})^{(\G_{\mathbb{C}}\,, \mathfrak{g}_{\mathbb{C}})}$ is generated by $\mathfrak{g}'$.
\item \underline{$\mathbb{D} = \mathbb{C}$: } We have $\E = \W \otimes_{\mathbb{R}} \V$, where $\W$ and $\V$ are complex vector spaces, and then $\E_{\mathbb{C}} = \W \otimes_{\mathbb{C}} \V \oplus \W \otimes_{\mathbb{C}} \V$. In particular,
\begin{eqnarray*}
\S(\E^{\mathbb{C}})^{\big(\G_{\mathbb{C}}\,, \mathfrak{g}_{\mathbb{C}}\big)} & = & \S(\W \otimes_{\mathbb{C}} \V \oplus \W \otimes_{\mathbb{C}} \V)^{\big(\G \times \G\,, \mathfrak{g} \oplus \mathfrak{g}\big)} \\
& = & \S(\W \otimes_{\mathbb{C}} \V)^{\big(\G\,, \mathfrak{g}\big)} \otimes \S(\W \otimes_{\mathbb{C}} \V)^{\big(\G\,, \mathfrak{g}\big)} \\
& = & \langle \S^{2}(\W \otimes_{\mathbb{C}} \V)^{\big(\G\,, \mathfrak{g}\big)} \rangle \otimes \langle \S^{2}(\W \otimes_{\mathbb{C}} \V)^{\big(\G\,, \mathfrak{g}\big)} \rangle \\
& = & \langle \mathfrak{g}' \rangle \otimes \langle \mathfrak{g}' \rangle = \langle \mathfrak{g}' \oplus \mathfrak{g}' \rangle = \langle \mathfrak{g}'_{\mathbb{C}} \rangle \\
& = & \langle \mathfrak{g}'\rangle
\end{eqnarray*}
and again it follows that $\textbf{WC}(\E^{\mathbb{C}}\,, \B^{\mathbb{C}})^{\big(\G_{\mathbb{C}}\,, \mathfrak{g}_{\mathbb{C}}\big)}$ is generated by $\mathfrak{g}'$.
\item \underline{$\mathbb{D} = \mathbb{H}$: } We have $\E = (\W \otimes_{\mathbb{H}} \V)_{\mathbb{R}}$ (where $\W$ and $\V$ are vector spaces over $\mathbb{H}$) and then $\E_{\mathbb{C}} = \W^{1} \otimes _{\mathbb{C}} \V^{1}$ (where $\V^{1}$ and $\W^{1}$ has been defined previously). In particular,
\begin{eqnarray*}
\S(\E^{\mathbb{C}})^{\big(\G_{\mathbb{C}}\,, \mathfrak{g}_{\mathbb{C}}\big)} & = & \S(\W^{1} \otimes_{\mathbb{C}} \U^{1})^{\big(\GL(\V^{1})^{\triangle}\,, \mathfrak{q}(\V^{1})\big)} \\ 
& = & \langle \S^{2}(\W^{1} \otimes_{\mathbb{C}} \U^{1})^{\big(\GL(\V^{1})^{\triangle}\,, \mathfrak{q}(\V^{1})\big)} \rangle \\
& = & \langle \mathfrak{spo}(\W^{1} \otimes_{\mathbb{C}} \U^{1})^{\big(\GL(\V^{1})^{\triangle}\,, \mathfrak{q}(\V^{1})\big)} \rangle \\
& = & \langle \mathfrak{gl}(\W^{1}) \rangle = \langle \mathfrak{g}'_{\mathbb{C}} \rangle \\
& = & \langle \mathfrak{g}' \rangle 
\end{eqnarray*}
and then $\textbf{WC}(\E^{\mathbb{C}}\,, \B^{\mathbb{C}})^{(\G_{\mathbb{C}}\,, \mathfrak{g}_{\mathbb{C}})}$ is generated by $\mathfrak{g}'$.
\end{itemize}

For Type I dual pairs the proofs are similar. We are going to prove it in one case only. Let $(\mathscr{G}\,, \mathscr{G}') = (\textbf{U}(\V)\,, \textbf{U}(\W))$ be an irreducible dual pair in $\textbf{SpO}(\E\,, \B)$, where $\V$ and $\V'$ are $\mathbb{Z}_{2}$-graded complex vector spaces, $\E = (\W \otimes_{\mathbb{C}} \V)_{\mathbb{R}}$ and $\textbf{U}(\V) = \left(\U(\V_{\bar{0}}) \times \U(\V_{\bar{1}})\,, \mathfrak{u}(\V)\right)$. One can see that $\E^{\mathbb{C}} = \W \otimes_{\mathbb{C}} \V \oplus \W \otimes_{\mathbb{C}} \V$ and that, in the notation of Remark \ref{RemarkComplexification} we have $\textbf{U}(\V)_{\mathbb{C}} = \textbf{GL}(\V)$. In particular, 
\begin{equation*}
\S(\E^{\mathbb{C}})^{\textbf{U}(\V)} = \S(\E^{\mathbb{C}})^{\textbf{GL}(\V)} = \S(\W \otimes_{\mathbb{C}} \V \oplus \W \otimes_{\mathbb{C}} \V)^{\textbf{GL}(\V)} = \langle \mathfrak{gl}(\W)\rangle = \langle\mathfrak{u}(\W)\rangle\,,
\end{equation*}
and the Theorem follows for the pair $(\textbf{U}(\V)\,, \textbf{U}(\W))$. 
\end{proof}

\section{Howe duality for the pair (\textbf{SpO}$(2n|1)\,, \mathfrak{osp}(2k|2l))$}

\label{Section9} 

Let $(\mathscr{G}\,, \mathscr{G}')$ be an irreducible reductive dual pair in $\textbf{SpO}(\E\,, \B)$, with $\E = \V \oplus \V^{*}$  a complex $\mathbb Z_2$-graded vector space, where $\B$ is the standard even, non-degenerate, $(-1)$-supersymmetric form on $\V \oplus \V^{*}$ (in particular, $\V$ is $\B$-isotropic). We assume that  $\mathscr{G} \subseteq \textbf{GL}(\V)$. Let $(\pi\,, \S(\V))$ be the Spinor-Oscillator representation of $\mathfrak{spo}(\E\,, \B)$ as in Remark \ref{RemaSpinWeil}. 
In this section, we assume the following conditions:
\begin{itemize}
   \item[(a)]
Every finite dimensional $\mathscr{G}$-module is completely reducible.\item[(b)] $\dim_{\mathbb{C}} \End_{\mathscr{G}}(\U_{\lambda}) = 1$ for every irreducible finite dimensional $\mathscr{G}$-module $(\lambda\,, \U_{\lambda})$.
\end{itemize}

We first look at the action of $\mathscr{G}$ on $\S(\V)$. Using that $\mathscr{G} \subseteq \textbf{GL}(\V)$, it follows that $\mathscr{G} \curvearrowright \S^{k}(\V)$ for every $k \in \mathbb{Z}_{+}$. In particular, the action of $\mathscr{G}$ on $\S(\V)$ is completely reducible. We obtain the decomposition
\begin{equation*}
\S(\V) = \bigoplus\limits_{(\lambda\,, \U_{\lambda}) \in \mathscr{G}_{\irr}} \U(\lambda)\,,
\end{equation*}
where $\mathscr{G}_{\irr}$ is the set of finite dimensional irreducible $\mathscr{G}$-modules and $\U(\lambda)$ is the $\lambda$-isotypic component corresponding to $(\lambda\,, \U_{\lambda}) \in \mathscr{G}_{\irr}$, i.e.
\begin{equation*}
\U(\lambda) = \left\{\T(\U_{\lambda})\,, \T \in \Hom_{\mathscr{G}}(\U_{\lambda}\,, \S(\V))\right\}\,.
\end{equation*}
For every $(\lambda\,, \U_{\lambda}) \in \mathscr{G}_{\irr}$, let $\Omega(\lambda)$ be the space $\Omega(\lambda) := \Hom_{\mathscr{G}}(\U_{\lambda}\,, \S(\V))$. For every $\X \in \textbf{WC}(\E\,, \B)\,, g \in \G\,, \Y \in \mathfrak{g}$ and $\omega_{\lambda} \in \Omega(\lambda)$, we have:
\begin{equation*}
(\pi(\X) \circ \omega_{\lambda}) \circ \lambda(g) = \pi(\X) \circ \pi(g) \circ \omega_{\lambda} = \pi(g) \circ (\pi(\X) \circ \omega_{\lambda})\,,
\end{equation*}
\begin{eqnarray*}
(\pi(\X) \circ \omega_{\lambda}) \circ \lambda(\Y)  & = & (-1)^{\left|\Y\right| \cdot \left|\omega_{\lambda}\right|} \pi(\X) \circ \pi(\Y) \circ \omega_{\lambda} \\
& = & (-1)^{\left|\X\right| \cdot \left|\Y\right|} (-1)^{\left|\Y\right| \cdot \left|\omega_{\lambda}\right|} \pi(\Y) \circ \pi(\X) \circ \omega_{\lambda} = (-1)^{\left|\Y\right| \cdot (\left|\omega_{\lambda}\right| + \left|\omega_{\lambda}\right|)} \pi(\Y) \circ \pi(\X) \circ \omega_{\lambda} \\
& = & (-1)^{\left|\Y\right| \cdot \left|\pi(\X) \circ \omega_{\lambda}\right|} \pi(\Y) \circ (\pi(\X) \circ \omega_{\lambda})\,,
\end{eqnarray*}
i.e. $\Omega(\lambda)$ has a natural structure of $\textbf{WC}(\E\,, \B)^{\mathscr{G}}$-module given by
\begin{equation*}
\X \cdot \omega_{\lambda} := \pi(\X) \circ \omega_{\lambda}\,, \qquad (\X \in \textbf{WC}(\E\,, \B)^{\mathscr{G}}\,, \omega_{\lambda} \in \Omega(\lambda))\,.
\end{equation*}

We recall the following lemma (see \cite[Proposition~5.7]{CHENGWANG}).

\begin{lemme}

For every finite dimensional vector space $\X \subseteq \S(\V)$, we have
\begin{equation*}
\normalfont{\textbf{WC}}(\E\,, \B)_{|_{\X}} = \Hom(\X\,, \S(\V))\,.
\end{equation*}
\label{LemmaRestriction}
\end{lemme}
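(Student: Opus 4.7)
The plan is to deduce this lemma from the Jacobson Density Theorem applied to the action of the associative $\mathbb{C}$-algebra $\normalfont{\textbf{WC}}(\E\,, \B)$ on the vector space $\S(\V)$. The two ingredients required are: (i) $\S(\V)$ is an irreducible $\normalfont{\textbf{WC}}(\E\,, \B)$-module (viewed as an ungraded algebra); and (ii) $\End_{\normalfont{\textbf{WC}}(\E\,, \B)}(\S(\V)) = \mathbb{C}\cdot\Id_{\S(\V)}$. Granted (i) and (ii), Jacobson density implies that for any linearly independent $s_{1}\,, \ldots\,, s_{n} \in \S(\V)$ and arbitrary $t_{1}\,, \ldots\,, t_{n} \in \S(\V)$, there exists $w \in \normalfont{\textbf{WC}}(\E\,, \B)$ with $w\cdot s_{i} = t_{i}$ for every $i$. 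Choosing $\{s_{1}\,,\ldots\,, s_{n}\}$ to be a basis of $\X$ and setting $t_{i} = \T(s_{i})$ for a prescribed $\T \in \Hom(\X\,, \S(\V))$ would then exhibit $\T$ as the restriction to $\X$ of an element of $\normalfont{\textbf{WC}}(\E\,, \B)$, which is exactly the content of the lemma.

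For ingredient (i), I would show that $1 \in \S(\V)$ is both a cyclic generator and can be recovered from any nonzero vector. Multiplying $1$ by elements of $\V$ using the operators $\M_{v}$ immediately yields $\normalfont{\textbf{WC}}(\E\,, \B)\cdot 1 = \S(\V)$. Conversely, given any nonzero $f \in \S(\V)$, I would decompose $f = \sum_{k\,, l} f_{k\,, l}$ with $f_{k\,, l} \in \S^{k}(\V_{\bar{0}}) \otimes \Lambda^{l}(\V_{\bar{1}})$, select a bi-degree $(k_{0}\,, l_{0})$ with $f_{k_{0}\,, l_{0}} \neq 0$, fix a monomial $v_{i_{1}}\cdots v_{i_{k_{0}+l_{0}}}$ appearing in $f_{k_{0}\,, l_{0}}$ with nonzero coefficient, and apply the corresponding composition of derivation operators $\D_{v_{i_{1}}^{*}} \cdots \D_{v_{i_{k_{0}+l_{0}}}^{*}}$; the defining relations of $\normalfont{\textbf{WC}}(\E\,, \B)$ kill every other monomial of degree $\leq k_{0}+l_{0}$ and output a nonzero scalar multiple of $1$. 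Combining these two moves gives $\normalfont{\textbf{WC}}(\E\,, \B)\cdot f = \S(\V)$.

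For ingredient (ii), any $T \in \End_{\normalfont{\textbf{WC}}(\E\,, \B)}(\S(\V))$ is determined by $T(1)$ because $1$ is cyclic. Commutation of $T$ with each $\D_{v^{*}}$ together with $\D_{v^{*}}\cdot 1 = 0$ forces $\D_{v^{*}}(T(1)) = T(\D_{v^{*}}\cdot 1) = 0$ for every $v^{*}\in\V^{*}$, so $T(1)$ is annihilated by every derivation and hence lies in $\mathbb{C}\cdot 1$; therefore $T$ is a scalar. The step that requires the most care, and hence is the main obstacle, is the super-sign bookkeeping in ingredient (i): on the Clifford factor the odd operators $\M_{\xi}$ and $\D_{\xi^{*}}$ anti-commute, so applying the derivations in the wrong order gives contractions with alternating signs, and one must check that the scalar produced from $f_{k_{0}\,, l_{0}}$ really is nonzero rather than accidentally vanishing due to cancellation. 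Using the explicit formulas of Remark~\ref{RemaSpinWeil}, the argument reduces to the standard fact that $\Lambda(\V_{\bar{1}})$ is a simple module over $\Cliff(\E_{\bar{1}}\,, \B_{1})$ and $\S(\V_{\bar{0}})$ is a simple module over the Weyl algebra $\mathscr{W}(\E_{\bar{0}}\,, \B_{0})$; once this is settled, (i) and (ii) follow and the lemma is immediate from Jacobson density.
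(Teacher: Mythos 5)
The paper itself gives no argument for Lemma~\ref{LemmaRestriction}: it is quoted verbatim from \cite[Proposition~5.7]{CHENGWANG}, so there is no in-paper proof to compare against. Your route --- simplicity of $\S(\V)$ as an ungraded module over the associative algebra $\normalfont{\textbf{WC}}(\E\,,\B)$, triviality of the commutant, and then the Jacobson density theorem --- is the standard way to obtain exactly this statement, and your ingredient (ii) is fine as written: $T(1)$ is annihilated by every $\D_{v^{*}}$, hence is a constant, and cyclicity of $1$ forces $T$ to be a scalar; density over the commutant $\mathbb{C}\cdot\Id$ then produces, for a basis $s_{1},\ldots,s_{n}$ of $\X$ and prescribed values $t_{i}=\T(s_{i})$, an element of the Weyl--Clifford algebra whose restriction to $\X$ is $\T$.

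There is, however, one step in ingredient (i) that would fail as stated: you cannot take an \emph{arbitrary} bi-degree $(k_{0},l_{0})$ with $f_{k_{0},l_{0}}\neq 0$. Applying $k_{0}+l_{0}$ derivations kills the components of $f$ of total degree $<k_{0}+l_{0}$ and the other monomials of total degree $k_{0}+l_{0}$ (their variable multisets do not match), but every component of total degree $>k_{0}+l_{0}$ survives and contributes terms of positive degree; the output is then a nonzero constant plus higher-degree terms, not a scalar multiple of $1$. The fix is immediate: choose a monomial of \emph{maximal} total degree in $f$ and apply the derivations dual to its variables. Then all other monomials are annihilated, a $k$-fold repeated even derivation applied to $x^{k}$ produces $k!\neq 0$, and the odd contractions produce $\pm 1$, so no cancellation can occur --- this also disposes of the sign worry you raise at the end. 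With that adjustment your argument is complete; alternatively one can simply invoke that $\S(\V_{\bar{0}})$ is a simple module over the Weyl algebra, that $\Cliff(\E_{\bar{1}}\,,\B_{1})\cong\End(\Lambda(\V_{\bar{1}}))$ since $\dim\E_{\bar{1}}$ is even, and that a tensor product of simple modules with scalar commutants is again simple with scalar commutant, before applying density.
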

Assume now that $\X \in \mathscr{G}_{\irr}$. We have a natural action of $\mathscr{G}$ on $\Hom(\X\,, \S(\V))$ given by
\begin{equation*}
g \cdot \T(x) = g(\T(g^{-1}x))\,, \quad \Y \cdot \T(x) = \Y(\T(x)) - (-1)^{\left|\Y\right| \cdot \left|\T\right|} \T(\Y(x))\,, \quad ((g\,, \Y) \in \mathscr{G}\,, x \in \X\,, \T \in \Hom(\X\,, \S(\V)))\,.
\end{equation*}

\begin{lemme}

The actions of $\mathscr{G}$ on $\normalfont{\textbf{WC}}(\E\,, \B)$ and $\Hom(\X\,, \S(\V))$ are completely reducible.

\label{LemmaDoubleSemisimple}

\end{lemme}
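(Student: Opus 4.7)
The plan is to reduce both assertions to hypothesis (a) by exhibiting each of $\textbf{WC}(\E\,, \B)$ and $\Hom(\X\,, \S(\V))$ as a sum of finite dimensional $\mathscr{G}$-invariant subspaces. Once this is achieved, assumption (a) applies to each such piece, and since a $\mathscr{G}$-module that is a sum of semisimple submodules is itself a sum of irreducibles, complete reducibility of the ambient module will follow immediately.

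For $\textbf{WC}(\E\,, \B)$, I would use the natural filtration $\{\textbf{WC}(\E\,, \B)_{k}\}_{k \in \mathbb{Z}_+}$, whose members are finite dimensional as images of $\bigoplus_{j \le k}\E^{\otimes j}$. The $\mathscr{G}$-stability of each $\textbf{WC}(\E\,, \B)_{k}$ splits into two checks. First, every $g \in \G$ acts on $\textbf{WC}(\E\,, \B)$ by the algebra automorphism of Remark \ref{RemarkActionSpO}, which preserves $\E$ and therefore every filtration step. Second, each $\X \in \mathfrak{g}$ acts by supercommutator with $\beta(\X) \in \Omega \subseteq \textbf{WC}(\E\,, \B)_{2}$, so the filtration-commutator bound $[\textbf{WC}(\E\,, \B)_{2}\,, \textbf{WC}(\E\,, \B)_{k}] \subseteq \textbf{WC}(\E\,, \B)_{k}$ recorded in Section \ref{SpinorOscillatorRepresentation} yields the required stability. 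Applying (a) to each $\textbf{WC}(\E\,, \B)_{k}$ and passing to the union $\bigcup_k \textbf{WC}(\E\,, \B)_{k} = \textbf{WC}(\E\,, \B)$ gives the first assertion.

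For $\Hom(\X\,, \S(\V))$, I would instead exploit the symmetric-degree grading $\S(\V) = \bigoplus_{n\geq 0} \S^{n}(\V)$. Because $\X$ is finite dimensional, there is an internal direct-sum decomposition
\[
\Hom(\X\,, \S(\V)) = \bigoplus_{n \geq 0} \Hom(\X\,, \S^{n}(\V))
\]
into finite dimensional summands. The hypothesis $\mathscr{G} \subseteq \textbf{GL}(\V)$ is exactly what is needed to see that each $\S^{n}(\V)$, and hence each $\Hom(\X\,, \S^{n}(\V))$, is $\mathscr{G}$-invariant: the group $\G$ acts on $\S(\V)$ as the canonical lift of its action on $\V$, while $\mathfrak{g} \subseteq \mathfrak{gl}(\V) \subseteq \mathfrak{spo}(\E\,, \B)$ acts on $\S(\V)$ through the image of $\mathfrak{gl}(\V)$ inside $\Omega$, namely as degree-preserving derivations. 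Invoking (a) on each summand then finishes the proof.

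I do not anticipate any serious obstacle; the only point deserving brief attention is the $\mathfrak{g}$-invariance of the filtration pieces of $\textbf{WC}(\E\,, \B)$, which crucially uses that the copy of $\mathfrak{spo}(\E\,, \B)$ sits inside $\textbf{WC}(\E\,, \B)$ in filtration degree exactly two via $\beta$—otherwise the bracket bound used above would not preserve the filtration.
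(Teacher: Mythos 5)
Your argument is correct, and it rests on the same underlying mechanism as the paper's proof: exhibit each space as a (union or direct sum of) finite dimensional $\mathscr{G}$-invariant pieces and invoke hypothesis (a) on each piece. For $\Hom(\X\,, \S(\V))$ your proof is literally the paper's: it writes $\Hom(\X\,, \S(\V)) = \X^{*} \otimes \S(\V) = \bigoplus_{k} \X^{*} \otimes \S^{k}(\V)$ and applies (a). For $\normalfont{\textbf{WC}}(\E\,, \B)$ the routes differ slightly: the paper transfers the question to $\S(\E)$ through the symbol map $\sigma: \gr\big(\normalfont{\textbf{WC}}(\E\,, \B)\big) \to \S(\E)$, which is a $\mathscr{G}$-module isomorphism, and uses that $\mathscr{G}$ preserves each $\S^{k}(\E)$; implicitly this still requires complete reducibility of the finite dimensional filtration steps to split the filtration and pass from the associated graded back to $\normalfont{\textbf{WC}}(\E\,, \B)$ itself. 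You instead work directly with the filtration, verifying $\mathscr{G}$-stability of each $\normalfont{\textbf{WC}}(\E\,, \B)_{k}$ by hand: the group acts by the automorphisms of Remark~\ref{RemarkActionSpO}, which preserve $\boldsymbol\iota(\E)$ and hence the filtration, and $\mathfrak{g}$ acts by supercommutator with elements of $\Omega \subseteq \normalfont{\textbf{WC}}(\E\,, \B)_{2}$, so the bound $\left[\normalfont{\textbf{WC}}(\E\,, \B)_{2}\,, \normalfont{\textbf{WC}}(\E\,, \B)_{k}\right] \subseteq \normalfont{\textbf{WC}}(\E\,, \B)_{k}$ gives stability. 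Your version is a bit more self-contained (it avoids needing the $\mathscr{G}$-equivariance of $\sigma$ and makes explicit the step the paper leaves implicit), while the paper's version keeps the symbol map in the foreground because the same isomorphism is the workhorse of the proof of the Double Commutant Theorem; both are fine.
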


\begin{proof}

We denote by $\gr(\textbf{WC}(\E\,, \B))$ the graded algebra correpsonding to the filtration $\left\{\textbf{WC}(\E\,, \B)_{k}\right\}_{k}$. Let $\sigma: \gr(\textbf{WC}(\E\,, \B)) \to \S(\E)$ be the Weyl symbol defined in \cite[Proposition~5.4]{CHENGWANG}. The map $\sigma$ is an isomorphism of $\mathscr{G}$-modules. The action of $\mathscr{G}$ on $\S(\E)$ is such that $\mathscr{G} \curvearrowright \S^{k}(\E)$ for every $k \in \mathbb{Z}_{+}$, so $\mathscr{G} \curvearrowright \textbf{WC}(\E\,, \B)$ semisimply.
Similarly, using that \[\Hom(\X\,, \S(\V)) = \X^{*} \otimes \S(\V) = \bigoplus\limits_{k \in \mathbb{Z}_{+}} \X^{*} \otimes \S^{k}(\V),
\]it follows that the action of $\mathscr{G}$ on $\Hom(\X\,, \S(\V))$ is semisimple.
\end{proof}

We denote by $\textbf{WC}(\E\,, \B)^{\mathscr{G}}$ and $\Hom_{\mathscr{G}}(\X\,, \S(\V))$ the space of $\mathscr{G}$-invariants. In other words, $\textbf{WC}(\E\,, \B)^{\mathscr{G}}$ (respectively, $\Hom_{\mathscr{G}}(\X\,, \S(\V))$) is the isotypic component in $\textbf{WC}(\E\,, \B)$ (respectively, $\Hom(\X\,, \S(\V))$) corresponding to the trivial action of $\mathscr{G}$.
Using Lemma \ref{LemmaDoubleSemisimple}, we denote by $\P_{1}$ and $\P_{2}$ the projections $\P_{1}: \textbf{WC}(\E\,, \B) \to \textbf{WC}(\E\,, \B)^{\mathscr{G}}$ and $\P_{2}: \Hom(\X\,, \S(\V)) \to \Hom_{\mathscr{G}}(\X\,, \S(\V))$. We denote by $\P_{\X}: \textbf{WC}(\E\,, \B) \to \Hom(\X\,, \S(\V))$ the map obtained from Lemma \ref{LemmaRestriction}. One can easily see that
\begin{equation*}
\P_{\X} \circ \P_{1} = \P_{2} \circ \P_{\X}\,.
\end{equation*}

\begin{lemme}

For every finite dimensional $\X\subseteq\S(\V)$, we have $\normalfont{\textbf{WC}}(\E\,, \B)^{\mathscr{G}}_{|_{\X}} = \Hom_{\mathscr{G}}(\X\,, \S(\V))$.

\label{ProjectionXG}

\end{lemme}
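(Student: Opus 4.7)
The plan is to exploit the commutativity relation $\P_\X\circ\P_1=\P_2\circ\P_\X$ already recorded before the lemma, together with the surjectivity of $\P_\X$ coming from Lemma~\ref{LemmaRestriction}.

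First I would establish the easy inclusion $\textbf{WC}(\E\,,\B)^{\mathscr G}\big|_{\X}\subseteq \Hom_{\mathscr G}(\X\,,\S(\V))$. For homogeneous $\T\in\textbf{WC}(\E\,,\B)^{\mathscr G}$ and $g\in \G$, $\Y\in\mathfrak g$, $x\in\X$, unwinding the definition of the $\mathscr G$-action on $\Hom(\X\,,\S(\V))$ yields $\pi(g)\circ(\T|_\X)\circ\lambda(g)^{-1}=\T|_\X$ and $\pi(\Y)\circ(\T|_\X)-(-1)^{|\Y|\cdot|\T|}(\T|_\X)\circ\lambda(\Y)=0$, so $\T|_\X\in\Hom_{\mathscr G}(\X\,,\S(\V))$. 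In other words, $\P_\X$ maps $\textbf{WC}(\E\,,\B)^{\mathscr G}$ into $\Hom_{\mathscr G}(\X\,,\S(\V))$.

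For the reverse inclusion, take any $\T\in\Hom_{\mathscr G}(\X\,,\S(\V))$. By Lemma~\ref{LemmaRestriction}, the restriction map $\P_\X$ is surjective, so there exists $\widetilde \T\in\textbf{WC}(\E\,,\B)$ with $\P_\X(\widetilde \T)=\T$. Applying the commutativity $\P_\X\circ\P_1=\P_2\circ\P_\X$ to $\widetilde \T$ gives
\[
\P_\X\bigl(\P_1(\widetilde \T)\bigr)=\P_2\bigl(\P_\X(\widetilde \T)\bigr)=\P_2(\T)=\T,
\]
because $\T$ is already $\mathscr G$-invariant and $\P_2$ is the identity on $\Hom_{\mathscr G}(\X\,,\S(\V))$. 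Since $\P_1(\widetilde \T)\in\textbf{WC}(\E\,,\B)^{\mathscr G}$ restricts to $\T$ on $\X$, this produces the required preimage and finishes the proof.

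There is no serious obstacle here: the whole lemma is a formal averaging argument, and the only nontrivial input is the surjectivity statement of Lemma~\ref{LemmaRestriction} together with the semisimplicity of the $\mathscr G$-action on $\textbf{WC}(\E\,,\B)$ and on $\Hom(\X\,,\S(\V))$ furnished by Lemma~\ref{LemmaDoubleSemisimple}, both of which have already been established. The proof is expected to be very short.
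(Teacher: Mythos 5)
Your proof is correct and follows essentially the same route as the paper: the reverse inclusion is obtained exactly as in the paper's proof, by lifting $\T\in\Hom_{\mathscr G}(\X\,,\S(\V))$ via the surjectivity of $\P_\X$ from Lemma~\ref{LemmaRestriction} and then applying the relation $\P_\X\circ\P_1=\P_2\circ\P_\X$ to replace the lift by its projection $\P_1(\widetilde\T)\in\normalfont{\textbf{WC}}(\E\,,\B)^{\mathscr G}$. The only difference is that you also spell out the easy inclusion $\normalfont{\textbf{WC}}(\E\,,\B)^{\mathscr G}\big|_{\X}\subseteq\Hom_{\mathscr G}(\X\,,\S(\V))$, which the paper leaves implicit.
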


\begin{proof}

Let $\omega_{x} \in \Hom_{\mathscr{G}}(\X\,, \S(\V))$. In particular, $\P_{2}(\omega_{x}) = \omega_{x}$. Using that $\omega_{x} \in \Hom(\X\,, \S(\V))$, it follows from Lemma \ref{LemmaRestriction} that there exists $\Z \in \textbf{WC}(\E\,, \B)$ such that $\P_{\X}(\Z) = \omega_{x}$. Then
\begin{equation*}
\omega_{x} = \P_{2}(\omega_{x}) = \P_{2}(\P_{\X}(\Z)) = \P_{\X}(\P_{1}(\Z))\,,
\end{equation*}
and using that $\P_{1}(\Z) \in \textbf{WC}(\E\,, \B)^{\mathscr{G}}$, the Lemma follows.
\end{proof}

We obtain the following proposition.

\begin{prop}

For every $(\lambda\,, \U_{\lambda}) \in \mathscr{G}_{\irr}$, the $\normalfont{\textbf{WC}}(\E\,, \B)^{\mathscr{G}}$-module $\Omega(\lambda)$ is irreducible. Moreover, if $\lambda \nsim \mu$, the $\normalfont{\textbf{WC}}(\E\,, \B)^{\mathscr{G}}$-modules $\Omega(\lambda)$ and $\Omega(\mu)$ are not equivalent.

\end{prop}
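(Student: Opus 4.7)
The plan is to leverage Lemma~\ref{ProjectionXG} twice: once to obtain a ``cyclic vector'' argument for irreducibility, and once to obtain a ``separation'' argument for non-equivalence. Both steps boil down to constructing $\mathscr{G}$-invariant elements of $\textbf{WC}(\E\,,\B)$ whose restriction to a prescribed finite-dimensional $\mathscr{G}$-submodule of $\S(\V)$ takes a prescribed value, which is precisely what Lemma~\ref{ProjectionXG} provides.

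For the irreducibility, I would let $M \subseteq \Omega(\lambda)$ be a nonzero $\mathbb{Z}_2$-graded $\textbf{WC}(\E\,,\B)^{\mathscr{G}}$-submodule and choose a nonzero homogeneous element $\omega_1 \in M$. Because $\U_\lambda$ is irreducible, $\ker(\omega_1)$ is a $\mathscr{G}$-submodule of $\U_\lambda$, hence trivial, so $\omega_1$ is injective. Given any homogeneous $\omega_2 \in \Omega(\lambda)$, I would set $X := \omega_1(\U_\lambda) \subseteq \S(\V)$ (a finite-dimensional $\mathscr{G}$-submodule isomorphic to $\U_\lambda$ up to parity shift) and consider the $\mathscr{G}$-intertwiner $\phi := \omega_2 \circ \omega_1^{-1} \colon X \to \S(\V)$. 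By Lemma~\ref{ProjectionXG}, there exists $\X \in \textbf{WC}(\E\,,\B)^{\mathscr{G}}$ with $\X|_X = \phi$, and a direct computation gives $\X \cdot \omega_1 = \pi(\X) \circ \omega_1 = \omega_2$. Hence $\omega_2 \in M$. Since this holds for every homogeneous $\omega_2$, we conclude $M = \Omega(\lambda)$.

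For the non-equivalence assertion, suppose toward a contradiction that $\Psi \colon \Omega(\lambda) \to \Omega(\mu)$ is an isomorphism of $\textbf{WC}(\E\,,\B)^{\mathscr{G}}$-modules (we may assume $\Psi$ is homogeneous). Pick nonzero homogeneous $\omega \in \Omega(\lambda)$ and set $\omega' := \Psi(\omega)$, which is nonzero by injectivity of $\Psi$. As above, $\omega$ and $\omega'$ are injective, so $X_\lambda := \omega(\U_\lambda)$ and $X_\mu := \omega'(\U_\mu)$ are irreducible $\mathscr{G}$-submodules of $\S(\V)$ isomorphic (up to parity) to $\U_\lambda$ and $\U_\mu$, respectively. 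From $\lambda \nsim \mu$ (which in the paper's convention already accounts for parity change) and Schur's lemma in the super setting, we deduce $X_\lambda \cap X_\mu = \{0\}$, so the sum $X_\lambda \oplus X_\mu$ is direct inside $\S(\V)$. I then define a $\mathscr{G}$-intertwiner $\phi \colon X_\lambda \oplus X_\mu \to \S(\V)$ by $\phi|_{X_\lambda} = 0$ and $\phi|_{X_\mu}$ equal to the inclusion, and use Lemma~\ref{ProjectionXG} to produce $\X \in \textbf{WC}(\E\,,\B)^{\mathscr{G}}$ extending $\phi$. Then $\X \cdot \omega = 0$, whereas $\X \cdot \omega' = \omega' \neq 0$; but the intertwining property of $\Psi$ forces $\X \cdot \omega' = \pm\Psi(\X \cdot \omega) = 0$, a contradiction.

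I do not anticipate a substantial obstacle: the entire argument is a formal consequence of Lemma~\ref{ProjectionXG}, which is the super-algebraic density statement established just above the proposition. The main bookkeeping concerns are (i) verifying that the map $\omega_2 \circ \omega_1^{-1}$ is a \emph{homogeneous} $\mathscr{G}$-intertwiner of the correct parity (its parity is $|\omega_1|+|\omega_2|$, matching the parity of the $\X$ that must act between $\omega_1$ and $\omega_2$), and (ii) handling the possible Koszul sign in the identity $\X \cdot \Psi(\omega) = (-1)^{|\Psi|\cdot|\X|}\Psi(\X \cdot \omega)$, which is inconsequential since only vanishing versus non-vanishing is used in the contradiction.
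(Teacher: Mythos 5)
Your proposal is correct and follows essentially the same route as the paper: both parts rest on Lemma~\ref{ProjectionXG}, extending the intertwiner $\omega_2\circ\omega_1^{-1}$ for irreducibility and the projection that kills $\omega(\U_\lambda)$ but fixes $\Gamma(\omega)(\U_\mu)$ for non-equivalence. The only cosmetic difference is that the paper proves directly that \emph{every} $\normalfont{\textbf{WC}}(\E\,,\B)^{\mathscr{G}}$-intertwiner $\Gamma:\Omega(\lambda)\to\Omega(\mu)$ vanishes, which sidesteps your minor reduction to a homogeneous isomorphism $\Psi$.
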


\begin{proof}

We first start by proving the irreducibility of the $\mathscr{G}$-modules $\Omega(\lambda)$. Let $0\neq \A\,, \B \in \Omega(\lambda)$ and let $\A(\lambda) := \A(\U_{\lambda})\,, \B(\lambda) = \B(\U_{\lambda})$. The spaces $\A(\lambda)$ and $\B(\lambda)$ are irreducible $\mathscr{G}$-modules isomorphic to $\U_{\lambda}$ or $\Pi(\U_{\lambda})$. Thus there exists an element $\omega_{\lambda} \in \Hom_{\mathscr{G}}(\A(\lambda)\,, \B(\lambda)) \subseteq \Hom_{\mathscr{G}}(\A(\lambda)\,, \S(\V))$ such that $\omega_{\lambda} \circ \A = \B$. According to Lemma \ref{ProjectionXG}, there exists an element $\Z \in \textbf{WC}(\E\,, \B)^{\mathscr{G}}$ such that $\P_{\A(\lambda)}(\Z) = \omega_{\lambda}$. Using that
\begin{equation*}
\B = \omega_{\lambda} \circ \A = \Z \cdot \A\,,
\end{equation*}
we get that $\Z \cdot \A = \B$, i.e. $\Omega(\lambda)$ is an $\textbf{WC}(\E\,, \B)^{\mathscr{G}}$-irreducible module.

Now, let $\lambda\,, \mu \in \mathscr{G}_{\irr}$ be such that $\lambda \nsim \mu$. Let $\Gamma \in \Hom_{\textbf{WC}(\E\,, \B)^{\mathscr{G}}}(\Omega(\lambda)\,, \Omega(\mu))$. We will prove that $\Gamma=0$. Let $\T \in \Omega(\lambda)$. Then $\T(\U_\lambda) \cap \Gamma(\T)(\U_\mu) = \left\{0\right\}$ because the two subspaces are non-isomorphic irreducible $\mathscr{G}$-modules. Let $\P_{\Gamma}: \T(\U_\lambda) \oplus \Gamma(\T)(\U_\mu) \to \Gamma(\T)(\U_\mu)$ be the projection onto the second component. One can easily see that $\P_{\Gamma}$ is $\mathscr{G}$-equivariant. In particular, $\P_{\Gamma} \in  \Hom_{\mathscr{G}}(\T(\U_\lambda) \oplus \Gamma(\T)(\U_\mu)\,, \S(\V))$, and it follows from Lemma \ref{ProjectionXG} that there exists $\omega \in \normalfont{\textbf{WC}}(\E\,, \B)^{\mathscr{G}}
$
such that $\omega_{|_{\T(\U_\lambda) \oplus \Gamma(\T)(\U_\mu)}} = \P_{\Gamma}$. Using that $\P_{\Gamma}\circ \T = 0$, it follows that $\omega\cdot \T = 0$ and then
\begin{equation*}
0 =  \Gamma(\omega\cdot \T) = \omega\cdot \Gamma(\T) = \P_{\Gamma}\circ \Gamma(\T) = \Gamma(\T)\,,
\end{equation*}
i.e. $\Gamma(\T) = 0$. In particular, $\Gamma = 0$ and the proposition follows.
\end{proof}

\begin{coro}

As a $\mathscr{G} \times \normalfont{\textbf{WC}}(\E\,, \B)^{\mathscr{G}}$-modules, we have:
\begin{equation*}
\S(\V) = \bigoplus\limits_{(\lambda\,, \U_{\lambda}) \in \mathscr{G}_{\irr}} \U_{\lambda} \otimes \Omega(\lambda)\,,
\end{equation*}
where $\U_{\lambda}$ (resp. $\Omega(\lambda)$) is an irreducible $\mathscr{G}$-module (resp. irreducible $\normalfont{\textbf{WC}}(\E\,, \B)^{\mathscr{G}}$-module).

\end{coro}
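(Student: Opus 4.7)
The plan is to assemble the corollary directly from the preceding proposition together with the $\mathscr{G}$-isotypic decomposition of $\S(\V)$ that was already established.

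First, I would recall the decomposition
\[
\S(\V) = \bigoplus_{(\lambda,\U_\lambda)\in\mathscr G_{\mathrm{irr}}}\U(\lambda)
\]
as a $\mathscr G$-module, which was obtained earlier using hypothesis (a) and the fact that $\mathscr G\subseteq\textbf{GL}(\V)$ acts on each graded piece $\S^k(\V)$. Invoking hypothesis (b), the evaluation map
\[
\U_\lambda\otimes_{\mathbb C}\Omega(\lambda)\to \U(\lambda),\qquad u\otimes \T\mapsto \T(u),
\]
is an isomorphism of $\mathscr G$-modules, where $\Omega(\lambda)=\Hom_{\mathscr G}(\U_\lambda,\S(\V))$ and $\mathscr G$ acts on the first factor on the left hand side. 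This is the standard factorization of an isotypic component whose endomorphism ring is $\mathbb C$.

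Next I would check that this factorization is compatible with the action of $\normalfont{\textbf{WC}}(\E\,,\B)^{\mathscr G}$. Since elements of $\normalfont{\textbf{WC}}(\E\,,\B)^{\mathscr G}$ supercommute with $\mathscr G$, they preserve each isotypic subspace $\U(\lambda)$, and on $\U_\lambda\otimes\Omega(\lambda)$ they act via $\Id_{\U_\lambda}\otimes(\X\cdot)$ where the action on $\Omega(\lambda)$ is the one already defined by $\X\cdot\omega:=\pi(\X)\circ\omega$. Thus the identification $\U(\lambda)\cong \U_\lambda\otimes \Omega(\lambda)$ is an isomorphism of $\mathscr G\times\normalfont{\textbf{WC}}(\E\,,\B)^{\mathscr G}$-modules, and summing over $\lambda$ yields the asserted decomposition.

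Finally, the irreducibility of each $\Omega(\lambda)$ as a $\normalfont{\textbf{WC}}(\E\,,\B)^{\mathscr G}$-module, and the fact that $\Omega(\lambda)\not\cong\Omega(\mu)$ for $\lambda\not\sim\mu$, are precisely the content of the proposition immediately preceding the corollary. No additional work is required. There is no real obstacle here: the whole argument is bookkeeping once the preceding proposition (which relied on Lemma~\ref{ProjectionXG} and hence on the double commutant theorem applied via Lemma~\ref{LemmaRestriction}) is available. The only point that deserves explicit mention is that the tensor product decomposition respects both actions simultaneously, which follows from the fact that $\mathscr G$ acts trivially on $\Omega(\lambda)$ and $\normalfont{\textbf{WC}}(\E\,,\B)^{\mathscr G}$ acts trivially on $\U_\lambda$ in this realization.
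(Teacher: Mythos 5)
Your argument is correct and is exactly the reasoning the paper leaves implicit: the corollary is stated without proof as an immediate consequence of the isotypic decomposition of $\S(\V)$ under hypotheses (a) and (b) together with the preceding proposition on the irreducibility and inequivalence of the modules $\Omega(\lambda)$. Your spelled-out bookkeeping (evaluation map $\U_\lambda\otimes\Omega(\lambda)\to\U(\lambda)$, compatibility of the two commuting actions) matches the intended argument, modulo only the standard sign conventions for odd intertwiners as in Equation \eqref{ActionLieG}.
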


In Section \ref{SectionDoubleCommutant}, we proved that $\textbf{WC}(\E\,, \B)^{\mathscr{G}} = \langle \mathfrak{g}' \rangle$ (see Theorem~\ref{thm-thm1.8-hadi}). In particular, for every $\lambda \in \mathscr{G}_{\irr}$, the space $\Omega(\lambda)$ is a $\mathfrak{g}'$-module.

\begin{coro}

For every $\lambda \in \mathscr{G}_{\irr}$, the $\mathfrak{g}'$-module $\Omega(\lambda)$ is irreducible.

\end{coro}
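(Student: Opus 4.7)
The plan is to derive this corollary as an immediate consequence of the preceding $\textbf{WC}(\E\,, \B)^{\mathscr{G}}$-irreducibility of $\Omega(\lambda)$, combined with the Double Commutant Theorem~\ref{thm-thm1.8-hadi}. The key observation is that Theorem~\ref{thm-thm1.8-hadi} identifies $\textbf{WC}(\E\,, \B)^{\mathscr{G}}$ with the associative subalgebra of $\End(\S(\V))$ generated by the image of $\mathfrak{g}'$; consequently any $\mathfrak{g}'$-invariant subspace of $\Omega(\lambda)$ is automatically invariant under the full algebra $\textbf{WC}(\E\,, \B)^{\mathscr{G}}$, reducing the claim to what has already been established.

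Concretely, I would start with a nonzero $\mathfrak{g}'$-stable subspace $V \subseteq \Omega(\lambda)$. By induction on the length of words in elements of $\pi(\mathfrak{g}')$, the subspace $V$ is closed under every product $\pi(\X'_{1}) \pi(\X'_{2}) \cdots \pi(\X'_{n})$ with $\X'_{i} \in \mathfrak{g}'$, and hence under every $\mathbb{C}$-linear combination of such products. By Theorem~\ref{thm-thm1.8-hadi}, this set of linear combinations is precisely $\textbf{WC}(\E\,, \B)^{\mathscr{G}}$, so $V$ is $\textbf{WC}(\E\,, \B)^{\mathscr{G}}$-stable. The previous corollary then forces $V = \Omega(\lambda)$, proving irreducibility.

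There is no real obstacle — the substantive work was already carried out in proving Theorem~\ref{thm-thm1.8-hadi} and the $\textbf{WC}(\E\,, \B)^{\mathscr{G}}$-irreducibility of $\Omega(\lambda)$. The only minor point worth noting is that the preceding irreducibility of $\Omega(\lambda)$ was established in the strong sense that for any nonzero $\A, \B \in \Omega(\lambda)$ there exists $\Z \in \textbf{WC}(\E\,, \B)^{\mathscr{G}}$ with $\Z \cdot \A = \B$, so no parity subtleties arise: the argument produces a $\mathbb{C}$-linear witness that is not required to be homogeneous, and the deduction above applies without modification to any $\mathfrak{g}'$-invariant $\mathbb{C}$-subspace.
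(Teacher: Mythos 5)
Your proposal is correct and is exactly the argument the paper intends: the corollary is stated without proof precisely because, once Theorem~\ref{thm-thm1.8-hadi} identifies $\normalfont{\textbf{WC}}(\E\,,\B)^{\mathscr{G}}$ with the associative algebra generated by $\mathfrak{g}'$, any $\mathfrak{g}'$-invariant subspace of $\Omega(\lambda)$ is invariant under all of $\normalfont{\textbf{WC}}(\E\,,\B)^{\mathscr{G}}$, and the preceding proposition forces it to be $\{0\}$ or $\Omega(\lambda)$. Your parity remark is a harmless and accurate clarification, since the earlier irreducibility statement was proved for arbitrary (not necessarily graded) subspaces.
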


\begin{coro}

As a $\mathscr{G} \times \mathfrak{g}'$-module, we have
\begin{equation*}
\S(\V) = \bigoplus\limits_{(\lambda\,, \U_{\lambda}) \in \mathscr{G}_{\irr}} \U_{\lambda} \otimes \Omega(\lambda)\,,
\end{equation*}
where $\Omega(\lambda)$ is an irreducible $\mathfrak{g}'$-module.

\label{CorollarySpo2n1}

\end{coro}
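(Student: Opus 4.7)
The plan is to deduce this corollary directly from the two preceding statements. The prior corollary already gives the decomposition
\[
\S(\V) = \bigoplus_{(\lambda,\U_\lambda)\in \mathscr{G}_{\irr}} \U_\lambda \otimes \Omega(\lambda)
\]
as a $\mathscr{G}\times \textbf{WC}(\E,\B)^{\mathscr{G}}$-module, and the unnumbered corollary immediately before this one asserts that each $\Omega(\lambda)$ is irreducible as a $\mathfrak{g}'$-module. Thus the work reduces to observing that these two facts combine to yield the claim.

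The key input is the Double Commutant Theorem (Theorem \ref{thm-thm1.8-hadi}), which states that the associative superalgebra $\textbf{WC}(\E,\B)^{\mathscr{G}}$ is generated by $\mathfrak{g}'$; in particular $\mathfrak{g}' \subseteq \textbf{WC}(\E,\B)^{\mathscr{G}}$. I would first use this to note that any $\textbf{WC}(\E,\B)^{\mathscr{G}}$-submodule of $\S(\V)$ is automatically a $\mathfrak{g}'$-submodule, so the direct sum decomposition above remains valid verbatim as a decomposition of $\mathscr{G}\times \mathfrak{g}'$-modules. The $\mathfrak{g}'$- and $\mathscr{G}$-actions supercommute because $\mathfrak{g}'$ lies in the supercommutant of $\mathscr{G}$ by the dual pair hypothesis.

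To complete the statement, I would invoke the irreducibility of each $\Omega(\lambda)$ as a $\mathfrak{g}'$-module from the preceding corollary. The argument there is essentially the converse half of the same observation: any $\mathfrak{g}'$-stable subspace of $\Omega(\lambda)$ is automatically stable under the associative superalgebra generated by $\mathfrak{g}'$, which by Theorem \ref{thm-thm1.8-hadi} equals $\textbf{WC}(\E,\B)^{\mathscr{G}}$, so the previously established $\textbf{WC}(\E,\B)^{\mathscr{G}}$-irreducibility of $\Omega(\lambda)$ forces such a subspace to be either zero or all of $\Omega(\lambda)$. I do not expect any substantive obstacle here: all the analytic weight is already carried by Theorem \ref{thm-thm1.8-hadi} together with the two conditions (a), (b) imposed at the start of Section \ref{Section9}, and the present corollary is merely the repackaging of the preceding two corollaries via $\mathfrak{g}' \subseteq \textbf{WC}(\E,\B)^{\mathscr{G}} = \langle \mathfrak{g}'\rangle$.
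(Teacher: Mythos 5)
Your argument is correct and coincides with the paper's own (implicit) reasoning: the paper likewise obtains this corollary by combining the $\mathscr{G}\times\normalfont{\textbf{WC}}(\E\,,\B)^{\mathscr{G}}$-decomposition with the identity $\normalfont{\textbf{WC}}(\E\,,\B)^{\mathscr{G}}=\langle\mathfrak{g}'\rangle$ from Theorem~\ref{thm-thm1.8-hadi}, so that any $\mathfrak{g}'$-stable subspace of $\Omega(\lambda)$ is $\normalfont{\textbf{WC}}(\E\,,\B)^{\mathscr{G}}$-stable and the irreducibility transfers. No gap to report.
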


\begin{theo}

Let $(\mathscr{G}\,, \mathscr{G}') = (\normalfont{\textbf{SpO}}(2n|1)\,, \normalfont{\textbf{OSp}}(2k|2l)) \subseteq \normalfont{\textbf{SpO}}(\mathbb{C}^{2k|2l} \otimes \mathbb{C}^{2n|1})$. Then
\begin{equation*}
\S(\mathbb{C}^{k|l} \otimes \mathbb{C}^{2n|1}) = \bigoplus\limits_{\lambda} \lambda \otimes \theta(\lambda)
\end{equation*}
where $\lambda$ is a finite-dimensional irreducible $\mathscr{G}$-module and $\theta(\lambda)$ is an irreducible $\mathfrak{g}'$-module.
\label{thm:SPO2n1}
\end{theo}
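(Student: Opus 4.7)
The plan is to deduce the theorem directly from Corollary~\ref{CorollarySpo2n1} by verifying the two standing hypotheses of Section~\ref{Section9} for $\mathscr{G}=\textbf{SpO}(2n|1)$. Once these are established, the statement is read off verbatim from that corollary, with $\mathbb{C}^{k|l}\otimes \mathbb{C}^{2n|1}$ playing the role of $\V$.

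First I would set up the Lagrangian factorization of $\E$ that places us in the framework of Section~\ref{Section9}. The space $\mathbb{C}^{2k|2l}$ carries its canonical even nondegenerate $1$-supersymmetric form and admits the super-Lagrangian decomposition $\mathbb{C}^{2k|2l}=\mathbb{C}^{k|l}\oplus(\mathbb{C}^{k|l})^*$; tensoring with $\mathbb{C}^{2n|1}$ gives $\E=\V\oplus \V^*$ with $\V=\mathbb{C}^{k|l}\otimes \mathbb{C}^{2n|1}$ a $\B$-isotropic summand. Since $\textbf{SpO}(2n|1)$ acts only on the second tensor factor, it preserves $\V$ and $\V^*$ separately, so $\mathscr{G}\subseteq \textbf{GL}(\V)$ as required.

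Next I would verify hypothesis (a). For $\mathfrak{spo}(2n|1)\cong \mathfrak{osp}(1|2n)$ every finite-dimensional representation is completely reducible; this is the classical semisimplicity theorem for $\mathfrak{osp}(1|2n)$ (Djokovi\'c--Hochschild), one of the features distinguishing $\mathfrak{osp}(1|2n)$ among basic classical Lie superalgebras. Since the even group $\G=\Sp(2n)\times \O(1)$ is reductive, a standard averaging argument transfers semisimplicity from the $\mathfrak{g}$-structure to the full $\mathscr{G}=(\G,\mathfrak{g})$-module structure. Then I would verify hypothesis (b). The central element $-1\in \O(1)$ acts on every $\mathscr{G}$-module $\V$ as $\epsilon\cdot (-1)^{|v|}$ for some $\epsilon\in\{\pm 1\}$, because it acts trivially on $\mathfrak{spo}(2n|1)_{\bar 0}$ and by $-1$ on $\mathfrak{spo}(2n|1)_{\bar 1}$ (the only structure compatible with the Harish-Chandra axiom). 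Consequently, an odd $\mathscr{G}$-equivariant endomorphism $\T\colon \U_\lambda\to \U_\lambda$ would satisfy $\T\circ (-1)=(-1)\circ \T$ on both $(\U_\lambda)_{\bar 0}$ and $(\U_\lambda)_{\bar 1}$, forcing $\epsilon=-\epsilon$, a contradiction. Hence $\End_{\mathscr{G}}(\U_\lambda)_{\bar 1}=\{0\}$, and the super-analogue of Schur's lemma yields $\End_{\mathscr{G}}(\U_\lambda)=\mathbb{C}$.

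With (a) and (b) confirmed, Corollary~\ref{CorollarySpo2n1} produces the decomposition
\[
\S(\mathbb{C}^{k|l}\otimes \mathbb{C}^{2n|1})=\bigoplus_{\lambda}\U_{\lambda}\otimes \Omega(\lambda),
\]
with each $\U_\lambda$ an irreducible finite-dimensional $\mathscr{G}$-module and each $\Omega(\lambda)$ an irreducible $\mathfrak{g}'$-module, which is precisely the claim. The main obstacle in this proposal lies in hypothesis (a): one must cite (or reprove) the Djokovi\'c--Hochschild semisimplicity theorem and then carefully transfer $\mathfrak{g}$-semisimplicity to $\mathscr{G}$-semisimplicity using reductivity of $\G$; hypothesis (b), by contrast, reduces to the explicit central-element argument above.
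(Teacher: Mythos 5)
Your proposal is correct and follows essentially the same route as the paper: set up $\V=\mathbb{C}^{k|l}\otimes\mathbb{C}^{2n|1}$ as a maximal isotropic subspace with $\mathscr{G}\subseteq\textbf{GL}(\V)$, verify hypotheses (a) and (b) of Section~\ref{Section9}, and invoke Corollary~\ref{CorollarySpo2n1}. The only difference is that where the paper simply cites \cite[Corollary~2.33]{CHENGWANG} for (a) and (b), you supply direct justifications (Djokovi\'c--Hochschild semisimplicity for $\mathfrak{osp}(1|2n)$ plus averaging over the finite component group, and the argument that $-1\in\O(1)$ acts as $\pm(-1)^{|v|}$, killing odd intertwiners), which are sound.
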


\begin{proof}

Let $\U = \mathbb{C}^{2n|1}$, $\W = \mathbb{C}^{2k|2l}$ and $\E = \W \otimes \U = \mathbb{C}^{2k|2l} \otimes_{\mathbb{C}} \mathbb{C}^{2n|1}$. The space $\mathbb{C}^{k|l}$ is a maximal isotropic subspace of $\mathbb{C}^{2k|2l}$ and then $\V = \mathbb{C}^{k|l} \otimes \mathbb{C}^{2n|1}$ is a maximal isotropic subspace of $\E$.

As explained in \cite[Corollary~2.33]{CHENGWANG}, $\textbf{SpO}(2n|1)$ satisfies the conditions (a) and (b) in the beginning of this section. In particular, the theorem follows from Corollary \ref{CorollarySpo2n1}.
\end{proof}

\appendix

\section{Semisimple superalgebras and Super Wedderburn Theorem}

In this appendix, unless stated otherwise we assume that $\mathbb K$ is an arbitrary field. 
All of the superalgebras are  finite dimensional and over $\mathbb{K}$ and all of the ideals are $\mathbb Z_2$-graded.
Some of the proofs of this  appendix rely on the work of  Wall \cite{WALL}. For a more elaborate treatment of central simple graded algebras see \cite[Chap. IV]{LAM}. 
The following definition is due to C.T.C. Wall~\cite{WALL}.
\begin{defn}

A $\mathbb{K}$-superalgebra $\mathscr{A}$ is said to be simple if it does not contain any nontrivial ideals. We say that $\mathscr{A}$ is central if $\mathscr{Z}(\mathscr{A}) \cap \mathscr{A}_{\bar{0}} = \mathbb{K}$, where $\mathscr{Z}(\mathscr{A})$ denotes  the  center of $\mathscr{A}$ considered as an ungraded algebra. That is, $\mathscr Z(\mathscr A):=\{a\in\mathscr R\,:\,ab=ba\ \text{ for all }b\in\mathscr A\}$. 
\label{dfn-frstApx}
\end{defn}

\begin{theo}[Super Wedderburn Theorem]
The following statements are equivalent:
\begin{enumerate}
\item Every $\mathscr{A}$-module is semisimple\,.
\item The left regular module $\mathscr{A}$ is a direct sum of minimal left ideals\,.
\item $\mathscr{A}$ is a direct product of simple superalgebras\,.
\end{enumerate}
\label{SuperWedderburnTheorem}
\end{theo}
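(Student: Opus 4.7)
The plan is to prove the three implications cyclically $(1) \Rightarrow (2) \Rightarrow (3) \Rightarrow (1)$, following the classical Artin--Wedderburn argument while keeping track of the $\mathbb{Z}_2$-grading throughout: ideals, submodules, and homomorphisms will be understood in the graded sense, with $\Hom_{\mathscr{A}}(\mathscr{M}, \mathscr{N})$ equipped with the $\mathbb{Z}_2$-grading as in \eqref{Equation1}.

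For $(1) \Rightarrow (2)$, I would simply apply semisimplicity to the left regular module ${}_{\mathscr{A}}\mathscr{A}$. It decomposes as a direct sum of graded-simple submodules, which are by definition minimal graded left ideals. Since $\mathscr{A}$ is finite-dimensional, this sum is finite. For $(2) \Rightarrow (3)$, I would write $\mathscr{A} = \bigoplus_{i=1}^{n} \mathscr{L}_i$ as a finite direct sum of minimal graded left ideals and group the indices $i$ according to the graded-isomorphism class of $\mathscr{L}_i$, with the understanding that when $\End_{\mathscr{A}}(\mathscr{L}_i)$ contains invertible odd elements, $\mathscr{L}_i$ and $\Pi\mathscr{L}_i$ are already graded-isomorphic. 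For each equivalence class $\alpha$, set $\mathscr{B}_\alpha := \sum_{i \in \alpha} \mathscr{L}_i$. The key step is showing that each $\mathscr{B}_\alpha$ is a graded two-sided ideal of $\mathscr{A}$: for any homogeneous $a \in \mathscr{A}$, right multiplication $x \mapsto xa$ restricts to a graded $\mathscr{A}$-module homomorphism from $\mathscr{L}_i$ to $\mathscr{A}$ whose image is either zero or graded-isomorphic to $\mathscr{L}_i$ (up to parity shift), hence contained in $\mathscr{B}_\alpha$. Consequently, $\mathscr{A}$ is the internal direct product of the $\mathscr{B}_\alpha$ as superalgebras. To verify that each $\mathscr{B}_\alpha$ is simple, fix a minimal graded left ideal $\mathscr{L} \subseteq \mathscr{B}_\alpha$; by the graded Schur lemma, $\mathbb{D} := \End_{\mathscr{A}}(\mathscr{L})$ is a division superalgebra, and the super Jacobson density theorem \cite{RACINE} then yields $\mathscr{B}_\alpha \cong \End_{\mathbb{D}}(\mathscr{L})$, which is a simple superalgebra.

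For $(3) \Rightarrow (1)$, suppose $\mathscr{A} = \prod_\alpha \mathscr{B}_\alpha$ with each $\mathscr{B}_\alpha$ simple. The central even idempotents $e_\alpha \in \mathscr{B}_\alpha$ produce, for any $\mathscr{A}$-module $\mathscr{M}$, a decomposition $\mathscr{M} = \bigoplus_\alpha e_\alpha \mathscr{M}$, reducing the problem to the case of a single simple superalgebra. By Wall's classification (cf.\ Section \ref{SectionDivisionSuperalgebra}), every simple $\mathbb{K}$-superalgebra is of the form $\End_{\mathbb{D}}(V)$ for some division superalgebra $\mathbb{D}$ and a finite-dimensional free graded $\mathbb{D}$-module $V$. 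A graded Morita-style argument then shows that every module over $\End_{\mathbb{D}}(V)$ is a direct sum of copies of $V$ and $\Pi V$, hence semisimple.

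The main obstacle will be the step $(2) \Rightarrow (3)$, specifically the bookkeeping surrounding parity shifts in the isotypic decomposition. One must correctly decide whether $\mathscr{L}$ and $\Pi\mathscr{L}$ belong to the same block (which happens precisely when $\End_{\mathscr{A}}(\mathscr{L})_{\bar 1} \neq \{0\}$, as in the dichotomy discussed in Proposition \ref{PropositionDivisionIota}), since an incorrect grouping would break either the two-sided ideal property of $\mathscr{B}_\alpha$ or the simplicity of its associated $\End_\mathbb{D}(\mathscr{L})$. Once this graded bookkeeping is in place, the remaining arguments parallel the ungraded case closely, with the graded Schur lemma and the density theorem of \cite{RACINE} playing the roles of their classical counterparts.
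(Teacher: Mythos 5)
Your overall cyclic strategy $(1)\Rightarrow(2)\Rightarrow(3)\Rightarrow(1)$ is the standard one, and since the paper does not prove Theorem \ref{SuperWedderburnTheorem} at all (it simply cites \cite[Proposition~2.4]{JOZEFIAK}), your argument has to stand on its own; most of it does, but there is a genuine error precisely at the point you single out as the main obstacle, namely the grouping rule in $(2)\Rightarrow(3)$. You claim that $\mathscr{L}$ and $\Pi\mathscr{L}$ should be placed in the same block precisely when $\End_{\mathscr{A}}(\mathscr{L})_{\bar{1}} \neq \{0\}$. This criterion is wrong: whenever copies of both $\mathscr{L}$ and $\Pi\mathscr{L}$ occur in the regular module they must be put in the same block, regardless of whether $\End_{\mathscr{A}}(\mathscr{L})$ has odd units. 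Concretely, take $\mathscr{A} = \Mat(m|n,\mathbb{K})$ with $m,n \geq 1$ and $\mathscr{L} = \mathbb{K}^{m|n}$ the standard module, so that $\End_{\mathscr{A}}(\mathscr{L}) = \mathbb{K}$ is purely even. The first $m$ columns of $\mathscr{A}$ are minimal graded left ideals evenly isomorphic to $\mathscr{L}$, the last $n$ columns are evenly isomorphic to $\Pi\mathscr{L}$, and $\mathscr{L} \not\cong \Pi\mathscr{L}$ via any even map. Your rule therefore splits $\mathscr{A}$ into two pieces; neither is a right ideal (right multiplication by $\E_{1,m+1}$ sends $\E_{1,1}$ to $\E_{1,m+1}$, i.e.\ from the first piece into the second), neither is a superalgebra summand, and neither is isomorphic to $\End_{\mathbb{D}}(\mathscr{L}) = \Mat(m|n,\mathbb{K})$ --- even though $\mathscr{A}$ is simple and consists of a single block. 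With that grouping, the key deduction in your two-sided ideal verification ("\ldots graded-isomorphic to $\mathscr{L}_i$ up to parity shift, hence contained in $\mathscr{B}_\alpha$") fails, as does the identification $\mathscr{B}_\alpha \cong \End_{\mathbb{D}}(\mathscr{L})$ via the density theorem (that identification needs the other blocks to annihilate $\mathscr{L}$, which presupposes the product decomposition you are trying to establish).

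The repair is simple and restores your argument: define the blocks by declaring $\mathscr{L}_i \sim \mathscr{L}_j$ iff $\mathscr{L}_j \cong \mathscr{L}_i$ or $\mathscr{L}_j \cong \Pi\mathscr{L}_i$ (equivalently, use the paper's convention of isomorphism via a homogeneous, possibly odd, invertible intertwiner, under which the classes are automatically closed under $\Pi$). Then each $\mathscr{B}_\alpha$ is a graded two-sided ideal, the blocks pairwise annihilate one another, $\mathscr{B}_\alpha$ acts faithfully on $\mathscr{L}$, and the super Schur lemma plus the density theorem of \cite{RACINE} give $\mathscr{B}_\alpha \cong \End_{\mathbb{D}}(\mathscr{L})$, which is simple. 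The dichotomy $\End_{\mathscr{A}}(\mathscr{L})_{\bar{1}} = \{0\}$ versus $\neq \{0\}$ then determines the \emph{type} of the simple block (matrix type versus queer type), not the grouping. Two smaller points: in $(3)\Rightarrow(1)$ you invoke "Wall's classification (cf.\ Section \ref{SectionDivisionSuperalgebra})", but Proposition \ref{ClassificationDivisionSuperalgebra} classifies division superalgebras over $\mathbb{R}$ and $\mathbb{C}$ only, whereas the theorem is stated over an arbitrary field; the fact you actually need --- that a finite dimensional simple superalgebra is $\End_{\mathbb{D}}(\V)$ for a division superalgebra $\mathbb{D}$ --- follows over any field from the same Schur/density argument you already use, or can be bypassed entirely by proving $(3)\Rightarrow(2)$ (the sum of all minimal graded left ideals of a simple factor is a nonzero graded two-sided ideal, hence everything) and then $(2)\Rightarrow(1)$ by writing any module as a sum of cyclic quotients of the regular module.
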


A proof of this theorem can be found in \cite[Proposition~2.4]{JOZEFIAK}.

\begin{defn}

A superalgebra $\mathscr{A}$ is called semisimple if it satisfies one of the three equivalent conditions of Theorem \ref{SuperWedderburnTheorem}.

\end{defn}

Let $\mathscr{A}$ and $\mathscr{B}$ be two  simple $\mathbb{K}$-superalgebras.
Our goal is to prove that the tensor product $\mathscr{A} \otimes_{\mathbb{K}} \mathscr{B}$ is semisimple. 
Set $\Z_{\mathscr{A}} := \mathscr{Z}(\mathscr{A})\cap\mathscr A_{\bar{0}}$ and $\Z_{\mathscr{B}} := \mathscr{Z}(\mathscr{B})\cap\mathscr B_{\bar{0}}$. 
\begin{lemme}

$\Z_{\mathscr{A}}$ is a field.

\label{LemmaCenterField}

\end{lemme}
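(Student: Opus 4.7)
The plan is to show that every nonzero element $z\in\Z_{\mathscr{A}}$ admits a multiplicative inverse that already lies in $\Z_{\mathscr{A}}$. The subset $\Z_{\mathscr{A}}$ is clearly a unital commutative $\mathbb K$-subalgebra of $\mathscr A$: it is closed under addition and multiplication (as an intersection of two subalgebras), contains $1$, and commutativity is automatic since $\Z_{\mathscr{A}}\subseteq\mathscr{Z}(\mathscr A)$.

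For the inversion step, I would fix $0\neq z\in\Z_{\mathscr{A}}$ and consider the left-multiplication map $L_z:\mathscr A\to\mathscr A$, $b\mapsto zb$. Because $z$ is even and central (in the ungraded sense), its image $z\mathscr A$ is a $\mathbb Z_2$-graded two-sided ideal of $\mathscr A$; similarly $\ker L_z$ is a $\mathbb Z_2$-graded two-sided ideal (using $z(cb)=(zc)b=(cz)b=c(zb)$). By simplicity of $\mathscr A$ and $z\neq 0$, the image is all of $\mathscr A$, so $L_z$ is surjective. Finite-dimensionality then forces $L_z$ to be bijective. In particular there is a unique $a\in\mathscr A$ with $za=1$.

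It remains to check that this inverse $a$ lies in $\Z_{\mathscr{A}}$. Writing $a=a_{\bar 0}+a_{\bar 1}$ and comparing parities in the identity $za_{\bar 0}+za_{\bar 1}=1$ (recall $z$ is even) gives $za_{\bar 1}=0$; injectivity of $L_z$ then yields $a_{\bar 1}=0$, so $a\in\mathscr A_{\bar 0}$. For centrality, for any $b\in\mathscr A$ compute
\[
z(ab-ba)=(za)b-b(za)=b-b=0,
\]
where in the middle step I used $b(za)=(bz)a=(zb)a=z(ba)$. Injectivity of $L_z$ gives $ab=ba$, hence $a\in\mathscr{Z}(\mathscr A)\cap\mathscr A_{\bar 0}=\Z_{\mathscr{A}}$, and $z$ is invertible in $\Z_{\mathscr{A}}$.

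There is no real obstacle here: the argument is the standard graded analogue of the classical fact that the centre of a simple algebra is a field, and the only subtlety is the parity bookkeeping in the final step, which is handled by noting that $z$ is even.
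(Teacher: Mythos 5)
Your proof is correct, and it takes a genuinely different route from the paper. The paper deduces the lemma from Wall's structure theory: by \cite[Lemma~3]{WALL} either $\mathscr{A}$ is simple as an ungraded algebra or $\mathscr{A}_{\bar{0}}$ is simple, and in either case one quotes the classical fact that the centre of a finite-dimensional simple algebra is a field (noting $\Z_{\mathscr{A}} \subseteq \mathscr{Z}(\mathscr{A}) \cap \mathscr{Z}(\mathscr{A}_{\bar{0}})$), after which the inverse of a nonzero $z \in \Z_{\mathscr{A}}$ is checked to lie in $\Z_{\mathscr{A}}$. You instead argue directly from graded simplicity: since $z$ is even and lies in the ungraded centre, $z\mathscr{A}$ is a nonzero $\mathbb{Z}_{2}$-graded two-sided ideal, hence equals $\mathscr{A}$, and then the parity and centrality of the resulting inverse follow from injectivity of $L_{z}$. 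This fits the paper's convention that simplicity refers to graded ideals, so no step is missing; your bookkeeping (evenness of $a$ from comparing parities, centrality from $z(ab-ba)=0$) is all sound. What the two approaches buy: yours is self-contained and elementary, avoiding Wall's dichotomy entirely (and, if one also notes that $\ker L_{z}$ is a proper graded ideal, it does not even need finite-dimensionality, only the existence of a unit); the paper's version is shorter on the page and leans on \cite{WALL} and \cite{KNAPP}, which are already in use throughout that appendix.
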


\begin{proof}

One can easily see that $\Z_{\mathscr{A}} \subseteq \mathscr{Z}(\mathscr{A})\cap  \mathscr{Z}(\mathscr{A}_{\bar{0}})$. As explained in \cite[Lemma~3]{WALL}, either $\mathscr{A}$ is simple (as an ungraded algebra) or $\mathscr{A}_{\bar{0}}$ is simple. In any case, the center of a finite-dimensional simple algebra is a field (see \cite[Lemma~2.32]{KNAPP}). Moreover, for every non-zero element $a \in \Z_{\mathscr{A}}$, the element $a^{-1}$ is in $\Z_{\mathscr{A}}$, and the lemma follows.
\end{proof}

Note that we have the following equality:
\begin{equation}
\mathscr{A} \otimes_{\mathbb{K}} \mathscr{B} = \left(\mathscr{A} \otimes_{\Z_{\mathscr{A}}} \Z_{\mathscr{A}}\right) \otimes_{\mathbb{K}} \mathscr{B} = \mathscr{A} \otimes_{\Z_{\mathscr{A}}} \left(\Z_{\mathscr{A}} \otimes_{\mathbb{K}} \mathscr{B}\right)\,,
\label{DecompositionOfA}
\end{equation}
and $\mathscr{A}$ is $\Z_{\mathscr{A}}$-central.

\begin{rema}

Let $\mathscr{A} = \mathscr{A}_{\bar{0}} \oplus \mathscr{A}_{\bar{1}}$ be a superalgebra which is semisimple as an ungraded algebra. Then it is also a semisimple superalgebra.
To show this, we prove that $\mathscr{A}$ is  a direct product of simple graded ideals. Since $\mathscr{A}$ is ungraded semisimple, we can express it as a  product of simple (ungraded) ideals, i.e.,
\begin{equation*}
\mathscr{A}=\mathscr{A}^1\times \cdots \times \mathscr{A}^r\,.
\end{equation*}
If $r=1$ then $\mathscr{A}$ is simple ungraded, hence simple graded, and we are done. If $r>1$, 
choose a simple graded ideal $\mathscr{I} = \mathscr{I}_{\bar{0}} \oplus \mathscr{I}_{\bar{1}}$ of $\mathscr{A}$. Since $\mathscr{I}$ is also an ungraded ideal, it should be  a product of some of the $\mathscr{A}^i$. For example, $\mathscr{I}=\mathscr{A}^1\times \cdots\times \mathscr{A}^s$ for some $s\leq r$. If $s=r$ then $\mathscr{A}$ is a simple superalgebra and we are done. If $s<r$, then 
note that
$\mathscr{I}':=\mathscr{A}^{s+1}\times \cdots\times \mathscr{A}^r$ is the 
annihilator of $\mathscr{I}$ in $\mathscr{A}$, i.e.,
\begin{equation*}
\mathscr{I}'=\left\{a\in \mathscr{A}\,:\,a\mathscr  I=0\right\}.
\end{equation*}
It is now straightforward to check that $\mathscr{I}'$ is a graded ideal of $\mathscr{A}$. From $\mathscr{A}=\mathscr{I}\oplus \mathscr{I}'$ and induction on $r$ it follows that $\mathscr{A}$ is a semisimple superalgebra. 
\label{rema-A.5}
\end{rema}
The first part of the following lemma is due to Wall \cite{WALL}. 
\begin{lemme}
\begin{enumerate}
\item Let $\mathscr{C} = \mathscr{C}_{\bar{0}} \oplus \mathscr{C}_{\bar{1}}$ be a $\mathbb{K}$-superalgebra such that $\mathscr{C}_{\bar{0}}$ is a simple $\mathbb{K}$-algebra and $\mathscr{C}_{\bar{1}} = \mathscr{C}_{\bar{0}} u$ for some $u \in \mathscr{Z}(\mathscr{C})_{\bar{1}}$ satisfying $u^{2} = 1$. Then $\mathscr{C}$ is a simple $\mathbb{K}$-superalgebra.
\item Let $\mathscr{C}$ be a simple $\mathbb{K}$-superalgebra and let $\mathbb L$ be a field that contains $\mathbb K$. Then 
$\mathbb{L} \otimes_{\mathbb{K}} \mathscr{C}$ is a semisimple $\mathbb{L}$-superalgebra. Moreover, if $\mathscr{C}$ is $\mathbb{K}$-central simple, $\mathbb{L} \otimes_{\mathbb{K}} \mathscr{C}$ is $\mathbb{L}$-central simple.
\end{enumerate}
\label{LemmaExtensionField}

\end{lemme}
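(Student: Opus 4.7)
For part (1), I would begin with a nonzero graded ideal $\mathscr I = \mathscr I_{\bar 0} \oplus \mathscr I_{\bar 1}$ of $\mathscr C$. Since $\mathscr I_{\bar 0}$ is an ungraded ideal of the simple algebra $\mathscr C_{\bar 0}$, it equals either $0$ or $\mathscr C_{\bar 0}$. In the latter case $1 \in \mathscr I_{\bar 0} \subseteq \mathscr I$, giving $\mathscr I = \mathscr C$. In the former case, centrality of $u$ forces $\mathscr I_{\bar 1} u \subseteq \mathscr I \cap \mathscr C_{\bar 0} = 0$, and from $u^{2} = 1$ I would conclude $\mathscr I_{\bar 1} = (\mathscr I_{\bar 1} u) u = 0$, so $\mathscr I = 0$. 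Thus every graded ideal is trivial.

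For part (2), the plan is to invoke the dichotomy of Wall cited just after Lemma \ref{LemmaCenterField}: a finite-dimensional simple $\mathbb K$-superalgebra $\mathscr C$ is either simple as an ungraded algebra or its even part $\mathscr C_{\bar 0}$ is simple as an ungraded algebra. In the first case, $\mathscr Z(\mathscr C)$ is a finite-dimensional field extension $\mathbb F$ of $\mathbb K$, and $\mathscr C$ is central simple over $\mathbb F$. I would then factor
\begin{equation*}
\mathbb L \otimes_{\mathbb K} \mathscr C \;\cong\; (\mathbb L \otimes_{\mathbb K} \mathbb F) \otimes_{\mathbb F} \mathscr C.
\end{equation*}
Since $\mathbb K \in \{\mathbb R, \mathbb C\}$, the extension $\mathbb F/\mathbb K$ is separable, so $\mathbb L \otimes_{\mathbb K} \mathbb F \cong \prod_i \mathbb L_i$ is a finite product of fields. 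The classical base-change theorem for central simple algebras then yields $\mathbb L \otimes_{\mathbb K} \mathscr C \cong \prod_i (\mathbb L_i \otimes_{\mathbb F} \mathscr C)$, a product of central simple $\mathbb L_i$-algebras. This is ungraded semisimple, hence semisimple as a superalgebra by Remark \ref{rema-A.5}. When $\mathscr C$ is $\mathbb K$-central, $\mathbb F = \mathbb K$ and the product collapses to a single factor, giving $\mathbb L$-central simplicity of $\mathbb L \otimes_{\mathbb K} \mathscr C$.

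In the second case ($\mathscr C_{\bar 0}$ ungraded simple, $\mathscr C$ not ungraded simple), examining a proper ungraded ideal intersected with $\mathscr C_{\bar 0}$ and its complement, together with the supercentrality analysis on $\mathscr C_{\bar 1}$, produces a decomposition $\mathscr C = \mathscr C_{\bar 0} \oplus \mathscr C_{\bar 0} v$ with $v$ an odd element whose square lies in $\mathscr Z(\mathscr C_{\bar 0})^\times$ and which supercommutes with all of $\mathscr C_{\bar 0}$. Applying the first case to the ungraded simple algebra $\mathscr C_{\bar 0}$, I would decompose $\mathbb L \otimes_{\mathbb K} \mathscr C_{\bar 0} = \prod_i \mathscr A^i$ into simple $\mathbb L$-algebras with centres $\mathbb L_i$. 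On each factor, $v^2$ projects to an invertible central element; after passing to the further decomposition dictated by the factorisation of $v^2$ inside $\mathbb L_i$, one may rescale $v$ so that $v^2 = 1$. Then each factor $\mathscr A^i \oplus \mathscr A^i v$ falls into the hypothesis of part (1) and is a simple superalgebra by that part, so $\mathbb L \otimes_{\mathbb K} \mathscr C$ is semisimple.

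The main obstacle will be the structural step in the second case: one must verify that the pair $(v, v^2)$ behaves well under the base change, i.e.\ that the odd generator can be rescaled to square to $1$ after splitting off the constituent fields of $\mathscr Z(\mathscr C_{\bar 0}) \otimes_{\mathbb K} \mathbb L$. Because we are in characteristic zero and every relevant field is $\mathbb R$ or $\mathbb C$, the needed square roots always exist in the appropriate factors, but this rescaling requires bookkeeping — especially when $\mathbb K = \mathbb L = \mathbb R$, since $v^2 < 0$ in $\mathbb R$ would force passage to a $\mathbb C$-factor. Once this normalisation is in place, part (1) finishes the argument factor-by-factor.
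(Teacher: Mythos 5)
Your part (1) is correct and is essentially the paper's own argument, and your treatment of semisimplicity in the case where $\mathscr C$ is ungraded simple (classical base change, then Remark \ref{rema-A.5}) also matches the paper. The problems are in the remaining two pieces of part (2).

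The serious gap is the ``Moreover'' (centrality) assertion. Your argument for it lives entirely inside the first case and hinges on the inference that $\mathbb K$-centrality of $\mathscr C$ forces $\mathbb F:=\mathscr Z(\mathscr C)=\mathbb K$. With the paper's notion of central (Definition \ref{dfn-frstApx}: $\mathscr Z(\mathscr C)\cap\mathscr C_{\bar 0}=\mathbb K$) this is false: $\mathscr C=\mathrm{Cl}_1(\mathbb R)=\mathbb R\oplus\mathbb R\varepsilon$ with $\varepsilon$ odd, $\varepsilon^2=-1$, is an $\mathbb R$-central simple superalgebra which is ungraded simple (it is $\mathbb C$ as an ungraded algebra), yet its ungraded centre is all of $\mathscr C$, so $\mathbb F\cong\mathbb C\neq\mathbb R$; the same happens for $\mathrm{Cl}_5(\mathbb R)$. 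Super-centrality only controls the \emph{even} part of the ungraded centre. In addition, a $\mathbb K$-central simple $\mathscr C$ can land in your second case (e.g. $\mathrm{Cl}_7(\mathbb R)$, $\mathrm{Cl}_1(\mathbb C)$, the queer algebras), and there you never discuss centrality at all. The paper argues differently and uniformly: since $\mathscr Z(\mathbb L\otimes_{\mathbb K}\mathscr C)=\mathbb L\otimes_{\mathbb K}\mathscr Z(\mathscr C)$, centrality gives $\mathscr Z(\mathbb L\otimes_{\mathbb K}\mathscr C)_{\bar 0}=\mathbb L$; then, semisimplicity plus Theorem \ref{SuperWedderburnTheorem} give $\mathbb L\otimes_{\mathbb K}\mathscr C\cong\mathscr D_1\times\cdots\times\mathscr D_y$ with each $\mathscr D_i$ simple, and $\mathscr Z(\mathscr D_1)_{\bar 0}\times\cdots\times\mathscr Z(\mathscr D_y)_{\bar 0}\subseteq\mathbb L$ forces $y=1$. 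You need an argument of this kind; the ``$\mathbb F=\mathbb K$'' shortcut cannot be repaired.

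The second gap is the one you flag yourself: in the non-ungraded-simple case you only extract an odd $v$ with $v^2\in\mathscr Z(\mathscr C_{\bar 0})^\times$, and the rescaling to $v^2=1$ inside the factors of $\mathbb L\otimes_{\mathbb K}\mathscr C_{\bar 0}$ is left unresolved (the remark about ``passing to a $\mathbb C$-factor'' when $v^2<0$ is not an argument). This is avoidable: Wall's Lemma 3, which the paper invokes at exactly this point, gives a central odd $u$ with $u^2=1$ already over $\mathbb K$ whenever $\mathscr C$ is graded simple but not ungraded simple (write $\mathscr C=I\oplus\sigma(I)$ with $I$ a simple ungraded ideal and $\sigma$ the parity automorphism, and take $u=e-\sigma(e)$ for the unit $e$ of $I$). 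Then in each simple factor $\mathscr C^{(i)}$ of $\mathbb L\otimes_{\mathbb K}\mathscr C_{\bar 0}$ the element $z_i:=e_iu$ satisfies $z_i^2=e_i$, and part (1) applies factor by factor with no rescaling at all; alternatively, note that your proof of part (1) only uses that $u$ is odd, central and has invertible square, so the sign of $v^2$ is irrelevant. Either repair closes the semisimplicity argument in this case, but the centrality claim still requires the separate argument described above.
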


\begin{proof}

\begin{enumerate}
\item Let $\mathscr{I} = \mathscr{I}_{\bar{0}} \oplus \mathscr{I}_{\bar{1}}$ be a graded ideal of $\mathscr{C}$. By assumption $\mathscr{C}_{\bar{0}}$ is simple, hence either $\mathscr{I}_{\bar{0}} = \{0\}$ or $\mathscr{I}_{\bar{0}} = \mathscr{C}_{\bar{0}}$. In the former case, for any $c \in \mathscr{I}_{\bar{1}}$, we have $cu \in \mathscr{I}_{\bar{0}} = \{0\}$, hence $cu = 0$, i.e. $c = cu^{2} = 0$, and $\mathscr{I} = \{0\}$. In the latter case, we have $\mathscr{C}_{\bar{1}} = \mathscr{C}_{\bar{0}} u = \mathscr{I}_{\bar{0}} u \subseteq \mathscr{I}_{\bar{1}}$, i.e. $\mathscr{I} = \mathscr{C}$.
\item As explained in \cite[Lemma~3]{WALL}, either $\mathscr{C}$ is simple (ungraded) or $\mathscr{C}_{\bar{1}} = \mathscr{C}_{\bar{0}}u$ for $u \in \mathscr{Z}(\mathscr{C})_{\bar{1}}$ with $u^{2} = 1$. If $\mathscr{C}$ is simple (ungraded) then from classical theory of associative algebras we know that $\mathbb{L}\otimes_{\mathbb{K}} \mathscr{C}$ is semisimple (as an ungraded algebra), hence
by Lemma \ref{rema-A.5}
 it is also semisimple as a superalgebra. 
 
If $\mathscr{C}_{\bar{0}}$ is simple and $\mathscr{C}_{\bar{1}}=\mathscr{C}_{\bar{0}}u$ for $u$ central satisfying $u^2=1$, then it follows from \cite[Proposition~2.33]{KNAPP} that $\mathbb{L} \otimes_{\mathbb{K}} \mathscr{C}_{\bar{0}}$ is semisimple, hence 
\begin{equation*}
\mathbb L\otimes_{\mathbb K}\mathscr{C}_{\bar{0}} \cong \mathscr{C}^{(1)}\times\cdots\times \mathscr{C}^{(r)},
\end{equation*}
where the $\mathscr{C}^{(i)}$ are simple algebras. Clearly $\mathscr{C}^{(i)}\oplus \mathscr{C}^{(i)}u$ is an ideal of $\mathbb L\otimes_\mathbb K\mathscr{C}$. 
Now let $e_i\in \mathscr{C}^{(i)}$ denote the identity element of the simple algebra $\mathscr{C}^{(i)}$, and set $z_i:=e_iu$. Then $z_i\in \mathscr{C}^{(i)}u$, and indeed it is a central element of $\mathscr{C}^{(i)}\oplus \mathscr{C}^{(i)}u$. Furthermore, $z_i^2=(e_iu)^2=e_i^2u^2=e_i$. 
Now set 
$\tilde{\mathscr C}^{(i)}:=\mathscr{C}^{(i)}\oplus \mathscr{C}^{(i)}u$. Then 
$\tilde{\mathscr C}^{(i)}:=
\mathscr{C}^{(i)}\oplus \mathscr{C}^{(i)}z_i$
and thus from the first part of the Lemma it follows that $\tilde{\mathscr C}^{(i)}$ is a simple ideal of $\mathscr{C}$. 
Clearly $\mathbb L\otimes_\mathbb K\mathscr C\cong\tilde{\mathscr C}^{(1)}\times \cdots\times\tilde{\mathscr C}^{(r)}$, hence from
Theorem \ref{SuperWedderburnTheorem}
it follows that
$\mathbb{L} \otimes_{\mathbb{K}} \mathscr{C}$ is semisimple. 

Now, we assume that $\mathscr{C}$ is a central simple $\mathbb{K}$-superalgebra. It follows that $\mathscr{Z}(\mathbb{L} \otimes_{\mathbb{K}} \mathscr{C})_{\bar{0}} = \mathbb{L}$. As explained previously, $\mathbb{L} \otimes_{\mathbb{K}} \mathscr{C}$ is semisimple. According to Theorem \ref{SuperWedderburnTheorem}, it follows that $\mathbb{L} \otimes_{\mathbb{K}} \mathscr{C} = \mathscr{D}_{1} \times \ldots \times \mathscr{D}_{y}$, where the $\mathscr{D}_{i}$ are simple $\mathbb{K}$-superalgebras. Note that 
\begin{equation*}
\mathscr{Z}(\mathscr{D}_{1})_{\bar{0}} \times \ldots \otimes \mathscr{Z}(\mathscr{D}_{y})_{\bar{0}} \subseteq \mathscr{Z}(\mathbb{L} \times_{\mathbb{K}} \mathscr{C})_{\bar{0}} = \mathbb{L}\,,
\end{equation*}
and it follows that $y = 1$, i.e. $\mathbb{L} \otimes_{\mathbb{K}} \mathscr{C}$ is simple.
\qedhere\end{enumerate}
\end{proof}

\begin{prop}
\begin{enumerate}
\item The tensor product of two finite-dimensional semisimple superalgebras is a semisimple superalgebra.
\item A quotient of a semisimple superalgebra is semisimple.
\end{enumerate}
\label{PropositionSemisimpleSuperalgebras}
\end{prop}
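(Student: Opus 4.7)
The plan is to reduce both parts to the Super Wedderburn Theorem (Theorem~\ref{SuperWedderburnTheorem}) combined with the tools already assembled in Lemmas~\ref{LemmaCenterField} and~\ref{LemmaExtensionField}.

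Part~(2) will be handled directly. Applying Super Wedderburn, I write a semisimple superalgebra $\mathscr{A}$ as a direct product $\mathscr{A}_1\times \cdots \times \mathscr{A}_r$ of simple superalgebras. Any graded two-sided ideal $\mathscr{I}$ of $\mathscr{A}$ satisfies $\mathscr{I}\cap \mathscr{A}_i\in\{\{0\},\mathscr{A}_i\}$ by simplicity of each $\mathscr{A}_i$, and since the factors mutually annihilate, $\mathscr{I}$ coincides with the product of those $\mathscr{A}_i$ that it contains. The quotient $\mathscr{A}/\mathscr{I}$ is then canonically isomorphic to the product of the remaining simple factors, and is therefore semisimple by Super Wedderburn again.

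For part~(1), the strategy is a sequence of reductions. First, since tensor products distribute over direct products of superalgebras, writing $\mathscr{A}=\prod_i\mathscr{A}_i$ and $\mathscr{B}=\prod_j\mathscr{B}_j$ with simple factors reduces the claim to the case when both $\mathscr{A}$ and $\mathscr{B}$ are themselves simple. In that case, I invoke identity~\eqref{DecompositionOfA} to rewrite
\[
\mathscr{A}\otimes_{\mathbb{K}}\mathscr{B}=\mathscr{A}\otimes_{\Z_\mathscr{A}}\bigl(\Z_\mathscr{A}\otimes_{\mathbb{K}}\mathscr{B}\bigr),
\]
where $\Z_\mathscr{A}$ is a field extension of $\mathbb{K}$ by Lemma~\ref{LemmaCenterField}. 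Lemma~\ref{LemmaExtensionField}(2) asserts that $\Z_\mathscr{A}\otimes_{\mathbb{K}}\mathscr{B}$ is a semisimple $\Z_\mathscr{A}$-superalgebra, so Super Wedderburn over $\Z_\mathscr{A}$ decomposes it further as $\prod_k\mathscr{C}_k$ into simple $\Z_\mathscr{A}$-superalgebras. Distributing the tensor product once more, the problem reduces to showing that each $\mathscr{A}\otimes_{\Z_\mathscr{A}}\mathscr{C}_k$ is semisimple.

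At this final stage, since $\mathscr{A}$ is $\Z_\mathscr{A}$-central simple, I would set $\mathbb{L}_k:=\mathscr{Z}(\mathscr{C}_k)\cap(\mathscr{C}_k)_{\bar 0}$, which is a field extension of $\Z_\mathscr{A}$ by the same argument as in Lemma~\ref{LemmaCenterField}, and use the factorization
\[
\mathscr{A}\otimes_{\Z_\mathscr{A}}\mathscr{C}_k=\bigl(\mathscr{A}\otimes_{\Z_\mathscr{A}}\mathbb{L}_k\bigr)\otimes_{\mathbb{L}_k}\mathscr{C}_k.
\]
By Lemma~\ref{LemmaExtensionField}(2), the first tensor factor $\mathscr{A}\otimes_{\Z_\mathscr{A}}\mathbb{L}_k$ is $\mathbb{L}_k$-central simple, while $\mathscr{C}_k$ is $\mathbb{L}_k$-central simple by the very construction of $\mathbb{L}_k$. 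The principal obstacle is thus the super-analogue of the classical theorem that the tensor product over a field of two central simple algebras is central simple; this is established by Wall in~\cite{WALL}. Granting it, each $\mathscr{A}\otimes_{\Z_\mathscr{A}}\mathscr{C}_k$ is simple, in particular semisimple, and part~(1) follows.
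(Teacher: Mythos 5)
Your proof is correct and follows essentially the same route as the paper: reduce to simple factors, use the identity $\mathscr{A}\otimes_{\mathbb{K}}\mathscr{B}=\mathscr{A}\otimes_{\Z_{\mathscr{A}}}(\Z_{\mathscr{A}}\otimes_{\mathbb{K}}\mathscr{B})$ together with Lemmas~\ref{LemmaCenterField} and~\ref{LemmaExtensionField}, extract the even centre of the remaining simple factor, and finish with Wall's theorem that the tensor product of two central simple superalgebras over a field is simple, with part~(2) being a direct consequence of Theorem~\ref{SuperWedderburnTheorem}. The only differences are expository (you spell out the decomposition into simple factors and the idempotent argument that the paper leaves implicit).
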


\begin{proof}

(1) It is enough to prove that the tensor product of two simple $\mathbb{K}$-superalgebras is semisimple. Using Equation \eqref{DecompositionOfA}, we obtain
\begin{equation*}
\mathscr{A} \otimes_{\mathbb{K}} \mathscr{B} = \mathscr{A} \otimes_{\Z_{\mathscr{A}}} \left(\Z_{\mathscr{A}} \otimes_{\mathbb{K}} \mathscr{B}\right),
\end{equation*}
and it follows from Lemma \ref{LemmaExtensionField} that $\Z_{\mathscr{A}} \otimes_{\mathbb{K}} \mathscr{B}$ is semisimple. In particular, it is enough to prove that for every central simple $\mathbb{K}$-superalgebra $\mathscr{A}$ and every simple $\mathbb{K}$-superalgebra $\mathscr{B}$, $\mathscr{A} \otimes_{\mathbb{K}} \mathscr{B}$ is simple. 
Let $\mathscr{A}$ and $\mathscr{B}$ be such superalgebras. Then
\begin{equation*}
\mathscr{A} \otimes_{\mathbb{K}} \mathscr{B} = \mathscr{A} \otimes_{\mathbb{K}} \left(\Z_{\mathscr{B}} \otimes_{\Z_{\mathscr{B}}} \mathscr{B}\right) = \left(\mathscr{A} \otimes_{\mathbb{K}} \Z_{\mathscr{B}}\right) \otimes_{\Z_{\mathscr{B}}} \mathscr{B}\,,
\end{equation*}
where $\Z_{\mathscr{B}} = \mathscr{Z}(\mathscr{B})_{\bar{0}}$. From Lemma \ref{LemmaCenterField}, we know that $\Z_{\mathscr{B}}$ is a field and from Lemma \ref{LemmaExtensionField}, it follows that both $\mathscr{A}\otimes_{\Z_\mathscr B}\Z_{\mathscr B}$ and $\mathscr{B}$ are central simple $\Z_{\mathscr{B}}$-superalgebras. In particular, from \cite[Theorem~2]{WALL} it follows that $\left(\mathscr{A} \otimes_{\mathbb{K}} \Z_{\mathscr{B}}\right) \otimes_{\Z_{\mathscr{B}}} \mathscr{B}$ is simple, and thus Proposition \ref{PropositionSemisimpleSuperalgebras} follows.

(2) Follows immediately from Theorem~\ref{SuperWedderburnTheorem}.
\end{proof}

\begin{rema}

In \cite{WALL}, Wall gave an explicit classification of simple superalgebras over any field. In particular, from the results of \cite{WALL} it follows that every semisimple
superalgebra $\mathscr{A}$ over $\mathbb{C}$ is of the form
\begin{equation}
\mathscr{A} \cong \bigoplus_{i=1}^{n} \Mat(a_{i}| b_{i}, \mathbb{C}) \oplus \bigoplus_{j=1}^{m} \mathrm{Q}(c_{i}, \mathbb{C})\,,
\label{DecompositionAComplex}
\end{equation}
with $n, m, a_{i}, b_{i}, c_{i} \in \mathbb{Z}^{+}$. Here  $\Mat(a_i|b_i,\mathbb C)$ and  $\mathrm Q(c_i,\mathbb C)$ denote the  \emph{associative} superalgebras that, when equipped with their  commutator brackets, yield the Lie superalgebras $\mathfrak{gl}(a_i|b_i,\mathbb C)$ and $\mathfrak{q}(c_i,\mathbb C)$.  Moreover, for every $a, b, c, d \in \mathbb{Z}^{+}$,
we have \begin{enumerate}
\item $\mathrm{Q}(a, \mathbb{C}) \otimes_{\mathbb{C}} \mathrm{Q}(b, \mathbb{C}) \cong \Mat(ab|ab, \mathbb{C})$\,,
\item $\mathrm{Q}(a, \mathbb{C}) \otimes_{\mathbb{C}} \Mat(b|c, \mathbb{C}) \cong \mathrm{Q}(a(b+c), \mathbb{C})$\,,
\item $\Mat(a|b, \mathbb{C}) \otimes_{\mathbb{C}} \Mat(c|d, \mathbb{C}) \cong \Mat(ac+bd|ad+bc, \mathbb{C})$\,.
\end{enumerate}

A detailed proof of the isomorphism \eqref{DecompositionAComplex} can be found in \cite[Theorem~3.1]{CHENGWANG}. Similarly, 
every semismimple superalgebra $\mathscr{A}$ over $\mathbb{R}$ is of the form
\begin{equation*}
\mathscr{A} \cong \bigoplus_{i=1}^{n} \Mat(a_{i}| b_{i}, \mathbb{S}_{i}) \oplus \bigoplus_{j=1}^{m} \mathrm{Q}(c_{j}, \mathbb{X}_{j}) \,,
\end{equation*}
where $\mathbb{S}_{i}, \mathbb{X}_{j} = \mathbb{R}, \mathbb{C}$ or $\mathbb{H}$ and with $n, m, k, a_{i}, b_{i}, c_{i},  \in \mathbb{Z}^{+}$. 
\label{PropositionRealCase}

\end{rema}

We finish this section with the following Lemma. We include a proof because we did not find a reference. 

\begin{lemme}
\label{LemmaAppendixB1}
Let $\V$ be a finite dimensional $\mathbb Z_2$-graded vector space and let $\mathscr{A} \subseteq \End(\V)$ be a subsuperalgebra. Assume that $\V$ is a semisimple $\mathscr{A}$-module. Then $\mathscr{A}$ is a semisimple superalgebra.
\end{lemme}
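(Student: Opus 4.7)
The plan is to reduce the statement, via the Super Wedderburn Theorem (Theorem \ref{SuperWedderburnTheorem}), to showing that $\mathscr{A}$ is semisimple as a left module over itself. Thus the goal becomes: display $\mathscr{A}$, acting on itself by left multiplication, as a semisimple graded module. Once this is achieved, the equivalence of conditions (2) and (3) in Theorem \ref{SuperWedderburnTheorem} yields that $\mathscr{A}$ is a semisimple superalgebra.

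The key observation is that $\V$ being a faithful semisimple $\mathscr{A}$-module forces $\End(\V)$ itself to be a semisimple left $\mathscr{A}$-module. To see this, I would use the canonical isomorphism of graded left $\mathscr{A}$-modules
\[
\End(\V) \cong \V \otimes \V^*,
\]
where $\mathscr{A}$ acts only on the $\V$ factor (via $a\cdot (v\otimes \phi) := (a v)\otimes \phi$) and $\V^*$ is regarded as a trivial $\mathscr{A}$-module. Choosing a homogeneous basis of $\V^*$ exhibits $\End(\V)$ as a finite direct sum of copies of $\V$ and $\Pi\V$, both of which are semisimple by the hypothesis (and by the fact that the parity-change functor preserves semisimplicity). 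Hence $\End(\V)$ is a semisimple graded left $\mathscr{A}$-module.

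Since the inclusion $\mathscr{A}\subseteq\End(\V)$ is a map of graded left $\mathscr{A}$-modules, the subalgebra $\mathscr{A}$ is a graded $\mathscr{A}$-submodule of $\End(\V)$. I would then invoke the standard fact, which carries over unchanged to the $\mathbb{Z}_2$-graded setting, that any graded submodule of a semisimple graded module is itself semisimple: given a graded submodule $P$ of $\mathscr{A}$, one takes a graded complement $P'$ of $P$ in $\End(\V)$ and verifies that $\mathscr{A}\cap P'$ is a graded complement of $P$ in $\mathscr{A}$. Consequently $\mathscr{A}$ is semisimple as a left $\mathscr{A}$-module, and the Super Wedderburn Theorem completes the proof.

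No step in this argument presents a serious obstacle; the only point requiring care is the verification that the usual arguments (decomposition of $\End(\V)\cong \V\otimes \V^*$, stability of semisimplicity under taking graded submodules, parity shifts) remain valid in the $\mathbb{Z}_2$-graded context, which amounts to checking that all homomorphisms and complements involved can be chosen to be homogeneous.
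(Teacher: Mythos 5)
Your proposal is correct and follows essentially the same route as the paper: both arguments show that $\End(\V)$, as a left $\mathscr{A}$-module under left multiplication, decomposes into finitely many copies of $\V$ (and $\Pi\V$) — the paper via the left ideals $\End(\V)\E_{i,i}$, you via $\End(\V)\cong \V\otimes \V^{*}$ — hence is semisimple, then observe that $\mathscr{A}$ is a graded submodule of $\End(\V)$ and conclude by the Super Wedderburn Theorem. Your explicit attention to the parity shifts $\Pi\V$ is a minor refinement of the same argument, not a different approach.
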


\begin{proof}
Set $d:=\dim(\V)$ and $\mathscr{B}:=\End(\V)$. Consider $\mathscr{B}$ as a left module over itself, with respect to left multiplication. Then
$\mathscr{B} \cong \V^{\oplus d}$ as $\mathscr{B}$-modules.
In fact the summands on the right hand side can be taken to be left ideals $\mathscr{B}e_{i}$ where $e_{1},\ldots,e_{d}$ is a system of idempotents in $\mathscr{B}$ satisfying $e_{i}e_{j}=\delta_{i\,,j}e_{i}$ and $\sum_{i=1}^d e_{i}=1$. For example if we realize $\End(\V)$ as $\End(\mathbb{K}^{m|n})$ then we can set $e_{i}:=\E_{i,i}$ for $1\leq i\leq m+n$.

Since $\mathscr{A} \subseteq \mathscr{B}$ is a subalgebra, it follows that $\mathscr{B} \cong \V^{\oplus d}$ as $\mathscr{A}$-modules. Since $\V$ is semisimple, it follows that $\mathscr{B}$ is a semisimple $\mathscr{A}$-module (with respect to left multiplication).  

Now consider $\mathscr{A}$ as an $\mathscr{A}$-module under left multiplication. Then $\mathscr{A}$ is an $\mathscr{A}$-submodule of $\mathscr{B}$, and hence it is a semisimple module (since $\mathscr{B}$ is also semisimple). In particular, it follows from Theorem \ref{SuperWedderburnTheorem} that $\mathscr{A}$ is a semisimple superalgebra. 
\end{proof}

\section{Superhermitian forms on $\U$ and superinvolutions on $\End_{\mathbb{D}}(\U)$}

\label{AppendixSuperInvolution}

Let $\mathbb{D}$ be a division superalgebra over a field $\mathbb{K}$ and $\iota$ be a ($\mathbb K$-linear) superinvolution on $\mathbb{D}$. 
\begin{rema}

As explained in Section \ref{SectionDivisionSuperalgebra}, if $\mathbb K=\mathbb R$ then  $\mathbb{D}$ can be  either $\mathbb{R}, \mathbb{C}, \mathbb{H}$ or $\mathbb{C} \oplus \mathbb{C} \cdot \varepsilon$ with $\varepsilon^{2} = 1$ and $i\varepsilon = \varepsilon i$.
Also, when $\mathbb K=\mathbb C$ we must have $\mathbb D=\mathbb D_{\bar 0}=\mathbb C$.
\end{rema}
Let $\U = \mathbb{D}^{n|m}$ be the right-$\mathbb{D}$-module given by
\begin{equation*} 
\U := \begin{cases} \mathbb{D}^{n|m} & \text{ if } \mathbb{D}_{\bar{1}} = \{0\}, \\ \mathbb{D}^{k} & \text{ if } \mathbb{D}_{\bar{1}} \neq \{0\}. \end{cases}
\end{equation*}

Our goal in this appendix is to  explain the correspondence between superhermitian forms on $\U$ and superinvolutions on $\End_{\mathbb{D}}(\U)$.

We first need to introduce some notation. We denote by $\U^{*}:=\Hom_{\mathbb{D}}(\U\,, \mathbb{D})$ the dual of $\U$, i.e. the set of additive maps $f: \U \to \mathbb{D}$ satisfying $f(u\cdot \D)=f(u)\cdot \D$ for $u \in \U$, $\D \in \mathbb{D}$. Note that $\U^{*}$ is a right $\mathbb{D}$-module: for $f \in \U^{*}$ and $\D \in \mathbb{D}$ we set
\begin{equation*}
(f\cdot \D)(u) := (-1)^{\left|\D\right| \cdot \left|f\right|} \iota(\D)f(u)\,, \qquad (u \in \U, \D \in \mathbb{D})\,.
\end{equation*}

We denote by $\langle \cdot\,,\cdot\rangle: \U^*\times \U\to \mathbb{D}$ the pairing given by $\langle f, u\rangle:=f(u)$ for $f\in \U^*$ and $u \in \U$. We define $\langle u\,, f\rangle := (-1)^{\left|f\right|\cdot \left|u\right|}\langle f\,, u\rangle$.  

\begin{rema}
\begin{enumerate}
\item To any homogeneous $( \iota,\epsilon)$-superhermitian form $\gamma$, we can associate a $\mathbb{D}$-module homomorphism $\Psi^{\gamma}: \U \to \U^{*}$:
\begin{equation*}
u \mapsto u^{*}\,, \qquad \text{where} \qquad \langle u^{*}\,, v \rangle:=\gamma(u\,, v)\,, v \in \U\,.
\end{equation*}
Note that $\left|\gamma\right| = \left|\Psi^{\gamma}\right|$. The superhermitian form $\gamma$ is non-degenerate if and only if the corresponding map $\Psi^{\gamma}: \U \to \U^{*}$ is a $\mathbb{D}$-module ismomorphism. 
\item There is an even canonical isomorphism of $\mathbb{D}$-modules $\eta: \U \to \U^{**}$, given by $u \mapsto \eta(u) := u^{**}$ where
\begin{equation}
u^{**}(f):=(-1)^{\left|u\right|\cdot\left|f\right|}\iota(f(u))\,, \qquad (f\in \U^{*}\,, u \in \U)
\label{November13}
\end{equation}
such that the right action of $\mathbb{D}$ on $\U^{**}$ is given by
\begin{equation*}
u^{**}\cdot \D(f) := (-1)^{\left|\D\right|\cdot\left|u\right|} \iota(\D) u^{**}(f)\,, \qquad (u \in \U, f \in \U^{*}, \D \in \mathbb{D})\,.
\end{equation*}
One can see that the map given in \eqref{November13} is well-defined. Indeed, for every $u \in \U, f \in \U^{*}$ and $\D \in \mathbb{D}$, we get
\begin{align*}
 u^{**}(f\cdot \D) &= (-1)^{\left|u\right|\cdot \left|f \cdot \D\right|}\iota\left((f\cdot \D)(u)\right) = (-1)^{\left|u\right|\cdot \left|f \cdot \D\right|} (-1)^{\left|\D\right|\cdot \left|f\right|}\iota(\iota(\D)f(u)) \\ 
     & =  (-1)^{\left|u\right|\cdot \left|f \cdot \D\right|} (-1)^{\left|\D\right|\cdot \left|f\right|} (-1)^{\left|\D\right| \cdot \left|f(u)\right|} \iota(f(u))\D\\
     & = (-1)^{\left|u\right|\cdot \left|f \cdot \D\right|} (-1)^{\left|\D\right|\cdot \left|f\right|} (-1)^{\left|\D\right| \cdot \left|f(u)\right|} (-1)^{\left|f\right| \cdot \left|u\right|} u^{**}(f)\D  
      =  u^{**}(f)\D\,.
\end{align*}
Moreover,
\begin{eqnarray*}
(u\cdot \D)^{**}(f) & = & (-1)^{\left|u\cdot \D\right| \cdot \left|f\right|}\iota(f(u\cdot \D)) = (-1)^{\left|u\cdot \D\right| \cdot \left|f\right|}\iota(f(u)\cdot \D) = (-1)^{\left|u\cdot \D\right| \cdot \left|f\right|}(-1)^{\left|f(u)\right| \cdot \left|\D\right|}\iota(\D)\iota(f(u)) \\
& = & (-1)^{\left|u\right| \cdot \left|f\right|}(-1)^{\left|\D\right| \cdot \left|u\right|}\iota(\D) \iota(f(u))\,,
\end{eqnarray*}
and
\begin{equation*}
(u^{**}\cdot \D)(f) = (-1)^{\left|\D\right| \cdot \left|u\right|}\iota(\D)u^{**}(f) = (-1)^{\left|u\right| \cdot \left|f\right|}(-1)^{\left|\D\right| \cdot \left|u\right|}\iota(\D) \iota(f(u))\,,
\end{equation*}
i.e. the map $u \to u^{**}$ is $\mathbb{D}$-linear.

\item Given a homogeneous $\mathbb{D}$-linear map $\T: \V \to \W$ between two right $\mathbb{D}$-modules, we define $\T^{*}: \W^{*} \to \V^{*}$ by
\begin{equation*}
\T^{*}f(v):=(-1)^{\left|\T\right|\cdot \left|f\right|}f(\T(v))\,, \qquad (f \in \W^{*}, v \in \V)\,.
\end{equation*}
In particular, $\left|\T\right| = \left|\T^{*}\right|$. The map $\T^{*}$ is $\mathbb{D}$-linear. Indeed, for every $u \in \U, f \in \U^{*}$ and $\D \in \mathbb{D}$,
\begin{equation*}
\T^{*}(f\cdot \D)(u) = (-1)^{\left|\T\right|\cdot \left|f \cdot \D\right|}(f \cdot \D)(\T(u))=(-1)^{\left|\T\right| \cdot \left|f \cdot \D\right|}(-1)^{\left|\D\right| \cdot \left|f\right|}\iota(\D)f(\T(u))
\end{equation*}
and 
\begin{equation*}
\T^{*}f \cdot \D(u) = (-1)^{\left|\D\right| \cdot \left|T^{*}f\right|} \iota(\D)\T^{*}f(u) = (-1)^{\left|\T\right| \cdot \left|f\right|}(-1)^{\left|\D\right| \cdot \left|\T^{*}f\right|}\iota(\D)f(\T(u))\,.
\end{equation*}
i.e. $\T^{*}(f\cdot \D)(u) = (\T^{*}f \cdot \D)(u)$. Moreover, for $\T: \U \to \U, \S: \U \to \U, f \in \U^{*}$ and $u \in \U$, we get:
\begin{equation*}
(\S\T)^{*}f(v)=(-1)^{(\left|\S\right|+\left|\T\right|)\cdot \left|f\right|}f(\S\T(v)) 
\end{equation*}
and
\begin{equation*}
\T^{*}\S^{*}f(v)=(-1)^{(\left|\S\right|+\left|f\right|)\cdot \left|\T\right|}\S^{*}f(\T(v)) = (-1)^{(\left|\S\right|+\left|f\right|)\cdot \left|\T\right|+ \left|\S\right|\cdot \left|f\right|}f(\S(\T(v)))\,,
\end{equation*}
i.e. $(\S\T)^{*}=(-1)^{\left|\S\right| \cdot \left|\T\right|}\T^{*}\S^{*}$.
\item Let $\T:\U\to \U^{*}$ be a $\mathbb{D}$-linear map. Identifying $\U^{**}$ and $\U$ via $\eta$, we can think of $\T^{*}$ as a map $\U \to \U^{*}$. For $v, w\in \U$, we have
\begin{equation*}
\T^{*}v(w) = \T^{*}v^{**}(w) =(-1)^{\left|\T\right| \cdot \left|v\right|}v^{**}(\T(w)) = (-1)^{\left|\T\right| \cdot \left|v\right| + (\left|\T\right|+\left|w\right|)\cdot \left|v\right|}\tau(\T(w)(v))=(-1)^{\left|v\right| \cdot \left|w\right|}\iota(\langle \T(w)\,, v\rangle)\,.
\end{equation*}
Thus $\T^{*}$ is uniquely defined by the relation
\begin{equation*}
\langle \T^{*}(v)\,,w\rangle = (-1)^{\left|v\right| \cdot \left|w\right|}\iota\left(\langle \T(w)\,, v\rangle\right)\,.
\end{equation*}
Similarly, let $\T: \U^{*} \to \U$ be $\mathbb{D}$-linear. Then we can think of $\T^{*}$ as a map $\U^{*} \to \U$. Let $f, g \in \U^{*}$, and assume that $\T^{*}f = u^{**} = u$ for $u\in \U$. This implies  $\left|u\right| = \left|\T\right| + \left|f\right|$. Now $\left|g\right| \neq \left|\T\right| + \left|f\right|$ implies $\T^{*}f(g) = 0$ and $\left|u\right| \neq \left|g\right|$ implies $u^{**}(g) = 0$. But in view of $\left|u\right| = \left|\T\right| + \left|f\right|$, we have $\left|g\right| = \left|\T\right| + \left|f\right|$ if and only if $\left|u\right| = \left|g\right|$. It follows that when at least one of  $\T^{*}f(g)$ and $u^{**}(g)$ are nonzero, we have 
\begin{equation*}
\T^{*}f(g) = (-1)^{\left|\T\right| \cdot \left|f\right|}f(\T(g)) = (-1)^{\left|\T\right| \cdot \left|f\right| }\langle f\,, \T(g)\rangle =(-1)^{\left|f\right| \cdot \left|g\right|}\langle \T(g)\,, f\rangle\,,
\end{equation*}
and
\begin{equation*}
u^{**}(g) = (-1)^{\left|g\right| \cdot \left|u\right|}\iota(g(u)) = (-1)^{\left|g\right| \cdot \left|u\right|}\iota(\langle g\,, \T^{*}(f)\rangle) = \iota(\langle \T^{*}f\,, g\rangle)\,.
\end{equation*}
From $\T^{*}f(g) = u^{**}(g)$, it follows that 
\begin{equation*}
\langle \T(g)\,, f\rangle = (-1)^{\left|f\right| \cdot \left|g\right|}\iota(\langle \T^{*}f\,, g\rangle)\,,
\end{equation*}
and in fact the latter relation uniquely determines $\T^*: \U^{*} \to \U$.

\item Finally, let $\T: \U \to \U$ be a $\mathbb{D}$-linear map. 
Then
\begin{equation*}
\T^{**}= \eta \circ \T \circ \eta^{-1}\,.
\end{equation*}
This follows from 
\begin{align*}
(\T^{**}v^{**})(f) & = (-1)^{\left|\T\right| \cdot \left|v\right|}v^{**}(\T^{*}(f))=(-1)^{\left|\T\right| \cdot \left|v\right| + \left|v\right| \cdot (\left|\T\right|+\left|f\right|)}\iota(\T^{*}(f)(v))\\
& =
(-1)^{\left|\T\right| \cdot \left|v\right| + \left|v\right|\cdot (\left|\T\right| + \left|f\right|) + \left|\T\right| \cdot \left|f\right|}\iota(f(\T(v))) = (-1)^{\left|\T\right| \cdot \left|f\right| + \left|v\right| \cdot \left|f\right|}\iota(f(\T(v)))\,,
\end{align*}
and 
\begin{align*}
(\T(v))^{**}(f) = (-1)^{(\left|\T\right| + \left|v\right|)\cdot f}\iota(f(\T(v)))
=(-1)^{\left|\T\right| \cdot \left|f\right| + \left|v\right| \cdot \left|f\right|}\iota(f(\T(v)))\,.
\end{align*}
\end{enumerate}

\label{remark:B1}
\end{rema}

Let $\gamma$ be a non-degenerate, even or odd, $(\iota, \varepsilon)$-superhermitian on $\U$. Since $\gamma$ is non-degenerate, for every $\T \in \End_{\mathbb{D}}(\U)$ there exists a unique map $\T^{\natural} \in \End_{\mathbb{D}}(\U)$ such that
\begin{equation}
\gamma(\T^{\natural}(w)\,, v) = (-1)^{\left|\T\right| \cdot \left|w\right|}\gamma(w\,, \T(v))\,.
\label{FromGammaUToEndDU}
\end{equation}

\begin{lemme}

The endomorphism $\T^{\natural}$ is $\mathbb{D}$-linear. Moreover, the map $\T \to \T^{\natural}$ is $\mathbb{D}$-linear and satisfies
\begin{equation*}
\T^{\natural \natural} = \T\,, \qquad (\S\T)^{\natural} = (-1)^{\left|\S\right|\cdot \left|\T\right|} \T^{\natural}\S^{\natural}\,, \qquad (\S\,, \T \in \End_{\mathbb{D}}(\U))\,.
\end{equation*}

\end{lemme}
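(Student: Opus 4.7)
\emph{Plan.} Non-degeneracy of $\gamma$ together with the defining identity \eqref{FromGammaUToEndDU} means that $\T^{\natural}$ is characterised uniquely by its pairings: an element $\A \in \End_{\mathbb{K}}(\U)$ equals $\T^{\natural}$ if and only if $\gamma(\A(w), v) = (-1)^{|\T|\cdot|w|}\gamma(w, \T(v))$ for all homogeneous $w, v \in \U$. Consequently, each of the four assertions will reduce to a sign-tracking verification that two $\gamma$-pairings agree, after which non-degeneracy cancels the test vector $v$. The only ingredients needed are (i) the defining equation~\eqref{FromGammaUToEndDU}, (ii) the $(\iota,\varepsilon)$-superhermiticity of $\gamma$, (iii) the right $\mathbb{D}$-sesquilinearity of $\gamma$ from Definition~\ref{DefSuperhermitian}, and (iv) the elementary facts $\iota^{2}=\mathrm{id}$ and $\iota(\varepsilon)=\varepsilon$.

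To establish $\mathbb{D}$-linearity of $\T^{\natural}$, I will expand $\gamma(\T^{\natural}(w\D), v)$ via \eqref{FromGammaUToEndDU} and then extract $\D$ from the first slot using sesquilinearity, arriving at the factor $(-1)^{|\T|(|w|+|\D|)+|\D|\cdot|w|+|\D|\cdot|\gamma|}\iota(\D)\gamma(w, \T v)$. Doing the same computation for $\gamma(\T^{\natural}(w)\D, v)$ in the opposite order (sesquilinearity first, then \eqref{FromGammaUToEndDU}) produces the same scalar factor, so the two pairings agree and therefore $\T^{\natural}(w\D) = \T^{\natural}(w)\D$. The $\mathbb{K}$-linearity of the assignment $\T \mapsto \T^{\natural}$ is then immediate, because both sides of \eqref{FromGammaUToEndDU} are linear in $\T$ on each homogeneous component; the $\mathbb{D}$-linear statement, in those cases where it makes sense, follows in the same way because the sign $(-1)^{|\T|\cdot|w|}$ is constant within a parity class.

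For $\T^{\natural\natural} = \T$, I will evaluate $\gamma(\T^{\natural\natural}(w), v)$ using \eqref{FromGammaUToEndDU} (noting $|\T^{\natural}|=|\T|$), then flip the arguments via superhermiticity to rewrite $\gamma(w, \T^{\natural}v)$ as $\varepsilon(-1)^{(|\T|+|v|)|w|}\iota(\gamma(\T^{\natural}v, w))$, apply~\eqref{FromGammaUToEndDU} to $\gamma(\T^{\natural}v,w)$, and flip once more with superhermiticity. The two factors of $\varepsilon$ cancel (since $\varepsilon^{2}=1$), the two applications of $\iota$ cancel (since $\iota^{2}=\mathrm{id}$ and $\iota(\varepsilon)=\varepsilon$), and the residual Koszul signs collapse to $1$, yielding $\gamma(\T^{\natural\natural}(w), v) = \gamma(\T(w), v)$ for every $v$; non-degeneracy then forces $\T^{\natural\natural}=\T$. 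For the product formula, I will process $\gamma((\S\T)^{\natural}(w), v) = (-1)^{(|\S|+|\T|)|w|}\gamma(w, \S\T v)$ by first sliding $\S$ across the pairing through $\S^{\natural}$ and then sliding $\T$ across through $\T^{\natural}$; aggregating the two sign contributions $(-1)^{|\S|\cdot|w|}$ and $(-1)^{|\T|(|\S|+|w|)}$ with the initial $(-1)^{(|\S|+|\T|)|w|}$ leaves exactly $(-1)^{|\S|\cdot|\T|}$, and non-degeneracy cancels $v$.

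I do not expect any conceptual obstacle; the entire proof is driven by the universal property of $\T^{\natural}$ coming from non-degeneracy. The only genuine care required is Koszul sign bookkeeping, and the trickiest spot will be the involutivity statement, where two swaps via superhermiticity interact with the parity shifts produced by applying $\T$ and $\T^{\natural}$ in succession.
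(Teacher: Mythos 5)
Your proposal is correct and follows essentially the same route as the paper's proof: each claim is verified by expanding both sides against a test vector using the defining relation \eqref{FromGammaUToEndDU} together with the sesquilinearity and $(\iota,\varepsilon)$-superhermitian symmetry of $\gamma$, tracking the Koszul signs, and then cancelling the test vector by non-degeneracy, and your stated sign factors all check out (including $\varepsilon^{2}=1$ and $\iota^{2}=\mathrm{id}$ in the involutivity step). The only cosmetic difference is that for the product rule you slide $\S$ and then $\T$ across the pairing one at a time, whereas the paper compares $\gamma(\T^{\natural}\S^{\natural}(u),v)$ and $\gamma((\S\T)^{\natural}(u),v)$ by pushing both onto $\gamma(u,\S\T(v))$.
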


\begin{proof}
Let $\T, \S \in \End_{\mathbb{D}}(\U)$, $u, v \in \U$ and $\D \in \mathbb{D}$. First,
\begin{eqnarray*}
\gamma(\T^{\natural}(u\cdot \D)\,, v) & = & (-1)^{\left|\T\right|\cdot \left|u\cdot \D\right|} \gamma(u\cdot \D\,, \T(v)) = (-1)^{\left|\T\right|\cdot \left|u\cdot \D\right|} (-1)^{\left|\D\right|\cdot \left|u\right| + \left|\D\right| \cdot \left|\gamma\right|} \iota(\D)\gamma(u\,, \T(v)) \\
    & = & (-1)^{\left|\T\right|\cdot \left|u\right| + \left|\T\right|\cdot \left|\D\right| + \left|\D\right|\cdot \left|u\right| + \left|\D\right|\cdot \left|\gamma\right| + \left|\T\right|\cdot \left|u\right|} \iota(\D)\gamma(\T^{\natural}(u)\,, v) \\
    & = & (-1)^{\left|\T\right|\cdot \left|u\right| + \left|\T\right|\cdot \left|\D\right| + \left|\D\right|\cdot \left|u\right| + \left|\D\right|\cdot \left|\gamma\right| + \left|\T\right|\cdot \left|u\right| + \left|\D\right| \cdot \left|\gamma\right| + \left|\D\right| \cdot \left|u\right| + \left|\T\right| \cdot \left|\D\right|} \gamma(\T^{\natural}(u)\cdot \D\,, v) \\
    & = & \gamma(\T^{\natural}(u) \cdot \D\,, v)
    \end{eqnarray*}
i.e. $\T^{\natural}$ is $\mathbb{D}$-linear. Moreover
\begin{equation*}
\gamma(\T^{\natural} \S^{\natural}(u)\,, v) = (-1)^{\left|\T\right| \cdot (\left|\S\right|+\left|u\right|)}\gamma(\S^{\natural}u\,, \T(v)) = (-1)^{\left|\T\right| \cdot (\left|\S\right| + \left|u\right|) + \left|\S\right| \cdot \left|u\right|} \gamma(u\,, \S(\T(v)))\,,
\end{equation*}
and
\begin{equation*}
\gamma((\S\T)^{\natural}(u)\,, v) = (-1)^{(\left|\S\right| + \left|\T\right|)\cdot \left|u\right|}\gamma(u\,, \S(\T(v)))\,,
\end{equation*}
from which it follows that 
\begin{equation*}
(\S\T)^{\natural} =(-1)^{\left|\S\right| \cdot \left|\T\right|}\T^{\natural} \S^{\natural}\,.
\end{equation*}
Finally, $\T^{\natural\natural}=\T$ because 
\begin{eqnarray*}
\gamma(\T^{\natural\natural}u\,, v)&
=&(-1)^{\left|\T\right| \cdot \left|u\right|}\gamma(u\,, \T^\natural(v))=
(-1)^{\left|\T\right| \cdot \left|u\right| + (\left|\T\right| + \left|v\right|)\cdot \left|u\right|}\varepsilon\iota\big(\gamma(\T^{\natural}(v)\,, u)\big)\\
&=&
(-1)^{\left|\T\right| \cdot \left|u\right| + (\left|\T\right| + \left|v\right|) \cdot \left|u\right| + \left|\T\right| \cdot \left|v\right|}
\varepsilon\iota\big(\gamma(v\,,\T(w))\big)\\
&=&
(-1)^{\left|\T\right| \cdot \left|u\right| + (\left|\T\right| + \left|v\right|)\cdot \left|u\right| + \left|\T\right| \cdot \left|v\right| + \left|v\right| \cdot (\left|\T\right| + \left|u\right|)}
\varepsilon\iota(\varepsilon)\gamma(\T(u)\,, v) =  \gamma(\T(u)\,, v)\,.\ \ \ \ \ 
\qedhere\end{eqnarray*}
\end{proof}

\begin{rema}

The following properties are straightforward:
\begin{itemize}
\item[(1)] $(z\Id_{\U})^\natural=(-1)^{\left|z\right|\cdot \left|\gamma\right|} \iota(z)\Id_{\U}$ for $z \in \mathscr{Z}(\mathbb{D})$\,. Indeed,
\begin{equation*}
\gamma((z\Id_{\U})^{\natural}u\,, v) = (-1)^{\left|z\Id_{\U}\right| \cdot \left|u\right|} \gamma(u\,, v \cdot z) = (-1)^{\left|z\right| \cdot \left|u\right|+ \left|z\right|\cdot \left|\gamma\right| + \left|z\right|\cdot \left|u\right|} \gamma(u\cdot \iota(z)\,, v) = (-1)^{\left|z\right| \cdot \left|\gamma\right|} \gamma(\iota(z)\Id_{\U}(u)\,, v)\,,
\end{equation*}
i.e. $(z\Id_{\U})^{\natural} = (-1)^{\left|z\right| \cdot \left|\gamma\right|} \iota(z)\Id_{\U}$.
\item[(2)] Recall that $\Psi^\gamma : \U \to \U^{*}$ is the $\mathbb{D}$-linear map corresponding to $\gamma$. Then \[
\T^{\natural} = 
(-1)^{|\Psi^\gamma|\cdot |\T|}
(\Psi^\gamma)^{-1}\circ \T^{*} \circ \Psi^\gamma.
\]Indeed for $v \in \U$, we set $v^{*} := \Psi^\gamma(v)$, so that $\langle v^{*}\,, w\rangle = \gamma(v\,,w)$ for $w \in \U$. Now 
\begin{align*}
\langle \T^{*}v^{*}\,, x\rangle = (-1)^{\left|\T\right| \cdot \left|v^*\right|}\langle v^{*}\,, \T(x)\rangle 
&
= (-1)^{\left|\T\right| \cdot (\left|\Psi^\gamma\right|+\left|v\right|)}\gamma(v\,, \T(x))\\
&= 
(-1)^{|\Psi^\gamma|\cdot |\T|}
\gamma(\T^{\natural}(v)\,, x)=
(-1)^{|\Psi^\gamma|\cdot |\T|}
\langle (\T^{\natural}v)^{*}, x\rangle\,,
\end{align*}
so that $\T^{*}v^{*}=
(-1)^{|\Psi^\gamma|\cdot |\T|}(\T^{\natural} v)^{*}$ and thus $(\Psi^\gamma)^{-1}(\T^{*}v^{*})=
(-1)^{|\Psi^\gamma|\cdot |\T|}
\T^{\natural}v$. 
\end{itemize}

\label{RemarkNovember14AH}

\end{rema}

\bigskip

The correspondence ofsuperhermitian forms and superinvolutions relies on the graded version of Skolem-Noether theorem which is proved in  \cite[Proposition~8]{ELDUQUEVILLA}. To make our exposition self-contained, we recall this theorem in the case of central simple superalgebras of the form $\End_{\mathbb D}(\U)$.

\begin{rema}
Let $\theta_1,\theta_2$ be $\mathbb K$-linear superinvolutions of $\End_{\mathbb D}(\U)$ such that 
$\theta_1(\X)=(-1)^{|\P|\cdot |\X|}\P\theta_2(\X)\P^{-1}$ for a homogeneous invertible 
$\P\in\End_{\mathbb D}(\U)$. Also, let
$\gamma_k:\U\times \U\to\mathbb  D$ for $k=1,2$ be $(\iota,\epsilon_k)$-superhermitian forms on $\U$ such that  \eqref{FromGammaUToEndDU}
 holds for $\gamma=\gamma_k$ and $\theta=\theta_k$. 
From the relations 
\[
\theta_k(\T)=(-1)^{|\Psi^{\gamma_k}|\cdot|\T|}(\Psi^{\gamma_1})^{-1}\circ \T^*\circ \Psi^{\gamma_k} 
\]
it follows that $(-1)^{|\X|\cdot |\T|}\X\T=\T\X$ for $\X:=(\Psi^{\gamma_2})^{-1}\circ \Psi^{\gamma_1}\circ \P$ and all $\T\in\End_{\mathbb D}(\U)$, i.e.,  
$\X\in \tilde{\mathscr Z}(\End_{\mathbb D}(\U))$.
Since $\tilde{\mathscr Z}(\End_{\mathbb D}(\U))$ is purely even, it follows that $|\X|=\bar 0$. Thus if $\gamma_1$ and $\gamma_2$ have the same parity then $\P$ is even, and if $\gamma_1$ and $\gamma_2$ have opposite parities then $\P$ is odd. 
\label{Remark.B5}
\end{rema}

\begin{prop}
\label{prp:Skolem}
Let $\mathbb D$, $\mathbb K$, and $\U$ be as above. Assume that $\mathrm{char}\,\mathbb K=0$ and  $\tilde{\mathscr Z}(\mathbb D)=\mathbb K$. Furthermore, let  $\Theta$ be a $\mathbb K$-linear automorphism of $\End_{\mathbb D}(\U)$. Then there exists an invertible homogeneous $\P\in\End_{\mathbb D}(\U)$
such that $\Theta(\X)=(-1)^{|\P|\cdot|\X| }\P\X\P^{-1}$ for all $\X\in\End_{\mathbb D}(\U)$.
\end{prop}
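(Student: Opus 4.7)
The plan is to recast the statement in module-theoretic language and then run the super-analogue of the classical Skolem--Noether argument. Set $A:=\End_{\mathbb D}(\U)$. The hypothesis $\tilde{\mathscr Z}(\mathbb D)=\mathbb K$, combined with the super-Wedderburn theorem (Theorem~\ref{SuperWedderburnTheorem}), guarantees that $A$ is a central simple $\mathbb K$-superalgebra, and that every simple left $A$-supermodule is isomorphic, up to parity shift, to $\U$ equipped with its tautological $A$-action.

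I first equip $\U$ with two left $A$-supermodule structures: the natural one, $\U_1$, and a twisted copy $\U_2$ with action $\X\cdot v:=\Theta(\X)(v)$. Since $\Theta$ is an even $\mathbb K$-algebra automorphism, $\U_2$ is a well-defined $A$-supermodule, and the bijectivity of $\Theta$ shows that $\U_2$ is simple whenever $\U_1$ is. By the uniqueness recalled above, there exists a homogeneous invertible $A$-equivariant isomorphism $\P:\U_1\to\U_2$, with equivariance understood in the sign convention of Equation~\eqref{Equation1}; its parity $|\P|\in\mathbb Z_2$ is $\bar 0$ when $\U_1\cong\U_2$ as $A$-supermodules and $\bar 1$ when $\U_1\cong \Pi\U_2$. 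Unwinding Equation~\eqref{Equation1} for $\P$, for homogeneous $\X\in A$ and $v\in \U$ we obtain
\[
\P(\X v)=(-1)^{|\P|\cdot|\X|}\,\Theta(\X)\,\P(v),
\]
i.e.\ $\P\X=(-1)^{|\P|\cdot|\X|}\Theta(\X)\P$ as elements of $\End_{\mathbb K}(\U)$, which rearranges to $\Theta(\X)=(-1)^{|\P|\cdot|\X|}\P\X\P^{-1}$ inside $A$.

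The main technical obstacle is the uniqueness (up to parity shift) of a simple $A$-supermodule. In the purely even case this is the content of classical Wedderburn, but in the super setting it rests on Wall's classification \cite{WALL} of central simple superalgebras, which forces $A$ to be isomorphic to either $\Mat(a|b,\mathbb D')$ or $\mathrm{Q}(c,\mathbb D')$ for some division superalgebra $\mathbb D'$ satisfying $\tilde{\mathscr Z}(\mathbb D')=\mathbb K$; for each of these two families a direct inspection of their simple supermodules yields the required uniqueness. The assumption $\mathrm{char}\,\mathbb K=0$ enters only to secure the semisimplicity needed along the way, while the centrality hypothesis $\tilde{\mathscr Z}(\mathbb D)=\mathbb K$ prevents $A$ from splitting as a product of non-isomorphic simple superalgebras, a situation in which the module-theoretic argument (and hence the existence of a single $\P$) would break down.
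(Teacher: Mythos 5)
Your module-theoretic setup is the natural one, but there is a genuine gap at the last step: the isomorphism $\P:\U_{1}\to\U_{2}$ that uniqueness of the simple supermodule provides is a priori only a $\mathbb{K}$-linear map, and nothing in your argument shows that it is right $\mathbb{D}$-linear, i.e.\ that $\P$ lies in $A=\End_{\mathbb{D}}(\U)$. The identity $\P\X=(-1)^{|\P|\cdot|\X|}\Theta(\X)\P$ therefore holds in $\End_{\mathbb{K}}(\U)$, but the phrase ``rearranges to \dots inside $A$'' is exactly what is not justified, and it is the whole content of the proposition. Note that $\Hom_{A}(\U_{1},\U_{2})$ is free of rank one over the commutant $\End_{A}(\U)\cong\mathbb{D}^{\mathrm{sop}}$, so a general intertwiner differs from a $\mathbb{D}$-linear one by a right multiplication, and producing a $\mathbb{D}$-linear representative is precisely where centrality must enter. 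As written, your argument never uses $\tilde{\mathscr Z}(\mathbb D)=\mathbb K$ in an essential way (the role you assign to it --- preventing $A$ from splitting --- is vacuous, since $\End_{\mathbb{D}}(\U)$ is simple for any division superalgebra $\mathbb{D}$), and it would equally ``prove'' the statement with centrality dropped, which is false: take $\mathbb{K}=\mathbb{R}$, $\mathbb{D}=\U=\mathbb{C}$, $\Theta=$ complex conjugation; then $v\mapsto\bar v$ is an $A$-module isomorphism $\U_{1}\to\U_{2}$, your argument runs verbatim, yet $\Theta$ is not conjugation by any element of $A=\mathbb{C}$.

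The standard repair is to build the $\mathbb{D}$-module structure into the module category from the start: regard $\U$ as a supermodule over $B:=A\otimes_{\mathbb{K}}\mathbb{D}^{\mathrm{sop}}$ (equivalently, as an $(A,\mathbb{D})$-bimodule), and twist only the $A$-factor by $\Theta$. The hypothesis $\tilde{\mathscr Z}(\mathbb D)=\mathbb K$ makes $A$ central simple, so $B$ is again simple (this is the tensor-product argument of Proposition~\ref{PropositionSemisimpleSuperalgebras}, going back to \cite{WALL}); a dimension count shows $B\cong\End_{\mathbb{K}}(\U)$, so the two $B$-supermodule structures on $\U$ are simple of equal graded dimension and hence linked by a homogeneous invertible $B$-map. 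Such a map is simultaneously a (signed) intertwiner for $\Theta$ and right $\mathbb{D}$-linear, i.e.\ an invertible homogeneous element of $\End_{\mathbb{D}}(\U)$, and converting it to the convention of Equation~\eqref{Equation1} yields exactly the sign $(-1)^{|\P|\cdot|\X|}$ in the statement. With that amendment your approach goes through; for comparison, the paper does not reprove the proposition at all but quotes it from \cite[Proposition~8]{ELDUQUEVILLA}.
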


In order to describe the correspondence of  $(\iota, \varepsilon)$-superhermitian forms  on $\U$ and superinvolutions on $\End_{\mathbb{D}}(\U)$  \eqref{FromGammaUToEndDU}we distinguish two different cases: $\mathbb D_{\bar 1}=0$ and $\mathbb D_{\bar 1}\neq 0$.

\begin{prop}

Let $\mathbb{D} = \mathbb{D}_{\bar{0}}$ be a division superalgebra over $\mathbb{K}$ where $\mathbb K=\mathbb R$ or $\mathbb C$. Also, let  $\iota$ be a  (super)involution on $\mathbb{D}$. Let $\T \mapsto \T^{\diamond}$ be a $\mathbb{K}$-linear superinvolution of $\End_{\mathbb{D}}(\U)$ such that $(z\Id_{\U})^{\diamond} = \iota(z)\Id_{\U}$ for $z\in \mathscr{Z}(\mathbb{D})$. Then the following assertions hold:
\begin{itemize}
\item[\rm (i)] There exists a homogeneous 
$(\iota, \epsilon)$-superhermitian form $\gamma^{\diamond}$ on $\U$ satisfying
\begin{equation}
\label{EquationDiagramMov132}
\gamma^{\diamond}(\T^{\diamond}(u)\,, v) = (-1)^{\left|\T\right| \cdot \left|u\right|}\gamma^{\diamond}(u\,, \T(v))\,, \qquad (u\,, v \in \U)\,.
\end{equation}
\item[\rm (ii)]
If $\gamma'$ is a
$(\iota,\epsilon')$-superhermitian form that satisfies \eqref{EquationDiagramMov132}
then $\gamma'=c\gamma^\diamond$ for some $c\in\mathscr Z(\mathbb D)$ such that $\iota(c)=\pm c$.
Moreover, $\epsilon'=\epsilon$ if   $\iota(c)=c$, and $\epsilon'=-\epsilon$ otherwise.

\end{itemize}
\label{PropositionC4}

\end{prop}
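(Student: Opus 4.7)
The plan is to realize $\diamond$ as an ``inner twist'' of a reference superinvolution coming from a canonical superhermitian form, via the graded Skolem--Noether theorem. First I would fix a homogeneous $\mathbb D$-basis $e_1,\dots,e_{n+m}$ of $\U$ and define an even, non-degenerate, $(\iota,1)$-superhermitian form $\gamma_0$ on $\U$ by $\gamma_0(\sum_i e_i a_i,\sum_j e_j b_j):=\sum_i \iota(a_i) b_i$; let $\Psi_0:\U\to\U^*$ be its associated $\mathbb D$-linear isomorphism, which is even and satisfies $\Psi_0^*=\Psi_0$, and let $\natural_0$ be the superinvolution of $\End_{\mathbb D}(\U)$ determined by $\gamma_0$, so that $\T^{\natural_0}=\Psi_0^{-1}\T^*\Psi_0$ by Remark~\ref{RemarkNovember14AH}.

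Next I would consider the ring automorphism $\Theta:=\diamond\circ\natural_0$ of $\End_{\mathbb D}(\U)$. Since both $\diamond$ and $\natural_0$ restrict to $\iota$ on the subalgebra $\mathscr Z(\mathbb D)\cdot\Id_\U$, the composite $\Theta$ fixes $\mathscr Z(\mathbb D)\cdot\Id_\U$ pointwise and is therefore $\mathscr Z(\mathbb D)$-linear. Because $\End_{\mathbb D}(\U)$ is central simple over $\mathscr Z(\mathbb D)$, Proposition~\ref{prp:Skolem} applied over $\mathscr Z(\mathbb D)$ produces a homogeneous invertible $\Q\in\End_{\mathbb D}(\U)$ with $\Theta(\T)=(-1)^{|\Q|\cdot|\T|}\Q\T\Q^{-1}$. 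Setting $\Psi:=\Psi_0\circ\Q^{-1}$ then yields
\[
\T^\diamond=\Theta(\T^{\natural_0})=(-1)^{|\Psi|\cdot|\T|}\Psi^{-1}\T^*\Psi,\qquad (\T\in\End_{\mathbb D}(\U)),
\]
with $|\Psi|=|\Q|$. I would now define $\gamma^\diamond(u,v):=\langle \Psi(u),v\rangle$; the $\mathbb K$-bilinearity, the parity $|\gamma^\diamond|=|\Psi|$ and the $\mathbb D$-sesquilinearity of $\gamma^\diamond$ all follow from the $\mathbb D$-linearity of $\Psi$ (see Remark~\ref{remark:B1}), and equation~\eqref{EquationDiagramMov132} follows by a direct unravelling of the formulas.

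To complete (i), I would verify that $\gamma^\diamond$ is $(\iota,\epsilon)$-superhermitian for some $\epsilon\in\{\pm 1\}$. Applying $\diamond$ twice and using $\T^{\diamond\diamond}=\T$ for all $\T$ forces $\Psi^{-1}\Psi^*$ to lie in $\mathscr Z(\End_{\mathbb D}(\U))=\mathscr Z(\mathbb D)\cdot\Id_\U$, hence $\Psi^*=c\Psi$ for some $c\in\mathscr Z(\mathbb D)$. Applying $*$ once more and using $\Psi^{**}=\Psi$ (under the canonical identification $\U\cong\U^{**}$) gives $c\,\iota(c)=1$. When $\iota$ is trivial on $\mathscr Z(\mathbb D)$ this immediately yields $c=\pm 1$; in the remaining case $\mathbb D=\mathbb C$ with $\iota=\conj$, Hilbert~90 writes $c=d/\iota(d)$ for some $d\in\mathbb C^\times$ and replacing $\Psi$ by $\Psi d$ rescales $c$ to $1$. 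In either situation $\Psi^*=\epsilon\Psi$ for some $\epsilon\in\{\pm 1\}$, so $\gamma^\diamond$ is $(\iota,\epsilon)$-superhermitian. For part~(ii), given a second form $\gamma'$ with associated $\Psi'$, setting $\R:=\Psi'\Psi^{-1}\in\End_{\mathbb D}(\U^*)$ and comparing the two formulas for $\diamond$ yields $\R\T^*=(-1)^{(|\Psi|+|\Psi'|)|\T|}\T^*\R$ for all homogeneous $\T$; since $*$ is surjective, $\R$ lies in the super-centre of $\End_{\mathbb D}(\U^*)$, which equals $\mathscr Z(\mathbb D)\cdot\Id_{\U^*}$ and is purely even because $\mathbb D_{\bar 1}=0$. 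Hence $|\Psi'|=|\Psi|$ and $\Psi'=c\Psi$ for a unique $c\in\mathscr Z(\mathbb D)$, so $\gamma'=c\gamma^\diamond$, and substituting into the $(\iota,\epsilon')$-superhermitian condition produces the relation $\epsilon c=\epsilon'\iota(c)$, yielding the stated sign dichotomy.

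The main anticipated obstacle is the correct application of Proposition~\ref{prp:Skolem}: its hypothesis $\tilde{\mathscr Z}(\mathbb D)=\mathbb K$ fails when $\mathbb D=\mathbb C$ and $\mathbb K=\mathbb R$, so one must reinterpret $\End_{\mathbb D}(\U)$ as a central simple $\mathbb C$-algebra and verify that $\Theta$ is automatically $\mathbb C$-linear (as it fixes $\mathbb C\cdot\Id_\U$ pointwise and is a ring automorphism). A secondary technicality is the careful sign bookkeeping required to check $\Psi^{**}=\Psi$ under the canonical identification, which is essential to derive the norm identity $c\,\iota(c)=1$ and hence the parity dichotomy in part~(ii).
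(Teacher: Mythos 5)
Your proposal is correct and follows essentially the same route as the paper's proof: pick a reference $(\iota,1)$-superhermitian form, compose its superinvolution with $\diamond$ to get a $\mathscr Z(\mathbb D)$-linear automorphism, apply the graded Skolem--Noether theorem (Proposition~\ref{prp:Skolem}) to realize it as an inner twist, define $\gamma^\diamond$ by twisting the reference form, and use $\diamond^2=\mathrm{Id}$ plus a normalization to pin down $\epsilon$, with uniqueness in (ii) coming from the graded centre of $\End_{\mathbb D}(\U)$ being $\mathscr Z(\mathbb D)\Id$ and purely even. The only differences are cosmetic (you work with $\Psi:\U\to\U^*$ and the dual operation $*$ instead of the paper's $\P\in\End_{\mathbb D}(\U)$ and the reference involution, and you make explicit the $\mathscr Z(\mathbb D)$-linearity point that the paper uses implicitly when $\mathbb K=\mathbb R$, $\mathbb D=\mathbb C$).
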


\begin{proof}
(i)
Let $\gamma^{\bullet}$ be any even $(\iota, 1)$-hermitian form on $\U$. Let $\T \mapsto \T^{\bullet}$ denote the involution associated to $\gamma^{\bullet}$ so that
\begin{equation*}
\gamma^{\bullet}(\T^{\bullet}(u)\,, v) = (-1)^{\left|\T\right| \cdot \left|u\right|}\gamma^{\bullet}(u\,, \T(v))\,, \qquad (u\,, v \in \U)\,.
\end{equation*}

It is not difficult to verify that the map $\T \mapsto (\T^{\bullet})^{\diamond}$ is an automorphism of $\End_{\mathbb{D}}(\U)$ which is $\mathscr{Z}(\mathbb{D})$-linear. Note that $\mathscr{Z}(\mathbb{D})$ is isomorphic to the centre of $\End_{\mathbb{D}}(\U)$. 
Thus by Proposition 
\ref{prp:Skolem} this automorphism should be inner, i.e.,  there exists $\P \in \End_{\mathbb{D}}(\U)$ such that
\begin{equation}
(\T^{\bullet})^{\diamond} = (-1)^{\left|\P\right| \cdot \left|\T\right|} \P^{-1}\T\P\,,
\label{TBulletDiamond1}
\end{equation}
that is $\T^{\diamond} = (-1)^{\left|\P\right| \cdot \left|\T\right|}\P^{-1} \T^{\bullet} \P$. The relation $\T^{\bullet\diamond} = \P^{-1}\T\P$ implies that
\begin{eqnarray}
\T = \T^{\diamond\diamond} & = & (\T^{\diamond})^{\diamond} = (-1)^{\left|\P\right| \cdot \left|\T\right|}(\P^{-1}\T^{\bullet}\P)^{\diamond} = \P^{-1} (\P^{-1}\T^{\bullet}\P)^{\bullet} \P \nonumber \\
                   & = & (-1)^{\left|\P\right|\cdot \left|\P^{-1}\T^{\bullet}\right|} \P^{-1} \P^{\bullet} (\P^{-1}\T^{\bullet})^{\bullet} \P \nonumber \\
                   & = & (-1)^{\left|\P\right|\cdot \left|\P^{-1}\T^{\bullet}\right|} (-1)^{\left|\P^{-1}\right|\cdot \left|\T^{\bullet}\right|} \P^{-1} \P^{\bullet} \T^{\bullet\bullet} {\P^{-1}}^{\bullet} \P \nonumber \\
                   & = & (-1)^{\left|\P\right|\cdot \left|\P^{-1}\T^{\bullet}\right|} (-1)^{\left|\P^{-1}\right|\cdot \left|\T^{\bullet}\right|} (-1)^{\left|\P\right|}\P^{-1} \P^{\bullet} \T^{\bullet\bullet} {\P^{-1}}^{\bullet} \P \label{EquationDecember81}\\
                   & = & (\P^{-1} \P^{\bullet}) \T (\P^{-1} \P^{\bullet})^{-1}\,, \nonumber
\end{eqnarray}
where the sign $(-1)^{\left|\P\right|}$ in Equation \eqref{EquationDecember81} comes from the fact that ${\P^{-1}}^{\bullet} = (-1)^{\left|\P\right|} {\P^{\bullet}}^{-1}$. In particular, $\P^{-1}\P^{\bullet} \in \mathscr{Z}(\mathbb{D})$ and consequently   $\P^{\bullet} = c\P$ where $c \in \mathscr{Z}(\mathbb{D})$. By using the same method, there exists $c_{1} \in \mathscr{Z}(\mathbb{D})$ such that $\P^{\diamond} = c_{1}\P$ and it follows from Equation \eqref{TBulletDiamond1} that $c = c_{1}$. After normalizing $\P$ by an element of $\mathscr Z(\mathbb D)$ we can  assume that $c \in \{\pm 1\}$.

Let $\gamma^{\diamond}$ be the form given by
\begin{equation*}
\gamma^{\diamond}(u\,, v) = \gamma^{\bullet}(\P(u)\,, v)\,.
\end{equation*}

Note that $\left|\gamma^{\diamond}\right| = \left|\P\right|$. For every homogeneous $\D \in \mathbb{D}$, $u, v \in \U$, we get:
Moreover,
\begin{equation*}
\gamma^{\diamond}(u \cdot \D\,, v) = \gamma^{\bullet}(\P(u \cdot \D)\,, v) = \gamma^{\bullet}(\P(u) \cdot \D\,, v) =  \iota(\D)\gamma^{\bullet}(\P(u)\,, v) = \iota(\D)\gamma^{\diamond}(u\,, v)
\end{equation*}
and
\begin{equation*}
\gamma^{\diamond}(u\,, v \cdot \D) = \gamma^{\bullet}(\P(u)\,, v \cdot \D) = \gamma^{\bullet}(\P(u)\,, v)\D = \gamma^{\diamond}(u\,, v)\D\,,
\end{equation*}
i.e. the form $\gamma^{\diamond}$ is super-sesquilinear. Moreover,
\begin{eqnarray*}
\gamma^{\diamond}(u\,, v) & = & \gamma^{\bullet}(\P(u)\,, v) = (-1)^{\left|\P\right| \cdot \left|u\right|} \gamma^{\bullet}(u\,, \P^{\bullet}v) = (-1)^{\left|\P\right| \cdot \left|u\right|} \gamma^{\bullet}(u\,, c\P(v))\\
& = & c(-1)^{\left|\P\right| \cdot \left|u\right|} (-1)^{\left|\P(v)\right| \cdot \left|u\right|}\gamma^{\bullet}(\P(v)\,, u) = c(-1)^{\left|v\right|\cdot \left|u\right|}\gamma^{\diamond}(v\,, u)
\end{eqnarray*}
so the form $\gamma^{\diamond}$ is $(\iota, c)$-superhermitian. Finally, for every $u, v \in \U$, we get:
\begin{eqnarray*}
\gamma^{\diamond}(\T^\diamond(u)\,, v) & = & \gamma^{\bullet}(\P\T^{\diamond}(u)\,, v) = \gamma^{\bullet}(\P\T^{\diamond}\P^{-1}\P(u)\,,v) \\
         & = & \gamma^{\bullet}(\T^{\bullet}\P(u)\,, v) = (-1)^{\left|\T\right|\cdot \left|\P(u)\right|} \gamma^{\bullet}(\P(u)\,, \T(v)) = (-1)^{\left|\T\right|\cdot \left|\P\right|}(-1)^{\left|\T\right|\cdot \left|u\right|} \gamma^{\diamond}(u\,, \T(v))\,.
\end{eqnarray*}

(ii) Assume that for another hermitian form $\gamma^{\star}(\cdot\,, \cdot)$ the relation \eqref{EquationDiagramMov132} holds. Let $h_{\diamond}$ and $h_{\star}$ be the associated adjoint maps $\U \to \U^{*}$ according to  part (1) of Remark \ref{remark:B1}. 
From Remark \ref{Remark.B5} it follows that $|h_\star|=|h_\diamond|$.
Since 
\begin{equation*}
\T^{\diamond} =(-1)^{|h_\diamond|\cdot |\T|} h^{-1}_{\diamond}\T^{*} h_{\diamond} = 
(-1)^{|h_\star|\cdot |\T|}
h^{-1}_{\star}\T^{*} h_{\star}\,,
\end{equation*}
it follows that $h_{\diamond} = ch_{\star}$ for some $c \in \mathscr{Z}(\mathbb{D})$, hence $\gamma^{\diamond}(\cdot\,, \cdot) = c\gamma^{\star}(\cdot\,,\cdot)$. Finally, note that  the form $c\gamma^{\star}$ is $(\iota,\pm\epsilon)$-superhermitian if and only if $\iota(c)=\pm c$.  
\end{proof}

Let us now consider the case where $\mathbb D_{\bar 1}\neq 0$. 
In this case basically $\mathbb K=\mathbb R$ and there is only one division superalgebra of interest (but there are two possible superinvolutions).
Recall that $\mathscr Z(\mathbb D)=\mathbb D$, hence $\mathscr Z(\End_{\mathbb D}(\U))$ is isomprhic to $\mathbb D$.
Thus the restriction of any superinvolution of
$\End_{\mathbb D}(\U)$  to
the ungraded centre  defines a superinvolution of $\mathbb D$.

\begin{prop}

Let $\mathbb{D} := \mathrm{Cl}_1(\mathbb C)$ and let $\iota$ be a superinvolution on $\mathbb{D}$. 
Let $\theta$ be an $\mathbb{R}$-linear superinvolution of $\End_{\mathbb{D}}(\U)$ such that $\theta(z\Id_{\U}) = \iota(z)\Id_{\U}$ for $z\in \mathscr{Z}(\mathbb{D})=\mathbb D$. Let $\epsilon\in\{\pm 1\}$. Then 
there exist an even $(\iota,\epsilon)$-superhermitian form $\gamma_{1}$ on $\U$, and an odd $(\iota \circ \delta, \epsilon)$-superhermitian form $\gamma_{2}$ on $\U$, satisfying
\begin{equation*}
\gamma_{1}(\theta(\T)u\,, v) = (-1)^{\left|\T\right| \cdot \left|u\right|}\gamma_{1}(u\,, \T(v))\,, \qquad \gamma_{2}(\theta(\T)u\,, v) = (-1)^{\left|\T\right| \cdot \left|u\right|}\gamma_{2}(u\,, \T(v))\,, \qquad (u, v \in \U)\,.
\end{equation*}

\label{PropositionC5}

\end{prop}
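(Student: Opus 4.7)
The plan is to mirror the argument of Proposition \ref{PropositionC4}, with additional bookkeeping needed to handle the presence of odd elements in $\mathbb D = \mathrm{Cl}_1(\mathbb C)$. First I would construct an explicit reference even $(\iota, 1)$-superhermitian form $\gamma_0$ on $\U$ by setting $\gamma_0(u,v) := \sum_i \iota(u_i) v_i$ for $u = \sum_i h_i u_i$, $v = \sum_i h_i v_i$ expressed in the $\mathbb D$-basis $\{h_1, \ldots, h_k\}$ of $\U$. A direct verification shows that $\gamma_0$ is even and $(\iota, 1)$-superhermitian, and that its induced superinvolution (denoted $\bullet$) satisfies $(z\Id_\U)^\bullet = \iota(z)\Id_\U$ for $z \in \mathbb D$.

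Next I would observe that $\Theta := \theta \circ \bullet$ is an $\mathbb R$-linear automorphism of $\End_\mathbb D(\U)$ (the composition of two anti-automorphisms) that fixes $z\Id_\U$ for every $z \in \mathbb D$, thanks to $\iota^2 = \mathrm{id}$. In particular $\Theta$ is $\mathbb C$-linear, so it is an automorphism of the $\mathbb C$-algebra $\End_\mathbb D(\U)$. Since $\tilde{\mathscr Z}(\mathrm{Cl}_1(\mathbb C)) = \mathbb C$ (a short check: $\varepsilon\cdot \varepsilon \neq (-1)^{|\varepsilon|^2}\varepsilon\cdot \varepsilon$, so $\varepsilon$ is not super-central), Proposition \ref{prp:Skolem} applied with base field $\mathbb C$ yields a homogeneous invertible $\P \in \End_\mathbb D(\U)$ such that $\theta(\T) = (-1)^{|\P|\cdot|\T|}\P\T^\bullet\P^{-1}$. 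Imposing $\theta^2 = \mathrm{id}$ and computing as in the proof of Proposition \ref{PropositionC4} gives $Q\T Q^{-1} = (-1)^{|\P|\cdot|\T|}\T$ for every $\T$, where $Q := \P^\bullet\P^{-1}$ is even. Since $\End_\mathbb D(\U) \cong \mathrm{Q}(k,\mathbb C)$ contains no nonzero even element that super-anticommutes with all of its odd part, $|\P|$ must be even. With $|\P|$ even, $Q$ lies in the ungraded centre $\mathbb D$, and applying $\bullet$ to the equation $\P^\bullet = c\P$ forces $|c|=1$ with $c \in \mathbb C^\times$; after replacing $\P$ by $d\P$ for a suitable $d \in \mathbb C^\times$ satisfying $d/\bar d = c$, we may assume $\P^\bullet = \P$.

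At this point I would define $\gamma_1(u,v) := \gamma_0(\P u, v)$. Since $|\P| = \bar 0$ the form $\gamma_1$ is even, and using $\P^\bullet = \P$ together with the $(\iota, 1)$-superhermitian property of $\gamma_0$ one verifies directly that $\gamma_1$ is $(\iota, 1)$-superhermitian and satisfies $\gamma_1(\theta(\T)u, v) = (-1)^{|\T|\cdot|u|}\gamma_1(u, \T v)$. The case $\epsilon = -1$ is obtained by replacing $\gamma_1$ with $i\gamma_1$: since $\iota(i) = -i$ (both $\iota_1$ and $\iota_2$ restrict to complex conjugation on $\mathbb C$), this flips the hermitian sign while preserving super-sesquilinearity and the induced superinvolution.

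Finally, to obtain $\gamma_2$ I would set $\gamma_2(u,v) := \gamma_1(u,v) \cdot \alpha \varepsilon$ for a complex scalar $\alpha$ to be chosen. The ungraded commutativity of $\mathrm{Cl}_1(\mathbb C)$ and the formula $(\iota \circ \delta)(\alpha \varepsilon) = \mp i \bar\alpha \varepsilon$ make it straightforward to check that $\gamma_2$ is odd, $(\iota \circ \delta)$-sesquilinear in the sense of Definition \ref{DefSuperhermitian}, and induces the same superinvolution $\theta$ as $\gamma_1$ (scalars from $\mathscr Z(\mathbb D)$ commute through the adjoint calculation). The remaining constraint from hermitian symmetry reduces to $\bar\alpha = \pm i (\epsilon / \epsilon')\alpha$ for the new hermitian sign $\epsilon'$; writing $\alpha = e^{i\phi}$, the map $\phi \mapsto \bar\alpha/\alpha = e^{-2i\phi}$ traces out the unit circle, so any prescribed $\epsilon' \in \{\pm 1\}$ can be realized by choosing an appropriate phase. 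Taking $\epsilon' = \epsilon$ completes the construction. The main obstacle in the whole argument is the parity argument forcing $|\P|$ to be even; this rests on the explicit structure of the queer superalgebra $\mathrm{Q}(k, \mathbb C)$ and has no counterpart in Proposition \ref{PropositionC4}.
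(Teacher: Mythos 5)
Your overall strategy (one reference form, one application of Skolem--Noether, then twisting, and finally producing $\gamma_2$ from $\gamma_1$ by multiplying its values by a suitable odd scalar $\alpha\varepsilon$) is viable, and your $\gamma_2$ step is in fact a legitimate shortcut compared with the paper, which instead runs the Skolem--Noether argument a second time against a second reference involution attached to $\iota_2=\iota\circ\delta$ and obtains an \emph{odd} intertwiner $\P_2$. However, there is a genuine gap at the crucial step where you decide the parity of $\P$. You claim that imposing $\theta^2=\mathrm{id}$ yields $\Q\T \Q^{-1}=(-1)^{|\P|\cdot|\T|}\T$ with $\Q:=\P^{\bullet}\P^{-1}$, and then you rule out odd $\P$ using the structure of $\mathrm{Q}(k,\mathbb{C})$. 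That identity is false: carrying out the computation (exactly as in the chain of equalities in the proof of Proposition~\ref{PropositionC4}, where the final result is $\T=(\P^{-1}\P^{\bullet})\T(\P^{-1}\P^{\bullet})^{-1}$ with no residual sign) one finds
\begin{equation*}
\theta^{2}(\T)=\P(\P^{\bullet})^{-1}\,\T\,\P^{\bullet}\P^{-1}\,,
\end{equation*}
because the two factors $(-1)^{|\P|\cdot|\T|}$ cancel against the signs coming from $(\S\T)^{\bullet}=(-1)^{|\S|\cdot|\T|}\T^{\bullet}\S^{\bullet}$ and $(\P^{-1})^{\bullet}=(-1)^{|\P|}(\P^{\bullet})^{-1}$. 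Hence involutivity only says that $\P^{\bullet}\P^{-1}$ is (ungraded) central and puts no constraint whatsoever on $|\P|$. A sanity check that your argument cannot be right as stated: in the setting of Proposition~\ref{PropositionC4} one also has $\theta^2=\mathrm{id}$, yet $\P$ there may well be odd --- this is precisely how the odd (periplectic-type) forms arise --- so no argument deriving evenness of $\P$ from involutivity alone can be correct. Consequently the (true) fact you invoke about $\mathrm{Q}(k,\mathbb{C})$ is applied to a premise that does not hold, and as written your construction of $\gamma_1$ as an \emph{even} form is not justified.

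The gap is repairable inside your own setup, and the repair is exactly the paper's argument: you have not yet exploited the hypothesis $\theta(z\Id_{\U})=\iota(z)\Id_{\U}$ on the \emph{odd} part of the ungraded centre. Since $\gamma_0$ is even, $(z\Id_{\U})^{\bullet}=\iota(z)\Id_{\U}$ for all $z\in\mathbb{D}$, so your automorphism $\Theta=\theta\circ\bullet$ fixes $\varepsilon\Id_{\U}$; on the other hand $\Theta(\varepsilon\Id_{\U})=(-1)^{|\P|}\P(\varepsilon\Id_{\U})\P^{-1}=(-1)^{|\P|}\varepsilon\Id_{\U}$ because $\varepsilon\Id_{\U}$ is ungraded central. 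Comparing forces $(-1)^{|\P|}=1$, i.e.\ $\P$ is even; this is the paper's computation with $\X_{\circ}=i\varepsilon\Id_{\U}$. With that substitution, your normalization $\P^{\bullet}=\P$, the definition $\gamma_1(u,v)=\gamma_0(\P u,v)$, the twist by $i$ for $\epsilon=-1$, and the passage to $\gamma_2$ by an appropriate phase $\alpha$ (compare Remark~\ref{SuperHermitianForms}) all go through.
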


\begin{proof}
Let  $\iota_{1}$ and $\iota_{2}$ be defined as in 
\eqref{eq:superinv}. 
 We assume $\iota:=\iota_1$, as the argument for $\iota:=\iota_2$ is similar.
Let $\gamma^{\theta_{1}}$ (resp. $\gamma^{\theta_{2}}$) be the even $(\iota_{1}, 1)$-superhermitian form on $\mathbb{D}^{k}$ (resp. even $(\iota_{2}, 1)$-superhermitian form on $\mathbb{D}^{k}$) as in part (1) of Remark \ref{SuperHermitianForms}. The forms $\gamma^{\theta_{1}}$ and $\gamma^{\theta_{2}}$ define superinvolutions $\theta_{1}$ and  $\theta_{2}$ on $\End_{\mathbb{D}}(\mathbb{D}^{k})$. One can see that
\begin{equation*}
\theta_{1}(\X) = -\iota_{1}(\X)^{t}\,, \qquad \theta_{2}(\X) = -\iota_{2}(\X)^{t}\,, \qquad (\X \in \End_{\mathbb{D}}(\mathbb{D}^{k}))\,,
\end{equation*}
where $\iota_{1}(\X)_{i, j} = \iota_{1}(\X_{i, j})$ and where $\X^{t}$ corresponds to the usual transpose of the $k\times k$ matrix $\X$.

 It follows from Proposition \ref{PropositionDivisionIota} that
\begin{equation*}
\theta(\lambda \X) = \overline{\lambda} \theta(\X)\,, \qquad (\lambda \in \mathbb{C}, \X \in \End_{\mathbb{D}}(\mathbb{D}^{k}))\,.
\end{equation*}
Note that $\theta_{1} = \theta_{2} \circ \delta$, where $\delta(\X) = (-1)^{\left|\X\right|}\X$ for $ \X \in \End_{\mathbb{D}}(\mathbb{D}^{k})$. Moreover, 
\begin{equation*}
\theta_{1}(\lambda \X) = \overline{\lambda} \theta_{1}(\X)\,, \qquad \theta_{2}(\lambda \X) = \overline{\lambda} \theta_{2}(\X)\,, \qquad (\lambda \in \mathbb{C}, \X \in \End_{\mathbb{D}}(\mathbb{D}^{k}))\,.
\end{equation*}
In particular, it follows that $\widetilde{\theta}_{1} = \theta \circ \theta^{-1}_{1}
$ and $
\widetilde{\theta}_{2} = \theta \circ \theta^{-1}_{2}
$
are $\mathbb C$-linear 
automorphisms of 
$\End_{\mathbb{D}}(\mathbb{D}^{k})$. 
But also from $\tilde{\mathscr Z}(\mathbb D)=\mathbb C$ it follows that  $\tilde{\mathscr Z}(\End_\mathbb D(\U)\cong \mathbb C$.
Thus by Proposition \ref{prp:Skolem}
 there exist homogeneous  $\P_{1}, \P_{2} \in \End_{\mathbb{D}}(\mathbb{D}^{k})$ such that
\begin{equation*}
\widetilde{\theta}_{1}(\X) = (-1)^{\left|\P_{1}\right|\cdot \left|\X\right|}\P^{-1}_{1} \X\P_{1}\,, \qquad \widetilde{\theta}_{2}(\X) = (-1)^{\left|\P_{2}\right|\cdot \left|\X\right|}\P^{-1}_{2} \X\P_{2}\,, \qquad (\X \in \End_{\mathbb{D}}(\mathbb{D}^{k}))\,,
\end{equation*}
i.e. $\theta(\X) = (-1)^{\left|\P_{1}\right|\cdot \left|\X\right|} \P^{-1}_{1} \theta_{1}(\X) \P_{1}$ and $\theta(\X) = (-1)^{\left|\P_{2}\right|\cdot \left|\X\right|} \P^{-1}_{2} \theta_{2}(\X) \P_{2}$.

Now set $\X=\X_\circ:=i\varepsilon\mathrm{Id}_\U$ and note that $\X_\circ\in\mathscr Z(\End_{\mathbb D}(\U))$. It follows that 
\[
\varepsilon=\theta(\X_\circ)
=(-1)^{|\P_1|\cdot |\X_\circ|}\P_1^{-1}\theta_1(\X_\circ)\P_1=
(-1)^{|\P_1|\cdot |\X_\circ|}\varepsilon.
\]
Since $|\X_\circ|=\bar 1$, we must have  $|\P_1|=\bar 0$. A similar argument with $\theta_1$ replaced by $\theta_2$ implies that we must have $|\P_2|=\bar 1$.
Moreover, there exist $c_{1}, c_{2} \in \mathbb{C}$ such that $\theta_{k}(\P_{k}) = c_{k}\P_{k}, k = 1, 2$. Indeed, for every $\T \in \End_{\mathbb{D}}(\mathbb{D}^{k})$,
\begin{eqnarray}
\T = \theta^{2}(\T) & = & \theta(\theta(\T)) = (-1)^{\left|\T\right|\cdot\left|\P_{k}\right|} \theta(\P^{-1}_{k}\theta_{k}(\T)\P_{k}) = (-1)^{\left|\T\right|\cdot\left|\P_{k}\right|} (-1)^{\left|\P_{k}\right|\cdot \left|\P^{-1}_{k}\theta_{k}(\T)\P_{k}\right|} \P^{-1}_{k} \theta_{k}(\P^{-1}_{k}\theta_{k}(\T)\P_{k}) \P_{k} \nonumber \\ 
& = & (-1)^{\left|\T\right|\cdot\left|\P_{k}\right|} (-1)^{\left|\P_{k}\right|\cdot \left|\T\right|} (-1)^{\left|\P_{k}\right| \cdot \left|\P^{-1}_{k}\theta_{k}(\T)\right|} \P^{-1}_{k}\theta_{k}(\P_{k}) \theta_{k}(\P^{-1}_{k}\theta_{k}(\T))\P_{k} \nonumber \\
& = & (-1)^{\left|\T\right|\cdot\left|\P_{k}\right|} (-1)^{\left|\P_{k}\right|\cdot \left|\T\right|} (-1)^{\left|\P_{k}\right| \cdot \left|\P^{-1}_{k}\theta_{k}(\T)\right|} (-1)^{\left|\theta_{k}(\T)\right| \cdot \left|\P^{-1}_{k}\right|} \P^{-1}_{k}\theta_{k}(\P_{k}) \theta^{2}_{k}(\T) \theta_{k}(\P^{-1}_{k})\P_{k} \nonumber \\
& = & (-1)^{\left|\T\right|\cdot\left|\P_{k}\right|} (-1)^{\left|\P_{k}\right|\cdot \left|\T\right|} (-1)^{\left|\P_{k}\right| \cdot \left|\P^{-1}_{k}\theta_{k}(\T)\right|} (-1)^{\left|\theta_{k}(\T)\right| \cdot \left|\P^{-1}_{k}\right|} (-1)^{\left|\P_{k}\right|}\P^{-1}_{k}\theta_{k}(\P_{k}) \theta^{2}_{k}(\T) \theta_{k}(\P_{k})^{-1}\P_{k} \label{EquationDecember8} \\
& = & \P^{-1}_{k}\theta_{k}(\P_{k}) \T (\P^{-1}_{k}\theta_{k}(\P_{k}))^{-1}\,. \nonumber
\end{eqnarray}
where the sign $(-1)^{\left|\P_{k}\right|}$ in Equation \eqref{EquationDecember8} comes from the fact that $\theta_{k}(\P^{-1}_{k}) = (-1)^{\left|\P_{k}\right|} \theta_{k}(\P_{k})^{-1}$. In particular, using that $\left|\theta_{k}(\P_{k})\right| = \left|\P^{-1}_{k}\right|$, it follows that $\P^{-1}_{k}\theta_{k}(\P_{k}) \in \mathscr{Z}(\mathbb{D})_{\bar{0}} \cong \mathbb{C}$ and the existence of the constants $c_{k}, k=1, 2$, follows. 
The matrices $\P_{k}$ can be normalized by scalars in $\mathscr Z(\mathbb D)_{\bar 0}=\mathbb C$ so that $c_{k} = 1$.

For $k \in \{1, 2\}$, we define the forms $\gamma_{k}: \mathbb{D}^{k} \times \mathbb{D}^{k} \to \mathbb{D}$ by
\begin{equation*}
\gamma_{k}(u\,, v) := \gamma^{\theta_{k}}(\P_{k}u\,, v)\,, \qquad (u\,, v \in \mathbb{D}^{k})\,.
\end{equation*}
Note that $\left|\gamma_{k}\right| = \left|\P_{k}\right|$ and that the forms $\gamma_{k}$ are $(\iota_{k}, 1)$-superhermitian. Moreover, for every homogeneous $u, v \in \mathbb{D}^{k}$ and $\T \in \End_{\mathbb{D}}(\mathbb{D}^{k})$, we have
\begin{eqnarray*}
\gamma_{k}(\theta(\T)u\,, v) & = & \gamma^{\theta_{k}}(\P_{k}\theta(\T)(u)\,, v) = \gamma^{\theta_{k}}(\P_{k}\theta(\T) \P^{-1}_{k}\P_{k}(u)\,, v) \\
         & = & (-1)^{\left|\P_{k}\right| \cdot \left|\T\right|} \gamma^{\theta_{k}}(\theta_{k}(\T)\P_{k}(u)\,, v) = (-1)^{\left|\P_{k}\right| \cdot \left|\T\right|}(-1)^{\left|\T\right|\cdot \left|\P_{k}(u)\right|} \gamma^{\theta_{k}}(\P_{k}(u)\,, \T(v)) \\
         & = & (-1)^{\left|\T\right|\cdot \left|u\right|} \gamma_{k}(u\,, \T(v))\,,
\end{eqnarray*}
i.e. $\gamma_k$ satisfies  Equation \eqref{EquationDiagramMov132}.
Finally, 
$i\gamma_k$ for $k=1,2$ is $(\iota_k,-1)$-superhermitian
since $\iota_1(i)=\iota_2(i)=-i$,  and also satisfies Equation \eqref{EquationDiagramMov132}.
\end{proof}

 


\section{Explicit realizations of 
$\mathfrak{gl}_\mathbb D(\W)$ and 
$\mathfrak{g}(\W\,, \gamma)$}

\label{AppendixExplicitRealization}

In this appendix we give explicit realizations of the Lie superalgebras $\mathfrak{gl}_\mathbb D(\W)$ and $\mathfrak{g}(\W\,, \gamma)$ for every possible $\mathbb{D}$ and $\gamma$.
 A summary of the constructions of this appendix is provided in Table~\ref{Table:LieSup}, where the first column gives the symbol used for the Lie superalgebra, and the second column indicates the subsection of this appendix where the constuction of the Lie superalgebra can be found.

\begin{table}[ht]

\begin{tabular}{|c|c|}
\hline 
Lie superalgebra & Case\\
\hline $\mathfrak{gl}(n|m\,,\mathbb D)$; 
$\mathbb D=\mathbb R,\mathbb C,\mathbb H$ 
& $\mathrm{I}$\\
\hline 
$\mathfrak q(n\,,\mathbb D)$;
$\mathbb D=\mathbb R,\mathbb C,\mathbb H$
&
$\mathrm{II}$ or $\mathrm{III}$\\
\hline 
$\bar{\mathfrak{q}}(n)$ & $\mathrm{IV}$ or $\mathrm{V}$\\
\hline
$\mathfrak p(n,\mathbb D)$; $\mathbb D=\mathbb R,\mathbb C$ & 
$\mathrm{I'(a)}$\\
\hline

$\bar{\mathfrak{p}}(n)$ & 
$\mathrm{I'(b)}$\\
\hline
$\mathfrak{p}^*(n)$ & 
$\mathrm{I'(c)}$\\
\hline 
$\mathfrak{osp}(p\,,q|m)$; $m$ even &
$\mathrm{II'(a)}$\\
\hline
$\mathfrak{spo}(n|p\,,q)$; $n$ even &
$\mathrm{II'(a)}$\\

\hline 
$\mathfrak{osp}(n|m)$; $m$ even &
$\mathrm{II'(b)}$\\
\hline
$\mathfrak{spo}(n|m)$; $n$ even &
$\mathrm{II'(b)}$\\
\hline

$\mathfrak{u}(p\,, q| r\,, s)$
&
$\mathrm{II'(c)}$
\\

\hline
$\mathfrak{spo}^*(p\,, q|m)$
&
$\mathrm{II'(d)}$\\
\hline
$\mathfrak{osp}^*(n|p\,, q)$
&
$\mathrm{II'(d)}$\\

\hline 
$\mathfrak q(p,q)$ & 
$\mathrm{III'}$\\
\hline

\end{tabular}

\vspace{3mm}

\caption{\label{Table:LieSup} Real and complex Lie superalgebras that occur in dual pairs}
\end{table}

Let $\mathbb{D}$ be a division superalgebra over $\mathbb{K} = \mathbb{R}$ or $\mathbb{C}$. Let $\W$ denote the right $\mathbb D$-module defined by
\begin{equation*}
\W := \begin{cases} \mathbb{D}^{n|m} & \text{ if } \mathbb{D}_{\bar{1}} = \{0\}, \\ \mathbb{D}^{n} & \text{ if } \mathbb{D}_{\bar{1}} \neq \{0\}. \end{cases}
\end{equation*}
As in Notation \ref{NotationsBasisU}, we denote by $\mathfrak{gl}_{\mathbb{D}}(\W)$ the $\mathbb K$-Lie superalgebra formed by $\mathbb{D}$-linear endomorphisms of $\W$. Recall that for  every superinvolution $\iota$ on $\mathbb{D}$ (see Equation \eqref{IotaProperties}) and every  $(\iota\,, \pm 1)$-superhermitian form $\gamma$ on $\W$ (see Definition \ref{DefSuperhermitian}), we denote by $\mathfrak{g}(\W\,, \gamma)$ the $\mathbb K$-subalgebra of $\mathfrak{gl}_{\mathbb{D}}(\W)$ defined by
\begin{equation*}
\mathfrak{g}(\W\,, \gamma) := \left\{\X \in \mathfrak{gl}_{\mathbb{D}}(\W)\,, \gamma(\X(u)\,, v) + (-1)^{\left|\X\right|\cdot\left|u\right|} \gamma(u\,, \X(v)) = 0\,, u\,, v \in \W\right\}.
\end{equation*}


\begin{nota}

 Let $\mathbb{D}$ be a division superalgebra over $\mathbb{R}$. 
Following the notation of \cite{SERGANOVA}, 
 for every 
 $\X\in \mathfrak{gl}(k|k\,, \mathbb{D}_{\bar{0}})$ of the form
 \[\X = \begin{pmatrix} \A & \B \\ \C & \D \end{pmatrix},\] we denote by $\sigma(\X), \Pi(\X)$ and $-\st(\X)$ the elements of $\mathfrak{gl}(k|k\,, \mathbb{D}_{\bar{0}})$ given by
\begin{equation*}
\sigma(\X) = \begin{pmatrix} \overline{\A} & \overline{\B} \\ \overline{\C} & \overline{\D} \end{pmatrix}\,, \qquad \Pi(\X) = \begin{pmatrix} \D & \C \\ \B & \A \end{pmatrix}\,, \qquad -\st(\X) = \begin{pmatrix} -\A^{t} & \C^{t} \\ -\B^{t} & -\D^{t} \end{pmatrix}\,,
\end{equation*}
where $\overline{\A}$ is the standard conjugation if $\mathbb{D}_{\bar{0}} = \mathbb{C}$ or $\mathbb{H}$.
We denote by $\xi_{1}$ the map
\begin{eqnarray*}
\xi_{1}: \mathbb{C} & \mapsto & \Mat(2\,, \mathbb{R}) \quad,\quad
a + ib  \mapsto  \begin{pmatrix} a & b \\ -b & a \end{pmatrix}.
\end{eqnarray*}
This map is a monomorphism and extends to a map 
\begin{equation}
\xi_{t}: \Mat(t\,, \mathbb{C}) \ni \A + i\B \to \begin{pmatrix} \A & \B \\ -\B & \A \end{pmatrix} \in \Mat(2t\,, \mathbb{R})\,, \qquad (\A\,, \B \in \Mat(t\,, \mathbb{R}))\,.
\label{MAPXI2}
\end{equation}
Similarly, let $\mathbb{H}$ be the field of quaternions, i.e. $\mathbb{H} = \mathbb{R} + i\mathbb{R} + j\mathbb{R} + ij \mathbb{R}$ with $ij = -ji$. Let $z = a + ib + jc + ijd = (a + ib) + j(c - id) \in \mathbb{H}$. We denote by $\xi'_{1}$ the map
\begin{eqnarray*}
\xi_{1}': \mathbb{H} & \mapsto & \Mat(2\,, \mathbb{C}) \\
(a + ib) + j(c - id) & \mapsto & \begin{pmatrix} a + ib & -c + id \\ c + id & a - ib \end{pmatrix}\,.
\end{eqnarray*}
This map is a monomorphism and extends to a map
\begin{align}
\label{MAPXI4}
\xi_{t}': \Mat(t\,, \mathbb{H})
&\to\Mat(2t,\mathbb C),\\
(\A + i\B) + j(\C - i\D) &\mapsto \begin{pmatrix} \A + i\B & -\C + i\D \\ \C + i\D & \A - i\B \end{pmatrix} \,, \qquad (\A\,, \B\,, \C\,, \D \in \Mat(t\,, \mathbb{R}))\,.
\notag
\end{align}

\label{NotationsSerganova}

\end{nota}

\noindent\textbf{The Lie superalgebras $\mathfrak{gl}_\mathbb D(\W)$.}
Our next task is to give concrete realizations of Lie superalgebras of the form $\mathfrak{gl}_{\mathbb{D}}(\W)$ for every possible $\mathbb{D}$. Note that examples include the queer Lie superalgebras and an exotic real form of $\mathfrak{gl}(n|n,\mathbb C)$. We will consider 5 cases below (Cases I--V).

\textbf{Case I:} $\mathbb D=\mathbb{D}_{\bar{0}}$. By using a $\mathbb D$-basis $\mathscr{B}$ of $\W$ as in Notation \ref{NotationsBasisU}, we identify $\mathfrak{gl}_{\mathbb{D}}(\W) \cong \mathfrak{gl}(n|m\,, \mathbb{D})$. By using the embeddings \eqref{MAPXI2} and \eqref{MAPXI4}, $\mathfrak{gl}_\mathbb D(\W)$ can be identified with a subalgebra of $\mathfrak{gl}(nd| md\,, \mathbb{R})$, with $d := \dim_{\mathbb{R}}(\mathbb{D})$.

\textbf{Case II:} $\mathbb{D} = \mathbb{D}_{\bar{0}} \oplus \mathbb{D}_{\bar{0}} \cdot \varepsilon$ where $\mathbb D_{\bar 0}\in\{\mathbb R,\mathbb C,\mathbb H\}$ , with $\varepsilon^{2} = 1$ and $\varepsilon$ commuting with $\mathbb{D}_{\bar{0}}$.
 Using a $\mathbb D$-basis $\mathscr{B}_{1}$ of $\W = \mathbb{D}^{n}$ as in Notation \ref{NotationsBasisU}, every $\X \in \mathfrak{gl}_{\mathbb{D}}(\W)$ can be written as $\X = \Y + \Z\cdot\varepsilon$, where $\Y, \Z \in \Mat(n\,, \mathbb{D}_{\bar{0}})$, and where $\Z \cdot \varepsilon$ is a matrix given by
\begin{equation*}
(\Z\cdot\varepsilon)_{i, j} = z_{i, j}\varepsilon\,, \qquad (1 \leq i, j \leq k)\,.
\end{equation*}

Let $\Psi_{1}: \mathbb{D} \to \Mat(1|1, \mathbb{D}_{\bar{0}})$ be the map given by
\begin{equation*}
\Psi_{1}(a + b\varepsilon) = \begin{pmatrix} a & b \\ b & a \end{pmatrix}\,, \qquad (a, b \in \mathbb{D}_{\bar{0}})\,.
\end{equation*}
The map $\Psi_{1}$ can be extended to an embedding of $\mathfrak{gl}_{\mathbb{D}}(\mathbb{D}^{n})$ in $\mathfrak{gl}(n|n,\mathbb D_{\bar 0})$, as follows:
\begin{equation}
\Psi_{n}: \mathfrak{gl}_{\mathbb{D}}(\mathbb{D}^{n}) \ni \A + \B \cdot \varepsilon \to \begin{pmatrix} \A & \B \\ \B & \A \end{pmatrix} \in \mathfrak{gl}(n|n\,, \mathbb{D}_{\bar{0}})\,, \qquad (\A\,, \B \in \Mat(n\,, \mathbb{D}_{\bar{0}})\,.
\label{EmbeddingDAbelian}
\end{equation}

Let $\Omega_{2}$ be the matrix in $\Mat(n|n\,, \mathbb{D}_{\bar{0}})_{\bar{1}}$ given by
\begin{equation*}
\Omega_{2} = \begin{pmatrix} 0 & \Id_{n} \\ -\Id_{n} & 0 \end{pmatrix}\,.
\end{equation*}
A straightforward calculation verifies that 
\begin{equation*}
\Im(\Psi_{n}) = \left\{\X \in \Mat(n|n\,, \mathbb{D}_{\bar{0}})\,, \left[\X, \Omega_{2}\right] = 0\right\},
\end{equation*}
or by using Notation \ref{NotationsSerganova}, we get
\begin{equation}
\Im(\Psi_{k}) = \left\{\X \in \Mat(k|k\,, \mathbb{D}_{\bar{0}})\,, \Pi(\X) = \X\right\}\,.
\label{EquationOmega2}
\end{equation}
We denote the Lie superalgebra defined in
\eqref{EquationOmega2} by $\mathfrak{q}(k\,, \mathbb{D}_{\bar{0}})$. 

\begin{rema}
For $\mathbb D=\mathbb D_{\bar 0}=\mathbb R$ or $\mathbb C$, the notation $\mathfrak q(k,\mathbb D)$ is standard. 
For $k\geq 2$, the algebra 
$\mathfrak q(k,\mathbb H)$ 
has a unique simple subquotient that is  isomorphic  to the Lie superalgebra $\mathfrak{psq}^*(k)$ in Serganova's notation
\cite{SERGANOVA}. 
\end{rema}

$\textbf{Cade III:}$ $\mathbb{D} = \mathbb{D}_{\bar{0}} \oplus \mathbb{D}_{\bar{0}} \cdot \varepsilon$ where $\mathbb D_{\bar 0}\in\{\mathbb R,\mathbb C,\mathbb H\}$ , with $\varepsilon^{2} = -1$ and $\varepsilon$ commuting with $\mathbb{D}_{\bar{0}}$. Again, every  $\X \in \mathfrak{gl}_{\mathbb{D}}(\W)$ can be written as $\X = \Y + \Z\cdot\varepsilon$, where $\Y, \Z \in \Mat(k\,, \mathbb{D}_{\bar{0}})$. Let $\Psi'_{1}: \mathbb{D} \to \Mat(1|1, \mathbb{D}_{\bar{0}})$ be the map given by
\begin{equation*}
\Psi'_{1}(a + b\varepsilon) = \begin{pmatrix} a & b \\ -b & a \end{pmatrix}\,, \qquad (a, b \in \mathbb{D}_{\bar{0}})\,.
\end{equation*}
The map $\Psi'_{1}$ can be extended to 
an embedding of 
$\mathfrak{gl}_{\mathbb{D}}(\mathbb{D}^{n})$ into
$\mathfrak{gl}(n|n,\mathbb D_{\bar 0})$ as follows:
\begin{equation}
\Psi'_{n}: \mathfrak{gl}_{\mathbb{D}}(\mathbb{D}^{n}) \ni \A + \B \cdot \varepsilon \to \begin{pmatrix} \A & \B \\ -\B & \A \end{pmatrix} \in \mathfrak{gl}(n|n\,, \mathbb{D}_{\bar{0}})\,, \qquad (\A\,, \B \in \Mat(n\,, \mathbb{D}_{\bar{0}})\,.
\label{EmbeddingDAbelian2}
\end{equation}

Let $\widetilde{\Omega}_{2}$ be the matrix in $\Mat(n|n\,, \mathbb{D}_{\bar{0}})_{\bar{1}}$ given by
\begin{equation*}
\widetilde{\Omega}_{2} = \begin{pmatrix} 0 & \Id_{n} \\ \Id_{n} & 0 \end{pmatrix}\,.
\end{equation*}
A straightforward calculation verifies that \begin{equation*}
\Im(\Psi'_{n}) = \left\{\X \in \Mat(n|n\,, \mathbb{D}_{\bar{0}})\,, \left[\X, \widetilde{\Omega}_{2}\right] = 0\right\}\,,
\end{equation*}
or by using Notation \ref{NotationsSerganova}, we have
\begin{equation}
\Im(\Psi'_{n}) = \left\{\X \in \Mat(n|n\,, \mathbb{D}_{\bar{0}})\,, \st \circ \Pi\circ\st^{-1}(\X) = \X\right\}\,.
\label{EquationOmega22}
\end{equation}

The Lie superalgebra $\Im(\Psi'_n)$ is also isomorphic to ${\mathfrak{q}}(n\,, \mathbb{D}_{\bar{0}})$ via the map 
\[
\Im(\Psi_n)\to \Im(\Psi'_n)\ ,\ 
\X\mapsto -\st(\X).
\] 


\textbf{Case IV:} $\mathbb{D} = \mathbb{C} \oplus \mathbb{C} \cdot \varepsilon$, with $\varepsilon^{2} = 1$ and $\varepsilon i = -i\varepsilon$: In this case we have \[
(a+ib) \varepsilon = \varepsilon \overline{(a+ib)}.
\] Let $\check\Psi_{1}: \mathbb{D} \to \Mat(1|1\,, \mathbb{C})$ be the map given by
\begin{equation*}
\check\Psi_{1}(a + b\varepsilon) = \begin{pmatrix} a & b \\ \overline{b} & \overline{a} \end{pmatrix}\,, \qquad (a, b \in \mathbb{C})\,.
\end{equation*}
The map $\check\Psi_{1}$ can be extended to an embedding of $\mathfrak{gl}_{\mathbb{D}}(\mathbb{D}^{n})$ into $\mathfrak{gl}(n|n,\mathbb D_{\bar 0})$ as follows:
\begin{equation*}
\check\Psi_{n}: \mathfrak{gl}_{\mathbb{D}}(\mathbb{D}^{n}) \ni \A + \B \cdot \varepsilon \to \begin{pmatrix} \A & \B \\ \overline{\B} & \overline{\A} \end{pmatrix} \in \Mat(n|n\,, \mathbb{C})\,, \qquad (\A\,, \B \in \Mat(n\,, \mathbb{D}_{\bar{0}}))\,.
\end{equation*}

 We denote  the image of $\check\Psi_{n}$ by
$\bar{\mathfrak{q}}(n)$. 
Thus

\begin{equation*}
\bar{\mathfrak{q}}(n) = \left\{\X \in \mathfrak{gl}(n|n\,, \mathbb{C})\,, \begin{pmatrix} \A & \B \\ \overline{\B} & \overline{\A} \end{pmatrix}\,, \A\,, \B \in \Mat(k\,, \mathbb{C})\right\}\,.
\end{equation*}
One can easily check that 
\begin{equation}
\bar{\mathfrak{q}}(n) = \left\{\X \in \Mat(n|n\,, \mathbb{C})\,, \sigma \circ \Pi(\X) = \X\right\}\,.
\end{equation}

\begin{rema}
Note that $\bar{\mathfrak q}(n)$ is a real form of $\mathfrak{gl}(n|n\,,\mathbb C)$. 
For $k\geq 2$ the algebra $\bar{\mathfrak q}(n)$ has a unique simple subquotient that is  isomorphic to the Lie superalgebra $^\circ\mathfrak{pq}(n)$ in Serganova's notation \cite{SERGANOVA}.
\end{rema}

\textbf{Case V:} $\mathbb{D} = \mathbb{C} \oplus \mathbb{C} \cdot \varepsilon$, with $\varepsilon^{2} = -1$ and $\varepsilon i = -i\varepsilon$.
 Let $\check \Psi'_{1}: \mathbb{D} \to \Mat(1|1\,, \mathbb{C})$ be the map given by
\begin{equation*}
\check \Psi'_{1}(a + b\varepsilon) = \begin{pmatrix} a & b \\ -\overline{b} & \overline{a} \end{pmatrix}\,, \qquad (a, b \in \mathbb{C})\,.
\end{equation*}
The map $\check \Psi'_{1}$ can be extended to an embedding of $\mathfrak{gl}_{\mathbb{D}}(\mathbb{D}^{n})$ into
$\mathfrak{gl}(n|n,\mathbb D_{\bar 0})$ as follows:
\begin{equation*}
\check \Psi'_{n}: \mathfrak{gl}_{\mathbb{D}}(\mathbb{D}^{n}) \ni \A + \B \cdot \varepsilon \to \begin{pmatrix} \A & \B \\ -\overline{\B} & \overline{\A} \end{pmatrix} \in \Mat(n|n\,, \mathbb{C})\,, \qquad (\A\,, \B \in \Mat(n\,, \mathbb{D}_{\bar{0}}))\,.
\end{equation*}
The Lie superalgebras 
$\bar{\mathfrak q}(n,\mathbb C)$ and 
$\Im(\check\Psi_n')$ are
isomorphic   via the map $\X\mapsto -\st(\X)$. 

\label{RemarkNovember3}

\bigskip
\noindent\textbf{The Lie superalgebras $\mathfrak g(\W,\gamma)$.}
In the rest of this appendix we assume that   $\iota: \mathbb{D} \to \mathbb{D}$ is a super-involution on $\mathbb{D}$ and $\gamma$ is a non-degenerate, even or odd, $(\iota, 1)$-superhermitian form on $\W$. 
Analogous realizations can be given when $\gamma$ is $(\iota,-1)$-superhermitian (see Remarks~\ref{remark:iota,-1},~\ref{rem:spo(n,pmq)} and~\ref{rem:C6}).
We consider~3~general cases (Cases $\mathrm{I}'$--$\mathrm{III}'$).

\bigskip

\textbf{Case $\mathbf{I'}$:} $\gamma$ odd and $\mathbb{D} = \mathbb{D}_{\bar{0}}$. From non-degeneracy of $\gamma$ it follows that $n = m$.
There are 4 subcases $\mathrm{I'(a)}$--$\mathrm{I'(d)}$.
In all of these subcases, 
the basis $\mathscr{B}$ of $\W$ as in Notation \ref{NotationsBasisU} can be chosen such that
\begin{equation*}
\gamma(e_{i}\,, e_{j}) = 0\,, \qquad \gamma(f_{i}\,, f_{j}) = 0\,, \qquad \gamma(e_{i}\,, f_{j}) = \delta_{i\,, j}\,.
\end{equation*}
In particular, the matrix of $\gamma$ in the basis $\mathscr{B}$ is
\begin{equation*}
\Mat_{\mathscr{B}}(\gamma) = \begin{pmatrix} 0 & \Id_{n} \\ \Id_{n} & 0 \end{pmatrix}\,.
\end{equation*}

\textbf{Subcase $\mathbf{I'(a)}$: 
}
$\mathbb{D} = \mathbb{R}$ or $\mathbb C$ and $\iota$ is the trivial involution. 
 Then 
\begin{equation}
\mathfrak{g}(\W\,, \gamma) \cong \left\{\X \in \mathfrak{gl}(n|n\,, \mathbb{D})\,, \X = \begin{pmatrix} \X_{1} & \X_{2} \\ \X_{3} & -\X^{t}_{1}\end{pmatrix}, \X_{2} = \X^{t}_{2}\,, \X_{3} = -\X^{t}_{3}\right\},
\end{equation}
and by using Notation \ref{NotationsSerganova} we have 
\begin{equation}
\mathfrak{g}(\W\,, \gamma) \cong \left\{\X \in \Mat(n|n\,, \mathbb{D})\,, -\st \circ \Pi(\X) = \X\right\}
.
\label{USPPQR}
\end{equation}
It is standard practice to  denote the Lie superalgebra~\eqref{USPPQR}
by  $\mathfrak{p}(n,\mathbb D)$.

\textbf{Subcase $\mathbf{I'(b)}$:} $\mathbb{D} = \mathbb{C}$ and $\iota(z) = \bar{z}$ for $z \in \mathbb{C}$.



Then
\begin{equation}
\mathfrak{g}(\W\,, \gamma) \cong \left\{\X \in \Mat(n|n\,, \mathbb{C})\,, \X = \begin{pmatrix} \X_{1} & \X_{2} \\ \X_{3} & -\X^{*}_{1}\end{pmatrix}\,, \X_{2} = \X^{*}_{2}\,, \X_{3} = - \X^{*}_{3}\,, \X_{1} \in \Mat(n\,, \mathbb{C})\right\}\,.
\end{equation}
Using Notation \ref{NotationsSerganova} we define 
\begin{equation}
\bar{\mathfrak{p}}(n):=\mathfrak{g}(\W\,, \gamma) \cong \left\{\X \in \Mat(n|n\,, \mathbb{C})\,, -\st \circ \Pi \circ \sigma(\X) = \X\right\}\,.
\label{eq:ubaralg}
\end{equation}
Note that $\bar{\mathfrak{p}}(n)$ is a real form of $\mathfrak{gl}(2n|2n\,,\mathbb C)$. For $n\geq 1$ this algebra has a unique simple subquotient that is isomorphic to the Lie superalgebra $\mathfrak{us\pi}(n)$
in Serganova's notation \cite{SERGANOVA}.

\textbf{Subcase $\mathbf{I'(c)}$:}
$\mathbb{D} = \mathbb{H}$ and $\iota(z)=\bar z$ for $z\in\mathbb H$, is the  quaternionic  conjugation.  Then
\begin{equation*}
\mathfrak{g}(\W\,, \gamma) \cong \left\{\X \in \Mat(n|n\,, \mathbb{H})\,, \X = \begin{pmatrix} \X_{1} & \X_{2} \\ \X_{3} & -\X^{*}_{1}\end{pmatrix}, \X_{2} = \X^{*}_{2}\,, \X_{3} = - \X^{*}_{3}\,, \X_{1} \in \Mat(n\,, \mathbb{H})\right\},
\end{equation*}
Using Notation \ref{NotationsSerganova} we define 
\begin{equation}
\mathfrak p^*(n):=
\mathfrak{g}(\W\,, \gamma) \cong \left\{\X \in \Mat(n|n\,, \mathbb{H})\,, -\st \circ \Pi \circ \sigma(\X) = \X\right\}\,.
\label{eq:p*H}
\end{equation}
Note that $\mathfrak p^*(n)$ is a real form of $\mathfrak p(2n,\mathbb C)$.
\begin{rema}
 All of the constructions in Subcases $\mathrm{I'(a)}$--$\mathrm{(d)}$ have analogues for a $(\iota,-1)$-superhermitian form $\gamma:\W\times \W\to\mathbb D$. We can choose a $\mathbb D$-basis such that 
\begin{equation*}
\Mat_{\mathscr{B}}(\gamma) = \begin{pmatrix} 0 & \Id_{n} \\ -\Id_{n} & 0 \end{pmatrix}\,.
\end{equation*}
Then the Lie superalgebras that we obtain in Subcases 
$\mathrm{I'(a)}$--$\mathrm{(c)}$  are as follows:
\begin{equation}
\left\{\X \in \Mat(n|n\,, \mathbb{R})\,, \X = \begin{pmatrix} \X_{1} & \X_{2} \\ \X_{3} & -\X^{t}_{1}\end{pmatrix}\,, \X_{2} = -\X^{t}_{2}\,, \X_{3} = \X^{t}_{3}\,, \X_{1} \in \Mat(n\,, \mathbb{R})\right\}
\cong
{\mathfrak{p}}(n\,, \mathbb{R})\,,
\label{Eq:A1}
\end{equation}

\begin{equation}
 \left\{\X \in \Mat(n|n\,, \mathbb{C})\,, \X = \begin{pmatrix} \X_{1} & \X_{2} \\ \X_{3} & -\X^{t}_{1}\end{pmatrix}\,, \X_{2} = -\X^{t}_{2}\,, \X_{3} = \X^{t}_{3}\,, \X_{1} \in \Mat(n\,, \mathbb{C}) \right\}
\cong\mathfrak{p}(n,\mathbb C)\,, 
\label{Eq:A2}
\end{equation}

\begin{equation}
\left\{\X \in \Mat(n|n\,, \mathbb{C})\,, \X = \begin{pmatrix} \X_{1} & \X_{2} \\ \X_{3} & -\X^{*}_{1}\end{pmatrix}\,, \X_{2} = -\X^{*}_{2}\,, \X_{3} = \X^{*}_{3}\,, \X_{1} \in \Mat(n\,, \mathbb{C})\right\}
\cong
\bar{\mathfrak p}(n)
\,,
\label{Eq:A3}
\end{equation}

\begin{equation}
 \left\{\X \in \Mat(n|n\,, \mathbb{H})\,, \X = \begin{pmatrix} \X_{1} & \X_{2} \\ \X_{3} & -\X^{*}_{1}\end{pmatrix}\,, \X_{2} = -\X^{*}_{2}\,, \X_{3} = \X^{*}_{3}\,, \X_{1} \in \Mat(n\,, \mathbb{H})\right\}\cong\mathfrak p^*(n)\,.
\label{Eq:A4}
\end{equation}
The isomorphism between each of the 
Lie superalgebras defined on the left hand side of   \eqref{Eq:A1}--\eqref{Eq:A3}
and its counterpart in Subcases $\mathrm{I'(a)}$--$\mathrm{(b)}$ is the map $\X\mapsto-\mathrm{st}(\X)$. For the Lie superalgebra $\eqref{Eq:A4}$ and  Subcase $\mathrm{I'(c)}$, 
the map is $\X\mapsto -\mathrm{st}(\bar\X)$ where $\bar X$ denotes the quaternionic conjugation of $\X$.

\label{remark:iota,-1}
\end{rema}

\textbf{Case $\mathbf{II'}$:}
$\gamma$ even and $\mathbb{D} = \mathbb{D}_{\bar{0}}$. 
Recall that $\gamma$ is assumed to be $(\iota,1)$-superhermitian. There are 4 subcases $\mathrm{II'(a)}$--$\mathrm{II'(d)}$.

\textbf{Subcase $\mathbf{II'(a)}$:}
$\mathbb{D} = \mathbb{R}$ (hence  $\iota$ is the trivial involution).  The restriction of $\gamma$ to $\W_{\bar{0}}$ (resp. $\W_{\bar{1}}$) is non-degenerate and symmetric  (resp. skew-symmetric). In particular, $m = 2e$ is even. The matrix $\mathscr{B}$ of $\W$ as in Notation \ref{NotationsBasisU} can be chosen such that
\begin{equation*}
\Mat_{\mathscr{B}}(\gamma) = \begin{pmatrix} \Id_{p, q} & 0 \\ 0 & \J_{m} \end{pmatrix}\,, 
\qquad
\J_{m} = \begin{pmatrix} 0 & \Id_{e} \\ -\Id_{e} & 0 \end{pmatrix}\,,
\qquad (n = p+q)\,.
\end{equation*}
One can easily see that $\X = \begin{pmatrix} \X_{1} & \X_{2} \\ \X_{3} & \X_{4} \end{pmatrix} \in \mathfrak{g}(\W\,, \gamma)$ if and only if
\begin{equation}
\X^{t}_{1}\Id_{p, q} + \Id_{p, q}\X_{1} = 0\,, \qquad \J_{m}\X_{4} + \X^{t}_{4}\J_{m} = 0\,, \qquad \X^{t}_{3}\J_{m} + \Id_{p, q}\X_{2} = 0\,.
\label{eq:X1X3X4}
\end{equation}
In particular, 
$\mathfrak{g}(\W\,, \gamma)$ is 
the Lie superalgebra that consists of matrices $\X\in\Mat(n|m,\mathbb R)$ of the form  
\[
\X = \begin{pmatrix} \X_{1} & \X_{3} \\ -\Id_{p, q}\X^{t}_{3}\J_{m} & \X_{4} \end{pmatrix},
\]
where $\X_1$ and $X_4$ satisfy the relations \eqref{eq:X1X3X4} and $\X_3\in\Mat(n\times m,\mathbb R)$. In particular, 
$\X_{1} \in \mathfrak{o}(p\,, q\,, \mathbb{R})$ and $ \X_{4} \in \mathfrak{sp}(2e\,, \mathbb{R})$
. This Lie superalgebra is usually denoted by $\mathfrak{osp}(p\,,\,q|m)$.

\begin{rema}
An analogous construction with a $(\iota,-1)$-superhermitian form yields the Lie superalgebra $\mathfrak{spo}(n|p\,,q)$ where $p+q=m$. 

\label{rem:spo(n,pmq)}
\end{rema}

\textbf{Subcase $\mathbf{II'}$(b):}
$\mathbb{D} = \mathbb{C}$ and $\iota(z)=z$ for $z\in \mathbb C$. The restriction of $\gamma$ to $\W_{\bar{0}}$ (resp. $\W_{\bar{1}}$) is non-degenerate and symmetric (resp. skew symmetric). In particular, $m = 2e$ is even. The basis $\mathscr{B}$ of $\W$ as in Notation~\ref{NotationsBasisU} can be chosen such that
\begin{equation}
\Mat_{\mathscr{B}}(\gamma) = \begin{pmatrix} \Id_{n} & 0 \\ 0 & \J_{m} \end{pmatrix}\,, \qquad \J_{m} = \begin{pmatrix} 0 & \Id_{e} \\ -\Id_{e} & 0 \end{pmatrix}\,.
\label{eq:X1X3X3C}
\end{equation}
The Lie superalgebra
$\mathfrak{g}(W\,,\gamma)$ consists of 
matrices 
$\X\in\Mat(n|m\,,\mathbb C)$ of the form 
\[\X = \begin{pmatrix} \X_{1} & \X_{2} \\ \X_{3} & \X_{4} \end{pmatrix} 
\]
that satisfy the relations
\begin{equation*}
\X^{t}_{1} + \X_{1} = 0\,, \qquad \J_{m}\X_{4} + \X^{t}_{4}\J_{m} = 0\,, \qquad \X^{t}_{3}\J_{m} + \X_{2} = 0\,.
\end{equation*}
This is the Lie superalgebra $\mathfrak{osp}(n|m\,,\mathbb C)$. An analogous construction with a $(\iota,-1)$-superhermitian form yields the Lie superalgebra $\mathfrak{spo}(n|m\,,\mathbb C)$. 

In particular,

\textbf{Subcase $\mathbf{II'}$(c):}
$\mathbb{D} = \mathbb{C}$ and $\iota(z)=\bar z$ for $z\in\mathbb C$. The restriction of $\gamma$ to $\W_{\bar{0}}$ (resp. $\W_{\bar{1}}$) is non-degenerate and $(\iota, 1)$-hermitian (resp. $(\iota, -1)$-hermitian). We can choose a basis $\mathscr{B}$ such that 
\begin{equation*}
\Mat_{\mathscr{B}}(\gamma) = \begin{pmatrix} \Id_{p, q} & 0 \\ 0 & i\Id_{r,s} \end{pmatrix}\,, \qquad (n = p+q, m = r+ s)\,.
\end{equation*}
Elements of  $\mathfrak{g}(\W\,, \gamma)$
are matrices $\X\in\Mat(n|m,\mathbb C)$ of the form 
\[\X = \begin{pmatrix} \X_{1} & \X_{2} \\ \X_{3} & \X_{4} \end{pmatrix},
\]
that satisfy the relations
\begin{equation*}
\X^{*}_{1}\Id_{p, q} + \Id_{p, q}\X_{1} = 0\,, \qquad \X^{*}_{4}\Id_{r, s} + \Id_{r, s}\X^{*}_{4} = 0\,, \qquad i\X^{*}_{3}\Id_{r, s} + \Id_{p, q}\X_{2} = 0\,.
\end{equation*}
This Lie superalgebra is usually denoted by  $\mathfrak{u}(p\,, q| r\,, s)$.
Note that an analogous construction with $\gamma$ assumed to be $(\iota,-1)$-superhermitian yields the same Lie superalgebra.

\textbf{Subcase $\mathbf{II'(d)}$:}
 $\mathbb{D} = \mathbb{H}$ and $\iota(z)=\bar z$ for $z\in\mathbb H$.  The restriction of $\gamma$ to $\W_{\bar{0}}$ (resp. $\W_{\bar{1}}$) is non-degenerate and $(\iota, 1)$-hermitian  (resp. $(\iota, -1)$-hermitian). We can choose a homogeneous $\mathbb D$-basis $\mathscr{B}$ of $\W$ such that \begin{equation*}
\Mat_{\mathscr{B}}(\gamma) = \begin{pmatrix} \Id_{p, q} & 0 \\ 0 & i\Id_{m} \end{pmatrix}\,, \qquad (n = p+q)\,.
\end{equation*}
One can easily see that $\X = \begin{pmatrix} \X_{1} & \X_{2} \\ \X_{3} & \X_{4} \end{pmatrix} \in \mathfrak{g}(\W\,, \gamma)$ if and only if
\begin{equation*}
\X^{*}_{1}\Id_{p, q} + \Id_{p, q}\X_{1} = 0\,, \qquad \X_{4}-i\X_4^*i = 0\,, \qquad 
\X_2^*\Id_{p,q}-i\X_3^{}=0\,.
\end{equation*}
In particular, 
\begin{equation*}
\mathfrak{g}(\W\,, \gamma) \cong 
\left\{
\X = \begin{pmatrix} \X_{1} & -\Id_{p, q}\X^{*}_{3}i \\ \X_{3} & \X_{4} \end{pmatrix}
\in\Mat(n|m\,, \mathbb{H})
\,, \X_{1} \in \mathfrak{sp}^{*}(p\,, q)\,, \X_{4} \in \mathfrak{so}^{*}(m)\,, \X_{3} \in \Mat(n \times m\,, \mathbb{H})\right\}\,.
\end{equation*}
The latter Lie superalgebra is usually denoted by $\mathfrak{spo}^*(p\,,q|m)$. 
\begin{rema}
If $\gamma$ is chosen to be $(\iota,-1)$-superhermitian, then a similar constriction yields the Lie superalgebras $\mathfrak{osp}^*(n|p\,,q)$ where $p+q=m$. 
\label{rem:C6}
\end{rema}

\textbf{Case $\mathbf{III'}$:}
$\gamma$ even and $\mathbb{D}_{1} \neq \{0\}$, hence $\W=\mathbb D^n$.
According to Lemma~\ref{LemmaTechnicalDecember} we can assume that $\mathbb D=\mathrm{Cl}_1(\mathbb C)$ and $\iota$ is one of the two superinvolutions $\iota_1$ and $\iota_2$ defined in~\eqref{eq:superinv}. 
In what follows we assume that $\iota=\iota_1$, as the case $\iota=\iota_2$ is analogous. 

\begin{rema}

\begin{enumerate}
\item Let $\mathbb{D} := \mathrm{Cl}_1(\mathbb C)$, that is $\mathbb D:=\mathbb{C} \oplus \mathbb{C} \cdot \varepsilon$ with $\varepsilon^{2} = 1$ and $i\varepsilon = \varepsilon i$. Let $\iota$ be one of the two  superinvolutions $\iota_1$ and $\iota_2$  on $\mathbb{D}$ defined in  \eqref{eq:superinv}. One can easily see that for any integer $0\leq p\le n$ the form $\gamma: \mathbb{D}^{n} \times \mathbb{D}^{n} \to \mathbb{D}$ given by
\begin{equation*}
\gamma(u\,, v) = \sum\limits_{i = 1}^{p} \iota(u_{i})v_{i}
-
\sum\limits_{i = p+1}^{n} \iota(u_{i})v_{i}
\,, \qquad (u = (u_{1}\,, \ldots\,, u_{n})\,, v = (v_{1}\,, \ldots\,, v_{n}) \in \mathbb{D}^{n})\,,
\end{equation*}
is an even $(\iota, 1)$-superhermitian form. Furthermore, we can choose a homogeneous $\mathbb D$-basis $\mathscr B_1$ of $\W$ as in Notation~\ref{NotationsBasisU} such that 
 $\Mat_{\mathscr B_1}(\gamma)=\Id_{p,n-p}$.

\item Let $\mathbb D$ and $\iota$ be as in part (1) above. Let $\D \in \mathbb{D}_{\bar{1}}$ be such that $\iota(\D) = c\D$, with $c \in \{\pm 1\}$, and let $\gamma$ be an even, $(\iota, \epsilon)$-superhermitian form. Let $\tilde{\gamma}: \mathbb{D}^{n} \times \mathbb{D}^{n} \to \mathbb{D}$ be the form given by
\begin{equation*}
\tilde{\gamma}(u\,, v) := (-1)^{\left|u\right|}\gamma(u\,, v \D)\,, \qquad (u\,, v \in \mathbb{D}^{n})\,.
\end{equation*}
For every $u, v \in \mathbb{D}^{n}$ and $\D_{1}, \D_{2} \in \mathbb{D}$, we have
\begin{align*}
\tilde{\gamma}(u \D_{1}, v \D_{2}) 
&
= (-1)^{\left|u\D_{1}\right|}\gamma(u \D_{1}\,, v\D_{2}\D) = (-1)^{\left|u\D_{1}\right|} \gamma(u\D_{1}\,, v\D\D_{2}) \\
& =  (-1)^{\left|u\right|+\left|\D_{1}\right|}(-1)^{\left|\D_{1}\right| \cdot \left|u\right|} \iota(\D_{1}) \gamma(u\,, v\D) \D_{2} = (-1)^{\left|u\right|+\left|\D_{1}\right|}(-1)^{\left|\D_{1}\right| \cdot \left|u\right|} (-1)^{\left|u\right|} \iota(\D_{1}) \tilde{\gamma}(u\,, v) \D_{2} \\
& =  (-1)^{\left|\D_{1}\right|\cdot \left|u\right| + \left|\D_{1}\right|} \iota(\D_{1}) \tilde{\gamma}(u\,, v) \D_{2} = (-1)^{\left|\D_{1}\right|\cdot\left|u\right| + \left|\D_{1}\right|\cdot\left|\tilde{\gamma}\right|} \iota(\D_{1})\tilde{\gamma}(u\,, v)\D_{2}\,,
\end{align*}
and
\begin{eqnarray*}
\tilde{\gamma}(u\,, v) & = & (-1)^{\left|u\right|} \gamma(u\,, v\D) = \epsilon(-1)^{\left|u\right|}(-1)^{\left|u\right| \cdot \left|v\D\right|} \iota(\gamma(v\D\,, u)) = \epsilon(-1)^{\left|u\right|}(-1)^{\left|u\right| \cdot \left|v\D\right|}(-1)^{\left|\D\right|\cdot\left|v\right|} \iota(\iota(\D)\gamma(v\,, u)) \\
& = & c\epsilon(-1)^{\left|u\right|}(-1)^{\left|u\right| \cdot \left|v\D\right|}(-1)^{\left|\D\right|\cdot\left|v\right|} \iota(\gamma(v\,, u\D)) = c\epsilon(-1)^{\left|u\right| \cdot \left|v\D\right|}(-1)^{\left|\D\right|\cdot \left|v\right|} (-1)^{\left|v\right|+\left|u\right|} \iota(\tilde{\gamma}(v\,, u)) \\
& = & c\epsilon(-1)^{\left|u\right|\cdot\left|v\right|} \iota(\tilde{\gamma}(v\,, u))
 \end{eqnarray*}  
i.e. $\tilde{\gamma}$ is an odd $(\iota, c\epsilon)$-superhermitian form on $\mathbb{D}^{n}$.     
\end{enumerate}

\label{SuperHermitianForms}

\end{rema}

 Let $\iota_\circ$ denote the complex conjgation on $\mathbb C=\mathbb D_{\bar 0}$, so that ${\iota_1}_{|_{\mathbb D_{\bar 0}}}=\iota_\circ$. Then  $\gamma_{|_{W_{\bar 0}\times \W_{\bar 0}}}$ is  $(1, \iota_\circ)$-hermitian, hence we can choose a $\mathbb{D}_{\bar{0}}$-basis $\mathscr{B}_{1}$ of $\W_{\bar{0}}$ such that $\Mat_{\mathscr{B}_{1}}(\gamma_{|_{\W_{\bar 0}\times \W_{\bar 0}}}) = \Id_{p, q}$, with $p+q=n$.

Elements of  $\mathfrak{gl}_\mathbb D(\W)$ are $n\times n$ matrices with entries in $\mathbb D$. Any such matrix can be expressed as $\X_1+\varepsilon \X_2$ where $\X_1,\X_2\in \Mat(n,\mathbb C)$. 
Let $u, v \in \W_{\bar 0}=\mathbb{D}^{n}_{\bar{0}}$ and $\X \in \mathfrak{g}(\W\,, \gamma)_{\bar{0}} \subseteq 
\mathfrak{gl}_\mathbb D(\W)_{\bar 0}=
\Mat(n\,, \mathbb{C})$. Then from the relation 
$\gamma(\X(u)\,, v) + \gamma(u\,, \X(v)) = 0 $ we obtain 
\begin{equation*}
 \iota_\circ(v)^{t}\Id_{p\,, q}\X u + \iota_\circ(v)^{t} \iota_\circ(\X)^{t} \Id_{p\,, q} u = 0\,,
\end{equation*}
i.e. $\Id_{p, q}\X + \X^*\Id_{p, q} = 0$. In particular, $\mathfrak{g}(\W, \gamma)_{\bar{0}} \cong \mathfrak{u}(p\,, q)$.

Similarly, let $\Y \in \mathfrak{g}(\W\,, \gamma)_{\bar{1}}$. Then there exists $\Z \in \Mat(n\,, \mathbb{C})$ such that $\Y = \Z \cdot \varepsilon$. 
From the relation
\[
\gamma((\Z\cdot \varepsilon) u,v\cdot \varepsilon)
+\gamma(u,(\Z\cdot \varepsilon)(v\cdot \varepsilon))=0
\]
for $u,v\in\W_{\bar 0}\cong \mathbb C^n$ we obtain 
$\Z^*=-i\Id_{p,q}\,\Z\, \Id_{p,q}$. 
Consequently, 
\begin{equation}
\mathfrak{g}(\W\,, \gamma) \cong \left\{\X + \Y\varepsilon
,\ \X^*=-\Id_{p,q}\,\X\,\Id_{p,q},\,
\Y^*=-i\Id_{p,q}\,\Y\, \Id_{p,q},
\X,\Y\in\Mat(n,\mathbb C)
\right\}\,.
\label{GUGammaAppendix}
\end{equation}
We denote the latter Lie superalgebra, which is a real form of $\mathfrak q(n,\mathbb C)$,  by $\mathfrak{q}(p,q)$. For $p+q\geq 2$ the Lie superlagebra $\mathfrak {q}(p,q)$ has a unique subquotient that is isomorphic to 
the simple Lie superalgebra $\mathfrak{uspq}(p,q)$ in  
Serganova's notation~\cite{SERGANOVA}.

\begin{rema}
In the case where $\mathbb K=\mathbb C$,  constructions of the Lie superalgebras $\mathfrak{gl}_\mathbb D(\W)$ and  $\mathfrak{g}(\W,\gamma)$
are analogous (and indeed much simpler).     In this case, the two algebras of the form $\mathfrak{gl}_\mathbb D(\W)$ are 
$\mathfrak{gl}(n|m\,,\mathbb C)$ and $\mathfrak{q}(n\,,\mathbb C)$. 
Furthermore, by Proposition~\ref{PropositionDivisionIota} the only division superalgebra with a $\mathbb C$-linear superinvolution is $\mathbb C$ (and the superinvolution is trivial).
The resulting Lie superalgebras $\mathfrak{g}(\W,\gamma)$ are 
$\mathfrak{osp}(n|m)$ (or $\mathfrak{spo}(n|m)$) and $\mathfrak{p}(n,\mathbb C)$.
\end{rema}

\end{document}